\documentclass{amsart}





\newtheorem{theorem}{Theorem}[section]
\newtheorem{lemma}[theorem]{Lemma}

\theoremstyle{definition}

\newtheorem{assumption}[theorem]{Assumption}

\theoremstyle{remark}
\newtheorem{remark}[theorem]{Remark}

\numberwithin{equation}{section}

\usepackage[utf8]{inputenc} 

\usepackage{amsmath}
\usepackage{amstext}
\usepackage{amsfonts}
\usepackage{amsthm}
\usepackage{comment}
\usepackage{amssymb}
\usepackage{enumitem}
\usepackage[a4paper, total={6in, 8in}]{geometry}

\usepackage{graphicx}
\usepackage{dsfont}
\usepackage{nicefrac}

\usepackage{hyperref}

\usepackage[nocompress]{cite}
\hypersetup{colorlinks}
\usepackage{glossaries}



\renewcommand{\(}{\left(}
\renewcommand{\)}{\right)}
\newcommand{\lno}{\left\|}
\newcommand{\rno}{\right\|}

\newcommand{\Ltwon}{\rno_{H}}
\newcommand{\Loptwon}{\rno_{\mathcal{L}(H)}}
\newcommand{\Lophtwon}{ P_h\rno_{\mathcal{L}(H)}}
\newcommand{\Lston}{\rno_{L^p(\Omega;H)}}

\renewcommand{\P}{\mathbb{P}}
\newcommand{\D}{\mathcal{D}}
\newcommand{\Ltwonex}{\rno_{L^2(\D)}}

\newcommand{\N}{\mathbb{N}}
\newcommand{\R}{\mathbb{R}}
\newcommand{\E}{\mathbb{E}}

\newcommand{\F}{\mathcal{F}}
\newcommand{\coord}{(t,\mathbf{x})}
\newcommand{\half}{\frac{1}{2}}

\newcommand{\diff}[1]{\,\mathrm{d}#1}
\newcommand{\e}{\mathrm{e}}
\newcommand{\orderPoly}{1}
\newcommand{\verttt}{{\vert\kern-0.25ex\vert\kern-0.25ex\vert}}
\newcommand{\vertttb}{{\big\vert\kern-0.25ex\big\vert\kern-0.25ex\big\vert}}
\newcommand{\verttttB}{{\Big\vert\kern-0.25ex\Big\vert\kern-0.25ex\Big\vert}}
\newcommand{\constChi}{C_{\chi}}

\DeclareMathOperator{\dom}{dom}

\newcommand{\inner}[3][]{\left( #2 , #3 \right)_{#1}}

\renewcommand{\L}{\mathcal{L}}

\begin{document}
	
	\title[A full space time splitting framework for semi-linear SPDEs]{Error bounds for full space-time splitting discretizations of semi-linear SPDEs -- with a focus on dG domain decompositions}

	\author{Monika Eisenmann}
	\author{Eskil Hansen}
	\author{Marvin Jans}
	\email{monika.eisenmann@math.lth.se, eskil.hansen@math.lth.se, marvin.jans@math.lth.se}
	\thanks{The first and the third author were supported in part by the Swedish Research Council under the grant 2023-03930, eSSENCE: The e-Science Collaboration and the Crafoord foundation and the second author by the Swedish Research Council under the grant 2023–04862.
	}
	
	
	\address{Centre for Mathematical Sciences\\
		Lund University\\
		P.O.\ Box 118\\
		221 00 Lund, Sweden}
	
	\subjclass[2020]{Primary  65C30, 60H35, 65M55, 60H15}
	
	\date{\today}
	
	
	\keywords{Domain decomposition, full discretization, SPDE, splitting scheme, Douglas--Rachford}


\begin{abstract}
	We consider a fully discretized numerical scheme for parabolic stochastic partial differential equations with multiplicative noise. Our abstract framework can be applied to formulate a non-iterative domain decomposition approach. Such methods can help to parallelize the code and utilize distributed hardware. The domain decomposition is integrated via the Douglas--Rachford splitting scheme, in which each split operator acts on a single part of the domain. For an efficient space discretization of the underlying equation, we chose the discontinuous Galerkin method as it suits the parallelization strategy well. For this fully discretized scheme, we provide a strong space-time convergence result under easily verifiable stability assumptions. We conclude the manuscript with numerical experiments validating our theoretical findings.
\end{abstract}

\maketitle

\section{Introduction} \label{sec:intro}
\allowdisplaybreaks
%
%
Consider a class of stochastic partial differential equations (SPDEs) with multiplicative noise, which take the form:
\begin{equation} \label{eq:SPDE}
	\begin{cases}
		\diff{X(t)} =\left[-AX(t)+f(t,X(t))\right]\diff{t}+B(t,X(t)) \diff{W(t)}, \quad t \in (0,t_f],\\
		X(0) = X_0.
	\end{cases}
\end{equation}
Here, $X(t)$ evolves in a real Hilbert space $H$ up to a finite time $t_f$. In this class of equations, $A$ is a linear, typically unbounded operator on the Hilbert space $H$, while the drift term $f(t,X(t))$ and the diffusion term $B(t,X(t))$ are possibly nonlinear but are assumed to be Lipschitz continuous with respect to $X(t)$.
%
%
Examples of semi-linear SPDEs include phase-field models, the Nagumo equation, and fluid flow problems (see \cite{lord_powell_shardlow_2014} for comparison). In these models, the noise can represent small-scale structures arising from thermal fluctuations, which are absent in deterministic models, and can account for the variability in wave speed or stochastic forcing. 

Given the  relevance of such stochastic models, studying efficient numerical approximation methods for SPDEs is crucial. Although these types of equations have gained increased attention in recent years, their numerical approximation still lags behind that of their deterministic counterparts. Our goal is to address this gap. Since stochastic equations typically exhibit lower regularity due to the presence of noise, we do not focus on high-order methods, as they are unlikely to provide significant benefits in such low-regularity settings. Instead, we seek to establish a theoretical foundation for an efficient numerical method that can be parallelized.

Domain decomposition methods are a common choice when designing parallel numerical methods for deterministic partial differential equations. For a general introduction, we refer the reader to \cite{Mathew.2008, DoleanJolivetNataf.2015,ToselliWidlund.2005, QuarteroniValli.1999}. These techniques partition the spatial domain into several subdomains, which enables parallel computations provided appropriate communication between the subdomains. There are various strategies for this communication. The standard approach involves iterative methods, in which the problems in the subdomains are solved sequentially, allowing information exchange between them. However, while such methods can yield accurate solutions, the iterative procedure incurs additional computational costs. To avoid the overhead of iterative schemes, we intend to incorporate the decomposition directly into the time integration using an operator splitting method. Examples of earlier deterministic studies of such methods can be found in \cite{EisenmannHansen.2018, EisenmannHansen.2022, ES.2024, HansenHenningsson.2017, ArrarasEtAl.2017, HansenHenningsson.2016, Mathew.1998, Vabishchevich.2008}. 

In the context of stochastic equations, less research has been conducted on domain decomposition methods. A few works have explored this direction within the realm of random differential equations, examples are \cite{ChenEtAl.2024, MuZhang.2019, BuckwarEtAl.2024, Prohl.2012, Ji.2024}. Our setting is related to \cite{Prohl.2012,BuckwarEtAl.2024} who consider the stochastic heat equation and a temporal semi-discretization, respectively. In contrast, we strive to derive a new abstract framework that requires fewer spatial regularity assumptions and that is applicable to the full space-time error analysis of non-iterative domain decomposition methods. The starting point for this framework will be elements from the backward Euler analysis for SPDEs in~\cite{lord_powell_shardlow_2014} and the deterministic space-time error analyses in \cite{HansenHenningsson.2016,HochbruckKoehler.2022}. 

For the time discretization, we will employ an implicit method with respect to the unbounded operator $A$ for stability reasons. As a result, solving implicit equations at each step of the time discretization is necessary. For $N \in \N$, a step size $\tau = t_f/N$, we define the discretization $X^{n}_{\tau}$ of the exact solution $X$ at the time point $n \tau$. More precisely, using the semi-implicit Euler method as a starting point with the initial condition $X^0_{\tau} = X_0$, the discretization takes on the following form
\begin{equation*}
	(I+\tau A) X^n_{\tau} 
	= X^{n-1}_{\tau} + \tau  f(t_{n-1},X^{n-1}_{\tau}) + B(t_{n-1},X^{n-1}_{\tau}) (W(t_n)-W(t_{n-1}))
\end{equation*}
for $n \in \{1,\dots,N\}$. To expedite the solution of these implicit equations, we propose the usage of a variant of the Douglas--Rachford splitting method \cite{DouglasRachford.1956}. Here, the operator $A$ is decomposed into two operators $A = A_1 + A_2$, where $A_\ell$ acts on one of the spatial subdomains, and the backward Euler step $(I+\tau A)^{-1}$ is approximated as
\begin{equation*}
	(I+\tau A)^{-1}
	= (I+\tau (A_1 + A_2))^{-1}
	\approx (I+\tau A_{2})^{-1} (I+\tau A_{1})^{-1} (I+\tau^2A_{1}A_{2} ),
\end{equation*}
The Douglas--Rachford splitting then reads as 
\begin{equation*}
	(I+\tau A_{1}) (I+\tau A_{2}) X^n_{\tau} 
	= (I+\tau^2A_{1}A_{2} ) X^{n-1}_{\tau} 
	+ \tau  f(t_{n-1},X^{n-1}_{\tau}) + B(t_{n-1},X^{n-1}_{\tau}) (W(t_n)-W(t_{n-1})),
\end{equation*}
where $n \in \{1,\dots,N\}$. Note that an additional modification of the first time step will be employed in the analysis, but this detail is omitted here for the sake of simplicity. The reason for choosing the Douglas--Rachford splitting scheme, instead of some other first-order scheme, is that it has an advantageous error structure, as described in \cite{HansenHenningsson.2017}. 

Next, we introduce a spatial discretization. Let $V_h = \text{span} \{\varphi_{1}, \dots, \varphi_{M(h)}  \}$ be a finite dimensional subspace of $H$ and consider the fully discretized solution $X^n_{h, \tau} = \sum_{i=1}^{M(h)} \alpha_{i}^n \varphi_{i}$ given by the algebraic system
\begin{align*}
	\mathbf{M} \mathbb{\alpha}^n
	&= (I+\tau A_{h,2})^{-1} (I+\tau A_{h,1})^{-1} (I+\tau^2A_{h, 1}A_{h, 2} ) \mathbf{M} \mathbb{\alpha}^{n-1}\\
	&\quad + (I+\tau A_{h,2})^{-1} (I+\tau A_{h,1})^{-1} \big(\tau f_h(t_{n-1},X^{n-1}_{h, \tau}) + B_h(t_{n-1},X^{n-1}_{h,\tau}) (W(t_n)-W(t_{n-1}))\big).
\end{align*}
Here, the mass matrix is given by $(\mathbf{M})_{i,j}  = \inner[H]{\varphi_{i}}{\varphi_{j}}$ and $A_{h,\ell}$ is the discrete counterpart of $A_\ell$. 

An issue for parallelization arises when the mass matrix $\mathbf{M}$ does not have a block structure. This block structure is even missing for the standard finite element method, as $\mathbf{M}$ then becomes tridiagonal in the one dimensional case. This lack of locality hinders efficient parallelization. While mass lumping can be a useful tool to diagonalize the mass matrix, this typically requires high regularity assumptions on the solution that we cannot always expect in our setting. Therefore, we opt for a spatial discretization that yields a more suitable mass matrix. Examples include discontinuous Galerkin (dG) and spectral Galerkin methods. In this study, we will concentrate on a symmetric interior penalty method as presented in \cite[Chapter 4.2]{DiPietro2012}. Other studies relating to dG methods applied to parabolic SPDEs include~\cite{Li2021,Pazner2019,Yang2023}.

%
%
In conclusion, the goals of this work are:
\begin{itemize}
	\item Derive an abstract framework for an operator splitting method that enables a full space-time error analysis for stochastic equations.
	\item Design a parallelizable method based on dG that can efficiently utilize distributed hardware.
	\item Prove state-of-the-art error bounds for the parallelizable method under easily verifiable assumptions in the new abstract framework.
\end{itemize}

%
%
The paper is organized as follows. In Section~\ref{sec:Prob_description}, we provide all the necessary assumptions on the data of \eqref{eq:SPDE}, state the solution concept, and an existence and regularity result for such a solution. With this in mind, we explain all the details for the full discretization in Section~\ref{sec:discretization}. This includes a general spatial discretization framework and the temporal discretization with a variation of the Douglas--Rachford splitting scheme. For this general framework, we then provide explicit error bounds in Section~\ref{sec:Conv_analysis}. For the error bounds, we state some needed auxiliary results, which we then combine to prove our main result in Theorems~\ref{thm:result} and \ref{thm:result_selfadjoint}. Our main application of the theoretical framework is then explained in detail in Section~\ref{sec:dG}. We verify that dG fits in our general spatial discretization framework and state a splitting of the operator needed for the Douglas--Rachford splitting, which is based on a domain decomposition approach. This setting is then further considered in our numerical example in Section~\ref{sec:Num_ex} where we confirm our findings through tests. Finally, needed auxiliary results are summarized in Appendices~\ref{appendix:basic_results}--\ref{appendix:higher_conv}.

\section{Problem description} \label{sec:Prob_description}

In the following section, we introduce the necessary notation and assumptions needed for Equation~\eqref{eq:SPDE}. We abbreviate $\R^+ := (0,\infty)$, $\R_0^+ := [0,\infty)$ and $\R_0^- := (- \infty,0]$. Further, we assume that $t_f \in \R^+$ is a given finite end time and $(H,\inner[H]{\cdot}{\cdot}, \|\cdot\|_H)$ is a real Hilbert space. The underlying filtered probability space for the stochastic equation is denoted by $(\Omega, \F, \{\F_t\}_{t\in [0,t_f]}, \P)$, which satisfies the usual conditions.
In the following, the constant $C \in \R^+$ is generic and can change from line to line, but it is always independent of the temporal and spatial discretization parameters, $\tau$ and $h$, respectively. 
The possibly unbounded operator $A$ in the parabolic Equation~\eqref{eq:SPDE} fulfills the following assumption.

\begin{assumption}\label{ass:A}
	Let the linear operators $A \colon \dom(A) \subset H \to H$ and $A_{\ell} \colon \dom(A_{\ell}) \subset H \to H$, $\ell \in \{1,2\}$, be given such that $A = A_1 + A_2$ on $\dom(A_1)\cap \dom(A_2) \subseteq \dom(A)$. Moreover, the operators fulfill the following criteria.
	\begin{enumerate}[label={(\alph*)}, ref={\ref{ass:A}~(\alph*)}]
		\item \label{ass:A_sect} The operator $A$ is a densely defined, positive operator on $H$. Furthermore, the operator $A$ is sectorial. That is there exists $\varphi \in (0,\frac{\pi}{2})$ such that the sector $S_{\varphi}=\{\lambda\in\mathbb{C} :\varphi < |\arg(\lambda)|\leq \pi \}$ and zero lie in the resolvent set $\rho(A)$. More precisely, for all $\lambda \in S_{\varphi}$, it follows that
		\begin{equation*}
			\|(A - \lambda I)^{-1}\|_{\L(H_{\mathbb{C}})} \leq \frac{C}{|\lambda|},
		\end{equation*}
		where $H_{\mathbb{C}}$ is the complexification $H + i H$ of the real space $H$.
		\item \label{ass:op_exch} For $\ell \in \{1,2\}$, the operator $A_{\ell}A^{-1} \colon H \to H$ is a well-defined, bounded operator.
	\end{enumerate}
\end{assumption}
Note that in this paper, we follow the same convention as in \cite[Definition~1.75]{lord_powell_shardlow_2014} for the definitions of (non-)positive and (non-)negative operators. With the previous assumption in mind, we observe that since $A$ is sectorial, there exists $\lambda_0 \in \R_0^-$ such that the range of $A - \lambda_0 I$ is $H$. Together with the fact that $-A$ is a positive (therefore also dissipative) operator, the range condition implies that the semigroup denoted by $\e^{-tA}$, $t \in \R_0^+$, is a semigroup of contractions on $H$, compare \cite[Chapter~1, Theorem~4.3]{Pazy1983}. Since $A$ is sectorial, the semigroup $\e^{-tA}$ is also analytic, compare \cite[Chapter~2, Theorem 5.2]{Pazy1983}. Assuming that $-A$ generates an analytic semigroup and $0 \in \rho(A)$, we obtain some useful bounds. For $\zeta \in \R_{0}^+$, it follows that
\begin{equation}\label{eq:ana_semi1}
	\| A^{\zeta} \e^{-tA} \|_{\L(H)} \leq C t^{-\zeta} \quad \text{for all } t \in \R^+,
\end{equation}
and for $\zeta\in [0,1]$ it holds that 
\begin{equation}\label{eq:ana_semi2}
	\| A^{-\zeta}(I-\e^{-tA}) \|_{\L(H)} \leq Ct^{\zeta} \quad \text{for all } t \in \R^+.
\end{equation}
For the proof of these results, we refer to \cite[Chapter~2,~Theorem~6.13]{Pazy1983}.
A definition for the fractional operators $A^{\zeta}$, $\zeta \in \R^+$, used in the bounds can be found in \cite[Chapter~2.6]{Pazy1983}.
In the following assumptions, we choose parameters $\theta$ with corresponding indices that are connected to the regularity of the coefficients. These parameters are fixed throughout the paper. Note that we allow for different parameters in the assumptions to be able to be precise about the needed regularity. If we set every $\theta_{\ast}$ to be $\frac{1}{2}-$, in Theorem~\ref{thm:result} we obtain that the error of the method is essentially $\tau^{\frac{1}{2}-} + h^{2-}$. Under additional assumptions on the split operators, we can see in Theorem~\ref{thm:result_selfadjoint} that some of these parameters can be set to $0$.
Whenever we use $\zeta \in \R_0^+$, this should be interpreted as a more general variable parameter that changes depending on the context.

\begin{assumption}\label{ass:X0}
	For a fixed $p \in [2,\infty)$ and $\theta_{X_0} \in[0,1)$, the initial condition $X_0 \colon \Omega \to H$ is a $\mathcal{F}_0$-measurable random variable such that $\| A^{\theta_{X_0}} X_0 \|_{L^p(\Omega;H)} \leq C$.
\end{assumption}

Next, we state the exact assumptions needed for the nonlinear perturbation $f$ of Equation~\eqref{eq:SPDE}.

\begin{assumption}\label{ass:f}
	Let $f\colon \R \times H\rightarrow H$ and $\theta_{f} \in[0,\half)$ be given. The following conditions are fulfilled.
	\begin{enumerate}[label={(\alph*)}, ref={\ref{ass:f}~(\alph*)}]
		\item \label{ass:f_reg} For all $v\in H$ and $w\in \dom(A^{\theta_{f}})$, it follows that
		\begin{equation*}
			\| f(t,v) \|_H \leq C\big(1+ \| v \|_H \big)
			\quad \text{and} \quad
			\| A^{\theta_{f}}f(t,w) \|_H \leq C\big(1+ \| A^{\theta_{f}}w \|_H \big).
		\end{equation*}
		\item \label{ass:f_hold} For all $v,w\in H$ and $s,t\in[0,t_f]$, it follows that
		\begin{equation*}
			\lno f(s,v)-f(t,w)\Ltwon\leq C \big(|s-t|^{\frac{1}{2}}+\lno v-w\Ltwon\big).
		\end{equation*}		
	\end{enumerate}
\end{assumption}

The next step is to define the setting for the stochastic part of Equation~\eqref{eq:SPDE}. First, we begin with the  Wiener noise $W$.

\begin{assumption}\label{ass:W}
	Let the operator $Q$ be a non-negative definite symmetric operator  of trace class on a Hilbert space $(U, \inner[U]{\cdot}{\cdot}, \|\cdot\|_U)$. Moreover, let $\{W(t)\}_{t\in [0,t_f]}$ be a given $Q$-Wiener process that is adapted  to the filtration $\{\F_t\}_{t\in [0,t_f]}$. 
\end{assumption}

For two Hilbert spaces $H_1$ and $H_2$, we denote the Hilbert space of Hilbert--Schmidt operators by $\mathcal{HS}(H_1,H_2)$. This space is equipped with the norm $\|E\|_{\mathcal{HS}(H_1,H_2)} = \sqrt{ \text{tr} (E^*E)}$, where $\text{tr}$ is the trace operator.
In order to state the assumptions on the noise term, we first recall the definition of the Cameron--Martin space, compare \cite[Section~10.3]{lord_powell_shardlow_2014}. For $U_0=Q^{\half}U$, the Cameron--Martin space $L^0_2$ is given by the set of linear operators $E \colon U_0 \rightarrow H$ that fulfill 
\begin{equation*}
	\| E \|_{L^0_2} := \| EQ^{\half} \|_{\mathcal{H}\mathcal{S}(U,H)} = \| E \|_{\mathcal{H}\mathcal{S}(U_0,H)} <\infty.
\end{equation*}

\begin{assumption}\label{ass:B}
	Let $B\colon [0,t_f] \times H\rightarrow L^0_2$ and $\theta_{B} \in[0,\half)$ be given. The following conditions are fulfilled.
	\begin{enumerate}[label={(\alph*)}, ref={\ref{ass:B}~(\alph*)}]
		\item \label{ass:B_reg} For all $v\in H$ and $w\in \dom(A^{\theta_{B}})$, it follows that
		\begin{equation}\label{eq:HS_B}
			\| B(t,v) \|_{L^0_2} \leq C\big(1+ \| v \|_H \big), 
			\quad 
			\| A^{\theta_{B}}B(t,w) \|_{L^0_2} \leq C\big(1+ \| A^{\theta_{B}}w \|_H \big)
		\end{equation}
		and for all $u\in \dom(A^{\theta_{X_0}})$
		\begin{equation}\label{eq:boundedOp_B}
			\| B(t,u) \|_{\L(U,H)} \leq C\big(1+ \| A^{\theta_{X_0}} u \|_H \big),
		\end{equation}
		where $\theta_{X_0}$ is chosen as in Assumption~\ref{ass:X0}.
		\item \label{ass:B_hold} For all $v,w\in H$ and $s,t\in[0,t_f]$, it follows that
		\begin{equation*}
			\| B(s,v)-B(t,w) \|_{L^0_2}\leq C \big(|s-t|^{\frac{1}{2}}+\lno v-w\Ltwon \big).
		\end{equation*}
	\end{enumerate}	
\end{assumption}

In this paper, we work with the mild solution of Equation~\eqref{eq:SPDE}. Under the assumptions stated above, let us recall the solution concept. A predictable $H$-valued process $\{X(t)\}_{t\in[0,t_f]}$ is called a mild solution if 
\begin{equation*}
	\P\Big(\int_0^{t_f}\lno X(t)\Ltwon^2\diff{t}<+\infty \Big) = 1
\end{equation*}
and if for all $t\in[0,t_f]$, it holds $\P$-a.s.~that
\begin{equation}\label{eq:SPDE_int}
	X(t)
	= \e^{-tA}X_0 + \int_0^t \e^{-(t-s)A}f(s,X(s))\diff{s} + \int_0^t \e^{-(t-s)A} B(s,X(s))\diff{W(s)}.
\end{equation}
Moreover, we recall that for $\Phi \in L^2(0,t_f; L^p(\Omega;L_2^0))$ the following Burkholder--Davis--Gundy inequality 
\begin{equation}\label{eq:burk}
	\Big(\E\Big[ \sup_{t\in (0,t_f)} \Big\| \int_0^{t} \Phi(s) \diff{W(s)} \Big\|_H^p \Big]\Big)^{\frac{1}{p}}
	\leq C \Big( 
	\int_0^{t_f} \| \Phi(s)\|_{L^p(\Omega;L_2^0) }^{2} \diff{s} \Big)^{\frac{1}{2}}
\end{equation}
holds. For a proof, we refer to \cite[Theorem 6.1.2]{Liu2015}.
With our assumptions, we have the following existence and regularity result.

\begin{theorem} \label{thm:ex_reg_exact_sol}
	Let Assumptions~\ref{ass:A}--\ref{ass:B} be fulfilled. Then there exists a unique mild solution to Equation~\eqref{eq:SPDE} up to modifications. Assuming that $\theta_{X_0} \in[\theta_{B},\theta_{B} + \frac{1}{2})$, the solution fulfills the following two regularity bounds
	\begin{equation} \label{eq:reg_space}
		\sup_{t\in (0,t_f) } \| A^{\theta_{X_0}} X(t) \|_{L^p(\Omega;H)}
		\leq C
	\end{equation}
	and
	\begin{equation} \label{eq:reg_hoelder}
		\sup_{s,t \in (0,t_f), s \neq t} \frac{ \| X(t)-X(s) \|_{L^p(\Omega;H)} }{|t-s|^{\min(\theta_{X_0}, \frac{1}{2})}} 
		\leq C.
	\end{equation}
\end{theorem}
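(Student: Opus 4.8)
The plan is to follow the classical Banach fixed-point strategy for mild solutions of semi-linear SPDEs, as in \cite[Chapter~10]{lord_powell_shardlow_2014}, and then bootstrap the spatial and temporal regularity from the three terms in the mild formulation \eqref{eq:SPDE_int}. First I would set up the solution map $\Phi$ on the space of predictable processes $Y$ with $\sup_{t\in[0,t_f]}\|Y(t)\|_{L^p(\Omega;H)}<\infty$ by
\begin{equation*}
	(\Phi Y)(t) = \e^{-tA}X_0 + \int_0^t \e^{-(t-s)A}f(s,Y(s))\diff{s} + \int_0^t \e^{-(t-s)A}B(s,Y(s))\diff{W(s)},
\end{equation*}
and show $\Phi$ maps this space into itself: the first term is controlled by the contraction property of $\e^{-tA}$ and Assumption~\ref{ass:X0}, the deterministic integral by the linear growth in \ref{ass:f_reg} together with $\|\e^{-(t-s)A}\|_{\L(H)}\leq 1$, and the stochastic integral by the Burkholder--Davis--Gundy inequality \eqref{eq:burk} combined with the growth bound \eqref{eq:HS_B}. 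Contractivity of $\Phi$ (possibly after passing to an equivalent norm with an exponential weight $\e^{-\beta t}$, or on a short time interval and then concatenating) follows the same way from the Lipschitz bounds \ref{ass:f_hold} and \ref{ass:B_hold}. This yields the unique mild solution up to modifications.

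For the spatial regularity \eqref{eq:reg_space}, I would apply $A^{\theta_{X_0}}$ to \eqref{eq:SPDE_int} and estimate term by term. The term $A^{\theta_{X_0}}\e^{-tA}X_0$ is bounded using Assumption~\ref{ass:X0} and $\|\e^{-tA}\|_{\L(H)}\leq1$. For the drift term I would split $A^{\theta_{X_0}}\e^{-(t-s)A} = A^{\theta_{X_0}}\e^{-(t-s)A/2}\cdot\e^{-(t-s)A/2}$ and use \eqref{eq:ana_semi1} to get a factor $(t-s)^{-\theta_{X_0}}$, which is integrable since $\theta_{X_0}<1$, together with the linear growth of $f$; alternatively one uses the $A^{\theta_f}$-bound in \ref{ass:f_reg} and the smoothing estimate with exponent $\theta_{X_0}-\theta_f<1$. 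For the stochastic term, BDG \eqref{eq:burk} reduces matters to bounding $\int_0^t \|A^{\theta_{X_0}}\e^{-(t-s)A}B(s,X(s))\|_{L^p(\Omega;L^0_2)}^2\diff{s}$; writing $A^{\theta_{X_0}}\e^{-(t-s)A}B = A^{\theta_{X_0}-\theta_B}\e^{-(t-s)A}\cdot A^{\theta_B}B$ and invoking \eqref{eq:ana_semi1} gives the integrand $\lesssim (t-s)^{-2(\theta_{X_0}-\theta_B)}(1+\|A^{\theta_B}X(s)\|_H)^2$, and $2(\theta_{X_0}-\theta_B)<1$ by the hypothesis $\theta_{X_0}<\theta_B+\frac12$, so the integral converges. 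A Gr\"onwall argument (using that $\theta_B\leq\theta_{X_0}$, so $\|A^{\theta_B}X(s)\|$ is dominated) closes the bound uniformly in $t$.

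For the H\"older regularity \eqref{eq:reg_hoelder}, I would estimate $X(t)-X(s)$ for $s<t$ again term by term in the mild formulation. The difference of the semigroup-acting-on-$X_0$ terms is handled by writing $(\e^{-tA}-\e^{-sA})X_0 = -A^{-\theta_{X_0}}(I-\e^{-(t-s)A})\cdot A^{\theta_{X_0}}\e^{-sA}X_0$ and combining \eqref{eq:ana_semi2} with \eqref{eq:ana_semi1} or directly with Assumption~\ref{ass:X0}, which produces a factor $|t-s|^{\min(\theta_{X_0},1)}$; similarly for the drift and diffusion integrals one separates the ``new'' pieces $\int_s^t(\cdot)$, estimated by linear growth and (for the stochastic piece) BDG, giving $|t-s|^{1}$ and $|t-s|^{1/2}$ respectively, from the ``old'' pieces $\int_0^s(\e^{-(t-r)A}-\e^{-(s-r)A})(\cdot)\diff{r}$, where one again inserts $A^{-\gamma}(I-\e^{-(t-s)A})$ with a small $\gamma$ and absorbs the singularity $(s-r)^{-\gamma}$ or $(s-r)^{-1/2-\gamma}$ into an integrable one using the regularity \eqref{eq:reg_space} already established. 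Collecting the exponents yields the rate $\min(\theta_{X_0},\frac12)$.

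The main obstacle is the stochastic term in the spatial regularity estimate: one must balance the smoothing exponent $\theta_{X_0}-\theta_B$ needed to produce $A^{\theta_{X_0}}$ against the integrability constraint $2(\theta_{X_0}-\theta_B)<1$ and simultaneously against the two-sided requirement $\theta_B\leq\theta_{X_0}<\theta_B+\frac12$, so that the Gr\"onwall step actually closes with the same quantity $\|A^{\theta_{X_0}}X\|$ it is trying to bound; keeping track of which fractional power multiplies $B$ in each estimate, and ensuring the additional boundedness hypothesis \eqref{eq:boundedOp_B} is never actually needed here (it will be used later in the discretization analysis), is the delicate bookkeeping. The H\"older estimate is then essentially a corollary of the same smoothing inequalities, with the only care being that $\gamma$ is chosen small enough that all exponents stay below the integrability threshold while the resulting H\"older exponent is not reduced below $\min(\theta_{X_0},\frac12)$.
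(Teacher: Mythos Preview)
Your proposal is correct and follows the standard route: Banach fixed-point for existence/uniqueness, then bootstrap spatial regularity via the smoothing estimate \eqref{eq:ana_semi1} with exponent $\theta_{X_0}-\theta_B$ (integrable thanks to $\theta_{X_0}<\theta_B+\tfrac12$) and close with Gr\"onwall, then deduce H\"older regularity from \eqref{eq:ana_semi2}. This is exactly the approach of the reference the paper cites: the paper does not give its own proof but simply refers to \cite[Theorem~1]{JENTZEN2012114} (with $\alpha=\theta_B$, $r=0$) and \cite[Section~2.5--2.6]{Kruse.2014}, noting that the additional time dependence of $f$ and $B$ causes no extra difficulty. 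Your observation that \eqref{eq:boundedOp_B} is not needed here is also correct; that bound enters only later in the discretization analysis.
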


\begin{proof}
	For a proof, we refer to \cite[Theorem~1]{JENTZEN2012114} when choosing $\alpha = \theta_{B}$ and $r = 0$. Note that this proof only covers coefficients $f$ and $B$ that are independent of $t$. However, adding a time dependence as in Assumptions~\ref{ass:f} and \ref{ass:B} does not further complicate the proof. Compare also \cite[Section~2.5--2.6]{Kruse.2014}, where the coefficients are time dependent, but the regularity assumptions are slightly different.
\end{proof}

\section{Discretization} \label{sec:discretization}

In the coming section, we provide a full discretization scheme for our underlying stochastic Equation~\eqref{eq:SPDE}. We begin by stating the setting for the spatial discretization in Section~\ref{subsec:space_discretization}. With this setting in mind, we can then add a time-stepping method to obtain a fully discretized method in Section~\ref{subsec:time_discretization}. The final scheme is stated in \eqref{eq:scheme}.

\subsection{Spatial discretization} \label{subsec:space_discretization}

For the space discretization, we choose a set $\{V_h\}_{h\in I}$, $I\subset \R^+$, of finite-dimensional subspaces of $H$. Recall that the constant $C$ is always assumed to be independent of the spatial discretization parameter $h$. 
Since $V_h$ is a subset of the Hilbert space $H$, we can use both the inner product $(\cdot,\cdot)_H$ and its corresponding norm $\|\cdot\|_H$ on the space. 

\begin{assumption}\label{ass:proj_err} 
	The bounded projection operator $P_h\colon H\rightarrow V_h$ fulfills $\| (I-P_h)v \|_H \leq Ch^2 \| Av \|_H$ for all $v\in \dom(A)$.
\end{assumption}

\begin{remark}\label{remark:proj_err} 
	Due to Assumption~\ref{ass:proj_err}, we obtain
	\begin{equation*}
		\| (I-P_h) A^{-1} \|_{\L(H)} \leq C h^2
		\quad \text{and} \quad
		\| I-P_h \|_{\L(H)} \leq C.
	\end{equation*}
	Together with the interpolation result from Lemma~\ref{lem:interA}, for $\zeta \in (0,1)$, we then find that
	\begin{equation*}
		\| (I-P_h) A^{-\zeta} \|_{\L(H)} 
		\leq C h^{2\zeta}
		\quad \text{or} \quad 
		\| (I-P_h) v \|_{H} \leq C h^{2\zeta} \| A^{\zeta} v\|_H
	\end{equation*}
	for all $v \in \dom(A^{\zeta})$.
\end{remark}

On $V_h$, we state approximating operators to the continuous operators introduced in the previous section. We begin with an approximation of the (unbounded) operator $A$ and its split operators $A_1$ and $A_2$. We need the discretization of $A$ to be positive and bounded with constants independent of $h$ w.r.t.~a suitable norm in $V_h$. To state the needed assumptions for this, we introduce a second norm $\| \cdot \|_{V_h}$ on $V_h$, which is induced by an inner product $\inner[V_h]{\cdot}{\cdot}$ and fulfills
\begin{equation} \label{eq:normV_h}
	\|v_h\|_H\leq C \|v_h\|_{V_h} \quad \text{for all } v_h\in V_h.
\end{equation}
The idea is that this additional norm is a discrete counterpart to the typical norm on the variational space $V:= \dom(A^{\frac{1}{2}})$. 

\begin{assumption}\label{ass:Ah}
	Let Assumptions~\ref{ass:A} and  \ref{ass:proj_err} be fulfilled and let the norm $\|\cdot \|_{V_h}$ be given as in \eqref{eq:normV_h}. Further, let	the operators $A_h, A_{h,1}, A_{h,2} \in \L(V_h)$ fulfill the following conditions.
	\begin{enumerate}[label={(\alph*)}]
		\item \label{ass:Ah_coercive}  The operator $A_h$ is strongly positive, i.e.
		\begin{equation*}
			(A_hv_h,v_h)_H\geq C\|v_h\|^2_{V_h} \quad \text{for all } v_h \in V_h.
		\end{equation*}
		\item \label{ass:Ah_bound} The operator $A_h$ fulfills the following boundedness condition
		\begin{equation*}
			\big| (A_hv_h,w_h)_H \big| \leq C\|v_h\|_{V_h}\|w_h\|_{V_h} \quad \text{for all } v_h,w_h\in V_h.
		\end{equation*}
		\item \label{ass:Ah_sum} The operators fulfill $A_h = A_{h,1} + A_{h,2}$ in $V_h$.
		\item \label{ass:Ah_pos} The operators $A_{h,1}$ and $A_{h,2}$ are non-negative operators in $V_h$ w.r.t.~$(\cdot,\cdot)_H$.
		\item \label{ass:Ah2} For $\ell \in \{1,2\}$, it holds that $\| A_{h,\ell} P_h \|_{\L(H)} \leq Ch^{-2}$.
		\item \label{ass:dissl_exch} For $\ell \in \{1,2\}$, it holds that
		\begin{equation*}
			\| (P_hA_{\ell}-A_{h,\ell}P_h) v \|_H \leq C \| Av \|_H \quad \text{ for all } v\in \dom(A).
		\end{equation*}
		\item \label{ass:dissa_exch} The operator $A_h$ fulfills that $\| A^{-1}-A_{h}^{-1}P_h \|_{\L(H)} \leq Ch^2$.
	\end{enumerate}	
\end{assumption}

In the following, we need the two bounds \eqref{eq:ana_semi1} and \eqref{eq:ana_semi2} also for the discretized operator $A_h$. First, we show that the operator $A_h$ is sectorial. On top of that, we need to verify that the constant $C$ in the inequalities \eqref{eq:ana_semi1} and \eqref{eq:ana_semi2} does not depend on $h$. To verify these properties, the spaces $X$ and $Z$ from \cite[Section~1.7.1]{Yagi2010} are chosen to be $(Z, \inner[Z]{\cdot}{\cdot}, \|\cdot\|_Z) = (V_h, \inner[V_h]{\cdot}{\cdot}, \|\cdot\|_{V_h})$ and $(X, \inner[X]{\cdot}{\cdot}, \|\cdot\|_X) = (V_h, \inner[H]{\cdot}{\cdot}, \|\cdot\|_H)$. This choice of space fits into the assumptions of \cite[Section~1.7.1]{Yagi2010}. Thus, with \cite[Theorem~2.1]{Yagi2010} and Assumptions~\ref{ass:Ah}~\ref{ass:Ah_coercive}--\ref{ass:Ah_bound} we obtain that $A_h$ is sectorial with an angle $\varphi_h \in (\frac{\pi}{2},\pi)$. It then follows that $A_h$ is a densely defined, positive, sectorial operator on $V_h$. Without loss of generality, we can always pick $\varphi_h$ and $\varphi$ from Assumption~\ref{ass:A_sect} such that both $A$ and $A_h$ are sectorial with the same angle that we refer to as $\varphi$ in the following. The constant to bound $\|(A_h P_h - \lambda I)^{-1} \|_{\L(H)}$ is then independent of $h$ for $\lambda \in S_{\varphi}$. 
With this in mind, we can now state the bounds \eqref{eq:ana_semi1} and \eqref{eq:ana_semi2} for $A_h$, where $C$ is independent of $h$, compare \cite[Chapter~2, Theorem 6.13]{Pazy1983} or \cite[Section~2.7.7]{Yagi2010} for a proof. More precisely, we have that for $\zeta\in \R_{0}^+$
\begin{equation}\label{eq:ana_semi_h1}
	\| A_h^{\zeta} \e^{-t A_h} P_h\|_{\L(H)} \leq C t^{-\zeta} \quad \text{for all } t \in \R^+,
\end{equation}
and for $\zeta\in[0,1]$
\begin{equation}\label{eq:ana_semi_h2}
	\| A_h^{-\zeta} (I - \e^{-t A_h}) P_h \|_{\L(H)} \leq Ct^{\zeta} \quad \text{for all } t \in \R^+.
\end{equation}
For the discretizations of $f$ and $B$, we add suitable projections to finite-dimensional subspaces. This can be done as follows for the function $f$
\begin{equation} \label{eq:def_fh}
	f_h \colon [0,t_f] \times H \to V_h \colon  (t,v)\mapsto P_hf(t,v).
\end{equation}
Due to the projection properties, this function can easily be compared to the original function. For the discretized version of $B$, we need some additional assumptions that we summarize in the following.

\begin{assumption}\label{ass:Proj_error_U}
	Let Assumption~\ref{ass:W} be fulfilled, let $\theta_{U} \in[0,\half)$ given and
	let $\{e_k\}_{k\in \N}$ be the eigenfunctions with their corresponding eigenvalues $\{q_k\}_{k\in \N}$ of $Q$ that build an orthonormal basis of $U$. For $r \in \R_0^+$ and $\varepsilon \in \R^+$, the eigenvalues fulfill $q_k = \mathcal{O}(k^{-(2r+1+\varepsilon)})$. Further, let $U_h$ be a finite-dimensional subspace of $U$ given by $U_h = \text{span} \{e_1, \dots, e_{N_U}\}$ such that $(N_U+1)^{-r} \leq C h^{2\theta_{U}+1}$. The orthogonal projection on $U_h$ is denoted by 
	\begin{equation}
		P_U \colon U \to U_h.
	\end{equation}
\end{assumption}

With this in mind, we are now prepared to state the discretized version of the noise operator $B$
\begin{equation}\label{eq:def_Bh}
	B_h \colon [0,t_f] \times H \to \mathcal{HS}(U_0,V_h)\colon   (t,v) \mapsto  P_h B(t,v) P_U.
\end{equation}

\begin{lemma}\label{lem:fh_Bh}
	Let Assumptions~\ref{ass:A}, \ref{ass:f}, \ref{ass:B}, \ref{ass:proj_err}, and \ref{ass:Proj_error_U} be fulfilled and let $f_h$ and $B_h$ be given as in \eqref{eq:def_fh} and \eqref{eq:def_Bh}, respectively. For all $u \in \dom(A^{\theta_{X_0}})$, $v,w \in H$ and $s, t\in[0,t_f]$, it follows that
	\begin{enumerate}[label={(\roman*)}, ref={(\roman*)}]
		\item \label{lem:fh_hold} 
		$\| f_h(s,v)-f_h(t,w) \|_H \leq C \big( |s-t|^{\frac{1}{2}} + \|v-w\|_H \big)$;
		\item \label{lem:Bh_regP} 
		$\| P_hB(t,u)-B_h(t,u) \|_{L^0_2}\leq Ch^{2\theta_{U}+1} \big(1+\| A^{\theta_{X_0}} u\|_H\big)$;
		\item \label{lem:Bh_hold} 
		$\| B_h(s,v)-B_h(t,w) \|_{L^0_2} \leq C \big( |s-t|^{\frac{1}{2}} + \|v-w\|_H \big)$.
	\end{enumerate}
\end{lemma}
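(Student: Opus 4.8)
The plan is to prove the three estimates essentially by reducing them to the corresponding properties of the continuous coefficients $f$ and $B$ together with the projection bounds from Assumption~\ref{ass:proj_err} and Assumption~\ref{ass:Proj_error_U}. For \ref{lem:fh_hold}, I would write $f_h(s,v) - f_h(t,w) = P_h\big(f(s,v) - f(t,w)\big)$, use that $\|P_h\|_{\mathcal{L}(H)} \le C$ (Remark~\ref{remark:proj_err}), and then apply the H\"older-type bound in Assumption~\ref{ass:f_hold} directly. This is a one-line argument.

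For \ref{lem:Bh_regP}, the key is to control $P_hB(t,u) - B_h(t,u) = P_hB(t,u)(I - P_U)$ in the $L^0_2$-norm, i.e.~to estimate $\|P_hB(t,u)(I-P_U)Q^{1/2}\|_{\mathcal{HS}(U,H)}$. I would expand the Hilbert--Schmidt norm in the eigenbasis $\{e_k\}$ of $Q$: since $(I-P_U)e_k = 0$ for $k \le N_U$ and $(I-P_U)e_k = e_k$ otherwise, the sum reduces to $\sum_{k > N_U} q_k \|P_hB(t,u)e_k\|_H^2$. Using $\|P_h\|_{\mathcal{L}(H)}\le C$ and the bound $\|B(t,u)\|_{\mathcal{L}(U,H)} \le C(1 + \|A^{\theta_{X_0}}u\|_H)$ from \eqref{eq:boundedOp_B}, each term is bounded by $C q_k (1+\|A^{\theta_{X_0}}u\|_H)^2$, so the tail sum is controlled by $(1+\|A^{\theta_{X_0}}u\|_H)^2 \sum_{k>N_U} q_k$. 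The decay assumption $q_k = \mathcal{O}(k^{-(2r+1+\varepsilon)})$ gives $\sum_{k>N_U} q_k \le C (N_U+1)^{-2r}$ (comparing with an integral), and then the calibration $(N_U+1)^{-r} \le C h^{2\theta_U+1}$ yields the claimed factor $h^{2(2\theta_U+1)}$ inside the square, hence $h^{2\theta_U+1}$ after taking the square root.

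For \ref{lem:Bh_hold}, I would split $B_h(s,v) - B_h(t,w) = P_h\big(B(s,v) - B(t,w)\big)P_U$ and estimate the $L^0_2$-norm by $\|P_h\|_{\mathcal{L}(H)}\,\|B(s,v)-B(t,w)\|_{L^0_2}\,\|P_U\|_{\mathcal{L}(U)}$, using that pre- and post-composition with operators bounded on $H$ and $U$ respectively only multiply the Hilbert--Schmidt norm; since $\|P_h\|_{\mathcal{L}(H)} \le C$ and $\|P_U\|_{\mathcal{L}(U)} \le 1$ (it is an orthogonal projection), Assumption~\ref{ass:B_hold} closes the estimate. A small point worth checking here is that post-composition with $P_U$ interacts correctly with the $Q^{1/2}$ in the definition of $\|\cdot\|_{L^0_2}$: since $P_U$ commutes with $Q$ (both diagonal in $\{e_k\}$), one has $\|EP_U\|_{L^0_2} = \|EP_UQ^{1/2}\|_{\mathcal{HS}(U,H)} = \|EQ^{1/2}P_U\|_{\mathcal{HS}(U,H)} \le \|EQ^{1/2}\|_{\mathcal{HS}(U,H)} = \|E\|_{L^0_2}$.

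The only genuinely non-routine step is \ref{lem:Bh_regP}, and within it the main obstacle is handling the case $\theta_{X_0}$-dependence cleanly: one must invoke \eqref{eq:boundedOp_B} rather than \eqref{eq:HS_B}, because after projecting away the low modes there is no Hilbert--Schmidt smallness to exploit except through the eigenvalue decay, so one needs the operator-norm bound on $B(t,u)$ to pull $\sqrt{q_k}$ out of each summand. Everything else is bookkeeping with $\|P_h\|_{\mathcal{L}(H)} \le C$, the triangle inequality, and the summation of the tail of a convergent series.
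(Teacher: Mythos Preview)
Your proposal is correct and follows essentially the same approach as the paper's proof. The only cosmetic difference is in \ref{lem:Bh_regP}: the paper factors $\|P_hB(t,u)(I-P_U)Q^{1/2}\|_{\mathcal{HS}(U,H)} \le \|B(t,u)\|_{\mathcal{L}(U,H)}\,\|(I-P_U)Q^{1/2}\|_{\mathcal{HS}(U,U)}$ and then computes the second factor as $\big(\sum_{k>N_U} q_k\big)^{1/2}$, whereas you expand the Hilbert--Schmidt norm directly in the eigenbasis first; both routes arrive at the same tail sum and use \eqref{eq:boundedOp_B} and the decay $q_k=\mathcal{O}(k^{-(2r+1+\varepsilon)})$ in the same way.
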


\begin{proof}
	First, we observe that \ref{lem:fh_hold} follows by an application of Assumption~\ref{ass:f_hold}
	\begin{equation*}
		\| f_h(s,v)-f_h(t,w) \|_H
		= \big\| P_h \big(f(s,v)-f(t,w)\big) \big\|_H
		\leq C\big(|s-t|^{\frac{1}{2}} + \| v-w\|_H\big).
	\end{equation*}
	For the proof of \ref{lem:Bh_regP}, we apply Assumption~\ref{ass:B_reg} and obtain
	\begin{align*}
		\| P_hB(t,u)-B_h(t,u) \|_{L^0_2}
		&= \| P_h B(t,u)( I - P_U) \|_{L^0_2}
		\leq \| B(t,u) \|_{\L(U,H)} \| (I - P_U) Q^{\frac{1}{2}}\|_{\mathcal{HS}(U,U)} \\
		&\leq C(1 + \|A^{\theta_{X_0}} u\|_H) \| (I - P_U) Q^{\frac{1}{2}} \|_{\mathcal{HS}(U,U)}.
	\end{align*}
	We then obtain the claimed result by combining the previous bound with 
	\begin{align*}
		\| (I - P_U) Q^{\frac{1}{2}} \|_{\mathcal{HS}(U,U)}
		&= \Big(\sum_{k = N_U +1}^{\infty} q_k\Big)^{\frac{1}{2}}
		\leq C \Big(\sum_{k = N_U +1}^{\infty} k^{-(2r +1 + \varepsilon)}\Big)^{\frac{1}{2}} \\
		&\leq (N_U+1)^{-r} \Big(\sum_{k = N_U +1}^{\infty} k^{-(1 + \varepsilon)} \Big)^{\frac{1}{2}} 
		\leq Ch^{2\theta_{U}+1},
	\end{align*}
	where we applied Assumption~\ref{ass:Proj_error_U}.
	The last remaining step is to prove \ref{lem:Bh_hold}. This can be done using Assumption~\ref{ass:B_hold} and the fact that the norm of a projection operator is less than one, which yields
	\newpage
	\begin{align*}
		\| B_h(s,v)-B_h(t,w) \|_{L^0_2} 
		&= \| P_h B(s,v) P_U - P_h B(t,w)P_U \|_{L^0_2}\\
		&\leq \| B(s,v) - B(t,w) \|_{L^0_2}
		\leq C \big( |s-t|^{\frac{1}{2}} + \| v - w \|_{H} \big).
	\end{align*}
\end{proof}

\subsection{Temporal discretization}\label{subsec:time_discretization}

The temporal discretization method that we add to the spatial discretization from the previous subsection is a Douglas--Rachford splitting method. In this method, we decompose the operator $A_h$ into two parts $A_{h,1}$ and $A_{h,2}$ with $A_h = A_{h,1} + A_{h,2}$. Here, we solve two sub-steps containing $A_{h,1}$ and $A_{h,2}$ instead of one step only containing $A_{h}$. 

First, let us fix the notation used on the temporal discretization in the paper. We choose an equidistant time grid with $t_n = n \tau$ and a step size  $\tau=\frac{t_f}{N}$ for $N\in\N$ and $n \in \{0, \dots,N\}$. We now state $X^n_{h,\tau}$ to approximate the exact solution $X(t_n)$ at a grid point $t_n$, $n \in \{1,\dots,N\}$,
\begin{equation}
	\label{eq:scheme}
	\begin{cases}
		X^0_{h,\tau} = P_hX_0,\\
		X^1_{h,\tau} = (I+\tau A_{h,2})^{-1} (I+\tau A_{h,1})^{-1} \big( X^0_{h,\tau} + \tau f_h(t_0,X^0_{h,\tau}) + B_h(t_0,X^0_{h,\tau}) W(t_1) \big),\\
		X^n_{h,\tau} = S_{h,\tau} X^{n-1}_{h,\tau} + (I+\tau A_{h,2})^{-1}(I+\tau A_{h,1})^{-1} \big( \tau  f_h(t_{n-1},X^{n-1}_{h,\tau})\\ 
		\hspace{2cm}+ B_h(t_{n-1},X^{n-1}_{h,\tau}) (W(t_n)-W(t_{n-1})) \big)
	\end{cases}
\end{equation}
where $n \in \{2,\dots,N\}$ and
\begin{equation}\label{eq:def_S}
	S_{h,\tau}=(I+\tau A_{h,2})^{-1}(I+\tau A_{h,1})^{-1} (I+\tau^2A_{h,1}A_{h,2} ).
\end{equation}
The inverse operators $(I+\tau A_{h,1})^{-1}$ and $(I+\tau A_{h,2})^{-1}$ are indeed well-defined. Due to the non-negativity of $A_{h,1}$ and $A_{h,2}$ and the fact that they act on finite-dimensional spaces, it follows that $-A_{h,1}$ and $-A_{h,2}$ generate semigroups of contraction, and therefore the inverses are well-defined. Moreover, both $\|(I+\tau A_{h,1})^{-1} P_h\|_{\L(H)}$ and $\|(I+\tau A_{h,2})^{-1} P_h\|_{\L(H)}$ are bounded by $1$, compare \cite[Chapter~1, Theorem~3.1]{Pazy1983}. 

Note that we approximate $\e^{-\tau A_h}$ by $(I+\tau A_{h,2})^{-1}(I+\tau A_{h,1})^{-1}$ in the first temporal step and by $S_{h,\tau}$ in all coming steps. The reason for this is that $S_{h,\tau}$ in the first step leads to a CFL-type condition in our analysis. Being able to consider the operator $(I+\tau A_{h,1})^{-1} (I+\tau^2A_{h,1}A_{h,2} ) (I+\tau A_{h,2})^{-1}$ instead, simplifies the analysis.
Thus, our approximation requires less regularity in the initial value compared to $S_{h,\tau}^{n}$ and offers a simple error recursion.

We can argue inductively that $X_{h, \tau}^n$ is $\F_{t_n}$-measurable for every $n \in \{1,\dots,N\}$. First, we observe that the initial value is $\F_{t_0} = \F_0$-measurable and an element of $L^p(\Omega;H)$ (Assumption~\ref{ass:X0}). Assuming that $X_{h, \tau}^{n-1}$ is $\F_{t_{n-1}}$-measurable, it follows that $X_{h, \tau}^n$ is $\F_{t_n}$-measurable as a composition of an $\F_{t_{n-1}}$-measurable function and the $\F_{t_{n}}$-measurable increment $W(t_n)-W(t_{n-1})$. Moreover, $X_{h, \tau}^n$ is an element of $L^p(\Omega;H)$ as the operators $S_{h,\tau}^n (I+\tau A_{h,2})^{-1}P_h $, $(I+\tau A_{h,2})^{-1}P_h$ and $(I+\tau A_{h,1})^{-1}P_h$ are bounded on $H$; compare Lemma~\ref{lem:Sne} below.

\begin{remark}
	When implementing the scheme, our method can be rewritten for more efficiency. Using the transformation $X^n_{h,\tau}=(I+\tau A_{h,2})^{-1} Y_{h,\tau}^n$, the method \eqref{eq:scheme} is equivalent to 
	\begin{equation*}
		\begin{cases}
			Y^1_{h,\tau} = (I+\tau A_{h,1})^{-1}X^0_{h,\tau}+(I+\tau A_{h,1})^{-1} \big( \tau f_h(0,X^0_{h,\tau}) + B_h(0,X^0_{h,\tau}) W(t_1) \big),\\
			Y^n_{h,\tau}
			= (I+\tau A_{h,1})^{-1} \big( (2(I+\tau A_{h,2})^{-1}-I )Y^{n-1}_{h,\tau} + \tau f_h(t_{n-1},X^{n-1}_{h,\tau})\\
			\qquad \qquad+ B_h(t_{n-1},X^{n-1}_{h,\tau}) (W(t_{n}) - W(t_{n-1}) ) \big)  +(I-(I+\tau A_{h,2})^{-1}) Y^{n-1}_{h,\tau}, 
			\quad n\in \{2,\dots,N\}.
		\end{cases}
	\end{equation*}
	When comparing the Douglas--Rachford splitting to the Lie splitting, the additional quadratic term $I + \tau^2 A_{h,1} A_{h,2}$ appears in the former, compare \eqref{eq:def_S}. 
	The advantage of this reformulation is that this quadratic term does not have to be evaluated, avoiding an extra matrix multiplication. This means that we do not need to evaluate more matrix operations compared to the simpler Lie splitting method. 
	In the semi-discrete setting, the quadratic term is problematic from a regularity point of view. Additionally, in a full discretization the product $A_{h,1} A_{h,2}$ is of the order $h^{-4}$. In the alternative formulation, such a term does not appear.
	
	To obtain the $X^n_{h,\tau}$-terms, we only need to calculate $(I+\tau A_{h,2})^{-1}Y_{h,\tau}^n$. Note since we compute $(I+\tau A_{h,2})^{-1} Y_{h,\tau}^n$ within every step, we obtain every $X^n_{h,\tau}$. So no additional computations are needed to get the arguments of $B_h$ and $f_h$ or to save $\{X^n_{h,\tau}\}_{n=1}^N$.
\end{remark}

\section{Convergence analysis}\label{sec:Conv_analysis}

We can now turn to the main analytical results of this paper. In the coming section, we prove explicit bounds for the error of our numerical scheme \eqref{eq:scheme} in Theorem~\ref{thm:result} and \ref{thm:result_selfadjoint}. To provide this result, we begin with some auxiliary statements collected in the coming lemmas. In Section~\ref{subsec:aux_conv_analysis}, we state some useful results from terms that appear in the error bound. In Section~\ref{subsec:errorTerms}, we begin by looking more closely at three different error parts: the error stemming from the initial condition, the error from the drift term, and the error obtained by the diffusion term. These three error parts can then be combined into the main result at the end of the subsection.

\subsection{Basic estimates}\label{subsec:aux_conv_analysis}

We begin to provide two basic results about certain operator products to stay bounded. This helps to shift around operators and prove the desired bounds.

\begin{lemma}\label{lem:Ah}
	Let Assumptions~\ref{ass:A}, \ref{ass:proj_err}, and \ref{ass:Ah} be fulfilled. Then it follows that 
	\begin{enumerate}[label={(\roman*)}, ref={(\roman*)}]
		\item \label{lem:switch}
		$\| A_hP_hA^{-1} \|_{\L(H)} \leq C$;
		\item \label{lem:disc2cont}
		$\| A_{h,\ell}A_h^{-1}P_h \|_{\L(H)}  \leq C$, $\ell \in \{1,2\}$.
	\end{enumerate}
\end{lemma}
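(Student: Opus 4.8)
The two statements are both "boundedness of a product of discrete and continuous operators independent of $h$", and the natural tool is to insert the continuous and discrete inverses of $A$ strategically and then invoke the assumptions in Assumption~\ref{ass:Ah} together with Remark~\ref{remark:proj_err}. For part~\ref{lem:switch}, I would start by writing $A_h P_h A^{-1} = A_h P_h A^{-1}$ and insert $A_h^{-1} P_h$ via the identity
\[
A_h P_h A^{-1} = A_h \bigl( A_h^{-1} P_h + (P_h A^{-1} - A_h^{-1} P_h) \bigr)
= P_h + A_h (P_h A^{-1} - A_h^{-1} P_h).
\]
Wait — $A_h$ is defined on $V_h$ only, so the term $P_h A^{-1} - A_h^{-1} P_h$ must first be recognised as mapping into $V_h$; indeed $A_h^{-1} P_h$ maps into $V_h$, and $P_h A^{-1}$ maps into $V_h$, so the difference does too and $A_h$ can be applied. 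By Assumption~\ref{ass:dissa_exch}, $\|A^{-1} - A_h^{-1} P_h\|_{\L(H)} \le C h^2$, hence also $\|P_h A^{-1} - A_h^{-1} P_h\|_{\L(H)} = \|P_h(A^{-1} - A_h^{-1} P_h)\|_{\L(H)} \le C h^2$ using $\|P_h\|_{\L(H)} \le C$ (boundedness of $P_h$, also in Remark~\ref{remark:proj_err}). Combined with Assumption~\ref{ass:Ah2}, which gives $\|A_h P_h\|_{\L(H)} = \|(A_{h,1}+A_{h,2})P_h\|_{\L(H)} \le C h^{-2}$ via Assumption~\ref{ass:Ah_sum}, the product $\|A_h(P_h A^{-1} - A_h^{-1} P_h)\|_{\L(H)} \le C h^{-2} \cdot C h^2 = C$, and adding $\|P_h\|_{\L(H)} \le C$ finishes part~\ref{lem:switch}.

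For part~\ref{lem:disc2cont}, I would write $A_{h,\ell} A_h^{-1} P_h$ and try to pass through the continuous split operators $A_\ell$. The cleanest route is to insert $A^{-1}$: since $A_\ell A^{-1}$ is bounded on $H$ by Assumption~\ref{ass:op_exch}, and $A A_h^{-1} P_h$ should be controllable via part~\ref{lem:switch} (note $\|A A_h^{-1} P_h\|_{\L(H)}$ is exactly the adjoint-type companion of $\|A_h P_h A^{-1}\|$, but one must be a little careful since these are genuinely different; I would instead bound $A A_h^{-1} P_h = A(A_h^{-1} P_h - A^{-1}) + I = I + A(A_h^{-1} P_h - A^{-1})$, and here $A(A_h^{-1}P_h - A^{-1})$ is not obviously bounded because $A$ is unbounded — so this detour needs care). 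A better approach: write
\[
A_{h,\ell} A_h^{-1} P_h = (A_{h,\ell} P_h - P_h A_\ell) A_h^{-1} P_h + P_h A_\ell A_h^{-1} P_h,
\]
then the first term is handled by Assumption~\ref{ass:dissl_exch}, $\|(A_{h,\ell}P_h - P_h A_\ell) v\|_H \le C\|Av\|_H$, applied to $v = A_h^{-1} P_h w$; but wait, $A_h^{-1}P_h w$ need not lie in $\dom(A)$ in an obvious way, and $\|A A_h^{-1} P_h\|$ is not known. The resolution is that on the finite-dimensional space $V_h$ everything is bounded, but the point is $h$-independence. So I would instead split off $P_h A_\ell = P_h A_\ell A^{-1} A$ and use $A_\ell A^{-1}$ bounded (Assumption~\ref{ass:op_exch}) plus the $h$-independent bound $\|A A_h^{-1} P_h\|_{\L(H)} \le C$ — which must itself be proven, presumably as a variant of part~\ref{lem:switch} or via the sectoriality/resolvent bounds for $A_h$ that were established just before the lemma.

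\textbf{Main obstacle.} The crux is establishing the $h$-uniform bound $\|A A_h^{-1} P_h\|_{\L(H)} \le C$ (equivalently, that $A_h^{-1} P_h$ maps $H$ boundedly into $\dom(A)$ with constant independent of $h$), since $A$ is unbounded and the naive manipulations produce terms like $A(A_h^{-1}P_h - A^{-1})$ that are not a priori controlled. I expect this to follow by combining Assumption~\ref{ass:dissa_exch} (the $O(h^2)$ inverse consistency) with Assumption~\ref{ass:Ah2} in a way dual to part~\ref{lem:switch}: namely $A A_h^{-1} P_h = A(A_h^{-1}P_h - A^{-1}) + I$ won't work directly, but $A_h^{-1} P_h = A^{-1} + (A_h^{-1}P_h - A^{-1})$ together with an argument that $A(A_h^{-1} P_h - A^{-1})$ is bounded because $A_h^{-1} P_h - A^{-1} = -A_h^{-1}(P_h A - A_h P_h)A^{-1}$-type identity exposes a smoothing $A_h^{-1}$ cancelling the leading $A$. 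Pinning down the correct algebraic identity of this form, and checking that each factor it splits into is covered by one of the seven parts of Assumption~\ref{ass:Ah} with an $h$-independent constant, is the delicate part; the rest is bookkeeping with $\|P_h\|_{\L(H)} \le C$ and the triangle inequality.
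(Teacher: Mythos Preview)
Your argument for part~\ref{lem:switch} is correct and matches the paper's proof exactly: write $A_hP_hA^{-1} = P_h + A_hP_h(A^{-1}-A_h^{-1}P_h)$ and use Assumption~\ref{ass:Ah}~\ref{ass:Ah2} together with~\ref{ass:dissa_exch} to get the $h^{-2}\cdot h^2$ cancellation.

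For part~\ref{lem:disc2cont}, however, there is a genuine gap. All of your candidate decompositions end up requiring a bound on $\|A A_h^{-1}P_h\|_{\L(H)}$, and you correctly flag this as the main obstacle---but it is worse than an obstacle: in the setting of Section~\ref{sec:dG}, $V_h$ consists of discontinuous piecewise polynomials, so $A_h^{-1}P_hw \in V_h$ does \emph{not} lie in $\dom(A)=H^2(\D)\cap H_0^1(\D)$ at all. The operator $A A_h^{-1}P_h$ is therefore not even well-defined in general, and no algebraic identity of the type you sketch can repair this. In particular, applying Assumption~\ref{ass:Ah}~\ref{ass:dissl_exch} with $v = A_h^{-1}P_h w$ is illegitimate, since that assumption requires $v\in\dom(A)$.

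The paper sidesteps this entirely by inserting $P_hA^{-1}$ \emph{before} invoking~\ref{ass:dissl_exch}, not after. Concretely,
\[
A_{h,\ell}A_h^{-1}P_h
= A_{h,\ell}\bigl(A_h^{-1}P_h - P_hA^{-1}\bigr) + A_{h,\ell}P_hA^{-1}.
\]
The first term is handled exactly as in part~\ref{lem:switch} (the same $h^{-2}\cdot h^2$ cancellation, using~\ref{ass:Ah2} and~\ref{ass:dissa_exch}). For the second term one splits again,
\[
A_{h,\ell}P_hA^{-1} = (A_{h,\ell}P_h - P_hA_\ell)A^{-1} + P_hA_\ell A^{-1},
\]
and now Assumption~\ref{ass:Ah}~\ref{ass:dissl_exch} is applied with $v=A^{-1}w$, which lies in $\dom(A)$ with $\|Av\|_H=\|w\|_H$ trivially; the final piece $P_hA_\ell A^{-1}$ is bounded by Assumption~\ref{ass:op_exch}. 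The key idea you were missing is simply to route through the \emph{continuous} inverse $A^{-1}$ so that the consistency estimate~\ref{ass:dissl_exch} is applied on $\dom(A)$, rather than trying to force the discrete inverse $A_h^{-1}P_h$ to land there.
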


\begin{proof}
	To prove \ref{lem:switch}, we insert Assumptions~\ref{ass:Ah}~\ref{ass:Ah2} and~\ref{ass:dissa_exch} and obtain
	\begin{equation*}
		\| A_hP_hA^{-1} \|_{\L(H)} 
		\leq \lno A_hP_h(A^{-1}-A_h^{-1}P_h)\Loptwon+\lno P_h\Loptwon
		\leq Ch^{-2}h^2+1.
	\end{equation*}
	For \ref{lem:disc2cont}, we use Assumptions~\ref{ass:Ah}~\ref{ass:Ah2}--\ref{ass:dissa_exch} and~\ref{ass:op_exch} to find
	\begin{align*}
		&\| A_{h,\ell}A_h^{-1}P_h \|_{\L(H)} \\
		&\leq  \lno A_{h,\ell}\(A_h^{-1}P_h-P_hA^{-1}\)\Loptwon+ \| A_{h,\ell}P_hA^{-1}\|_{\L(H)} \\
		&\leq \| A_{h,\ell}P_h \|_{\L(H)}  \lno A_h^{-1}P_h-A^{-1}\Loptwon+\lno (A_{h,\ell}P_h-P_hA_{\ell})A^{-1}\Loptwon + \| P_hA_{\ell}A^{-1} \|_{\L(H)} \\
		&\leq   Ch^{-2}h^2 +C + \| A_{\ell}A^{-1} \|_{\L(H)} \leq C.
	\end{align*}
\end{proof}

The following bound shows that the norm of $S_{h,\tau}^n (I+\tau A_{h,2})^{-1}$ is bounded by one for every $n \in \N$. Note that we include $(I+\tau A_{h,2})^{-1}$ to handle the quadratic term.

\begin{lemma}\label{lem:Sne}
	Let Assumptions~\ref{ass:A}, \ref{ass:proj_err}, and \ref{ass:Ah} be fulfilled. Further, let $S_{h,\tau}$ be given as in \eqref{eq:def_S}. Then for every $n \in \N$, it follows that
	\begin{equation*}
		\| S_{h,\tau}^n (I+\tau A_{h,2})^{-1} P_h \|_{\L(H)} \leq 1.
	\end{equation*}
\end{lemma}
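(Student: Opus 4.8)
The plan is to work in the inner product $(\cdot,\cdot)_H$ restricted to $V_h$ and exploit that each factor $(I+\tau A_{h,\ell})^{-1}$ is a contraction on $(V_h,\|\cdot\|_H)$. The key algebraic observation is that $S_{h,\tau}$ conjugated by $(I+\tau A_{h,2})$ is a product of resolvent-type factors whose norms are controlled. Concretely, set $R_\ell = (I+\tau A_{h,\ell})^{-1}$ for $\ell\in\{1,2\}$. Then from \eqref{eq:def_S} one computes
\[
 (I+\tau A_{h,2}) S_{h,\tau} (I+\tau A_{h,2})^{-1}
 = R_1 (I+\tau^2 A_{h,1}A_{h,2})(I+\tau A_{h,2})^{-1}
 = R_1 \big( I - \tau A_{h,2} + \tau A_{h,2} + \tau^2 A_{h,1}A_{h,2}\big) R_2,
\]
and since $I+\tau^2 A_{h,1}A_{h,2} = (I+\tau A_{h,1})(I+\tau A_{h,2}) - \tau A_{h,1} - \tau A_{h,2}$, multiplying by $R_2$ on the right and $R_1$ on the left gives
\[
 (I+\tau A_{h,2}) S_{h,\tau} (I+\tau A_{h,2})^{-1} = I - \tau R_1 A_{h,1} - \tau R_1 A_{h,2} R_2 .
\]
Thus, writing $T := (I+\tau A_{h,2}) S_{h,\tau} (I+\tau A_{h,2})^{-1}$, we have $T = R_1 - \tau R_1 A_{h,2}R_2 = R_1(I - \tau A_{h,2}R_2) = R_1\big((I+\tau A_{h,2}) - \tau A_{h,2}\big)R_2 = R_1 R_2^{-1}\cdot$ — careful: the cleanest form is $T = R_1(I-\tau A_{h,2})R_2$. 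I would verify this identity directly and then note that $S_{h,\tau}^n (I+\tau A_{h,2})^{-1} = (I+\tau A_{h,2})^{-1} T^n$, so it suffices to show $\|(I+\tau A_{h,2})^{-1}\|_{\L(H)}\le 1$ (already recorded after \eqref{eq:def_S}) and $\|T^n\|_{\L(H)}\le 1$, i.e.\ $\|T\|_{\L(H)}\le 1$.

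For the bound $\|T\|_{\L(H)}\le 1$ with $T = R_1(I-\tau A_{h,2})R_2$, I would estimate factor by factor on $(V_h,\|\cdot\|_H)$: $\|R_1\|_{\L(H)}\le 1$ and $\|R_2\|_{\L(H)}\le 1$ by the contraction property of the resolvents (as stated in the paragraph after \eqref{eq:def_S}), and $\|(I-\tau A_{h,2})R_2\|_{\L(H)} = \|(I-\tau A_{h,2})(I+\tau A_{h,2})^{-1}\|_{\L(H)}\le 1$. This last bound is the Cayley-transform estimate for the non-negative operator $A_{h,2}$ (Assumption~\ref{ass:Ah}~\ref{ass:Ah_pos}): for $v_h\in V_h$, writing $w_h = R_2 v_h$, one has $\|(I-\tau A_{h,2})w_h\|_H^2 - \|(I+\tau A_{h,2})w_h\|_H^2 = -4\tau (A_{h,2}w_h, w_h)_H \le 0$, hence $\|(I-\tau A_{h,2})w_h\|_H \le \|(I+\tau A_{h,2})w_h\|_H = \|v_h\|_H$. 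Chaining the three factors gives $\|T\|_{\L(H)}\le 1$, and therefore $\|T^n\|_{\L(H)}\le 1$ and $\|S_{h,\tau}^n(I+\tau A_{h,2})^{-1}\|_{\L(H)} = \|(I+\tau A_{h,2})^{-1}T^n\|_{\L(H)} \le 1$, which is the claim.

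The step I expect to require the most care is pinning down the exact algebraic identity for $T = (I+\tau A_{h,2})S_{h,\tau}(I+\tau A_{h,2})^{-1}$ and confirming it collapses to a product of three contractions; the factorization $I+\tau^2 A_{h,1}A_{h,2} = (I+\tau A_{h,1})(I+\tau A_{h,2}) - \tau A_{h,1}-\tau A_{h,2}$ is the crux, and one must be attentive that $A_{h,1}$ and $A_{h,2}$ need not commute, so the order of factors matters and cross terms must be tracked honestly. Everything else — the resolvent contraction bounds and the Cayley-transform estimate — is routine given Assumption~\ref{ass:Ah}~\ref{ass:Ah_pos} and the semigroup-of-contractions remark already in the text. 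One should also double-check that all operators here are genuinely defined on all of $V_h$ (true, since $A_{h,\ell}\in\L(V_h)$ and the resolvents exist by the contraction-semigroup argument), so there are no domain subtleties.
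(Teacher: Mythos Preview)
Your overall strategy --- conjugating $S_{h,\tau}$ by $(I+\tau A_{h,2})$ and then bounding the conjugated operator $T$ as a contraction --- is exactly the paper's approach. But the specific identity you land on, $T = R_1(I-\tau A_{h,2})R_2$, is wrong. The error is in the step where you write $T = I - \tau R_1 A_{h,1} - \tau R_1 A_{h,2} R_2$: when you multiply $-\tau A_{h,1}$ on the right by $R_2$, you must keep the $R_2$, giving $-\tau R_1 A_{h,1} R_2$, not $-\tau R_1 A_{h,1}$. The correct expression is $T = I - \tau R_1 A_h R_2$, and your collapse $I - \tau R_1 A_{h,1} = R_1$ (which is a true identity) is not applicable because the missing $R_2$ breaks the structure. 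A quick scalar check with $A_{h,1}=A_{h,2}=1$, $\tau=1$ gives $T = \tfrac12$ but $R_1(I-\tau A_{h,2})R_2 = 0$, confirming the mismatch.

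The factorization that actually works is the one the paper uses: write
\[
I + \tau^2 A_{h,1}A_{h,2} = \tfrac12\big[(I-\tau A_{h,1})(I-\tau A_{h,2}) + (I+\tau A_{h,1})(I+\tau A_{h,2})\big],
\]
so that
\[
T = \tfrac12\, R_1(I-\tau A_{h,1})(I-\tau A_{h,2})R_2 + \tfrac12 I.
\]
Now the first term is a product of the two Cayley transforms $R_1(I-\tau A_{h,1})$ and $(I-\tau A_{h,2})R_2$, each non-expansive by exactly the argument you gave; the whole is then a convex combination of a contraction and the identity, hence a contraction. Your Cayley-transform estimate and the conjugation framework are fine --- you just need the correct algebraic decomposition, which requires \emph{both} Cayley factors, not just the one for $A_{h,2}$.
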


\begin{proof}
	The proof is inspired by some arguments from \cite[Lemma~3.1]{HansenHenningsson.2016}. First, we rewrite $S_{h,\tau}^n (I+\tau A_{h,2})^{-1}$ as follows
	\newpage
	\begin{align}
		S_{h,\tau}^n (I+\tau A_{h,2})^{-1}
		\nonumber
		&= \big((I+\tau A_{h,2})^{-1}(I+\tau A_{h,1})^{-1} (I+\tau^2A_{h,1}A_{h,2}) \big)^n (I+\tau A_{h,2})^{-1}\\
		\nonumber
		&= (I+\tau A_{h,2})^{-1} \big((I+\tau A_{h,1})^{-1} (I+\tau^2A_{h,1}A_{h,2}) (I+\tau A_{h,2})^{-1}\big)^n \\
		\nonumber
		&= (I+\tau A_{h,2})^{-1} \Big( \half(I+\tau A_{h,1})^{-1}\big( (I-\tau A_{h,1})(I-\tau A_{h,2})\\
		\nonumber
		&\hspace{3cm}+(I+\tau A_{h,1})(I+\tau A_{h,2}) \big) (I+\tau A_{h,2})^{-1} \Big)^n\\
		\label{eq:proofSbound}
		&= (I+\tau A_{h,2})^{-1}\Big(\frac{1}{2}(I+\tau A_{h,1})^{-1}(I-\tau A_{h,1})(I-\tau A_{h,2})(I+\tau A_{h,2})^{-1}+\frac{1}{2}I\Big)^n.
	\end{align}
	In the next step, we show that the operators $(I+\tau A_{h,1})^{-1}(I-\tau A_{h,1})P_h $ and $(I-\tau A_{h,2})(I+\tau A_{h,2})^{-1}P_h$ are non-expansive. This follows from the fact that for every $v_h \in V_h$, we obtain
	\begin{align*}
		&\| (I+\tau A_{h,1})^{-1}(I-\tau A_{h,1})v_h \|_{H}^2\\
		&= \| (I+\tau A_{h,1})^{-1}v_h \|_{H}^2
		-2\tau  \((I+\tau A_{h,1})^{-1}v_h , A_{h,1}(I+\tau A_{h,1})^{-1}v_h\)_{H}
		+\| \tau(I+\tau A_{h,1})^{-1} A_{h,1}v_h \|_{H}^2\\
		&\leq \| (I+\tau A_{h,1})^{-1}v_h \|_{H}^2
		+ 2\tau\( (I+\tau A_{h,1})^{-1}v_h, A_{h,1}(I+\tau A_{h,1})^{-1}v_h\)_{H}
		+ \| \tau(I+\tau A_{h,1})^{-1} A_{h,1}v_h \|_{H}^2\\
		&=	\| (I+\tau A_{h,1})^{-1}(I+\tau A_{h,1})v_h \|_{H}^2 
		= \| v_h \|_{H}^2,
	\end{align*}
	where we used Assumption~\ref{ass:Ah}~\ref{ass:Ah_pos}. Analogously, the same follows for $(I-\tau A_{h,2})(I+\tau A_{h,2})^{-1}P_h$.
	Using the bound
	\begin{align*}
		&\Big\| \frac{1}{2}(I+\tau A_{h,1})^{-1}(I-\tau A_{h,1})(I-\tau A_{h,2})(I+\tau A_{h,2})^{-1} P_h +\frac{1}{2}I \Big\|_{\L(H)}\\
		&\leq \frac{1}{2} \big\| (I+\tau A_{h,1})^{-1}(I-\tau A_{h,1})(I-\tau A_{h,2})(I+\tau A_{h,2})^{-1} P_h \big\|_{\L(H)} 
		+ \frac{1}{2}
		\leq 1
	\end{align*}
	in \eqref{eq:proofSbound}, it follows that the $\L(H)$-norm of $S_{h,\tau}^n (I+\tau A_{h,2})^{-1}P_h$ is bounded by one.
\end{proof}

The following two lemmas give results about the difference between the semigroup $\e^{-\tau A_h}$ generated by the $A_h$ and either $(I+\tau A_{h,2})^{-1}(I+\tau A_{h,1})^{-1}$ or $S_{h,\tau}$.

\begin{lemma}\label{lem:est_eab}
	Let Assumptions~\ref{ass:A}, \ref{ass:proj_err}, and \ref{ass:Ah} be fulfilled. Then for all values $\zeta\in[0,1]$, it follows that
	\begin{equation*}
		\big\| (I+\tau A_{h,2})\( \e^{-\tau A_h}-(I+\tau A_{h,2})^{-1}(I+\tau A_{h,1})^{-1}\)P_hA^{-\zeta} \big\|_{\L(H)}
		\leq C \tau^{\zeta}.
	\end{equation*}
\end{lemma}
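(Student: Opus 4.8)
The plan is to compare $\e^{-\tau A_h}$ and the product $(I+\tau A_{h,2})^{-1}(I+\tau A_{h,1})^{-1}$ with the single rational factor $(I+\tau A_h)^{-1}$ as an intermediate. That is, I would write
\begin{equation*}
	\e^{-\tau A_h}-(I+\tau A_{h,2})^{-1}(I+\tau A_{h,1})^{-1}
	= \big(\e^{-\tau A_h}-(I+\tau A_h)^{-1}\big) + \big((I+\tau A_h)^{-1}-(I+\tau A_{h,2})^{-1}(I+\tau A_{h,1})^{-1}\big).
\end{equation*}
The first bracket is the classical error of the implicit Euler rational approximation of an analytic semigroup, which together with the bounds \eqref{eq:ana_semi_h1}--\eqref{eq:ana_semi_h2} (valid with $h$-independent constant, as established after Assumption~\ref{ass:Ah}) gives $\|(\e^{-\tau A_h}-(I+\tau A_h)^{-1})P_hA_h^{-\zeta}\|_{\L(H)}\le C\tau^{\zeta}$ for $\zeta\in[0,1]$; one then trades $A_h^{-\zeta}$ for $A^{-\zeta}$ using Lemma~\ref{lem:Ah}, Remark~\ref{remark:proj_err} and the interpolation Lemma~\ref{lem:interA}. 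For the second bracket, I would use the algebraic identity
\begin{equation*}
	(I+\tau A_h)^{-1}-(I+\tau A_{h,2})^{-1}(I+\tau A_{h,1})^{-1}
	= -\tau^2 (I+\tau A_{h,2})^{-1} A_{h,1}A_{h,2}\,(I+\tau A_h)^{-1},
\end{equation*}
which comes from $(I+\tau A_{h,1})(I+\tau A_{h,2}) = (I+\tau A_h) + \tau^2 A_{h,1}A_{h,2}$ after left-multiplying by $(I+\tau A_{h,2})^{-1}(I+\tau A_{h,1})^{-1}$ and right-multiplying by $(I+\tau A_h)^{-1}$. Multiplying the whole displayed identity on the left by $(I+\tau A_{h,2})$ kills the outer inverse, so the quantity to bound reduces to $\tau^2 A_{h,1}A_{h,2}(I+\tau A_h)^{-1}P_hA^{-\zeta}$ up to the already-handled Euler term (which, after multiplying by $(I+\tau A_{h,2})$, is controlled using $\|(I+\tau A_{h,2})(I+\tau A_h)^{-1}\|$-type estimates or by keeping that factor attached throughout).

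The heart of the matter is therefore the estimate
\begin{equation*}
	\big\| \tau^2 A_{h,1}A_{h,2}(I+\tau A_h)^{-1}P_hA^{-\zeta} \big\|_{\L(H)}\le C\tau^{\zeta}, \qquad \zeta\in[0,1].
\end{equation*}
I would split the two factors of $\tau$: write $\tau^2 A_{h,1}A_{h,2}(I+\tau A_h)^{-1} = \big(\tau A_{h,1}A_h^{-1}\big)\big(\tau A_{h,2}A_h^{-1}\big)\big(A_h (I+\tau A_h)^{-1}\big)A_h (I+\tau A_h)^{-1} \cdot$ — more carefully, use $A_{h,\ell}A_h^{-1}P_h$ bounded (Lemma~\ref{lem:disc2cont}) to replace each $A_{h,\ell}$ by a factor $A_h$ at the cost of a constant, landing on $\tau^2 A_h^2(I+\tau A_h)^{-1}P_hA^{-\zeta}$; but $(I+\tau A_h)^{-1}$ only absorbs one power of $\tau A_h$ boundedly, so instead I would interpolate. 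For $\zeta=0$ the bound should fail (it is $\mathcal{O}(1)$ only), which is consistent since the claimed inequality gives $C$ at $\zeta=0$; for $\zeta=1$ one needs $\|\tau^2 A_{h,1}A_{h,2}(I+\tau A_h)^{-1}P_hA^{-1}\|\le C\tau$, i.e. $\|\tau A_{h,1}A_{h,2}(I+\tau A_h)^{-1}P_hA^{-1}\|\le C$. Writing $A_{h,1}A_{h,2}(I+\tau A_h)^{-1} = A_{h,1}A_h^{-1}\cdot A_{h,2}A_h^{-1}\cdot A_h^2(I+\tau A_h)^{-1}$ and then $\tau A_h^2(I+\tau A_h)^{-1} = A_h\cdot \tau A_h(I+\tau A_h)^{-1}$, the bounded part is $\tau A_h(I+\tau A_h)^{-1}P_h$ (uniformly bounded, resolvent calculus) and the leftover $A_h$ is paired with $A^{-1}$ via $\|A_hP_hA^{-1}\|_{\L(H)}\le C$ from Lemma~\ref{lem:switch}. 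So $\zeta=1$ works; the general $\zeta\in[0,1]$ follows by operator interpolation between the $\zeta=0$ and $\zeta=1$ endpoints, using Lemma~\ref{lem:interA} to interpolate the family $A^{-\zeta}$.

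The main obstacle I anticipate is bookkeeping the projection $P_h$ and the discrete versus continuous fractional powers carefully: $A^{-\zeta}$ is a power of the continuous operator while all the resolvents and $A_{h,\ell}$ live on $V_h$, so every time one wants to move $A_h^{\zeta}$ past $P_h$ to $A^{\zeta}$ (or vice versa) one must invoke Assumption~\ref{ass:Ah}\ref{ass:dissa_exch}, Remark~\ref{remark:proj_err} and Lemma~\ref{lem:Ah}, and the interpolation step only applies cleanly to the bounded-operator families. A secondary technical point is justifying that the outer factor $(I+\tau A_{h,2})$ which we multiply in does not spoil the Euler-term estimate — this is where one uses that $\|(I+\tau A_{h,2})(I+\tau A_h)^{-1}P_h\|_{\L(H)}$ is bounded (again via $A_{h,2}A_h^{-1}P_h$ bounded and $\tau A_h(I+\tau A_h)^{-1}P_h$ bounded), so one keeps $(I+\tau A_{h,2})$ attached to $(I+\tau A_h)^{-1}$ throughout rather than expanding prematurely. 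Once these are in place, the estimate is a fairly mechanical combination of the sectoriality bounds \eqref{eq:ana_semi_h1}--\eqref{eq:ana_semi_h2}, the boundedness lemmas, and interpolation.
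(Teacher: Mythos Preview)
Your route---inserting $(I+\tau A_h)^{-1}$ as an intermediate and splitting into an ``Euler vs.\ semigroup'' piece and a ``splitting vs.\ Euler'' piece---is a legitimate alternative to the paper's direct expansion, but there is a slip in the algebra that matters. From $(I+\tau A_{h,1})(I+\tau A_{h,2})=(I+\tau A_h)+\tau^2 A_{h,1}A_{h,2}$ one actually obtains
\begin{equation*}
(I+\tau A_h)^{-1}-(I+\tau A_{h,2})^{-1}(I+\tau A_{h,1})^{-1}
= \tau^2 (I+\tau A_{h,2})^{-1}(I+\tau A_{h,1})^{-1} A_{h,1}A_{h,2}\,(I+\tau A_h)^{-1},
\end{equation*}
so after left-multiplying by $(I+\tau A_{h,2})$ the residual term is $\tau^2(I+\tau A_{h,1})^{-1}A_{h,1}A_{h,2}(I+\tau A_h)^{-1}$, not $\tau^2 A_{h,1}A_{h,2}(I+\tau A_h)^{-1}$. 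The missing $(I+\tau A_{h,1})^{-1}$ is not cosmetic: your subsequent factorization $A_{h,1}A_{h,2}(I+\tau A_h)^{-1}=A_{h,1}A_h^{-1}\cdot A_{h,2}A_h^{-1}\cdot A_h^2(I+\tau A_h)^{-1}$ tacitly commutes $A_h$ past $A_{h,2}A_h^{-1}$, which is not assumed here. With the correct factor restored one can instead write
\begin{equation*}
\tau^2(I+\tau A_{h,1})^{-1}A_{h,1}A_{h,2}(I+\tau A_h)^{-1}
= \big[\tau(I+\tau A_{h,1})^{-1}A_{h,1}\big]\big[A_{h,2}A_h^{-1}\big]\big[\tau A_h(I+\tau A_h)^{-1}\big],
\end{equation*}
three uniformly bounded factors (the outer ones equal $I-(\cdot)^{-1}$, the middle one by Lemma~\ref{lem:Ah}~\ref{lem:disc2cont}), which settles $\zeta=0$; the $\zeta=1$ case follows after peeling off $A_hP_hA^{-1}$ via Lemma~\ref{lem:Ah}~\ref{lem:switch} and absorbing the extra $A_h^{-1}$ into $(I+\tau A_h)^{-1}$. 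With this correction your argument goes through.

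For comparison, the paper never introduces $(I+\tau A_h)^{-1}$: it observes that after multiplying by $(I+\tau A_{h,2})$ the operator in question is simply $(I+\tau A_{h,2})\e^{-\tau A_h}-(I+\tau A_{h,1})^{-1}$, bounds this directly for $\zeta=0$ (both summands are controlled via $A_{h,2}A_h^{-1}$ and $\tau A_h\e^{-\tau A_h}$), and for $\zeta=1$ expands $(I+\tau A_{h,1})^{-1}=I-\tau(I+\tau A_{h,1})^{-1}A_{h,1}$ to get three $\mathcal{O}(\tau)$ terms, then interpolates. This is shorter and avoids the extra Euler layer; your version has the conceptual merit of isolating the pure splitting contribution as a single algebraic remainder, which could be reused for other splittings.
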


\begin{proof}
	We prove this lemma using an interpolation result. For this, we begin to prove the bound for $\zeta = 0$, then $\zeta = 1$, and provide the bound for all the in-between values using Lemma~\ref{lem:interA}.
	For the case $\zeta=0$, we apply \eqref{eq:ana_semi_h1} and Lemma~\ref{lem:Ah}~\ref{lem:disc2cont} 	to find
	\begin{align*}
		\|\mathcal{B} \|_{\L(H)}
		&:=\big\| (I+\tau A_{h,2}) \big( \e^{-\tau A_h}-(I+\tau A_{h,2})^{-1}(I+\tau A_{h,1})^{-1} \big) P_h \big\|_{\L(H)}\\
		&\leq \| (I+\tau A_{h,2}) \e^{-\tau A_h} P_h \|_{\L(H)} + \| (I+\tau A_{h,1})^{-1} P_h \|_{\L(H)}\\
		&\leq \| \e^{-\tau A_h}P_h \|_{\L(H)} + \| \tau A_{h,2}A_{h}^{-1}A_h \e^{-\tau A_h} P_h\|_{\L(H)} + 1\\
		&\leq 2+C \| A_{h,2}A_{h}^{-1}P_h \|_{\L(H)} \| \tau A_h \e^{-\tau A_h} P_h\|_{\L(H)}
		\leq C.
	\end{align*}
	This concludes the case for $\zeta=0$. Now we look at the case $\zeta=1$ and use the identity $(I+\tau A_{h,1})^{-1}=I-\tau (I+\tau A_{h,1})^{-1} A_{h,1}$ in $V_h$ as well as Lemma~\ref{lem:Ah}~\ref{lem:switch}, \eqref{eq:ana_semi_h2}, and Lemma~\ref{lem:Ah}~\ref{lem:disc2cont} to find that
	\begin{align*}
		&\| \mathcal{B} A^{-1} \|_{\L(H)} \\
		&\leq \big\| \big( (I+\tau A_{h,2})\e^{-\tau A_h} - (I+\tau A_{h,1})^{-1} \big) A_h^{-1} P_h \big \|_{\L(H)}
		\| A_hP_hA^{-1} \|_{\L(H)}\\
		&= \big\| \big( (I+\tau A_{h,2})\e^{-\tau A_h} - (I-\tau(I+\tau A_{h,1})^{-1}A_{h,1}) \big )A_h^{-1} P_h \big\|_{\L(H)}  \| A_hP_hA^{-1} \|_{\L(H)}\\
		&\leq C \big( \| A_h^{-1} (\e^{-\tau A_h}-I ) P_h \|_{\L(H)}
		+ \tau \| A_{h,2}A_h^{-1}\e^{-\tau A_h} P_h \|_{\L(H)} 
		+ \tau \|(I+\tau A_{h,1})^{-1}A_{h,1}A_{h}^{-1} P_h \|_{\L(H)} \big)\\
		&\leq C \big( \tau+ \tau \| A_{h,2}A_h^{-1} P_h \|_{\L(H)} 
		+ \tau \| A_{h,1}A_{h}^{-1} P_h \|_{\L(H)} \big) 
		\leq C\tau.
	\end{align*}
	This concludes the case for $\zeta=1$. 
	The result for $\zeta\in(0,1)$ follows from Lemma~\ref{lem:interA}, which implies that $\lno\mathcal{B} A^{-\zeta}\Loptwon \leq C \tau^{\zeta}$ holds. This completes the proof.
\end{proof}

\begin{lemma}\label{lem:est_eS}
	Let Assumptions~\ref{ass:A}, \ref{ass:proj_err}, and \ref{ass:Ah} be fulfilled. Further, let $S_{h,\tau}$ be given as in \eqref{eq:def_S}. For all $\zeta\in[0,1]$ and $s \in \R^+$, it follows that
	\begin{equation*}
		\lno(I+\tau A_{h,2})\(\e^{-\tau A_h}-S_{h,\tau}\)\e^{-sA_h}P_hA^{-\zeta}\Loptwon
		\leq C\frac{\tau^{1+\zeta}}{s}.
	\end{equation*}
\end{lemma}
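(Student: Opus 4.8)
The plan is to first derive an \emph{exact} algebraic decomposition of $(I+\tau A_{h,2})(\e^{-\tau A_h}-S_{h,\tau})$ into three manageable pieces, to estimate each piece after composition with $\e^{-sA_h}P_hA^{-\zeta}$ for $\zeta\in\{0,1\}$, and finally to interpolate in $\zeta$. For the decomposition, note that $(I+\tau A_{h,2})$ cancels its inverse inside $S_{h,\tau}$, so that $(I+\tau A_{h,2})S_{h,\tau}=(I+\tau A_{h,1})^{-1}(I+\tau^2A_{h,1}A_{h,2})$. Combining this with the exact Taylor identity $\e^{-\tau A_h}-I+\tau A_h=\int_0^\tau\!\int_0^r A_h^2\e^{-\sigma A_h}\diff{\sigma}\diff{r}$, the resolvent identity $I-(I+\tau A_{h,1})^{-1}=\tau A_{h,1}(I+\tau A_{h,1})^{-1}$, and the relation $A_{h,1}^2+A_{h,1}A_{h,2}=A_{h,1}A_h$ (Assumption~\ref{ass:Ah}~\ref{ass:Ah_sum}), a short computation yields
\begin{equation*}
	(I+\tau A_{h,2})\bigl(\e^{-\tau A_h}-S_{h,\tau}\bigr)=\int_0^\tau\!\!\int_0^r A_h^2\e^{-\sigma A_h}\diff{\sigma}\diff{r}+\tau A_{h,2}\bigl(\e^{-\tau A_h}-I\bigr)-\tau^2(I+\tau A_{h,1})^{-1}A_{h,1}A_h,
\end{equation*}
whose three summands I denote by $T_1,T_2,T_3$.

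For the case $\zeta=0$, I would use the semigroup property $\e^{-\sigma A_h}\e^{-sA_h}=\e^{-(\sigma+s)A_h}$ together with \eqref{eq:ana_semi_h1}. The term $T_1\e^{-sA_h}P_h$ is bounded by $\int_0^\tau\!\int_0^r C(\sigma+s)^{-2}\diff{\sigma}\diff{r}\le C\tau/s$, since $\int_0^r(\sigma+s)^{-2}\diff{\sigma}\le 1/s$. For $T_2\e^{-sA_h}P_h$ I write $(\e^{-\tau A_h}-I)\e^{-sA_h}=-\int_0^\tau A_h\e^{-(r+s)A_h}\diff{r}$, factor $A_{h,2}A_h=(A_{h,2}A_h^{-1})A_h^2$ and invoke $\|A_{h,2}A_h^{-1}P_h\|_{\L(H)}\le C$ from Lemma~\ref{lem:Ah}~\ref{lem:disc2cont}. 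For $T_3\e^{-sA_h}P_h$ I regroup as $\tau A_{h,1}(I+\tau A_{h,1})^{-1}\cdot\tau A_h\e^{-sA_h}P_h$ and use $\|\tau A_{h,1}(I+\tau A_{h,1})^{-1}P_h\|_{\L(H)}\le 2$. This gives $\|(I+\tau A_{h,2})(\e^{-\tau A_h}-S_{h,\tau})\e^{-sA_h}P_h\|_{\L(H)}\le C\tau/s$. For the case $\zeta=1$ the extra factor $A^{-1}$ is absorbed by \emph{one} power of $A_h$ via $\|A_hP_hA^{-1}\|_{\L(H)}\le C$ (Lemma~\ref{lem:Ah}~\ref{lem:switch}), while a \emph{second} power of $A_h$ still supplies the smoothing $\|A_h\e^{-tA_h}P_h\|_{\L(H)}\le Ct^{-1}$; for $T_3$ one additionally uses $\|(I+\tau A_{h,1})^{-1}P_h\|_{\L(H)}\le 1$ and once more $\|A_{h,1}A_h^{-1}P_h\|_{\L(H)}\le C$. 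The remaining $r$-integrals are $\int_0^\tau\ln(1+r/s)\diff{r}\le\tau^2/(2s)$ and $\tau\int_0^\tau(r+s)^{-1}\diff{r}\le\tau^2/s$, yielding $\|(I+\tau A_{h,2})(\e^{-\tau A_h}-S_{h,\tau})\e^{-sA_h}P_hA^{-1}\|_{\L(H)}\le C\tau^2/s$.

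Finally, writing $G:=(I+\tau A_{h,2})(\e^{-\tau A_h}-S_{h,\tau})\e^{-sA_h}P_h$, the two endpoint estimates read $\|G\|_{\L(H)}\le C\tau/s$ and $\|GA^{-1}\|_{\L(H)}\le C\tau^2/s$, so Lemma~\ref{lem:interA} gives, for $\zeta\in(0,1)$,
\begin{equation*}
	\|GA^{-\zeta}\|_{\L(H)}\le\|G\|_{\L(H)}^{1-\zeta}\,\|GA^{-1}\|_{\L(H)}^{\zeta}\le C\Bigl(\tfrac{\tau}{s}\Bigr)^{1-\zeta}\Bigl(\tfrac{\tau^2}{s}\Bigr)^{\zeta}=C\,\frac{\tau^{1+\zeta}}{s},
\end{equation*}
which together with the endpoints is the claim.

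The main obstacle is obtaining the decomposition and then bookkeeping the powers of $A_h$ correctly. In particular one must recognise that the quadratic splitting term $\tau^2A_{h,1}A_{h,2}$, which by itself is only of size $h^{-4}$, has to be combined with $\tau^2A_{h,1}^2$ into $\tau^2A_{h,1}A_h$ so that the bounded operator $A_{h,1}A_h^{-1}$ becomes available; and in the case $\zeta=1$ the two available powers of $A_h$ must be split, one generating the factor $s^{-1}$ via $\|A_h\e^{-tA_h}P_h\|\le Ct^{-1}$ and one being cancelled by $A^{-1}$ via $\|A_hP_hA^{-1}\|\le C$, because assigning both powers to the semigroup would produce the non-integrable singularity $s^{-2}$ as $s\to 0$, whereas not using the cancellation with $A^{-1}$ would leave only an $\mathcal{O}(\tau)$ bound instead of $\mathcal{O}(\tau^2/s)$.
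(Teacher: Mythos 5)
Your proof is correct, and it takes essentially the same route as the paper: endpoint estimates at $\zeta=0$ and $\zeta=1$ built on the bounded operators $A_{h,\ell}A_h^{-1}P_h$ and $A_hP_hA^{-1}$ from Lemma~\ref{lem:Ah}, the analytic-semigroup smoothing bounds \eqref{eq:ana_semi_h1}--\eqref{eq:ana_semi_h2}, and interpolation via Lemma~\ref{lem:interA}. Your three-term decomposition $T_1+T_2+T_3$ is an algebraically equivalent rearrangement of the paper's factored form $(I+\tau A_{h,1})^{-1}(I+\tau^2A_{h,1}A_{h,2})(\e^{-\tau A_h}-I)+\tau(I+\tau A_{h,1})^{-1}A_h\e^{-\tau A_h}$, and your $T_1$ is exactly the second-order Taylor remainder $\int_0^{\tau}rA_h^2\e^{-rA_h}\diff{r}$ that the paper invokes in its $\zeta=1$ estimate.
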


\begin{proof}
	Again, we prove this lemma by using an interpolation result from Lemma~\ref{lem:interA}.
	Inserting the definition of $S_{h,\tau}$ from \eqref{eq:def_S}, the identity $I=(I+\tau A_{h,1})^{-1}+\tau (I+\tau A_{h,1})^{-1}A_{h,1}$, and $A_{h,1}+A_{h,2}=A_{h}$, we can rewrite the left-hand side from the claimed result for $\zeta = 0$ as
	\begin{align*}
		\mathcal{B} &:= (I+\tau A_{h,2}) (\e^{-\tau A_h}-S_{h,\tau} ) \e^{-sA_h}P_h\\
		&= \big( ((I+\tau A_{h,1})^{-1}+\tau(I+\tau A_{h,1})^{-1} A_{h,1}) (I+\tau A_{h,2})\e^{-\tau A_h}\\
		&\qquad-(I+\tau A_{h,1})^{-1} (I+\tau^2 A_{h,1} A_{h,2}) \big) \e^{-sA_h}P_h\\
		&= \big( ( (I+\tau A_{h,1})^{-1} + \tau(I+\tau A_{h,1})^{-1} A_{h,1}) \e^{-\tau A_h}\\
		&\qquad+ ( \tau (I+\tau A_{h,1})^{-1} A_{h,2} + \tau^2(I+\tau A_{h,1})^{-1} A_{h,1} A_{h,2}) \e^{-\tau A_h}\\
		&\qquad- (I+\tau A_{h,1})^{-1} - \tau^2 (I+\tau A_{h,1})^{-1} A_{h,1} A_{h,2} \big) \e^{-sA_h}P_h\\
		&= \big( ((I+\tau A_{h,1})^{-1}+\tau^2(I+\tau A_{h,1})^{-1} A_{h,1}A_{h,2}) (\e^{-\tau A_h}-I)\\
		&\qquad + \tau (I+\tau A_{h,1})^{-1}A_h \e^{-\tau A_h} \big) A^{-1}_hA_h\e^{-sA_h}P_h.
	\end{align*}
	For the case $\zeta=0$, we use \eqref{eq:ana_semi_h1}, the fact that the operators $(I+\tau A_{h,1})^{-1}P_h$, $\e^{-\tau A_h} P_h$, and $\tau(I+\tau A_{h,1})^{-1} A_{h,1} P_h = P_h - (I+\tau A_{h,1})^{-1}P_h $ are bounded operators, \eqref{eq:ana_semi_h2}, Lemma~\ref{lem:Ah}~\ref{lem:disc2cont}, and find
	\begin{align*}
		\lno \mathcal{B}  \Loptwon
		&\leq \big\| \big( (I+\tau A_{h,1})^{-1}+\tau^2(I+\tau A_{h,1})^{-1} A_{h,1}A_{h,2} \big) A_{h}^{-1} ( \e^{-\tau A_h}-I)P_h \\ 
		&\qquad\qquad + \tau (I+\tau A_{h,1})^{-1} \e^{-\tau A_h} P_h \big\|_{\L(H)}
		\lno A_h \e^{-sA_h}P_h \Loptwon\\
		&\leq\frac{C}{s}\Big( \lno(I+\tau A_{h,1})^{-1}A_{h}^{-1} (\e^{-\tau A_h}-I) P_h \Loptwon \\
		&\qquad +\lno \tau^2(I+\tau A_{h,1})^{-1} A_{h,1} A_{h,2}A_{h}^{-1} (\e^{-\tau A_h}-I) P_h \Loptwon\\
		&\qquad +\lno\tau (I+\tau A_{h,1})^{-1} \e^{-\tau A_h} P_h \Loptwon \Big)\\
		&\leq\frac{C}{s} \Big( \lno A_{h}^{-1} (\e^{-\tau A_h}-I ) P_h \Loptwon \\
		&\qquad + \tau \lno \tau(I+\tau A_{h,1})^{-1} A_{h,1}\Lophtwon \lno A_{h,2}A_{h}^{-1} P_h \Loptwon
		\lno(\e^{-\tau A_h}-I) P_h \Loptwon +\tau \Big)\\
		&\leq C \frac{\tau}{s}.
	\end{align*}
	This concludes the first part of the proof for $\zeta=0$. 
	For the second case $\zeta=1$, we use \eqref{eq:ana_semi_h1}, Lemma~\ref{lem:Ah}~\ref{lem:switch}, and obtain
	\newpage
	\begin{align*}
		\lno\mathcal{B} A^{-1}\Loptwon
		& \leq \big\| \big( (I+\tau A_{h,1})^{-1}+\tau^2(I+\tau A_{h,1})^{-1} A_{h,1}A_{h,2} \big) (\e^{-\tau A_h}-I) A^{-2}_h P_h\\
		&\qquad + \tau (I+\tau A_{h,1})^{-1}A_h \e^{-\tau A_h} A^{-2}_h P_h \big\|_{\L(H)} 
		\| A_h\e^{-sA_h} P_h \|_{\L(H)}
		\| A_hP_hA^{-1} \|_{\L(H)} \\
		& \leq \frac{C}{s} \Big( \big\| (I+\tau A_{h,1})^{-1} (\e^{-\tau A_h}+\tau A_h \e^{-\tau A_h}-I )A_h^{-2} P_h  \big\|_{\L(H)}\\
		&\qquad + \tau^2 \big\| (I+\tau A_{h,1})^{-1} A_{h,1}A_{h,2}A_h^{-2} ( \e^{-\tau A_h}-I ) P_h \big\|_{\L(H)} \Big) \\
		& \leq \frac{C}{s} \Big( \big\| (I+\tau A_{h,1})^{-1} (\e^{-\tau A_h}+\tau A_h \e^{-\tau A_h}-I)A_h^{-2} P_h  \big\|_{\L(H)}\\
		&\qquad + \tau \| \tau(I+\tau A_{h,1})^{-1} A_{h,1} P_h \|_{\L(H)} 
		\| A_{h,2}A_h^{-1} P_h \|_{\L(H)} \lno A_h^{-1} (\e^{-\tau A_h}-I) \Lophtwon \Big) \\
		&\leq \frac{C}{s} \tau^2, 
	\end{align*}
	where we use  the facts that
	\begin{equation*}
		\lno (\e^{-\tau A_h}+\tau A_h\e^{-\tau A_h}-I )A_h^{-2}\Lophtwon
		= \Big \| \int_0^{\tau} r A_h^2\e^{-rA_h}\diff{r} A_h^{-2} P_h \Big\|_{\L(H)}
		\leq C\tau^2,
	\end{equation*}
	the operator $ \tau(I+\tau A_{h,1})^{-1} A_{h,1}P_h = P_h - (I+\tau A_{h,1})^{-1}P_h$ is bounded, Lemma~\ref{lem:Ah}~\ref{lem:disc2cont}, and \eqref{eq:ana_semi_h2} in the last step.
	Now we have proved the claimed bound for both $\zeta = 0$ and $\zeta = 1$. The last step is to deduce the result for $\zeta\in(0,1)$. We can apply Lemma~\ref{lem:interA} and obtain
	\begin{equation*}
		\lno \mathcal{B} A^{-\zeta} \Loptwon\leq C \Big(\frac{\tau^2}{s}\Big)^{\zeta}
		\Big(\frac{\tau}{s} \Big)^{1-\zeta} = C \frac{\tau^{1+\zeta} }{s},
	\end{equation*}
	which proves the claim of the lemma.
\end{proof}

The previous lemmas can now be combined to a bound that quantifies the difference between the exact flow given through the semigroup $\e^{-t_n A}$ and its approximation $S^{n-1}_{h,\tau}(I+\tau A_{h,2})^{-1}(I+\tau A_{h,1})^{-1}P_h$. We will state two different versions of this bound. These differ between a minimal set of assumptions in Lemma~\ref{lem:Split_error}, and additional assumptions on the setting in Lemma~\ref{lem:Split_error} that provide the same final error as in \cite[Theorem~10.34]{lord_powell_shardlow_2014} for our scheme.

\begin{lemma}\label{lem:Split_error}
	Let Assumptions~\ref{ass:A}, \ref{ass:proj_err}, and \ref{ass:Ah} be fulfilled. Further, let $S_{h,\tau}$ be given as in \eqref{eq:def_S}. For every $n \in \{1, \dots, N\}$ and $\theta, \zeta \in [0,1]$, it follows that 
	\begin{equation*}
		\big\| \big( \e^{-t_nA} - S^{n-1}_{h,\tau}(I+\tau A_{h,2})^{-1}(I+\tau A_{h,1})^{-1}P_h \big) A^{-\theta} \big\|_{\mathcal{L}(H)}
		\leq C \big((1+\ln(n)) \tau^{\theta} + t_n^{- \zeta(1-\theta)} h^{2\theta + 2\zeta(1-\theta)}\big).
	\end{equation*}
\end{lemma}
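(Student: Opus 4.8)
The plan is to split the operator inside the norm into two telescoping pieces: a purely temporal splitting error that compares $\e^{-t_nA_h}P_h$ with the iterated splitting operator $S^{n-1}_{h,\tau}(I+\tau A_{h,2})^{-1}(I+\tau A_{h,1})^{-1}P_h$, and a spatial error that compares $\e^{-t_nA}A^{-\theta}$ with $\e^{-t_nA_h}P_hA^{-\theta}$. Concretely, I would write
\begin{align*}
	&\big( \e^{-t_nA} - S^{n-1}_{h,\tau}(I+\tau A_{h,2})^{-1}(I+\tau A_{h,1})^{-1}P_h \big) A^{-\theta}\\
	&\quad= \big( \e^{-t_nA}A^{-\theta} - \e^{-t_nA_h}P_hA^{-\theta} \big)
	+ \big( \e^{-t_nA_h} - S^{n-1}_{h,\tau}(I+\tau A_{h,2})^{-1}(I+\tau A_{h,1})^{-1} \big) P_hA^{-\theta},
\end{align*}
and bound each bracket separately. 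For the spatial term I expect the bound $C t_n^{-\zeta(1-\theta)} h^{2\theta+2\zeta(1-\theta)}$ to follow from a now-standard argument: write the difference using the resolvent/semigroup comparison, use Assumption~\ref{ass:dissa_exch} (i.e.\ $\|A^{-1}-A_h^{-1}P_h\|_{\L(H)}\le Ch^2$) together with the analyticity bounds \eqref{eq:ana_semi1}, \eqref{eq:ana_semi_h1}, and interpolate in $\theta$ and in the smoothing exponent $\zeta$ via Lemma~\ref{lem:interA}; this is the type of estimate available in Appendix~\ref{appendix:basic_results} and follows the pattern of \cite[Theorem~10.34]{lord_powell_shardlow_2014}. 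I would treat this part briefly, citing the appendix.

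The heart of the proof is the temporal splitting term, and there I would set up a telescoping sum. Writing $T_{h,\tau} := (I+\tau A_{h,2})^{-1}(I+\tau A_{h,1})^{-1}$ for the first-step operator, I decompose
\begin{align*}
	\e^{-t_nA_h} - S^{n-1}_{h,\tau}T_{h,\tau}
	&= \big( \e^{-\tau A_h} - T_{h,\tau} \big)\e^{-(n-1)\tau A_h}
	+ \sum_{k=1}^{n-1} S^{k-1}_{h,\tau}\big( \e^{-\tau A_h} - S_{h,\tau} \big)\e^{-(n-1-k)\tau A_h}T_{h,\tau}.
\end{align*}
The first term is controlled by Lemma~\ref{lem:est_eab} after inserting a harmless factor $(I+\tau A_{h,2})(I+\tau A_{h,2})^{-1}$, giving $C\tau^{\theta}$ (using $\e^{-(n-1)\tau A_h}P_hA^{-\theta}$, bounded uniformly, or rather shifting the $A^{-\theta}$ through). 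For each summand in the telescoping sum I insert $(I+\tau A_{h,2})(I+\tau A_{h,2})^{-1}$ to the left of $(\e^{-\tau A_h}-S_{h,\tau})$, use $\|S^{k-1}_{h,\tau}(I+\tau A_{h,2})^{-1}\|_{\L(H)}\le 1$ from Lemma~\ref{lem:Sne} to discard the propagator on the left, apply Lemma~\ref{lem:est_eS} with $s = (n-1-k)\tau$ to get the factor $\tau^{1+\zeta'}/((n-1-k)\tau)$ for a suitable interpolation parameter, and bound the remaining $\e^{-(n-1-k)\tau A_h}T_{h,\tau}A^{-\theta}$ pieces. Summing $\sum_{k} \tfrac{\tau}{(n-1-k)\tau}$ produces the logarithmic factor $1+\ln(n)$, which explains that term in the statement; the power of $\tau$ is $\theta$ after interpolating Lemma~\ref{lem:est_eS} between $\zeta'=0$ and $\zeta'=1$ and matching exponents with the $A^{-\theta}$ that is available. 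Care is needed near $k=n-1$ where $s=0$ is not allowed; there I would instead absorb $\e^{-\tau A_h}-S_{h,\tau}$ directly into a $\tau^\theta$-type bound without the smoothing factor (e.g.\ via Lemma~\ref{lem:est_eab}-style estimate for $S_{h,\tau}$), or start the smoothing sum at $k=1,\dots,n-2$ and treat the last term separately.

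\textbf{Main obstacle.} The delicate point is bookkeeping the fractional powers and the smoothing near the endpoint of the telescoping sum: I must track where the $A^{-\theta}$ sits, interpolate Lemmas~\ref{lem:est_eab} and \ref{lem:est_eS} in $\zeta$ so that the temporal rate comes out exactly $\tau^{\theta}$ (not $\tau^{\theta}$ times an uncontrolled negative power of $t_n$), and ensure the singular factors $1/s$ from Lemma~\ref{lem:est_eS} sum to only a logarithm rather than a negative power of $\tau$. This is exactly the place where the splitting error analysis departs from the backward-Euler analysis of \cite{lord_powell_shardlow_2014}, and where the extra $(1+\ln(n))$ in the bound originates. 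The spatial half and the rewriting of $S_{h,\tau}$ are routine given the lemmas already established; the telescoping estimate with uniform propagator bounds is the crux.
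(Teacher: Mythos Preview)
Your overall plan (spatial/temporal split, interpolation for the spatial part, telescoping plus Lemmas~\ref{lem:Sne}, \ref{lem:est_eab}, \ref{lem:est_eS} for the temporal part) matches the paper. However, your telescoping identity is \emph{incorrect} in the non-commutative setting. With $E=\e^{-\tau A_h}$, $S=S_{h,\tau}$, $T=T_{h,\tau}$, your right-hand side evaluates to
\[
(E-T)E^{n-1}+(E^{n-1}-S^{n-1})T=E^n-TE^{n-1}+E^{n-1}T-S^{n-1}T,
\]
which equals $E^n-S^{n-1}T$ only if $T$ commutes with $E^{n-1}$, i.e.\ only if $A_{h,1}$ and $A_{h,2}$ commute. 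That is precisely what the general lemma must avoid assuming. The same ordering problem also obstructs the estimates: in your summands the factor $T_{h,\tau}$ sits between $\e^{-(n-1-k)\tau A_h}$ and $P_hA^{-\theta}$, so Lemma~\ref{lem:est_eS} (which requires $\e^{-sA_h}P_hA^{-\theta}$ together on the right) does not give the rate $\tau^{1+\theta}/s$; you would only get $\tau/s$ and lose the factor $\tau^{\theta}$. Your endpoint worry ``$s=0$ at $k=n-1$'' is another symptom of this wrong orientation.

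The fix, and what the paper does, is to telescope in the opposite order:
\[
E^n-S^{n-1}T=\sum_{k=1}^{n-1}S^{\,n-k-1}(E-S)E^{k}\;+\;S^{\,n-1}(E-T).
\]
This identity holds without any commutativity. Now insert $(I+\tau A_{h,2})^{-1}(I+\tau A_{h,2})$ between $S^{\,n-k-1}$ and $(E-S)$: Lemma~\ref{lem:Sne} controls $S^{\,n-k-1}(I+\tau A_{h,2})^{-1}$, and Lemma~\ref{lem:est_eS} applies directly to $(I+\tau A_{h,2})(E-S)\e^{-t_kA_h}P_hA^{-\theta}$ with $s=t_k\ge\tau$, so there is no $s=0$ issue. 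The separate term $S^{\,n-1}(E-T)$ carries no trailing semigroup and is handled by Lemma~\ref{lem:est_eab}. Summing $\sum_{k=1}^{n-1}\tau^{1+\theta}/t_k$ yields the $(1+\ln n)\tau^{\theta}$ bound. The spatial part is as you outlined, via Lemmas~\ref{app2}--\ref{app3} and interpolation.
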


\begin{proof}
	We begin to split the term that we want to bound into two parts
	\begin{align*}
		&\big\| \big( \e^{-t_nA} - S^{n-1}_{h,\tau}(I+\tau A_{h,2})^{-1}(I+\tau A_{h,1})^{-1}P_h \big)A^{-\theta} \big\|_{\mathcal{L}(H)}\\
		&\leq \big\| \big( \e^{-t_nA} - \e^{-t_nA_h} P_h \big) A^{-\theta} \big\|_{\mathcal{L}(H)}
		+ \big\| \big( \e^{-t_nA_h} - S^{n-1}_{h,\tau}(I+\tau A_{h,2})^{-1}(I+\tau A_{h,1})^{-1}\big) P_h A^{-\theta} \big\|_{\mathcal{L}(H)}\\
		&=:  \Gamma_1   +  \Gamma_2  .
	\end{align*}
	In the following, we abbreviate $\mathcal{B} = \e^{-t_n A}- \e^{-t_n A_h} P_h$. Then using that the semigroups $\e^{-t_nA}$ and $\e^{-t_nA_h}$ are bounded operators as well as Lemmas~\ref{app2} and \ref{app3}, it follows that
	\begin{equation}\label{eq:proof_errS1}
		\| \mathcal{B} \|_{\L(H)} \leq C,\quad 
		\| \mathcal{B}\|_{\L(H)} \leq C\frac{h^2}{t_n} \quad\text{and}\quad
		\| \mathcal{B}A^{-1}\|_{\L(H)} \leq Ch^2.
	\end{equation}
	Furthermore, we have
	\begin{equation}\label{eq:proof_errS2}
		\| \mathcal{B}\|_{\L(H)}
		=\| \mathcal{B}\|_{\L(H)}^{\zeta}\| \mathcal{B}\|_{\L(H)}^{1-\zeta}
		\leq \Bigl(C\frac{h^2}{t_n} \Bigr)^\zeta C^{1-\zeta}
		=C\frac{h^{2\zeta}}{t_n^\zeta}.
	\end{equation}
	For all given $\theta\in [0,1]$, the value $\zeta(1-\theta)$ also lies in $[0,1]$ for every choice of $\zeta \in [0,1]$.
	Hence, after applying Lemma~\ref{lem:interA} and inserting the third bound of \eqref{eq:proof_errS1} and \eqref{eq:proof_errS2}, it follows that
	\begin{equation*}
		\Gamma_1
		= \| \mathcal{B}A^{-\theta}\|_{\L(H)} 
		\leq C\| \mathcal{B}A^{-1}\|_{\L(H)}^{\theta} \| \mathcal{B}\|_{\L(H)}^{1-\theta}
		\leq \bigl(Ch^2\bigr)^\theta 
		\Bigl(C\frac{h^{2\zeta}}{t_n^\zeta}\Bigr)^{1-\theta}
		= C t_n^{- \zeta(1-\theta)} h^{2\theta + 2\zeta(1-\theta)}.
	\end{equation*}
	It remains to bound $ \Gamma_2$. We decompose $\Gamma_2$ using a telescopic sum structure where we can bound the single summands with Lemmas~\ref{lem:Sne},~\ref{lem:est_eS},~\ref{lem:est_eab} and then obtain
	\begin{align*}
		\Gamma_2  
		&=\big\| \big( \e^{-t_nA_h}-S^{n-1}_{h,\tau}(I+\tau A_{h,2})^{-1}(I+\tau A_{h,1})^{-1} \big)P_hA^{-\theta} \big\|_{\L(H)}\\
		&\leq \sum_{k=1}^{n-1} \big\| \big( S^{n-k-1}_{h,\tau}(I+\tau A_{h,2})^{-1} (I+\tau A_{h,2} ) 
		(\e^{-\tau A_h}-S_{h,\tau}) \e^{-t_k A_h}\big) P_hA^{-\theta} \big\|_{\L(H)}\\
		&\quad + \big\| S^{n-1}_{h,\tau}(I+\tau A_{h,2})^{-1} (I+\tau A_{h,2}) 
		\big(\e^{-\tau A_h}-(I+\tau A_{h,2})^{-1}(I+\tau A_{h,1})^{-1}  \big) P_hA^{-\theta} \big\|_{\L(H)}\\
		&\leq \sum_{k=1}^{n-1} \big\| \big( (I+\tau A_{h,2} ) 
		(\e^{-\tau A_h}-S_{h,\tau}) \e^{-t_k A_h}\big) P_hA^{-\theta} \big\|_{\L(H)}\\
		&\quad + \big\| (I+\tau A_{h,2}) 
		\big(\e^{-\tau A_h}-(I+\tau A_{h,2})^{-1}(I+\tau A_{h,1})^{-1}  \big) P_hA^{-\theta} \big\|_{\L(H)}\\
		&\leq C\tau^{\theta} \Big( 1+\sum_{k=1}^{n-1}\frac{\tau}{t_k} \Big)
		\leq C (1+\ln(n)) \tau^{\theta} .
	\end{align*}
	Here we used Lemma~\ref{lem:riemann} in the last step. This concludes the proof.
\end{proof}

\begin{lemma}\label{lem:Split_error_B}
	Let Assumptions~\ref{ass:A}, \ref{ass:proj_err}, and \ref{ass:Ah} be fulfilled, additionally let $A_{h,1}$ and $A_{h,2}$ be self-adjoint and commute. Further, let $S_{h,\tau}$ be given as in \eqref{eq:def_S}. For every $n \in \{1, \dots, N\}$ and $\theta, \zeta \in [0,1]$, it follows that 
	\begin{align*}
		&\big\| \big( \e^{-t_nA} - S^{n-1}_{h,\tau}(I+\tau A_{h,2})^{-1}(I+\tau A_{h,1})^{-1}P_h \big) A^{-\theta} \big\|_{\mathcal{L}(H)}\\
		&\leq C t_n^{- \zeta(1-\theta)} \big( (1+\ln(n))^{\theta} \tau^{\theta + \zeta(1-\theta)} + h^{2\theta + 2\zeta(1-\theta)}\big).
	\end{align*}
\end{lemma}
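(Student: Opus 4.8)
I would start from the same decomposition as in the proof of Lemma~\ref{lem:Split_error}, writing
\[
\big\| \big( \e^{-t_nA} - S^{n-1}_{h,\tau}(I+\tau A_{h,2})^{-1}(I+\tau A_{h,1})^{-1}P_h \big)A^{-\theta} \big\|_{\L(H)} \leq \Gamma_1 + \Gamma_2 ,
\]
with $\Gamma_1 = \| (\e^{-t_nA} - \e^{-t_nA_h}P_h)A^{-\theta}\|_{\L(H)}$ and $\Gamma_2 = \| (\e^{-t_nA_h} - S^{n-1}_{h,\tau}(I+\tau A_{h,2})^{-1}(I+\tau A_{h,1})^{-1})P_hA^{-\theta}\|_{\L(H)}$. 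The spatial term $\Gamma_1$ does not involve the splitting, so it is bounded verbatim as in Lemma~\ref{lem:Split_error}, which yields $\Gamma_1 \leq C t_n^{-\zeta(1-\theta)}h^{2\theta+2\zeta(1-\theta)}$. The whole task is therefore to improve the bound for $\Gamma_2$ from $C(1+\ln n)\tau^{\theta}$ to $C t_n^{-\zeta(1-\theta)}(1+\ln n)^{\theta}\tau^{\theta+\zeta(1-\theta)}$.

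Set $\mathcal{C} := (\e^{-t_nA_h} - S^{n-1}_{h,\tau}(I+\tau A_{h,2})^{-1}(I+\tau A_{h,1})^{-1})P_h$, so $\Gamma_2 = \| \mathcal{C}A^{-\theta}\|_{\L(H)}$. I would prove the bound by interpolating, via Lemma~\ref{lem:interA}, between the endpoints $\theta = 1$ and $\theta = 0$. The endpoint $\theta = 1$ is already available: it is the bound $\| \mathcal{C}A^{-1}\|_{\L(H)} \leq C(1+\ln n)\tau$ established inside the proof of Lemma~\ref{lem:Split_error}, and it needs no additional hypothesis. For the endpoint $\theta = 0$ I would establish the new estimate $\| \mathcal{C}\|_{\L(H)} \leq C\tau/t_n$; together with the trivial bound $\| \mathcal{C}\|_{\L(H)} \leq C$ and interpolation this gives $\| \mathcal{C}\|_{\L(H)} \leq C(\tau/t_n)^{\zeta}$ for every $\zeta\in[0,1]$. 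Interpolating the two endpoints then produces
\[
\Gamma_2 = \| \mathcal{C}A^{-\theta}\|_{\L(H)} \leq \big(C(1+\ln n)\tau\big)^{\theta}\big(C(\tau/t_n)^{\zeta}\big)^{1-\theta} = C(1+\ln n)^{\theta}\tau^{\theta+\zeta(1-\theta)}t_n^{-\zeta(1-\theta)} ,
\]
and adding the estimate for $\Gamma_1$ and factoring out $t_n^{-\zeta(1-\theta)}$ gives the assertion.

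The core is thus the estimate $\| \mathcal{C}\|_{\L(H)} \leq C\tau/t_n$, and this is exactly where self-adjointness and commutativity of $A_1$, $A_2$ are used: in that case the semigroups $\e^{-sA_h}$, the resolvents $(I+\tau A_{h,\ell})^{-1}$, the propagator $S_{h,\tau}$ and the fractional powers of $A_h$, $A_{h,1}$, $A_{h,2}$ form a mutually commuting self-adjoint family on $V_h$. This has two uses. First, from $(I+\tau A_{h,1})(I+\tau A_{h,2}) = I + \tau A_h + \tau^2 A_{h,1}A_{h,2}$ one gets
\[
\e^{-\tau A_h} - S_{h,\tau} = (I+\tau A_{h,2})^{-1}(I+\tau A_{h,1})^{-1}\Big[\big(\e^{-\tau A_h} + \tau A_h\e^{-\tau A_h} - I\big) + \tau^2 A_{h,1}A_{h,2}\big(\e^{-\tau A_h} - I\big)\Big] ,
\]
and since $\big\| \big(\e^{-\tau A_h} + \tau A_h\e^{-\tau A_h} - I\big)A_h^{-2}P_h\big\|_{\L(H)} = \Big\| \int_0^{\tau} rA_h^2\e^{-rA_h}\diff{r}\,A_h^{-2}P_h\Big\|_{\L(H)} \leq C\tau^2$ and $\| A_{h,1}A_{h,2}A_h^{-2}P_h\|_{\L(H)} = \| (A_{h,1}A_h^{-1})(A_{h,2}A_h^{-1})P_h\|_{\L(H)} \leq C$ by commutativity and Lemma~\ref{lem:Ah}~\ref{lem:disc2cont}, this upgrades the local splitting error to $\| (\e^{-\tau A_h} - S_{h,\tau})A_h^{-2}P_h\|_{\L(H)} \leq C\tau^2$, with the analogous bound for $(I+\tau A_{h,2})^{-1}(I+\tau A_{h,1})^{-1}$ in place of $S_{h,\tau}$. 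Second, and more importantly, the commuting self-adjoint structure supplies the global cancellation between $\e^{-t_nA_h}$ and the discrete propagator that a term-by-term estimate misses: by simultaneous diagonalisation of $A_{h,1}$ and $A_{h,2}$ the bound $\| \mathcal{C}\|_{\L(H)} \leq C\tau/t_n$ reduces to the scalar multiplier estimate
\[
\sup_{\mu,\nu\geq 0}\Big| \e^{-n\tau(\mu+\nu)} - \frac{(1+\tau^2\mu\nu)^{n-1}}{(1+\tau\mu)^{n}(1+\tau\nu)^{n}}\Big| \leq \frac{C}{n} ,
\]
which is elementary: near the origin the difference is of order $n\tau^{2}(\mu+\nu)^{2}\e^{-n\tau(\mu+\nu)}$, and the exponential damping confines its maximum to the range $\tau(\mu+\nu)\sim 1/n$, where it is $\leq C/n$, while for $\tau(\mu+\nu)\gtrsim 1$ both terms are already $\mathcal{O}(n^{-2})$.

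The main obstacle is precisely this $t_n^{-1}$ decay of $\| \mathcal{C}\|_{\L(H)}$. It cannot be obtained from the telescoping identity
\[
\mathcal{C} = \sum_{k=1}^{n-1}S^{n-1-k}_{h,\tau}(\e^{-\tau A_h} - S_{h,\tau})\e^{-t_kA_h}P_h + S^{n-1}_{h,\tau}\big(\e^{-\tau A_h} - (I+\tau A_{h,2})^{-1}(I+\tau A_{h,1})^{-1}\big)P_h
\]
by estimating the summands separately: since $S^m_{h,\tau}$ is not a smoothing operator, the best per-summand bounds add up only to $\mathcal{O}(1)$, and one genuinely needs the cancellation between the exact and the discrete flow, which the self-adjoint commuting assumption makes accessible through the functional calculus, in the spirit of \cite{Ichinose2001}. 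Everything else — the treatment of $\Gamma_1$, the $\theta=1$ endpoint, and the two interpolations via Lemma~\ref{lem:interA} — is routine given the lemmas already proved.
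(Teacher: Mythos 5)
Your outer argument coincides with the paper's: the same $\Gamma_1+\Gamma_2$ split, the unchanged spatial bound for $\Gamma_1$, the three operator bounds $\|\mathcal{C}\|_{\L(H)}\leq C$, $\|\mathcal{C}\|_{\L(H)}\leq C\tau/t_n$, $\|\mathcal{C}A^{-1}\|_{\L(H)}\leq C\tau(1+\ln n)$ (the last recycled from the $\theta=1$ case of Lemma~\ref{lem:Split_error}), and the two applications of Lemma~\ref{lem:interA}. Where you genuinely diverge is in the proof of the crucial estimate $\|\mathcal{C}\|_{\L(H)}\leq C\tau/t_n$, which the paper isolates as Lemma~\ref{lem:high_conv} in Appendix~\ref{appendix:higher_conv}. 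There it is obtained by inserting the auxiliary operator $\tilde{A}_h=(I+\tau A_{h,1})^{-1}A_h(I+\tau A_{h,2})^{-1}$, comparing $\e^{-t_{n-1}A_h}$ with $\e^{-t_{n-1}\tilde{A}_h}$ through a Duhamel integral, and reducing the remaining piece $\e^{-t_{n-1}\tilde A_h}-(I-\tau\tilde A_h)^{n-1}$ via functional calculus for the single self-adjoint operator $I-\tau\tilde A_h$ to the one-variable bound $|\e^{-(n-1)(1-\lambda)}-\lambda^{n-1}|\leq C/(n-1)$ on $[0,1]$. You instead diagonalize $A_{h,1}$ and $A_{h,2}$ simultaneously and prove the two-variable multiplier estimate directly; this is a legitimate and arguably more transparent route in the finite-dimensional commuting self-adjoint setting, and your identification of why the telescoping sum cannot yield the $t_n^{-1}$ decay is exactly right. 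Two caveats: first, your tail claim that both terms are $\mathcal{O}(n^{-2})$ once $\tau(\mu+\nu)\gtrsim 1$ is an overstatement (e.g.\ for $\tau\mu$ large and $\tau\nu$ of order $n/(\tau\mu)$ the discrete symbol is only $\mathcal{O}(n^{-1})$), though the required $\mathcal{O}(n^{-1})$ bound does hold uniformly, so a complete write-up would need the full case analysis you only sketch; second, you assert that self-adjointness and commutativity of $A_1,A_2$ make the \emph{discrete} family $A_{h,1},A_{h,2}$ mutually commuting and self-adjoint, which is not automatic for a Galerkin discretization -- the spectral argument really needs these properties at the discrete level, which is also the hypothesis under which the paper's Appendix~\ref{appendix:higher_conv} lemma is stated.
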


\begin{proof}
	In the following, we use the same abbreviation as in the proof of Lemma~\ref{lem:Split_error}. Analogously, we can bound the first error term and find that $\Gamma_1 \leq C t_n^{- \zeta(1-\theta)} h^{2\theta + 2\zeta(1-\theta)}$. The difference to the proof of the previous lemma is how we handle $\Gamma_{2}$. In the following, we abbreviate $\tilde{\mathcal{B}} = \e^{-t_n A_h} P_h - S^{n-1}_{h,\tau}(I+\tau A_{h,2})^{-1}(I+\tau A_{h,1})^{-1}P_h$. Then using that the  semigroup $\e^{-t_nA_h}$ is a bounded operator as well as Lemmas~\ref{lem:Sne},~\ref{lem:high_conv}, and the estimate of $\Gamma_2$ in the proof of Lemma~\ref{lem:Split_error} for $\theta = 1$, it follows that
	\begin{equation}\label{eq:proof_errS3}
		\| \tilde{\mathcal{B}} \|_{\L(H)} \leq C,\quad 
		\| \tilde{\mathcal{B}} \|_{\L(H)} \leq C\frac{\tau}{t_n} \quad\text{and}\quad
		\| \tilde{\mathcal{B}} A^{-1}\|_{\L(H)} \leq C \tau(1+\ln(n)).
	\end{equation}
	Additionally, we can combine the first two bounds from \eqref{eq:proof_errS3} and find for all $\zeta \in [0,1]$
	\begin{equation}\label{eq:proof_errS4}
		\| \tilde{\mathcal{B}}\|_{\L(H)}
		=\| \tilde{\mathcal{B}}\|_{\L(H)}^{\zeta}\| \tilde{\mathcal{B}}\|_{\L(H)}^{1-\zeta}
		\leq \Bigl(C\frac{\tau}{t_n} \Bigr)^\zeta C^{1-\zeta}
		=C\frac{\tau^{\zeta}}{t_n^\zeta}.
	\end{equation}
	An analogous argument combining \eqref{eq:proof_errS3} and \eqref{eq:proof_errS4} with the help of Lemma~\ref{lem:interA} as for $\Gamma_1$ in the proof of Lemma~\ref{lem:Split_error} shows that
	\begin{equation*}
		\Gamma_2
		= \| \tilde{\mathcal{B}}A^{-\theta}\|_{\L(H)} 
		\leq \bigl(C \tau (1+\ln(n)) \bigr)^\theta 
		\Bigl(C \frac{\tau^{\zeta}}{t_n^\zeta}\Bigr)^{1-\theta}
		= C (1+\ln(n))^{\theta} t_n^{- \zeta(1-\theta)} \tau^{\theta + \zeta(1-\theta)}.
	\end{equation*}
\end{proof}

\subsection{Convergence results} \label{subsec:errorTerms}

With the auxiliary results from the previous subsection in mind, we can now begin to bound the error of the numerical method \eqref{eq:scheme}. We begin by considering the difference between the exact solution in integral form \eqref{eq:SPDE_int} at a grid point $t_n$ and the numerical approximation \eqref{eq:scheme}. To analyze the error in the following subsection, we split the difference into three parts, as in \cite[Theorem 10.34]{lord_powell_shardlow_2014} for the semi-implicit Euler method,
\begin{align*}
	&X(t_n)-X_{h,\tau}^n\\
	&=\big(\e^{-t_nA}-S^{n-1}_{h,\tau}(I+\tau A_{h,2})^{-1}(I+\tau A_{h,1})^{-1}P_h \big)X_0\\
	&\quad+\sum_{k=0}^{n-1} \Big[ \int_{t_k}^{t_{k+1}} \e^{-(t_{n}-s)A}f(s,X(s))\diff{s}- \tau S^{n-k-1}_{h,\tau}(I+\tau A_{h,2})^{-1}(I+\tau A_{h,1})^{-1} f_h(t_k,X^k_{h,\tau}) \Big]\\\
	&\quad+\sum_{k=0}^{n-1} \int_{t_k}^{t_{k+1}} \big[ \e^{-(t_{n}-s)A}B(s,X(s))-S^{n-k-1}_{h,\tau}(I+\tau A_{h,2})^{-1}(I+\tau A_{h,1})^{-1} B_h(t_k,X^{k}_{h,\tau}) \big] \diff{W(s)}\\
	&= \Gamma_{X_0} + \Gamma_f + \Gamma_B .
\end{align*}
We will now consider these three terms in more detail in the following lemmas. The error $\Gamma_{X_0}$ can be estimated as follows.

\begin{lemma}\label{lem:conv_in} 
	Let Assumptions~\ref{ass:A}, \ref{ass:X0}, \ref{ass:proj_err}, and \ref{ass:Ah} be fulfilled. For every $n \in \{1, \dots, N\}$ and $\zeta \in [0,1]$, it follows that 
	\begin{equation*}
		\| \Gamma_{X_0} \|_{L^p(\Omega;H)} 
		\leq C \big((1+\ln(n)) \tau^{\theta_{X_0}} + t_n^{- \zeta(1-\theta_{X_0})} h^{2\theta_{X_0} + 2\zeta(1-\theta_{X_0})} \big).
	\end{equation*}
\end{lemma}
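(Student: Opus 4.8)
The plan is to recognize $\Gamma_{X_0}$ as precisely the operator error term already controlled by Lemma~\ref{lem:Split_error}, applied to the initial data in a suitably regular form. Concretely, I would write
\[
	\Gamma_{X_0} = \big( \e^{-t_nA} - S^{n-1}_{h,\tau}(I+\tau A_{h,2})^{-1}(I+\tau A_{h,1})^{-1}P_h \big) A^{-\theta_{X_0}} \cdot A^{\theta_{X_0}} X_0,
\]
which is legitimate because, by Assumption~\ref{ass:X0}, $X_0 \in \dom(A^{\theta_{X_0}})$ $\P$-a.s.\ (indeed $A^{\theta_{X_0}}X_0 \in L^p(\Omega;H)$), so the factorization through $A^{-\theta_{X_0}}$ makes sense and $A^{-\theta_{X_0}}A^{\theta_{X_0}}X_0 = X_0$. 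Note $\theta_{X_0} \in [0,1)$, so in particular $\theta_{X_0} \in [0,1]$ and Lemma~\ref{lem:Split_error} applies with $\theta = \theta_{X_0}$.

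Then I would take $L^p(\Omega;H)$-norms and pull the deterministic operator out: since the operator in front is non-random and bounded on $H$, we get
\[
	\| \Gamma_{X_0} \|_{L^p(\Omega;H)} \leq \big\| \big( \e^{-t_nA} - S^{n-1}_{h,\tau}(I+\tau A_{h,2})^{-1}(I+\tau A_{h,1})^{-1}P_h \big) A^{-\theta_{X_0}} \big\|_{\mathcal{L}(H)} \, \| A^{\theta_{X_0}} X_0 \|_{L^p(\Omega;H)}.
\]
The first factor is bounded by $C\big((1+\ln(n))\tau^{\theta_{X_0}} + t_n^{-\zeta(1-\theta_{X_0})} h^{2\theta_{X_0} + 2\zeta(1-\theta_{X_0})}\big)$ via Lemma~\ref{lem:Split_error} (with $\theta = \theta_{X_0}$ and the same $\zeta$), and the second factor is bounded by $C$ by Assumption~\ref{ass:X0}. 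Multiplying the two bounds and absorbing constants into the generic $C$ yields exactly the claimed estimate.

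This proof is essentially a one-line reduction, so there is no serious obstacle; the only things to be careful about are (i) checking that the measurability and integrability of $X_0$ allow the norm-splitting (the operator is deterministic, so this is immediate), and (ii) making sure the admissible range $\theta_{X_0} \in [0,1)$ is contained in the range $[0,1]$ required by Lemma~\ref{lem:Split_error}, which it is. One might also remark that the case $n=1$ is included since $1+\ln(1) = 1 > 0$ and $t_1 = \tau > 0$, so all quantities are well-defined.
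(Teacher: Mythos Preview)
Your proposal is correct and is exactly the argument the paper uses: the paper's proof is the single sentence ``This follows directly from Lemma~\ref{lem:Split_error} and Assumption~\ref{ass:X0},'' and you have simply unpacked the details of that reduction.
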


\begin{proof}
	This follows directly from Lemma~\ref{lem:Split_error} and Assumption~\ref{ass:X0}.
\end{proof}

In the next step, we consider the error $ \Gamma_f $ that arises from the drift term.

\begin{lemma}\label{lem:conv_f}
	Let Assumptions~\ref{ass:A}--\ref{ass:B}, \ref{ass:proj_err}, and \ref{ass:Ah} be fulfilled for $\theta_{f} \in (0, \theta_{X_0}) \cap (0, \frac{1}{2})$. For every $n \in \{1, \dots, N\}$, it follows that 
	\begin{equation*}
		\| \Gamma_f \|_{L^p(\Omega;H)} 
		\leq C \big(\tau^{\min(\theta_{X_0},\frac{1}{2})} + (1+\ln(n)) \tau^{\theta_f} + h^{2}\big) + C \tau\sum_{k=0}^{n-1} \| X(t_k)-X^k_{h,\tau} \|_{L^p(\Omega;H)}.
	\end{equation*}
\end{lemma}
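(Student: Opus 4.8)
The plan is to split $\Gamma_f$ into three pieces according to the structure of the error: a \emph{quadrature error} comparing $\int_{t_k}^{t_{k+1}}\e^{-(t_n-s)A}f(s,X(s))\diff{s}$ to $\tau\e^{-(t_n-t_k)A}f(t_k,X(t_k))$; a \emph{flow error} comparing $\tau\e^{-(t_n-t_k)A}$ to $\tau S^{n-k-1}_{h,\tau}(I+\tau A_{h,2})^{-1}(I+\tau A_{h,1})^{-1}P_h$ acting on $f(t_k,X(t_k))$; and a \emph{data error} comparing $f(t_k,X(t_k))$ to $f_h(t_k,X^k_{h,\tau})$ after this discrete propagator. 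Concretely, for each $k$ I would write
\[
\int_{t_k}^{t_{k+1}}\!\e^{-(t_n-s)A}f(s,X(s))\diff{s}-\tau S^{n-k-1}_{h,\tau}(\cdots)f_h(t_k,X^k_{h,\tau})
= E_k^{\mathrm{quad}}+E_k^{\mathrm{flow}}+E_k^{\mathrm{data}},
\]
sum over $k=0,\dots,n-1$, and take the $L^p(\Omega;H)$ norm, using Minkowski's inequality throughout.

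For the quadrature term, I would split $E_k^{\mathrm{quad}} = \int_{t_k}^{t_{k+1}}\e^{-(t_n-s)A}\big(f(s,X(s))-f(t_k,X(t_k))\big)\diff{s} + \int_{t_k}^{t_{k+1}}\big(\e^{-(t_n-s)A}-\e^{-(t_n-t_k)A}\big)f(t_k,X(t_k))\diff{s}$. The first part is controlled by the Hölder estimate in Assumption~\ref{ass:f_hold} together with the temporal regularity \eqref{eq:reg_hoelder} of $X$, giving a bound of order $\tau\cdot\tau^{\min(\theta_{X_0},\frac12)}$ per step, hence $\tau^{\min(\theta_{X_0},\frac12)}$ after summing (using $\sum_k\tau = t_n \le t_f$). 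For the second part I would factor $\e^{-(t_n-s)A}-\e^{-(t_n-t_k)A} = \e^{-(t_n-t_{k+1})A}\big(\e^{-(t_{k+1}-s)A}-I\big)\e^{-(t_k \text{ gap})A}$ — more precisely write it as $-(I-\e^{-(s-t_k)A})\e^{-(t_n-s)A}$, use $\|A^{-\theta_f}(I-\e^{-(s-t_k)A})\|_{\L(H)}\le C\tau^{\theta_f}$ from \eqref{eq:ana_semi2}, the smoothing bound $\|A^{\theta_f}\e^{-(t_n-s)A}\|\le C(t_n-s)^{-\theta_f}$ from \eqref{eq:ana_semi1}, and $\|A^{\theta_f}f(t_k,X(t_k))\|\le C(1+\|A^{\theta_f}X(t_k)\|)$ from Assumption~\ref{ass:f_reg}, which is finite by \eqref{eq:reg_space} since $\theta_f<\theta_{X_0}$; summing $\sum_k\int_{t_k}^{t_{k+1}}(t_n-s)^{-\theta_f}\diff s$ stays bounded, yielding an $\mathcal{O}(\tau^{\theta_f})$ contribution. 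For the flow term, I apply Lemma~\ref{lem:Split_error} with $\theta = \theta_f$ and an appropriate $\zeta$ to $\big(\e^{-(t_n-t_k)A}-S^{n-k-1}_{h,\tau}(\cdots)P_h\big)A^{-\theta_f}$, multiply by $\|A^{\theta_f}f(t_k,X(t_k))\|_{L^p(\Omega;H)}\le C$, and sum $\tau\sum_k$ against the bound $C((1+\ln)\tau^{\theta_f} + (t_n-t_k)^{-\zeta(1-\theta_f)}h^{\cdots})$; choosing $\zeta$ so that $2\theta_f+2\zeta(1-\theta_f)=2\theta_f+1$, i.e. $\zeta=\tfrac12$, produces the $h^{2\theta_f+1}$ term and a $\tau$-sum of $(t_n-t_k)^{-1/2}$ that converges.

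Finally, for the data term $E_k^{\mathrm{data}} = \tau S^{n-k-1}_{h,\tau}(I+\tau A_{h,2})^{-1}(I+\tau A_{h,1})^{-1}\big(f(t_k,X(t_k))-f_h(t_k,X^k_{h,\tau})\big)$, I split $f(t_k,X(t_k))-f_h(t_k,X^k_{h,\tau}) = (I-P_h)f(t_k,X(t_k)) + P_h\big(f(t_k,X(t_k))-f(t_k,X^k_{h,\tau})\big)$. The projection part uses Remark~\ref{remark:proj_err} with exponent $\theta_f$ to get $\|(I-P_h)f(t_k,X(t_k))\|_H \le Ch^{2\theta_f}\|A^{\theta_f}f(t_k,X(t_k))\|_H$, bounded in $L^p(\Omega;H)$ — this contributes $\tau\sum_k\cdot h^{2\theta_f}$, i.e. $\mathcal{O}(h^{2\theta_f})$, absorbed into $h^{2\theta_f+1}\lesssim h^{2\theta_f}$ (or kept as is, since $h^{2\theta_f}$ dominates). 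The Lipschitz part, using Assumption~\ref{ass:f_hold} and the uniform operator bound $\|S^{n-k-1}_{h,\tau}(I+\tau A_{h,2})^{-1}P_h\|_{\L(H)}\le 1$ from Lemma~\ref{lem:Sne} (together with $\|(I+\tau A_{h,1})^{-1}P_h\|\le1$), gives exactly $C\tau\sum_{k=0}^{n-1}\|X(t_k)-X^k_{h,\tau}\|_{L^p(\Omega;H)}$, the Gronwall-type term in the statement. I expect the main obstacle to be bookkeeping the interplay between the negative powers of $(t_n-t_k)$ coming from the smoothing estimates and from Lemma~\ref{lem:Split_error}, and checking that every such $\tau$-weighted sum is genuinely summable (uniformly in $n$ and $\tau$) rather than producing an extra logarithmic or $\tau^{-\varepsilon}$ factor — in particular making sure the $h$-term exponent is optimized to $2\theta_f+1$ via the right choice of $\zeta$, and that the quadrature splitting is done so the $f$-regularity index $\theta_f$ (not $\tfrac12$) is what appears, since $\theta_f<\theta_{X_0}$ is exactly what makes $\|A^{\theta_f}X(t_k)\|$ finite.
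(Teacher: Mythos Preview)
Your approach is essentially the paper's: the paper decomposes $\Gamma_f$ into four pieces $\Gamma_{f,1},\dots,\Gamma_{f,4}$ (your quadrature term is the paper's $\Gamma_{f,1}+\Gamma_{f,2}$, your flow term is $\Gamma_{f,3}$, your data term is $\Gamma_{f,4}$), and each is bounded by the same tools you name.

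One point needs correction. In your data term you split $f(t_k,X(t_k)) - f_h(t_k,X^k_{h,\tau})$ into an $(I-P_h)f$ piece and a Lipschitz piece. But the discrete propagator $S^{n-k-1}_{h,\tau}(I+\tau A_{h,2})^{-1}(I+\tau A_{h,1})^{-1}$ acts on $V_h$ and the flow error you subtract (via Lemma~\ref{lem:Split_error}) already carries a $P_h$; what remains after the flow step is therefore $S^{n-k-1}_{h,\tau}(\cdots)P_h f(t_k,X(t_k))$. Since $f_h = P_h f$ by definition, the data term is simply
\[
\tau\, S^{n-k-1}_{h,\tau}(I+\tau A_{h,2})^{-1}(I+\tau A_{h,1})^{-1}\big(f_h(t_k,X(t_k)) - f_h(t_k,X^k_{h,\tau})\big),
\]
and your $(I-P_h)f$ piece is identically zero. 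This matters: your proposed bound $h^{2\theta_f}$ for that piece would \emph{not} be absorbed by the statement's $h^{2\theta_f+1}$ --- you have the inequality backwards, since $h^{2\theta_f+1}\le h^{2\theta_f}$ for $h\le 1$. Once you drop this spurious term the data error gives exactly the Gr\"onwall sum $C\tau\sum_k\|X(t_k)-X^k_{h,\tau}\|_{L^p(\Omega;H)}$, as in the paper.

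A minor slip in the flow term: you set up the correct equation $2\theta_f + 2\zeta(1-\theta_f) = 2\theta_f+1$, but this solves to $\zeta = \tfrac{1}{2(1-\theta_f)}$, not $\zeta=\tfrac12$; equivalently $\zeta(1-\theta_f)=\tfrac12$, which is exactly what the paper writes and what makes $\tau\sum_k(t_n-t_k)^{-1/2}$ converge via Lemma~\ref{lem:riemann}.
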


\begin{proof}
	To bound the error term $ \Gamma_f $, we decompose it into four separate terms 
	\begin{align*}
		\Gamma_f
		&=\sum_{k=0}^{n-1} \int_{t_k}^{t_{k+1}} \big(\e^{-(t_n-s)A}-\e^{-(t_n-t_{k})A} \big)f(s,X(s))\diff{s} \\
		&\quad+\sum_{k=0}^{n-1} \e^{-(t_n-t_{k})A}\int_{t_k}^{t_{k+1}} \big( f(s,X(s))- f(t_k,X(t_k)) \big)\diff{s} \\
		&\quad+\tau\sum_{k=0}^{n-1} \big(\e^{-(t_n-t_{k})A}-S^{n-k-1}_{h,\tau}(I+\tau A_{h,2})^{-1}(I+\tau A_{h,1})^{-1}P_h\big) f(t_k,X(t_k)) \\
		&\quad+\tau\sum_{k=0}^{n-1} S^{n-k-1}_{h,\tau}(I+\tau A_{h,2})^{-1}(I+\tau A_{h,1})^{-1} \big(P_h f(t_k,X(t_k))-f_h(t_k,X^k_{h,\tau})\big) \\
		&:= \Gamma_{f,1}  + \Gamma_{f,2}  + \Gamma_{f,3}  + \Gamma_{f,4}  .
	\end{align*}
	For $ \Gamma_{f,1}  $, we apply the semigroup bounds \eqref{eq:ana_semi1}, \eqref{eq:ana_semi2}, use Assumption~\ref{ass:f_reg} and \eqref{eq:reg_space} with $\zeta =0$ from Theorem~\ref{thm:ex_reg_exact_sol} to find
	\begin{align*}
		\| \Gamma_{f,1}  \|_{L^p(\Omega;H)}
		&\leq \sum_{k=0}^{n-1} \Big\| \int_{t_k}^{t_{k+1}} A\e^{-(t_n-s)A} A^{-1} \big(I-\e^{-(s-t_{k}) A}\big) f(s,X(s))\diff{s} \Big\|_{L^p(\Omega;H)}\\
		&\leq C\tau \sup_{s\in[t_{n-1}, t_n]}\|f(s,X(s))\|_{L^p(\Omega;H)}+ C\sum_{k=0}^{n-2}\int_{t_k}^{t_{k+1}} \frac{|s-t_{k}|}{|t_n-s|} \lno f(s,X(s))\Lston \diff{s} \\
		&\leq C\tau \Big(1+\sup_{s\in[0,t_f]}\lno X(s)\Lston\Big)+C\sum_{k=0}^{n-2}\int_{t_k}^{t_{k+1}}\frac{\tau}{|t_n-s|} \big( 1+\lno X(s)\Lston \big) \diff{s}\\
		&\leq C(1+\ln(n)) \tau \Big(1+\sup_{s\in[0,t_f]}\lno X(s)\Lston\Big)
		\leq C (1+\ln(n)) \tau.
	\end{align*}
	The second term $ \Gamma_{f,2}  $ can be bound by applying the semigroup $\e^{-(t_n-t_{k})A}$ is a bounded operator, Assumption~\ref{ass:f_hold} and \eqref{eq:reg_hoelder} from Theorem~\ref{thm:ex_reg_exact_sol}
	\begin{align*}
		\| \Gamma_{f,2}  \|_{L^p(\Omega;H)}
		&\leq \sum_{k=0}^{n-1} \Big\| \e^{-(t_n-t_{k})A} \int_{t_k}^{t_{k+1}} \big( f(s,X(s))- f(t_k,X(t_k)) \big) \diff{s} \Big\|_{L^p(\Omega;H)} \\
		&\leq \sum_{k=0}^{n-1} \int_{t_k}^{t_{k+1}} \lno f(s,X(s))- f(t_k,X(t_k))\Lston \diff{s}\\
		&\leq C\sum_{k=0}^{n-1} \int_{t_k}^{t_{k+1}} |s-t_k|^{\min(\theta_{X_0},\frac{1}{2})} \Big( 1+\frac{ \|X(s)- X(t_k)\|_{L^p(\Omega;H)}}{|s-t_k|^{\min(\theta_{X_0},\frac{1}{2})}} \Big) \diff{s}\\
		&\leq C\tau^{\min(\theta_{X_0},\frac{1}{2})} \Big( 1+\sup_{s,t\in[0,t_f],s\neq t} \frac{ \|X(s)- X(t)\|_{L^p(\Omega;H)}}{|s-t|^{\min(\theta_{X_0},\frac{1}{2})}} \Big)
		\leq C\tau^{\min(\theta_{X_0},\frac{1}{2})}.
	\end{align*}
	Next we apply Lemma~\ref{lem:Split_error} with $\zeta = 1$ in combination with Assumption~\ref{ass:f_reg} and Lemma~\ref{lem:riemann}, which then leads to
	\begin{align*}
		\| \Gamma_{f,3}  \|_{L^p(\Omega;H)}
		&\leq \tau \sum_{k=0}^{n-1} \big\| \big( \e^{-(t_n-t_{k})A} - S^{n-k-1}_{h,\tau} (I+\tau A_{h,2})^{-1}(I+\tau A_{h,1})^{-1} P_h \big) f(t_k,X(t_k)) \big\|_{L^p(\Omega;H)}  \\
		&\leq C \tau \sum_{k=0}^{n-1} \Big(\tau^{\theta_f} (1+\ln(n-k)) + \frac{h^{2}} {(t_n-t_k)^{1 - \theta_f}} \Big)  \| A^{\theta_f}f(t_k,X(t_k)) \|_{L^p(\Omega;H)} \\
		&\leq C\big((1+\ln(n)) \tau^{\theta_f} + h^{2} \big)  \Big( 1+\sup_{s\in[0,t_f]} \| A^{\theta_f}X(s)\|_{L^p(\Omega;H)} \Big)\\
		&\leq C\big((1+\ln(n)) \tau^{\theta_f} + h^{2} \big) ,
	\end{align*}
	where we used \eqref{eq:reg_space} and $\theta_{f} \leq \theta_{X_0}$ in the last step.
	The last error term $ \Gamma_{f,4}  $ can be bounded with Lemma~\ref{lem:Sne}, the facts that $\|(I+\tau A_{h,1})^{-1}P_h\|_{\L(H)} \leq 1$ and $P_h f (t_k,X(t_k)) = f_h(t_k,X(t_k))$ together with Lemma~\ref{lem:fh_Bh}~\ref{lem:fh_hold}. Then, it follows that 
	\begin{align*}
		\| \Gamma_{f,4}  \|_{L^p(\Omega;H)}
		&\leq \tau\sum_{k=0}^{n-1} \big\| S^{n-k-1}_{h,\tau}(I+\tau A_{h,2})^{-1} (I+\tau A_{h,1})^{-1} \big(P_hf(t_k,X(t_k))-f_h(t_k,X^k_{h,\tau}) \big) \big\|_{L^p(\Omega;H)}\\
		&\leq C \tau \sum_{k=0}^{n-1} \| f_h(t_k,X(t_k))-f_h(t_k,X^k_{h,\tau}) \|_{L^p(\Omega;H)}
		\leq C \tau\sum_{k=0}^{n-1} \| X(t_k)-X^k_{h,\tau} \|_{L^p(\Omega;H)}.
	\end{align*}
	Combining the bounds for $ \Gamma_{f,1}  $, $ \Gamma_{f,2} $, $\Gamma_{f,3}$, and $ \Gamma_{f,4} $, we obtain the claimed result.
\end{proof}

For the error part $ \Gamma_B $ arising from the stochastic perturbation, we bound the square of the $L^p(\Omega;H)$-norm for notational convenience. 

\begin{lemma}\label{lem:conv_W}
	Let Assumptions~\ref{ass:A}--\ref{ass:B}, \ref{ass:proj_err}, \ref{ass:Ah}, and \ref{ass:Proj_error_U} be fulfilled for $\theta_{X_0} \in [\theta_{B}, \theta_{B} + \frac{1}{2})$. For every $n \in \{1, \dots, N\}$, it follows that 
	\begin{equation*}
		\| \Gamma_B \|_{L^p(\Omega;H)}^2 
		\leq C ( 1+\ln(n))^{2} \big( \tau^{2\min(\theta_{X_0},\theta_{B}) } + h^{2(2\min(\theta_{B}, \theta_{U})+1)} \big) 
		+ C \tau \sum_{k=0}^{n-1} \| X(t_k) - X^k_{h,\tau} \|_{L^p(\Omega;H)}^2.
	\end{equation*}
\end{lemma}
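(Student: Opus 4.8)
The plan is to follow the same four-term decomposition strategy used for $\Gamma_f$ in Lemma~\ref{lem:conv_f}, but adapted to the stochastic integral and with the Burkholder--Davis--Gundy inequality \eqref{eq:burk} replacing the pathwise estimates. First I would split
\begin{align*}
	\Gamma_B
	&= \sum_{k=0}^{n-1} \int_{t_k}^{t_{k+1}} \big( \e^{-(t_n-s)A} - \e^{-(t_n-t_k)A} \big) B(s,X(s)) \diff{W(s)}\\
	&\quad + \sum_{k=0}^{n-1} \e^{-(t_n-t_k)A} \int_{t_k}^{t_{k+1}} \big( B(s,X(s)) - B(t_k,X(t_k)) \big) \diff{W(s)}\\
	&\quad + \sum_{k=0}^{n-1} \big( \e^{-(t_n-t_k)A} - S^{n-k-1}_{h,\tau}(I+\tau A_{h,2})^{-1}(I+\tau A_{h,1})^{-1}P_h \big) \int_{t_k}^{t_{k+1}} B(t_k,X(t_k)) \diff{W(s)}\\
	&\quad + \sum_{k=0}^{n-1} S^{n-k-1}_{h,\tau}(I+\tau A_{h,2})^{-1}(I+\tau A_{h,1})^{-1} \int_{t_k}^{t_{k+1}} \big( P_h B(t_k,X(t_k)) - B_h(t_k,X^k_{h,\tau}) \big) \diff{W(s)}\\
	&=: \Gamma_{B,1} + \Gamma_{B,2} + \Gamma_{B,3} + \Gamma_{B,4}.
\end{align*}
For each piece I would apply \eqref{eq:burk}, which turns the $L^p(\Omega;H)$-norm of the stochastic integral into an $L^2$-in-time norm of the $L^p(\Omega;L_2^0)$-norm of the integrand; I bound $\|\Gamma_B\|_{L^p(\Omega;H)}^2 \le C\sum_{j=1}^4 \|\Gamma_{B,j}\|_{L^p(\Omega;H)}^2$ term by term.

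For $\Gamma_{B,1}$ I write $\e^{-(t_n-s)A}-\e^{-(t_n-t_k)A} = \e^{-(t_n-s)A}(I-\e^{-(s-t_k)A})$, factor as $A^{\theta_B}\e^{-(t_n-s)A}\cdot A^{-\theta_B}(I-\e^{-(s-t_k)A})$ and use \eqref{eq:ana_semi1}, \eqref{eq:ana_semi2} together with Assumption~\ref{ass:B_reg} and the spatial regularity \eqref{eq:reg_space}; the resulting bound has a factor $(t_n-s)^{-\theta_B}(s-t_k)^{\theta_B}$ inside the time integral (handled near $s=t_n$ by the crude $\|B\|_{L_2^0}$ bound), giving a contribution of order $(1+\ln n)^2\tau^{\min(2\theta_B,1)}$ after squaring and applying Lemma~\ref{lem:riemann}. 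For $\Gamma_{B,2}$ I use that $\e^{-(t_n-t_k)A}$ is a contraction together with the Hölder-in-time and Lipschitz-in-space estimate Assumption~\ref{ass:B_hold} and \eqref{eq:reg_hoelder}, obtaining a contribution of order $\tau^{\min(2\theta_{X_0},1)}$. For $\Gamma_{B,3}$ I apply Lemma~\ref{lem:Split_error} with $\theta=\theta_B$ and a $\zeta$ chosen so that $\zeta(1-\theta_B)=\tfrac12$ (so the singularity $(t_n-t_k)^{-1/2}$ is integrable after squaring), combined with Assumption~\ref{ass:B_reg} and \eqref{eq:reg_space} with exponent $\theta_B\le\theta_{X_0}$; this yields the $(1+\ln n)^2\tau^{2\theta_B}$ and $h^{2(2\theta_B+1)}$ terms. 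For $\Gamma_{B,4}$ I use Lemma~\ref{lem:Sne} and the contractivity of $(I+\tau A_{h,1})^{-1}P_h$, insert $\pm P_h B(t_k,X(t_k))$... actually $\pm B_h(t_k,X(t_k))$, and split the integrand as $\big(P_hB(t_k,X(t_k))-B_h(t_k,X(t_k))\big) + \big(B_h(t_k,X(t_k))-B_h(t_k,X^k_{h,\tau})\big)$; the first is controlled by Lemma~\ref{lem:fh_Bh}~\ref{lem:Bh_regP} (giving $h^{2(2\theta_U+1)}$ via \eqref{eq:reg_space}) and the second by Lemma~\ref{lem:fh_Bh}~\ref{lem:Bh_hold} (giving the Grönwall-type term $C\tau\sum_k\|X(t_k)-X^k_{h,\tau}\|_{L^p(\Omega;H)}^2$, where the $\tau$ comes from $\int_{t_k}^{t_{k+1}}1\diff s=\tau$ after BDG). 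Collecting the four contributions and using $\min(\theta_B,\theta_U)$ to unify the spatial exponents gives the claimed estimate.

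The main obstacle I expect is the bookkeeping in $\Gamma_{B,1}$ and $\Gamma_{B,3}$: one must choose the split of the fractional power between semigroup and perturbation carefully so that the time singularities $(t_n-s)^{-\theta_B}$ and $(t_n-t_k)^{-\zeta(1-\theta_B)}$ remain square-integrable in time, which forces $\zeta(1-\theta_B)=\tfrac12$ and is the reason the hypothesis $\theta_{X_0}\in[\theta_B,\theta_B+\tfrac12)$ appears — it guarantees $\zeta\in[0,1]$ so Lemma~\ref{lem:Split_error} applies — while simultaneously keeping the spatial exponent $2\theta_B+2\zeta(1-\theta_B)=2\theta_B+1$ at the right order. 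A secondary technical point is that the logarithmic factors from Lemma~\ref{lem:Split_error} must be summed against $\tau/(t_n-t_k)$ via Lemma~\ref{lem:riemann} without losing an extra power of $\ln n$; keeping the final power of the logarithm at $2$ (rather than accumulating more) requires pulling $(1+\ln n)$ out of the sum before applying the Riemann-sum bound.
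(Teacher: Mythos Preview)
Your proposal is correct and follows essentially the same four-term decomposition and estimates as the paper's proof. Two minor remarks: for $\Gamma_{B,1}$ the paper splits with exponent $\tfrac12$ rather than $\theta_B$, i.e.\ $A^{1/2}\e^{-(t_n-s)A}\cdot A^{-1/2}(I-\e^{-(s-t_k)A})$, separating off the interval $[t_{n-1},t_n]$ to handle the resulting $(t_n-s)^{-1}$ singularity and obtaining $C(1+\ln n)\tau$; your $\theta_B$-split also works (and in fact needs no crude-bound trick since $(t_n-s)^{-2\theta_B}$ is integrable for $\theta_B<\tfrac12$, giving $C\tau^{2\theta_B}$ without a logarithm, already dominated by $\Gamma_{B,3}$). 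Second, your explanation of the hypothesis $\theta_{X_0}\in[\theta_B,\theta_B+\tfrac12)$ is slightly off: the constraint $\zeta\in[0,1]$ in $\Gamma_{B,3}$ follows from $\theta_B<\tfrac12$ alone; the hypothesis on $\theta_{X_0}$ is there so that Theorem~\ref{thm:ex_reg_exact_sol} applies and so that $\|A^{\theta_B}X(t_k)\|_{L^p(\Omega;H)}$ is controlled via \eqref{eq:reg_space}.
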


\begin{proof}
	To estimate the $ \Gamma_B $-error, we begin to apply the Burkholder--Davis--Gundy inequality \eqref{eq:burk} and decompose the error into four parts as follows
	\begin{align*}
		&\| \Gamma_B \|_{L^p(\Omega;H)}^2\\
		&\leq C \sum_{k=0}^{n-1} \int_{t_k}^{t_{k+1}} \big\| \e^{-(t_n-s)A} B(s,X(s))- S^{n-k-1}_{h,\tau}(I+\tau A_{h,2})^{-1}(I+\tau A_{h,1})^{-1} B_h(t_k,X^{k}_{h,\tau}) \big\|_{L^p(\Omega;L_2^0)}^2 \diff{s} \\
		&\leq C \Big( \sum_{k=0}^{n-1} \int_{t_k}^{t_{k+1}} \big\| \big( \e^{-(t_n-s)A} - \e^{-(t_n-t_k)A} \big)B(s,X(s)) \big\|_{L^p(\Omega;L_2^0)}^2 \diff{s}\\
		&\quad+ \sum_{k=0}^{n-1} \int_{t_k}^{t_{k+1}} \big\| \e^{-(t_n-t_{k})A}\big(B(s,X(s))-B(t_k,X(t_k))\big) \big\|_{L^p(\Omega;L_2^0)}^2 \diff{s}\\
		&\quad+ \sum_{k=0}^{n-1} \int_{t_k}^{t_{k+1}} \big\| \big(\e^{-(t_n-t_{k})A}-S^{n-k-1}_{h,\tau}(I+\tau A_{h,2})^{-1}(I+\tau A_{h,1})^{-1} P_h\big)B(t_k,X(t_k)) \big\|_{L^p(\Omega;L_2^0)}^2 \diff{s}\\
		&\quad+ \sum_{k=0}^{n-1} \int_{t_k}^{t_{k+1}} \big\| S^{n-k-1}_{h,\tau}(I+\tau A_{h,2})^{-1}(I+\tau A_{h,1})^{-1} \big(P_h B(t_k,X(t_k))-B_h(t_k,X^k_{h,\tau})\big) \big\|_{L^p(\Omega;L_2^0)}^2 \diff{s} \Big)\\
		&:=C \big( \Gamma_{B,1} + \Gamma_{B,2} + \Gamma_{B,3} + \Gamma_{B,4} \big).
	\end{align*}
	We begin with $ \Gamma_{B,1}  $. This can be bound by applying the semigroup bounds \eqref{eq:ana_semi1}, \eqref{eq:ana_semi2}, Assumption~\ref{ass:B_reg}, and \eqref{eq:reg_space} with $\zeta =0$ from Theorem~\ref{thm:ex_reg_exact_sol}. More precisely, we obtain
	\begin{align*}				
		\Gamma_{B,1}  
		&= \sum_{k=0}^{n-1} \int_{t_k}^{t_{k+1}} \big\| A^{\frac{1}{2}} \e^{-(t_n-s)A} A^{- \frac{1}{2}} (I-\e^{-(s-t_k)A}) B(s,X(s)) \big\|_{L^p(\Omega; L^0_2)}^{2} \diff{s} \\ 
		&\leq \tau \sup_{s \in [t_{n-1}, t_n]} \big\| B(s,X(s)) \big\|_{L^p(\Omega; L^0_2)}^{2}
		+ C \sum_{k=0}^{n-2} \int_{t_k}^{t_{k+1}} \frac{|s-t_k|}{|t_n-s|} \big( 1+ \| X(s)\|_{L^p(\Omega;H) }^2 \big)\diff{s} \\ 
		&\leq \Big(1+ C\tau \sum_{k=0}^{n-2} \int_{t_k}^{t_{k+1}} |t_n-s|^{-1} \diff{s} \Big)
		\Big(1+\sup_{s\in[0,t_f]} \| X(s) \|_{L^p(\Omega;H) }^2 \Big) 
		\leq C (1 + \ln(n)) \tau.
	\end{align*}
	To bound the second term $ \Gamma_{B,2}  $, we use Assumption~\ref{ass:B_hold} and \eqref{eq:reg_hoelder} from Theorem~\ref{thm:ex_reg_exact_sol}, to find
	\begin{align*}
		\Gamma_{B,2}  
		&= \sum_{k=0}^{n-1} \int_{t_k}^{t_{k+1}} \big\| \e^{-(t_n-t_{k})A} \big(B(s,X(s))-B(t_k,X(t_k))\big) \big\|_{L^p(\Omega;L_2^0)}^2 \diff{s} \\
		&\leq C \sum_{k=0}^{n-1} \int_{t_k}^{t_{k+1}} 
		\big\| B(s,X(s))-B(t_k,X(t_k)) \big\|_{L^p(\Omega;L_2^0)}^2
		\diff{s}\\
		&\leq C \sum_{k=0}^{n-1} \int_{t_k}^{t_{k+1}} |s - t_k|^{2\min(\theta_{X_0}, \frac{1}{2}) } \Big\| 1+\frac{\|X(s)-X(t_k)\|_{H}}{|s-t_k|^{\min(\theta_{X_0}, \frac{1}{2}) }}  \Big\|_{L^p(\Omega;\R)}^2
		\diff{s} 
		\leq C \tau^{2\min(\theta_{X_0}, \frac{1}{2}) }.
	\end{align*}
	When bounding the third term $ \Gamma_{B,3} $, we can use Lemma~\ref{lem:Split_error} with $\zeta \in [0,1]$ such that $\zeta(1-\theta_{B}) = \frac{1}{2}$, combine this with Assumption~\ref{ass:B_reg} and apply \eqref{eq:reg_space} from Theorem~\ref{thm:ex_reg_exact_sol} and Lemma~\ref{lem:riemann}. This then leads to
	\begin{align*}
		\Gamma_{B,3}  
		&= \tau \sum_{k=0}^{n-1} \big\| \big(\e^{-(t_n-t_{k})A}-S^{n-k-1}_{h,\tau}(I+\tau A_{h,2})^{-1}(I+\tau A_{h,1})^{-1} P_h\big)B(t_k,X(t_k)) \big\|_{L^p(\Omega;L_2^0)}^2\\
		&\leq C \tau \sum_{k=0}^{n-1} \Big( \tau^{\theta_{B}} ( 1+\ln(n-k)) + \frac{h^{2\theta_{B}+1}}{(t_n-t_k)^{\frac{1}{2}}} \Big)^2 \big\| A^{\theta_{B}} B(t_k,X(t_k)) \big\|_{L^p(\Omega;L_2^0)}^2\\
		&\leq C \tau \sum_{k=0}^{n-1} \Big(\tau^{2\theta_{B}} ( 1+\ln(n-k))^2 + \frac{h^{2(2\theta_{B}+1)}}{t_n-t_k} \Big) \big( 1 + \| A^{\theta_{B}} X(t_k)\|_{L^p(\Omega;H)}\big)^2\\
		&\leq C (1+\ln(n))^2 \big( \tau^{2\theta_{B}} + h^{2(2\theta_{B}+1)} \big).
	\end{align*}	
	The remaining term $\Gamma_{B,4}$, can be bound by an application of Lemma~\ref{lem:Sne} and the fact that the term $\| (I+\tau A_{h,1})^{-1} P_h \|_{\L(H)}$ is bounded. Using Lemmas~\ref{lem:fh_Bh}~\ref{lem:Bh_regP}, \ref{lem:Bh_hold}, and \eqref{eq:reg_space} from Theorem~\ref{thm:ex_reg_exact_sol}, we then obtain 
	\begin{align*}
		\Gamma_{B,4}  
		&= \tau \sum_{k=0}^{n-1} \big\| S^{n-k-1}_{h,\tau}(I+\tau A_{h,2})^{-1}(I+\tau A_{h,1})^{-1} \big(P_h B(t_k,X(t_k))-B_h(t_k,X^k_{h,\tau})\big) \big\|_{L^p(\Omega;L_2^0)}^2 \\
		&\leq C \tau \sum_{k=0}^{n-1} \Big(\| P_hB(t_k,X(t_k))-B_h(t_k,X(t_k)) \|_{L^p(\Omega;L_2^0)}^2\\
		&\qquad \qquad \qquad+ \| B_h(t_k,X(t_k))-B_h(t_k,X^k_{h,\tau}) \|_{L^p(\Omega;L_2^0)}^2\Big)\\
		&\leq C \tau h^{2(2\theta_{U}+1)} \sum_{k=1}^{n-1} \big(1+\|A^{\theta_{X_0}} X(t_k)\|_{L^p(\Omega;H)}^2\big)
		+ C \tau\sum_{k=0}^{n-1} \| X(t_k)-X^k_{h,\tau} \|_{L^p(\Omega;H)}^2\\
		&\leq C h^{2(2\theta_{U}+1)} + C \tau\sum_{k=0}^{n-1} \| X(t_k)-X^k_{h,\tau} \|_{L^p(\Omega;H)}^2.
	\end{align*}
	Combining the bounds for $\Gamma_{B,1}$, $\Gamma_{B,2}$, $\Gamma_{B,3}$, and $\Gamma_{B,4}$, we obtain the claimed result.
\end{proof}

\begin{theorem}\label{thm:result}
	Let Assumptions~\ref{ass:A}--\ref{ass:B}, \ref{ass:proj_err}, \ref{ass:Ah}, and \ref{ass:Proj_error_U} be fulfilled for $\theta_{X_0} \in [\theta_{B}, \theta_{B} + \frac{1}{2})$ and $\theta_{f} \in (0, \min(\theta_{X_0}, \frac{1}{2}))$. For every $n \in \{1, \dots, N\}$ and $\zeta\in[0,1]$, it follows that 
	\begin{align*}
		\| X(t_n)-X^n_{h,\tau} \|_{L^p(\Omega;H)}
		&\leq C (1+\ln(n)) \big( \tau^{\min(\theta_{X_0}, \theta_f, \theta_B)} + h^{2\min(\theta_{B}, \theta_{U})+1} \big) \\
		&\quad + C \big( t_n^{- \zeta(1-\theta_{X_0})} h^{2\theta_{X_0} + 2\zeta(1-\theta_{X_0}) } \big).
	\end{align*}
\end{theorem}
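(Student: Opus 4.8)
The starting point is the splitting $X(t_n)-X^n_{h,\tau}=\Gamma_{X_0}+\Gamma_f+\Gamma_B$ introduced above, for which Lemmas~\ref{lem:conv_in}, \ref{lem:conv_f} and \ref{lem:conv_W} already supply individual bounds. Writing $e_n:=\|X(t_n)-X^n_{h,\tau}\|_{L^p(\Omega;H)}$, the plan is to turn these three estimates into a single recursive inequality for $e_n$ and then close it by a discrete Grönwall argument. The initial error is disposed of directly: since $X^0_{h,\tau}=P_hX_0$, Remark~\ref{remark:proj_err} and Assumption~\ref{ass:X0} give $e_0\leq Ch^{2\theta_{X_0}}$ and $e_0\le C$, so the $k=0$ contributions to the sums below only produce a harmless $C\tau\le C\tau^{\min(\theta_{X_0},\theta_f,\theta_B,\frac{1}{2})}$.

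For $n\geq1$, combining the triangle inequality with Lemmas~\ref{lem:conv_in}--\ref{lem:conv_W}, then squaring and using the Cauchy--Schwarz bound $\big(\tau\sum_{k=0}^{n-1}e_k\big)^2\leq t_f\,\tau\sum_{k=0}^{n-1}e_k^2$ to fold the drift recursion into the (squared) diffusion recursion, yields an inequality of the form
\begin{equation*}
	e_n^2\leq G_n+C\,\tau\sum_{k=0}^{n-1}e_k^2,\qquad
	G_n=C(1+\ln n)^2\big(\tau^{2\beta}+h^{2\gamma}\big)+Ch^{2(2\theta_f+1)}+C\,t_n^{-2\zeta(1-\theta_{X_0})}h^{2\mu},
\end{equation*}
where $\beta=\min(\theta_{X_0},\theta_f,\theta_B,\frac{1}{2})$, $\gamma=2\min(\theta_B,\theta_U)+1$ and $\mu=2\theta_{X_0}+2\zeta(1-\theta_{X_0})$, using $\tau\le1$ to merge the various $\tau$-powers coming from the three lemmas into the single exponent $2\beta$. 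A discrete Grönwall inequality then upgrades this to $e_n^2\leq G_n+C\,\tau\sum_{k=0}^{n-1}G_k$, so everything reduces to bounding $\tau\sum_{k=0}^{n-1}G_k$ by $C\,G_n$ up to a harmless additional logarithm. The monotone parts of $G_k$ — the $(1+\ln k)^2$-terms and the constant $h^{2(2\theta_f+1)}$ — give no trouble, since $\tau\sum_{k=0}^{n-1}(1+\ln k)^2\le t_f(1+\ln n)^2$ and $\tau\sum_{k=0}^{n-1}h^{2(2\theta_f+1)}\le t_fh^{2(2\theta_f+1)}$.

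The step I expect to be the crux is controlling the time-singular spatial term inherited from $\Gamma_{X_0}$, whose Grönwall-smeared version is $C\,h^{2\mu}\,\tau\sum_{k=1}^{n-1}t_k^{-2\zeta(1-\theta_{X_0})}$: the Riemann-type sum $\tau\sum_{k=1}^{n-1}t_k^{-\rho}$ is controlled (by a constant if $\rho<1$, by $C(1+\ln n)$ if $\rho=1$, via Lemma~\ref{lem:riemann}) only when $\rho\le1$, which is precisely why one cannot simply carry the stated $\zeta$ through the recursion. The remedy is to use inside the summation the safe exponent $\zeta_0$ with $\zeta_0(1-\theta_{X_0})=\min(\frac{1}{2},1-\theta_{X_0})\le\frac{1}{2}$ — for which Lemma~\ref{lem:conv_in} still applies and $2\zeta_0(1-\theta_{X_0})\le1$ — keeping the stated $\zeta$ only for the leading additive $\Gamma_{X_0}$ contribution at level $n$, which is where the term $t_n^{-\zeta(1-\theta_{X_0})}h^{2\theta_{X_0}+2\zeta(1-\theta_{X_0})}$ in the conclusion originates. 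The smeared leftover is then $\le C(1+\ln n)h^{2(2\theta_{X_0}+2\zeta_0(1-\theta_{X_0}))}$, and since $\theta_{X_0}\ge\theta_B\ge\min(\theta_B,\theta_U)$ one checks $h^{2\theta_{X_0}+2\zeta_0(1-\theta_{X_0})}\le h^{2\min(\theta_B,\theta_U)+1}$ for $h\in(0,1)$, so this contribution is absorbed into the $(1+\ln n)^2h^{2\gamma}$ term of $G_n$. Finally, taking square roots of $e_n^2\le CG_n$ via $\sqrt{a+b+c}\le\sqrt a+\sqrt b+\sqrt c$ and relabelling the generic constant produces exactly the asserted bound, the three summands corresponding to the $\Gamma_B$/$\Gamma_f$ data, the $\Gamma_f$ spatial contribution, and the $\Gamma_{X_0}$ spatial contribution, respectively.
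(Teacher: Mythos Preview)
Your proposal is correct and follows the same overall route as the paper: decompose into $\Gamma_{X_0}+\Gamma_f+\Gamma_B$, square, feed the bounds of Lemmas~\ref{lem:conv_in}--\ref{lem:conv_W} into a discrete Gr\"onwall inequality, and take square roots.

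Where you differ from the paper is in the level of care at the Gr\"onwall step. The paper simply invokes Lemma~\ref{gron} on the inequality $e_n^2\le G_n+C\tau\sum_{k<n}e_k^2$ and reads off the result, but that lemma is stated for a \emph{constant} inhomogeneity $a$, while here $G_m$ contains the decreasing factor $t_m^{-2\zeta(1-\theta_{X_0})}$ that becomes large at early times. You correctly spotted this and your fix---using the variable-coefficient form $e_n^2\le G_n+C\tau\sum_{k<n}G_k$, applying Lemma~\ref{lem:conv_in} at the intermediate levels with a ``safe'' $\zeta_0$ satisfying $2\zeta_0(1-\theta_{X_0})\le 1$ so that Lemma~\ref{lem:riemann} controls the sum, and retaining the user-specified $\zeta$ only for the undisturbed $G_n$ contribution---is the right way to make the argument watertight and to land exactly on the $t_n^{-\zeta(1-\theta_{X_0})}$ term in the stated bound. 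The absorption $h^{2\theta_{X_0}+2\zeta_0(1-\theta_{X_0})}\le h^{2\min(\theta_B,\theta_U)+1}$ that you check via $\theta_{X_0}\ge\theta_B$ and $\theta_B,\theta_U<\tfrac12$ is also correct. In short, your argument is a more explicit version of what the paper records in two lines.
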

\begin{remark}
	For the final time $t_N = t_f$, i.e. $n = N$, we can choose $\zeta \in [0,1]$ such that $\zeta(1-\theta_{X_0})=\frac{1}{2}$ and obtain the error bound
	\begin{equation*}
		\| X(t_N)-X^N_{h,\tau} \|_{L^p(\Omega;H)}
		\leq C (1+\ln(N)) \big( \tau^{\min(\theta_{X_0}, \theta_f, \theta_B)} + h^{2\min(\theta_{B}, \theta_{U})+1} \big) 
		+ C h^{2\min(\theta_{X_0}, \frac{1}{2}) + 1}.
	\end{equation*}
	We additionally included $Ct_n^{-\zeta(1-\theta)} h^{2\theta+2\zeta(1-\theta)}$ in the error bound to include the case of early time steps where $t_n^{-\zeta(1-\theta)}$ cannot be bounded independently of $\tau$. In this case, the parabolic smoothing has not decreased the error coming from the initial value. To avoid the pole containing $t_n$, we can choose $\zeta = 0$ and find
	\begin{equation*}
		\| X(t_n)-X^n_{h,\tau} \|_{L^p(\Omega;H)}
		\leq C (1+\ln(n)) \big( \tau^{\min(\theta_{X_0}, \theta_f, \theta_B)} + h^{2\min(\theta_{B}, \theta_{U})+1} \big) 
		+ C h^{2 \theta_{X_0} }
	\end{equation*}
	or we keep the pole in the form of
	\begin{equation*}
		\| X(t_n)-X^n_{h,\tau} \|_{L^p(\Omega;H)}
		\leq C (1+\ln(n)) \big( \tau^{\min(\theta_{X_0}, \theta_f, \theta_B)} + h^{2\min(\theta_{B}, \theta_{U})+1} \big) 
		+ C t_n^{-\frac{1}{2}} h^{2\theta_{X_0} + 1 }
	\end{equation*}
	to keep the optimal spatial convergence rate.
\end{remark}

\begin{proof}[Proof of Theorem~\ref{thm:result}]
	For $\zeta \in [0,1]$, we combine Lemmas~\ref{lem:conv_in}, \ref{lem:conv_f}, and~\ref{lem:conv_W}. This then leads to
	\begin{align*}
		&\| X(t_n)-X^n_{h,\tau} \|_{L^p(\Omega;H)}^2\\
		&\leq C \| \Gamma_{X_0} \|_{L^p(\Omega;H)}^2 
		+ C \| \Gamma_f \|_{L^p(\Omega;H)}^2 
		+ C \| \Gamma_B \|_{L^p(\Omega;H)}^2 \\
		&\leq C (1+\ln(n))^{2} \big( \tau^{2\min(\theta_{X_0}, \theta_f, \theta_B)} + h^{2(2\min(\theta_{B}, \theta_{U})+1)} \big) 
		+ C t_n^{- 2\zeta(1-\theta_{X_0})} h^{2 (2\theta_{X_0} + 2\zeta(1-\theta_{X_0})) }\\
		&\quad 
		+ h^{4}
		+ C \tau \sum_{k=0}^{n-1} \| X(t_k) - X^k_{h,\tau} \|_{L^p(\Omega;H)}^2.
	\end{align*}
	Using the discrete Gr\"onwall's inequality (Lemma~\ref{gron}) and taking the square root, we obtain the claimed result
	\begin{align*}
		\| X(t_n)-X^n_{h,\tau} \|_{L^p(\Omega;H)}
		&\leq C (1+\ln(n)) \big( \tau^{\min(\theta_{X_0}, \theta_f, \theta_B)} + h^{2\min(\theta_{B}, \theta_{U})+1} \big) \\
		&\quad + C  t_n^{- \zeta(1-\theta_{X_0})} h^{2\theta_{X_0} + 2\zeta(1-\theta_{X_0})} .
	\end{align*}
\end{proof}

The previous theorem does not yet provide an error bound that matches that of the Euler–-Maruyama method (without splitting). Under additional assumptions, however, this gap can be closed.
For splitting methods, it is well known that the splitting error vanishes when the involved operators commute, see for example \cite[Section~IV.1.1]{HundsdorferVerwer.2003}. This commutativity is exploited in Lemma~\ref{lem:high_conv} to derive an improved bound for the splitting error $\e^{-t_n  A_h} P_h - S_{h,\tau}^{n-1}(I+\tau A_{h,2})^{-1}(I+\tau A_{h,1})^{-1} P_h$.
In addition, we assume that the operators are self-adjoint. This assumption mainly serves to simplify the proof. It ensures that the appearing operators are sectorial and allows for a straightforward application of the functional calculus theorem. If these arguments can be established by other means, it may be possible to relax this assumption.

\begin{theorem}\label{thm:result_selfadjoint}
	Let Assumptions~\ref{ass:A}--\ref{ass:B}, \ref{ass:proj_err}, \ref{ass:Ah}, and \ref{ass:Proj_error_U} be fulfilled and let $A_{h,1}$ and $A_{h,2}$ additionally be self-adjoint and commute with each other. For every $n \in \{1, \dots, N\}$, it follows that 	
	\begin{align*}
		\| X(t_n)-X^n_{h,\tau} \|_{L^p(\Omega;H)}
		&\leq C t_n^{- \zeta(1-\theta_{X_0})} \big( (1+\ln(n))^{\theta_{X_0}} \tau^{\theta_{X_0} + \zeta(1-\theta_{X_0})} + h^{2\theta_{X_0} + 2\zeta(1-\theta_{X_0})}\big)\\
		&\quad + C (1 + \ln(n))^{\frac{1}{2}+\theta_B} \big( \tau^{\theta_B + \frac{1}{2}} + h^{2\theta_B + 1}\big)
		+ C \big(\tau^{\min(\theta_{X_0}, \frac{1}{2}) } + h^{2\theta_{U}+1}\big),
	\end{align*}
	where $\zeta \in [0,1]$.
\end{theorem}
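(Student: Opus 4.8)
The plan is to reproduce the proof of Theorem~\ref{thm:result} almost verbatim, the single structural change being that every invocation of Lemma~\ref{lem:Split_error} is replaced by its self-adjoint counterpart Lemma~\ref{lem:Split_error_B}. That is, I would first establish the self-adjoint analogues of Lemmas~\ref{lem:conv_in}, \ref{lem:conv_f} and \ref{lem:conv_W}. For $\Gamma_{X_0}$ nothing more is needed than inserting Assumption~\ref{ass:X0} into Lemma~\ref{lem:Split_error_B} with $\theta=\theta_{X_0}$, which produces exactly $\|\Gamma_{X_0}\|_{L^p(\Omega;H)}\le Ct_n^{-\zeta(1-\theta_{X_0})}\big((1+\ln n)^{\theta_{X_0}}\tau^{\theta_{X_0}+\zeta(1-\theta_{X_0})}+h^{2\theta_{X_0}+2\zeta(1-\theta_{X_0})}\big)$, the first line of the assertion.

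For $\Gamma_f$ I would keep the four-part decomposition $\Gamma_f=\Gamma_{f,1}+\Gamma_{f,2}+\Gamma_{f,3}+\Gamma_{f,4}$ from the proof of Lemma~\ref{lem:conv_f}. The pieces $\Gamma_{f,1}$, $\Gamma_{f,2}$ and $\Gamma_{f,4}$ do not involve the splitting operator and are bounded verbatim by $C(1+\ln n)\tau$, $C\tau^{\min(\theta_{X_0},1/2)}$ and $C\tau\sum_k\|X(t_k)-X^k_{h,\tau}\|_{L^p(\Omega;H)}$ (the last one fuelling the Gr\"onwall step). Only $\Gamma_{f,3}$ is treated anew: writing $f(t_k,X(t_k))=A^{-\theta_f}A^{\theta_f}f(t_k,X(t_k))$ and using Assumption~\ref{ass:f_reg} together with \eqref{eq:reg_space}, I would apply Lemma~\ref{lem:Split_error_B} with $\theta=\theta_f$ and $\zeta$ chosen as large as the summability of $\tau\sum_k(t_n-t_k)^{-\zeta(1-\theta_f)}$ allows, i.e.\ $\zeta(1-\theta_f)\le1$; this simultaneously lifts the temporal exponent to at least $\min(\theta_{X_0},1/2)$ and the spatial exponent to $2\theta_f+2\zeta(1-\theta_f)$, which at $\zeta=1$ equals $2$, so that the $h$-part is absorbed into $h^{\min(2\theta_U+1,2)}$. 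The term $\Gamma_B$ is handled analogously after applying the Burkholder--Davis--Gundy inequality \eqref{eq:burk} and the four-part split of the proof of Lemma~\ref{lem:conv_W}; again the only new ingredient enters $\Gamma_{B,3}$, where Lemma~\ref{lem:Split_error_B} with $\theta=\theta_B$ is used, but now the weight $(t_n-t_k)^{-\zeta(1-\theta_B)}$ appears \emph{squared} inside the time integral, so $\zeta$ must satisfy $2\zeta(1-\theta_B)\le1$; the remaining pieces $\Gamma_{B,1}$, $\Gamma_{B,2}$, $\Gamma_{B,4}$ give, as before, $C(1+\ln n)\tau$, $C\tau^{\min(2\theta_{X_0},1)}$ and $Ch^{2(2\theta_U+1)}+C\tau\sum_k\|X(t_k)-X^k_{h,\tau}\|_{L^p(\Omega;H)}^2$.

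Finally I would assemble $\|X(t_n)-X^n_{h,\tau}\|_{L^p(\Omega;H)}^2\le C\big(\|\Gamma_{X_0}\|^2+\|\Gamma_f\|^2+\|\Gamma_B\|^2\big)$, isolate the accumulated term $C\tau\sum_{k=0}^{n-1}\|X(t_k)-X^k_{h,\tau}\|_{L^p(\Omega;H)}^2$, apply the discrete Gr\"onwall inequality (Lemma~\ref{gron}) and take square roots: the $\Gamma_{X_0}$ part remains as the first displayed line, and everything else is collected into $C(1+\ln n)^{1/2}\big(\tau^{\min(\theta_{X_0},1/2)}+h^{\min(2\theta_U+1,2)}\big)$. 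I expect the genuine obstacle to be precisely the trade-off that distinguishes this result from Theorem~\ref{thm:result}: the parabolic-smoothing weight $t_n^{-\zeta(1-\theta)}$ that Lemma~\ref{lem:Split_error_B} now also attaches to the \emph{temporal} error has to be spent at once on (i) keeping the discrete time sums — and, in the $\Gamma_B$ estimate, their squares — bounded uniformly in $\tau$, and (ii) maximising the $\tau$- and $h$-exponents; the attendant bookkeeping of the $(1+\ln n)$-powers and the verification that no leftover term escapes the two displayed lines (using, for instance, $\tau^a\le\tau^b$ and $h^a\le h^b$ whenever $a\ge b$, and that $\theta_U$ is a free discretization parameter) is the delicate part, though it is structurally a simplification of, rather than an addition to, the argument behind Theorem~\ref{thm:result}.
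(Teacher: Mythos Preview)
Your proposal is correct and follows the paper's own route almost exactly: replace Lemma~\ref{lem:Split_error} by Lemma~\ref{lem:Split_error_B} in the treatment of $\Gamma_{X_0}$, $\Gamma_{f,3}$ and $\Gamma_{B,3}$, keep the remaining pieces of Lemmas~\ref{lem:conv_f} and \ref{lem:conv_W} verbatim, and finish with Gr\"onwall. The only difference is a parameter choice: for $\Gamma_{f,3}$ and $\Gamma_{B,3}$ the paper applies Lemma~\ref{lem:Split_error_B} with $\theta=0$ (and $\zeta=1$, respectively $\zeta=\tfrac12$), whereas you keep $\theta=\theta_f$ and $\theta=\theta_B$ as in the non-self-adjoint proof. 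Both choices yield bounds that are absorbed into the asserted estimate; the paper's $\theta=0$ is marginally cleaner since it bypasses the second parts of Assumptions~\ref{ass:f_reg} and~\ref{ass:B_reg} and makes the summability constraint on $\zeta$ immediate.
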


\begin{proof}
	The proof essentially follows analogously to the proof of Theorem~\ref{thm:result}. The difference is that we exchange Lemma~\ref{lem:Split_error} by Lemma~\ref{lem:Split_error_B} in Lemmas~\ref{lem:conv_in}--\ref{lem:conv_W}. More precisely, the result of Lemma~\ref{lem:conv_in} becomes
	\begin{equation*}
		\| \Gamma_{X_0} \|_{L^p(\Omega;H)} 
		\leq C t_n^{- \zeta(1-\theta_{X_0})} \big( (1+\ln(n))^{\theta_{X_0}} \tau^{\theta_{X_0} + \zeta(1-\theta_{X_0})} + h^{2\theta_{X_0} + 2\zeta(1-\theta_{X_0})}\big).
	\end{equation*}
	To adjust Lemma~\ref{lem:conv_f}, we apply Lemma~\ref{lem:Split_error_B} with $\theta = 0$ and $\zeta=1$ instead of Lemma~\ref{lem:Split_error} for $\Gamma_{f,3}$. An alternative bound to Lemma~\ref{lem:conv_f} then is
	\begin{equation*}
		\| \Gamma_f \|_{L^p(\Omega;H)} 
		\leq C \big(\tau^{\min(\theta_{X_0},\frac{1}{2})} + (1+\ln(n)) (\tau + h^{2})\big) + C \tau\sum_{k=0}^{n-1} \| X(t_k)-X^k_{h,\tau} \|_{L^p(\Omega;H)}.
	\end{equation*}
	Additionally, we bound $ \Gamma_{B,3} $ in Lemma~\ref{lem:conv_W} using Lemma~\ref{lem:Split_error_B} with $\theta = \theta_B$, $\zeta = (2(1-\theta_B))^{-1}$ instead of Lemma~\ref{lem:Split_error}. This leads to an updated bound of Lemma~\ref{lem:conv_W} of the form
	\begin{align*}
		\| \Gamma_B \|_{L^p(\Omega;H)}^2 
		&\leq 
		C (1+\ln(n))^{1+2\theta_B} \big( \tau^{2(\theta_B + \frac{1}{2}) } + h^{2(2\theta_B + 1)}\big)
		+ C \big(\tau^{2\min(\theta_{X_0}, \frac{1}{2}) } + h^{2(2\theta_{U}+1)}\big)\\
		&\quad + C \tau\sum_{k=0}^{n-1} \| X(t_k)-X^k_{h,\tau} \|_{L^p(\Omega;H)}^2.
	\end{align*}
	The last step to prove the desired bound is to combine the three error terms, apply Gr\"onwall's inequality and taking the square root.
\end{proof}

\section{Example: A fully discretized domain decomposition scheme}\label{sec:dG}

In this section, we exemplify the theoretical results. For the abstract Equation~\eqref{eq:SPDE}, we state a more concrete SPDE in Section~\ref{subsec:cont_Prob} and verify that the equation fits in the framework from Section~\ref{sec:Prob_description}. Following the problem description, we state the space discretization in Section~\ref{subsec:dG_Notation} followed by the full discretization in Section~\ref{subse:full_disc}. In both sections, we verify that the assumptions stated in Sections~\ref{subsec:space_discretization} and \ref{subsec:time_discretization} are fulfilled.

\subsection{Continuous problem}
\label{subsec:cont_Prob}
We consider the semi-linear stochastic diffusion equation
\begin{equation}\label{eq:model}
	\begin{cases}
		\diff{X}\coord = \big[\nabla\cdot (K(\mathbf{x}) \nabla X\coord )
		+f(t,X\coord) \big] \diff{t} + B(t,X\coord) \diff{W}(t,\mathbf{x}), \hspace{-4pt}& \coord\in (0,t_f]\times \D;\\
		X\coord=0, & \coord \in [0,t_f] \times \partial\D;\\
		X(0,\mathbf{x})=X_0(\mathbf{x}),& \mathbf{x}\in \D,
	\end{cases}
\end{equation}
where $\D \subset \R^d$, $d \in \N$, is an open, convex polygon, and $t_f \in \R^+$.
The linear operator $A$ is given through
\begin{equation}\label{eq:def_A}
	A v(\mathbf{x}) = - \nabla\cdot\left(K(\mathbf{x}) \nabla v(\mathbf{x}) \right), \quad \mathbf{x} \in \D,
\end{equation}
where the matrix-valued function $K$ is Lipschitz continuous w.r.t. $\mathbf{x}$, fulfills
\begin{equation}\label{eq:def_K_sym}
	K \in \R^{d \times d} \quad \text{ is symmetric } 
\end{equation}
and the eigenvalues of $K(\mathbf{x})$ lie in the interval
\begin{equation}\label{eq:def_K_eigenvalues}
	[K_0,K_1] \quad \text{for } K_0,K_1 \in \R^+ \text{ for almost every } \mathbf{x} \in \D.
\end{equation}
We interpret $A$ as an unbounded operator in $L^2(\D)$, i.e.~$A \colon \dom(A)\subset L^2(\D)\rightarrow L^2(\D)$, where the domain of $A$ is given by $\dom(A) = \{ v \in H^1_0(\D) : Av \in L^2(\D)\} = H^2(\D)\cap H^1_0(\D)$. Note that $\dom(A)$ is a dense subset of $L^2(\D)$ and that $ \| v \|_{H^2(\D)} \leq C \| Av \|_{L^2(\D)}$ for all $v \in \dom(A)$, compare \cite[Theorem~9.24]{Hackbusch2017} in combination with \cite[Theorem~1.4.3.]{Grisvard.2011}. 
For more details on Sobolev spaces and their norms, we refer the reader to \cite[Chapter~6.2]{Hackbusch2017}.

For our method, we want to split $A$ into two parts $A_1$ and $A_2$. As a decomposition of the operator $A$, we choose a domain decomposition. To specify this, we choose two overlapping subdomains $\{\D_{\ell}\}_{{\ell}=1}^{2}$ of $\D$ with a Lipschitz boundary. The union of the two subdomains is $\D$ again. On these subdomains, we define the non-negative weight functions $\{\chi_{\ell}\}_{{\ell}=1}^{2}\subset W^{1,\infty}(\D)$. The support of $\chi_{\ell}$ is $\D_{\ell}$ and the two weight functions form a partition of unity on $\D$ and fulfill $\| \chi_{\ell}\|_{L^{\infty}(\D)} \leq 1$ for $\ell \in \{1,2\}$. Furthermore, we assume that the weight functions are piecewise linear. We define 
\begin{equation}\label{eq:Asplit}
	A_{\ell} v(\mathbf{x}) = - \nabla\cdot\left( \chi_{\ell} (\mathbf{x}) K(\mathbf{x})\nabla v(\mathbf{x}) \right), \quad \ell \in \{1,2\}, \quad \mathbf{x} \in \D,
\end{equation}
for $v\in \dom(A_\ell) =\{u\in H^1_0(\D):A_{\ell}u\in L^2(\D)\}$. Note that $\dom(A) \subseteq \dom(A_{\ell})$ is fulfilled. As the set $\dom(A_{\ell})$ does not appear in the analysis of the fully discretized method, we do not introduce this in much detail here but refer the reader to \cite[Equation~(2.8) and Lemma~2.4]{HansenHenningsson.2017} for more details.

The operators $A$, $A_1$, and $A_2$ fit into the setting of Assumption~\ref{ass:A}, where $H = L^2(\D)$. First, we note that since $\chi_1 + \chi_2 = 1$ on $\D$, it follows that $A_1 + A_2 = A$. 
Moreover, we can show that Assumption~\ref{ass:A_sect} is fulfilled. Since $K$ is positive definite, the operator $A$ is positive, and $-A$ generates an analytical contraction semigroup, compare \cite[Chapter~7, Theorem 2.7]{Pazy1983}. Furthermore, in our example, the operator $A$ is additionally self-adjoint. Note that only the constant of Assumption~\ref{ass:A_sect} is dependent on $\| \nabla \chi_{\ell} \|_{L^{\infty}(\D)^d}$, which can be linked to the overlap size of the domain decomposition.

\begin{lemma}\label{lem:AellAinv}
	Let $A$ and $A_{\ell}$, $\ell \in \{1,2\}$, be given as in \eqref{eq:def_A} and \eqref{eq:Asplit}, respectively. Then it follows that
	\begin{equation*}
		\| A_\ell A^{-1} \|_{\mathcal{L}(L^2(\mathcal{D}))}
		\leq 1+C \| \nabla \chi_{\ell} \|_{L^{\infty}(\mathcal{D})^d },
	\end{equation*}
	where $C$ does not depend on $\chi_{\ell}$. This verifies that Assumption~\ref{ass:op_exch} is fulfilled. 
\end{lemma}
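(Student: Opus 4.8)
The plan is to estimate the operator norm of $A_\ell A^{-1}$ acting on $v \in L^2(\mathcal{D})$ by writing $w = A^{-1} v \in \dom(A) = H^2(\mathcal{D}) \cap H^1_0(\mathcal{D})$, so that $A_\ell A^{-1} v = A_\ell w$, and then controlling $\| A_\ell w \|_{L^2(\mathcal{D})}$ in terms of $\| v \|_{L^2(\mathcal{D})} = \| A w \|_{L^2(\mathcal{D})}$. The key algebraic observation is the product-rule identity
\begin{equation*}
	A_\ell w = - \nabla \cdot \big( \chi_\ell K \nabla w \big) = - \chi_\ell \nabla \cdot \big( K \nabla w \big) - \nabla \chi_\ell \cdot \big( K \nabla w \big) = \chi_\ell \, A w - \nabla \chi_\ell \cdot \big( K \nabla w \big),
\end{equation*}
which is valid since $\chi_\ell$ is Lipschitz (indeed piecewise linear) and $K \nabla w \in H^1(\mathcal{D})^d$.

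First I would bound the first term: since $0 \le \chi_\ell \le 1$ pointwise, $\| \chi_\ell A w \|_{L^2(\mathcal{D})} \le \| A w \|_{L^2(\mathcal{D})} = \| v \|_{L^2(\mathcal{D})}$, which produces the leading constant $1$. Second I would bound the gradient term: using $\| \nabla \chi_\ell \|_{L^\infty(\mathcal{D})^d}$ and the bound \eqref{eq:def_K_eigenvalues} on the eigenvalues of $K$,
\begin{equation*}
	\| \nabla \chi_\ell \cdot ( K \nabla w ) \|_{L^2(\mathcal{D})} \le \| \nabla \chi_\ell \|_{L^\infty(\mathcal{D})^d} \, K_1 \, \| \nabla w \|_{L^2(\mathcal{D})^d} \le C \| \nabla \chi_\ell \|_{L^\infty(\mathcal{D})^d} \, \| A w \|_{L^2(\mathcal{D})},
\end{equation*}
where in the last step I invoke the elliptic regularity estimate $\| w \|_{H^2(\mathcal{D})} \le C \| A w \|_{L^2(\mathcal{D})}$ already cited in the text (so in particular $\| \nabla w \|_{L^2(\mathcal{D})^d} \le C \| A w \|_{L^2(\mathcal{D})}$), with $C$ depending only on $\mathcal{D}$, $K_0$, $K_1$ and the Lipschitz constant of $K$, but not on $\chi_\ell$. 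Combining the two bounds via the triangle inequality gives $\| A_\ell A^{-1} v \|_{L^2(\mathcal{D})} \le \big( 1 + C \| \nabla \chi_\ell \|_{L^\infty(\mathcal{D})^d} \big) \| v \|_{L^2(\mathcal{D})}$, which is exactly the claim; the well-definedness and boundedness assertion of Assumption~\ref{ass:op_exch} follows since the right-hand constant is finite (recall $\chi_\ell \in W^{1,\infty}(\mathcal{D})$).

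The one point that needs a little care — and which I expect to be the main (mild) obstacle — is justifying the product-rule splitting of $A_\ell w$ rigorously: one must check that $\chi_\ell K \nabla w$ is weakly differentiable with the expected distributional divergence, i.e. that the Leibniz rule holds for the product of the Lipschitz scalar $\chi_\ell$ and the $H^1$-vector field $K \nabla w$. This is standard (it follows by approximation, or from the fact that $W^{1,\infty} \cdot H^1 \subset H^1$ with the usual product rule), but it is where the piecewise-linearity/Lipschitz hypothesis on $\chi_\ell$ is actually used, and it is worth stating explicitly. Everything else is a direct chain of elementary inequalities, and the claimed $\chi_\ell$-independence of $C$ is transparent because $\chi_\ell$ enters only through the two harmless factors $\| \chi_\ell \|_{L^\infty} \le 1$ and $\| \nabla \chi_\ell \|_{L^\infty(\mathcal{D})^d}$.
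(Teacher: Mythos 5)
Your proposal is correct and follows essentially the same route as the paper: both rest on the Leibniz-rule decomposition $-\nabla\cdot(\chi_\ell K\nabla w)=\chi_\ell Aw-\nabla\chi_\ell\cdot(K\nabla w)$, the pointwise bound $0\le\chi_\ell\le 1$ for the leading constant $1$, and the estimate $\|\nabla w\|_{L^2(\mathcal{D})^d}\le C\|Aw\|_{L^2(\mathcal{D})}$ (the paper gets this from the continuous embedding $\dom(A)\hookrightarrow H^1_0(\mathcal{D})$ rather than the full $H^2$ elliptic regularity you invoke, but either suffices). The only cosmetic difference is that the paper phrases the argument as a bound on $\|A_\ell\|_{\mathcal{L}(\dom(A);L^2(\mathcal{D}))}$ with the graph norm, while you estimate $\|A_\ell A^{-1}v\|_{L^2(\mathcal{D})}$ directly; these are the same computation.
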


\begin{proof}
	For the proof, we look at the operator norm of $A_{\ell}$ and $A^{-1}$ separately. More precisely, we can bound the operator norm of the product by 
	\begin{equation*}
		\| A_\ell A^{-1} \|_{\mathcal{L}(L^2(\mathcal{D}))}
		\leq\| A_\ell\|_{\mathcal{L}(\dom(A);L^2(\mathcal{D}))} \| A^{-1}\|_{\mathcal{L}\left(L^2(\mathcal{D});\dom(A)\right)},
	\end{equation*}
	where we equip $\dom(A)$ with the graph norm $\|\cdot\|_{L^2(\mathcal{D})}+\| A\cdot\|_{L^2(\mathcal{D})}$.
	First, we note that the operator $A^{-1} \colon L^2(\mathcal{D}) \to \dom(A)$ is bounded. It remains to show that $A_\ell \colon \dom(A) \to L^2(\mathcal{D})$ is bounded. For all $v\in \dom(A)$, it follows that 
	\begin{align*}
		\frac{\| A_{\ell}v \|_{L^2(\mathcal{D})}}{\|v\|_{L^2(\mathcal{D})}+\| Av\|_{L^2(\mathcal{D})}}
		&\leq \frac{\| \nabla \cdot (\chi_{\ell} K \nabla v )\|_{L^2(\mathcal{D})}}{\|v\|_{L^2(\mathcal{D})}+\| Av\|_{L^2(\mathcal{D})}}\leq \frac{\| \chi_{\ell}Av\|_{L^2(\mathcal{D})}+\| \nabla \chi_{\ell} \cdot K\nabla v\|_{L^2(\mathcal{D})}}{\|v\|_{L^2(\mathcal{D})}+\| Av\|_{L^2(\mathcal{D})}}\\
		&\leq \frac{\| Av\|_{L^2(\mathcal{D})} + \| \nabla\chi_{\ell}\|_{L^{\infty}(\D)^d}\| K\nabla v\|_{L^2(\mathcal{D})^d}}{\|v\|_{L^2(\mathcal{D})}+\| Av\|_{L^2(\mathcal{D})}}\\ 
		&\leq 1+\frac{K_1\| \nabla\chi_{\ell}\|_{L^{\infty}(\D)^d} \| \nabla v\|_{L^2(\mathcal{D})^d}}{\|v\|_{L^2(\mathcal{D})}+\| Av\|_{L^2(\mathcal{D})}}
		\leq 1+C\| \nabla \chi_{\ell}\|_{L^{\infty}(\mathcal{D})^d},
	\end{align*}
	where we used in the last step that $\dom(A)$ is continuously embedded in $H^1_0(\D)$ and $\| \nabla \cdot \|_{L^2(\D)^d}$ is an equivalent norm in $H^1_0(\D)$.
\end{proof}

Moreover, for $\theta_f, \theta_{B} \in [0,\frac{1}{2})$, $\theta_{X_0} \in [\max(\theta_{f}, \theta_{B}), \theta_{B} + \frac{1}{2})$, and $U=L^2(\D)$, let $X_0$, $f$, $W$, and $B$ satisfy Assumptions~\ref{ass:X0}--\ref{ass:B}. 

\subsection{Discontinuous Galerkin spatial discretization}\label{subsec:dG_Notation}

When discretizing the SPDE \eqref{eq:model} in space, we choose the discontinuous Galerkin method, more specifically, the symmetric interior penalty  Galerkin (SIPG) method. In the following, we give a short introduction and explanation of the notation used. For further information, we refer the reader to \cite{DiPietro2012, Riviere}. 

We discretize the spatial domain $\D$ with the mesh $\mathcal{T}_h$, which contains the elements $T \in \mathcal{T}_h$. The elements $T$ are affine mappings of a reference element $\hat{T}$, which is a convex polyhedron in $\R^d$ with $n_0$ faces.
In the following, we denote the diameter of an element $T$ by $h_T$. The value $h$ indicates the maximum of the diameters $h_T$ of elements in the mesh $\mathcal{T}_h$. We assume that the mesh family $\{\mathcal{T}_{h_i}\}_{h\in I}$ with $I\subset \R^+$, fulfills the conditions of \cite[Lemma~1.62]{DiPietro2012}, and there is a constant $C \in \R^+$, independent of $h$, such that $h_T\geq Ch$ for all $T\in\mathcal{T}_h$. Moreover, we assume that $\chi_{\ell}\vert_T$, $\ell \in \{1,2\}$, is linear on every $T\in\mathcal{T}_h$ for every $h$. 
The collection of the edges $e$ of elements $T$ in $\mathcal{T}_h$ is denoted by $F_h$. As one edge can belong to two elements of $\mathcal{T}_h$ at the same time, those two corresponding elements are denoted by $T_e^+$ and $T^-_e$, where the choice of notation of the two elements is arbitrary but after that the choice is consistent. If $e \subset \partial \D$, there is only one element $T_e$ associated with this particular edge. In the following, we denote $h_e = \min(h_{T_e^+}, h_{T_e^-})$ if $e$ does not lie on the boundary $\partial \D$ and $h_e = h_{T}$ for $e \subset \partial \D$.
The discretization space is defined as
\begin{equation*}
	V_h=\{v_h\in L^2(\D):v_h|_{T}\in \mathbb{P}^{\orderPoly}(T) \text{ for all } T\in\mathcal{T}_h \}, 
\end{equation*}
where $\mathbb{P}^\orderPoly(T)$ denotes the polynomials of at most order $\orderPoly$ on the element $T$, and $v_h\vert_{T}$ is the restriction of $v_h$ to an element $T$. 
An example for a basis for the space $V_h$ can be put together by defining polynomial bases on each element $T \in \mathcal{T}_h$ such that every function has its support only in one element $T$. When considering two basis functions $\varphi_{i}$ and $\varphi_{j}$ of $V_h$, the inner product $\inner[H]{\varphi_{i}}{\varphi_{j}}$ can only be nonzero if the functions' support lies on the same element $T$. Thus, after choosing a suitable order of the basis elements, the mass matrix $(\mathbf{M})_{i,j}  = \inner[H]{\varphi_{i}}{\varphi_{j}}$ obtains a block structure. The blocks represent the elements $T \in \mathcal{T}_h$.

Moreover, for $\nu \in \{0,1,2\}$, we introduce the following broken space $H^{\nu}(\mathcal{T}_h )$ on the partition $\mathcal{T}_h$, compare with \cite[Section 2.3]{Riviere},
\begin{equation*}
	H^{\nu}(\mathcal{T}_h ) 
	= \{v\in L^2(\D) : v\vert_{T}\in H^{\nu}(T) \text{ for all } T\in\mathcal{T}_h\}
	\text{ with } 
	\verttt v \verttt_{H^{\nu}(\mathcal{T}_h )}
	=\Big(\sum_{T\in\mathcal{T}_h} \|v\vert_{T}\|_{H^{\nu}(T)}^2 \Big)^{\frac{1}{2}}.
\end{equation*}
Note that $H^0(\mathcal{T}_h )$ is equal to the Lebesgue space $L^2(\D)$ and that for $v \in H^{\nu}(\D)$, we find that $\verttt v \verttt_{H^{\nu}(\mathcal{T}_h )} = \| v \|_{H^{\nu}(\D )}$. Additionally, for $v\in\dom(A)$ no jumps appear on the edges $e \in F_h$, compare \cite[Lemma~4.3]{DiPietro2012}.
On an element $T$, we can then make the following statement for the $L^2(\D)$-projection operator $P_h$ that maps $L^2(\D)$ on $V_h$.

\begin{lemma}\label{lem:Projerr}
	For all $\nu\in\{0,1,2\}$, $m \in \{0,\dots,\nu\}$, $T\in\mathcal{T}_h$, and $v\in H^{\nu}(T),$ it follows that
	\begin{equation*}
		\|\left(I-P_h\right)v\|_{H^m(T)}\leq C h_T^{\nu-m} \|v\|_{H^{\nu}(T)},
	\end{equation*}
	where $C$ is independent of $h$ and $T$.
\end{lemma}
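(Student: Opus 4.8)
The plan is to reduce the estimate to the reference element and invoke the Bramble--Hilbert lemma. First I would observe that, since the functions in $V_h$ are subject to no continuity requirements across element interfaces, $V_h$ decomposes as the $L^2(\D)$-orthogonal direct sum $V_h=\bigoplus_{T\in\mathcal{T}_h}\mathbb{P}^{\orderPoly}(T)$, the summands being orthogonal because distinct elements overlap only on a null set. Consequently $P_h$ acts elementwise, i.e.\ $(P_hv)\vert_T=\Pi_T(v\vert_T)$, where $\Pi_T\colon L^2(T)\to\mathbb{P}^{\orderPoly}(T)$ is the $L^2(T)$-orthogonal projection, and it suffices to prove $\|(I-\Pi_T)w\|_{H^m(T)}\leq Ch_T^{\nu-m}\|w\|_{H^{\nu}(T)}$ for a single $T\in\mathcal{T}_h$ and $w:=v\vert_T\in H^{\nu}(T)$. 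The case $\nu=0$ (whence $m=0$ and $h_T^{\nu-m}=1$) is immediate, since $\Pi_T$ is an orthogonal projection on $L^2(T)=H^0(T)$ and hence a contraction.

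For $\nu\in\{1,2\}$ I would pass to the reference element via the affine bijection $F_T(\hat{\mathbf{x}})=B_T\hat{\mathbf{x}}+b_T$ with $F_T(\hat T)=T$, setting $\hat w:=w\circ F_T$. Since $F_T$ maps $\mathbb{P}^{\orderPoly}(\hat T)$ bijectively onto $\mathbb{P}^{\orderPoly}(T)$ and the change of variables rescales the $L^2$ inner product only by the constant $|\det B_T|$, the projection commutes with the pull-back, $\bigl((I-\Pi_T)w\bigr)\circ F_T=(I-\hat\Pi)\hat w$, where $\hat\Pi\colon L^2(\hat T)\to\mathbb{P}^{\orderPoly}(\hat T)$ is the $L^2(\hat T)$-orthogonal projection. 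Because $\orderPoly=1\geq\nu-1$, the bounded linear operator $I-\hat\Pi\colon H^{\nu}(\hat T)\to H^m(\hat T)$ annihilates $\mathbb{P}^{\nu-1}(\hat T)$, so the Bramble--Hilbert lemma gives
\[
	\|(I-\hat\Pi)\hat w\|_{H^m(\hat T)}\leq C\,|\hat w|_{H^{\nu}(\hat T)},
\]
with a constant $C$ depending only on $\hat T$, $\nu$ and $m$, not on $h$ or $T$.

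It then remains to scale back to $T$. The mesh regularity assumed in Section~\ref{subsec:dG_Notation} --- the conditions of \cite[Lemma~1.62]{DiPietro2012} together with $h_T\geq Ch$ --- guarantees shape-regularity of the family $\{\mathcal{T}_h\}$, hence the uniform bounds $\|B_T\|_{\mathrm{op}}\leq Ch_T$, $\|B_T^{-1}\|_{\mathrm{op}}\leq Ch_T^{-1}$ and $|\det B_T|\simeq h_T^{d}$. Substituting these into the standard seminorm transformation inequalities $|\phi|_{H^k(T)}\leq C\|B_T^{-1}\|_{\mathrm{op}}^{k}|\det B_T|^{1/2}|\hat\phi|_{H^k(\hat T)}$ and $|\hat\phi|_{H^k(\hat T)}\leq C\|B_T\|_{\mathrm{op}}^{k}|\det B_T|^{-1/2}|\phi|_{H^k(T)}$ --- applied with $\phi=(I-\Pi_T)w$ for $0\leq k\leq m$ and with $\phi=w$ for $k=\nu$ --- and using $h_T\leq1$ without loss of generality to absorb $h_T^{-k}\leq h_T^{-m}$, the determinant factors cancel and the display above yields $\|(I-\Pi_T)w\|_{H^m(T)}\leq Ch_T^{\nu-m}|w|_{H^{\nu}(T)}\leq Ch_T^{\nu-m}\|w\|_{H^{\nu}(T)}$; taking $w=v\vert_T$ completes the proof. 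Every individual step is classical --- the statement is simply the $L^2$-projection approximation estimate for broken polynomial spaces, cf.\ \cite{DiPietro2012,Riviere} --- so there is no genuine obstacle; the only point deserving attention is that both the affine-scaling constants and the Bramble--Hilbert constant on $\hat T$ are uniform over the mesh family, which is precisely what the shape-regularity hypothesis provides.
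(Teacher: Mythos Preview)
Your argument is correct and is precisely the classical route---elementwise reduction, pull-back to the reference element, Bramble--Hilbert, and affine scaling under shape-regularity. The paper itself does not spell out a proof but simply refers to \cite[Lemma~1.58]{DiPietro2012}, whose content is exactly the argument you have written, so your proposal matches the intended proof.
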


The proof of this lemma follows from \cite[Lemma~1.58]{DiPietro2012}. 
With the help of this lemma, the projection error of $P_h$ can be bounded as stated in Assumption~\ref{ass:proj_err}, i.e.,~we observe that
\begin{align*}
	\| (I-P_h) v \|_{L^2(\D)}
	&= \Big(\sum_{T \in \mathcal{T}_h} \| (I-P_h) v \|_{L^2(T)}^2\Big)^{\frac{1}{2}}
	\leq C \Big(\sum_{T \in \mathcal{T}_h} h_T^4 \| v \|_{H^2(T)}^2\Big)^{\frac{1}{2}}\\
	&\leq C h^2 \| v \|_{H^2(\D)}
	\leq C h^2 \| A v \|_{L^2(\D)}
\end{align*}
is fulfilled for every $v \in H^2(\D) \cap H_0^1(\D) = \dom(A)$.
To show that $V_h$ fits into Assumption~\ref{ass:Ah}, we first need to state some additional definitions.
Functions from the discretization space $V_h$ can have jumps on the edges $e \in F_h$. Therefore, we define the jump function and average function on an interior edge $e$ as
\begin{equation}\label{eq:jump_average}
	[v]\vert_{e}=v\vert_{T_e^+}-v\vert_{T_e^-} 
	\quad \text{and} \quad 
	\{v\}\vert_{e}=\half\big(v_{T_e^+}+v_{T_e^-}\big),
\end{equation}
where $T_e^+$ and $T_e^-$ are the two elements that share the edge $e$. 
On an edge on the boundary $\partial \D$, the functions are defined as
\begin{equation}\label{eq:jump_average_boun}
	\{v\}\vert_e=v\vert_{T_e}=[v]\vert_e.
\end{equation}
Furthermore, we also use the following semi-norm related to jumps on the faces. For $v \in L^2(\D)$, this norm is denoted by
\begin{equation*}
	|v|_{J_h}=\Big(\sum_{e\in F_h} \frac{1}{h_e} \|[v]\|^2_{L^2(e)}\Big)^{\half}. 
\end{equation*}
To show that Assumption~\ref{ass:Ah} is fulfilled, we need to introduce a suitable norm as mentioned in \eqref{eq:normV_h}. We choose the norm that is induced by the inner product
\begin{equation*}
	\inner[V_h]{v_h}{w_h}
	= \sum_{T\in\mathcal{T}_h} \inner[H^{1}(T)]{v_h\vert_{T}}{w_h\vert_{T}} + \sum_{e\in F_h} \frac{1}{h_e} \inner[L^2(e)]{[v_h]}{[w_h]} \quad \text{for all }v_h, w_h \in V_h
\end{equation*}
and then given by
\begin{equation}\label{eq:defNormVh}
	\lno\cdot\rno_{V_h}= \big(\verttt\cdot \verttt_{H^{1}(\mathcal{T}_h )}^2+|\cdot|_{J_h}^2\big)^{\frac{1}{2}}.
\end{equation}
For $\sigma \in \R_0^+$ and $K$ as stated in \eqref{eq:def_K_sym}--\eqref{eq:def_K_eigenvalues}, we can now define the discretized operator $A_h\colon V_h \to V_h$ via its corresponding bilinear form $a_h$ through $(A_{h} v_h,w_h) = a_{h}(v_h,w_h)$ for all $v_h, w_h \in V_h$, where 
\begin{equation} \label{eq:A_dG_formulation}
	\begin{split}
		a_{h}(v_h,w_h)
		&:= \sum_{T\in\mathcal{T}_h} \left( K\nabla v_h,\nabla w_h\right)_{L^2(T)^d}
		- \sum_{e\in F_h}\int_e \{K\nabla v_h\cdot \mathbf{n}_e\}[w_h] \diff{\xi}\\
		&\quad - \sum_{e\in F_h}\int_e \{K\nabla w_h\cdot \mathbf{n}_e\}[v_h] \diff{\xi}
		+ \sum_{e\in F_h}\int_e \frac{\sigma}{h}[v_h][w_h] \diff{\xi}
	\end{split}
\end{equation}
for all $v_h,w_h\in V_h$. Here, $\mathbf{n}_e$ is the outer pointing normal derivative such that $\mathbf{n}_e := \mathbf{n}_{T_e^+} =  - \mathbf{n}_{T_e^-}$ for an inner edge $e$ and $\mathbf{n}_e := \mathbf{n}_{T_e}$ for an edge $e \subset \partial T$.
In \eqref{eq:A_dG_formulation}, we see four relevant terms in the dG formulation: the weak formulation of $A$, the consistency term, the symmetry term, and the penalty term. For a detailed derivation, we refer the reader to \cite[Chapter~4.2.1.1--4.2.1.3]{DiPietro2012} and \cite[Equation~(4.20)]{DiPietro2012} for \eqref{eq:normV_h}. Note also that the discrete operator weakly enforces the Dirichlet boundary condition. The parameter $\sigma$ is chosen such that $A_h$ is strongly positive.

\begin{lemma} \label{lem:Ah_pos_bounded} 
	Let $A_{h} \colon V_h \to V_h$ be given as in \eqref{eq:A_dG_formulation}. Further, let $n_0$ be the maximal number of vertices of an element $T \in \mathcal{T}_h$.
	For $\sigma > \constChi n_0$, where $\constChi$ is from Lemma~\ref{lem:disctra}, it follows that $A_{h}$ is strongly positive and bounded, where the norm $\|\cdot\|_{V_h}$ is given in \eqref{eq:defNormVh}. This ensures that Assumptions~\ref{ass:Ah}~\ref{ass:Ah_coercive}--\ref{ass:Ah_bound} are fulfilled.
\end{lemma}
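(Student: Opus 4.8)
The statement is the classical coercivity and continuity result for the symmetric interior penalty dG form, and both parts reduce to controlling the consistency and symmetry contributions
\[
	\mathcal{N}(v_h,w_h) := \sum_{e\in F_h}\int_e \{K\nabla v_h\cdot \mathbf{n}_e\}[w_h]\diff{\xi}, \qquad v_h,w_h\in V_h.
\]
The plan is: (i) estimate $\mathcal{N}$ in terms of the broken $H^1$-seminorm and the jump seminorm via the discrete trace inequality of Lemma~\ref{lem:disctra}; (ii) deduce the boundedness of $a_h$ termwise, i.e.\ Assumption~\ref{ass:Ah}~\ref{ass:Ah_bound}; (iii) deduce strong positivity, i.e.\ Assumption~\ref{ass:Ah}~\ref{ass:Ah_coercive}, by absorbing a Young-type splitting of $\mathcal{N}(v_h,v_h)$ into the diffusion and penalty terms once $\sigma>\constChi n_0$, and then upgrading the resulting energy lower bound to the full $\|\cdot\|_{V_h}$-norm from \eqref{eq:defNormVh} by a discrete Poincaré--Friedrichs inequality for $H^1(\mathcal{T}_h)$. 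Throughout we use that the eigenvalues of $K$ lie in $[K_0,K_1]$, see \eqref{eq:def_K_eigenvalues}, and the shape-regularity of the mesh, which yields $Ch\le h_e\le h$ for every $e\in F_h$ and a uniformly bounded number ($n_0$) of faces per element.

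For step (i), applying Cauchy--Schwarz on each edge, inserting the factors $h_e^{1/2}$ and $h_e^{-1/2}$, and then Cauchy--Schwarz in the edge sum gives
\[
	|\mathcal{N}(v_h,w_h)|
	\leq \Big(\sum_{e\in F_h} h_e \|\{K\nabla v_h\cdot \mathbf{n}_e\}\|_{L^2(e)}^2\Big)^{\half} \, |w_h|_{J_h}.
\]
Bounding the average by its two one-sided traces, the discrete trace inequality of Lemma~\ref{lem:disctra} (with the $K$-weighting, which produces the constant $\constChi$), together with $h_e\leq h_{T_e^{\pm}}$ and the bound $n_0$ on the number of faces of an element, yields
\[
	\sum_{e\in F_h} h_e \|\{K\nabla v_h\cdot \mathbf{n}_e\}\|_{L^2(e)}^2
	\leq \constChi n_0 \sum_{T\in\mathcal{T}_h}(K\nabla v_h,\nabla v_h)_{L^2(T)^d}
	\leq C \verttt v_h\verttt_{H^1(\mathcal{T}_h)}^2 .
\]

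For step (ii), the diffusion term of $a_h$ is bounded by $K_1\verttt v_h\verttt_{H^1(\mathcal{T}_h)}\verttt w_h\verttt_{H^1(\mathcal{T}_h)}$ by Cauchy--Schwarz, the two terms $\mathcal{N}(v_h,w_h)$ and $\mathcal{N}(w_h,v_h)$ are controlled by step (i), and the penalty term $\sigma\sum_{e\in F_h}\frac1h\|[v_h]\|_{L^2(e)}\|[w_h]\|_{L^2(e)}$ is bounded by $C\sigma|v_h|_{J_h}|w_h|_{J_h}$ using $Ch\le h_e\le h$; summing gives $|a_h(v_h,w_h)|\leq C\|v_h\|_{V_h}\|w_h\|_{V_h}$. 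For step (iii), we write
\[
	a_h(v_h,v_h) = \sum_{T\in\mathcal{T}_h}(K\nabla v_h,\nabla v_h)_{L^2(T)^d} - 2\,\mathcal{N}(v_h,v_h) + \sigma\sum_{e\in F_h}\frac1h\|[v_h]\|_{L^2(e)}^2,
\]
bound the first term below by $K_0\sum_T\|\nabla v_h\|_{L^2(T)^d}^2$, estimate $2|\mathcal{N}(v_h,v_h)|$ from above by step (i) and Young's inequality as $\delta\constChi n_0\sum_T(K\nabla v_h,\nabla v_h)_{L^2(T)^d}+\delta^{-1}|v_h|_{J_h}^2$ for a free parameter $\delta>0$, and bound the penalty term below by $C\sigma|v_h|_{J_h}^2$ again by $Ch\le h_e\le h$. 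Choosing $\delta$ with $\delta\constChi n_0<1$ and observing that when $\sigma>\constChi n_0$ the parameter $\delta$ can simultaneously be chosen so that the coefficient of $|v_h|_{J_h}^2$ is positive gives $a_h(v_h,v_h)\geq C\big(\sum_T\|\nabla v_h\|_{L^2(T)^d}^2+|v_h|_{J_h}^2\big)$; a discrete Poincaré--Friedrichs inequality controlling $\|v_h\|_{L^2(\D)}$ by the broken gradient and the jump seminorm (see \cite{DiPietro2012}) then upgrades this to $a_h(v_h,v_h)\geq C\|v_h\|_{V_h}^2$.

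The main obstacle is the bookkeeping of constants in step (iii) required to land precisely on the threshold $\sigma>\constChi n_0$: this relies on the exact normalization of the discrete trace inequality in Lemma~\ref{lem:disctra} and on propagating the shape-regularity bounds $Ch\le h_e\le h$ consistently through the Young splitting of $\mathcal{N}(v_h,v_h)$ and through the comparison of the $\frac{\sigma}{h}$-penalty with the $\frac{1}{h_e}$-weighted jump seminorm in $\|\cdot\|_{V_h}$. A secondary point is the discrete Poincaré step, which is exactly where the jump seminorm in the definition \eqref{eq:defNormVh} of $\|\cdot\|_{V_h}$ is indispensable, since without it the broken-gradient energy does not control the $L^2(\D)$-part of the norm.
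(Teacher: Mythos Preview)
Your proposal is correct and follows precisely the standard symmetric interior penalty argument from \cite[Lemmas~4.12, 4.16, 4.20 and Equation~(4.20)]{DiPietro2012}, which is exactly what the paper invokes in lieu of a written proof. One minor point: in step~(i) the bound $\sum_{e} h_e\|\{K\nabla v_h\cdot\mathbf{n}_e\}\|_{L^2(e)}^2\leq \constChi n_0\sum_T(K\nabla v_h,\nabla v_h)_{L^2(T)^d}$ tacitly treats $K$ as constant; for variable $K$ one first pulls out $K_1$, applies Lemma~\ref{lem:disctra} to the polynomial $\nabla v_h$, and reinserts $K_0^{-1}$ on the right, picking up a factor $K_1^2/K_0$ in the threshold (compare the analogous computation in Lemma~\ref{lem:dissip}); this is exactly the bookkeeping issue you already flag.
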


This statement follows from the lemmas from \cite[Lemmas~4.12 and 4.16]{DiPietro2012} which we combine with \cite[Lemma~4.20]{DiPietro2012}.
The discretizations $f_h \colon [0,t_f] \times L^2(\D) \to V_h$ and $B_h \colon [0,t_f] \times L^2(\D) \to L_2^0$ of $f$ and $B$ are given as in \eqref{eq:def_fh} and \eqref{eq:def_Bh}, respectively, where $P_U$ is chosen such that Assumption~\ref{ass:Proj_error_U} is satisfied.

\subsection{Full discretization} \label{subse:full_disc}

The operator $A_h$ stated in \eqref{eq:A_dG_formulation} is now split into two separate parts. As a decomposition of the operator $A$, we choose a domain decomposition method. We then define $A_{h,{\ell}} \colon V_h \to V_h$ that fulfills $(A_{h,{\ell}}v_h,w_h) = a_{h,{\ell}}(v_h,w_h)$ for all $v_h, w_h \in V_h$, where 
\begin{equation}\label{eq:Ahsplit}
	\begin{split}
		a_{h,{\ell}}(v_h,w_h)
		&:=\sum_{T\in\mathcal{T}_h}\left(\chi_{\ell}K\nabla v_h,\nabla w_h\right)_{L^2(T)^d}
		-\sum_{e\in F_h}\int_e \{\chi_{\ell}K\nabla v_h\cdot \mathbf{n}_e\}[w_h] \diff{\xi}\\
		&\quad -\sum_{e\in F_h}\int_e \{\chi_{\ell}K\nabla w_h\cdot \mathbf{n}_e\}[v_h] \diff{\xi}
		+\sum_{e\in F_h}\int_e \chi_{\ell}\frac{\sigma}{h_e}[v_h][w_h] \diff{\xi}
	\end{split}
\end{equation}
for $\ell \in \{1,2\}$. Note that the flux is zero on the interior boundary of $\mathcal{D}_{\ell}$, the boundary part of $\mathcal{D}_{\ell}$ that does not overlap with the boundary of $\mathcal{D}$.
In the following, we go through Assumption~\ref{ass:Ah}~\ref{ass:Ah_sum}--\ref{ass:dissa_exch} and prove that $A_{h}$, $A_{h,{1}}$, and $A_{h,{2}}$ fit into the setting.
The sum property $A_{h} = A_{h,{1}} + A_{h,{2}}$ from Assumption~\ref{ass:Ah}~\ref{ass:Ah_sum} is a direct consequence form the sum property of $\chi_1$ and $\chi_2$. This can easily be observed after inserting $1 = \chi_1 + \chi_2$ into \eqref{eq:Ahsplit}.
The next step is to prove the non-negativity of a split operator $A_{h,\ell}$, $\ell \in \{1,2\}$, i.e.~Assumption~\ref{ass:Ah}~\ref{ass:Ah_pos}. Before we turn to the proof of this result, in the same fashion as \cite[Section~2.7.1]{Riviere} , we first provide an auxiliary result that is of use in the proof for non-negativity. For this, we keep track of the constants because they give restrictions on the parameter $\sigma$.

\begin{lemma}\label{lem:chidisc}
	For every $T\in\mathcal{T}_h$, every $e \in F_h$ such that $e \subset \partial T$, the outward pointing normal derivative $\mathbf{n}_e = \mathbf{n}_{\partial T}|_e$ and $v_h\in V_h$, it follows that
	\begin{equation*}
		\big\| \chi_{\ell}^{\half}\nabla v_h\vert_{T} \big\|_{L^2(e)^d}^2
		\leq \constChi h_T^{-1} \big\| \chi_{\ell}^{\frac{1}{2}}\nabla v_h \big\|_{L^2(T)^d}^2, \quad \ell \in \{1,2\},
	\end{equation*}
	where the constant $\constChi$ is stated in Lemma~\ref{lem:disctra}.
\end{lemma}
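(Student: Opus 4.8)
The statement is a weighted discrete trace inequality on a single element $T$, comparing the $L^2$-norm of $\chi_\ell^{1/2}\nabla v_h$ on a face $e\subset\partial T$ with its $L^2$-norm on the bulk element $T$. Since $v_h\in V_h$ is a polynomial of degree at most $\orderPoly$ on $T$, $\nabla v_h\vert_T$ is a polynomial (vector-valued) of one degree lower, so the underlying mechanism is the standard discrete trace (inverse) inequality for polynomials on shape-regular meshes, e.g.\ \cite[Lemma~1.46]{DiPietro2012} or the cited Lemma~\ref{lem:disctra}. The only genuinely new ingredient is that the weight $\chi_\ell$ appears on both sides, so one must commute the weight past the trace inequality. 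The key structural fact that makes this possible is the standing assumption that $\chi_\ell\vert_T$ is \emph{linear (affine)} on every $T\in\mathcal{T}_h$, hence $\chi_\ell^{1/2}$ is \emph{not} polynomial, but $\chi_\ell\,|\nabla v_h|^2$ restricted to $T$ is a polynomial of bounded degree.

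First I would reduce to the scalar polynomial trace inequality. Write the left-hand side as $\int_e \chi_\ell\,|\nabla v_h|^2\diff{\xi}$ and the right-hand side as $\int_T \chi_\ell\,|\nabla v_h|^2\diff{\mathbf{x}}$ (up to the constant). Both integrands are the restriction to $T$ of a single function $w := \chi_\ell\,|\nabla v_h|^2$, which on $T$ is a polynomial of degree at most $2\orderPoly-1$ (degree $\orderPoly-1$ for each component of $\nabla v_h$, squared and summed gives $2(\orderPoly-1)$, times the affine $\chi_\ell$ gives $2\orderPoly-1$). Now apply the discrete trace inequality from Lemma~\ref{lem:disctra} (or \cite[Lemma~1.46]{DiPietro2012}) to the scalar polynomial $w$: on a shape-regular element, $\|w\|_{L^1(e)}\leq \constChi\, h_T^{-1}\|w\|_{L^1(T)}$ — but one must be careful, the classical discrete trace inequality is usually stated for $L^2$-norms of polynomials, i.e.\ $\|q\|_{L^2(e)}^2\leq C h_T^{-1}\|q\|_{L^2(T)}^2$. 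Since $\chi_\ell\geq 0$, the function $w$ is nonnegative, so $\int_e w\diff{\xi} = \|w\|_{L^1(e)}$ and $\int_T w\diff{\mathbf{x}} = \|w\|_{L^1(T)}$; applying the $L^1$-version of the discrete trace inequality for polynomials (which follows from the $L^2$-version plus a Cauchy--Schwarz/inverse-estimate argument, or is available directly, cf.\ \cite[Remark~1.47]{DiPietro2012}) then yields exactly the claim with constant $\constChi$ depending only on $n_0$, the reference element, and the mesh regularity constant, but \emph{not} on $h$ or on $\chi_\ell$ (only its linearity is used, not its magnitude — in fact $\|\chi_\ell\|_{L^\infty}\leq 1$).

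The cleanest route, and the one I would actually write, is: (i) note $w=\chi_\ell|\nabla v_h|^2\in\mathbb{P}^{2\orderPoly-1}(T)$ and $w\geq 0$; (ii) invoke Lemma~\ref{lem:disctra} applied to the polynomial space $\mathbb{P}^{2\orderPoly-1}$, which gives the $L^1$ discrete trace bound $\int_e |w|\diff{\xi}\leq \constChi h_T^{-1}\int_T |w|\diff{\mathbf{x}}$ with $\constChi$ independent of $h$; (iii) drop the absolute values using $w\geq 0$ to conclude. If Lemma~\ref{lem:disctra} is stated only in $L^2$-form, I would insert one extra line deriving the $L^1$-form via the standard scaling-to-reference-element argument: pull back to $\hat T$ by the affine map, use equivalence of all norms on the finite-dimensional space $\mathbb{P}^{2\orderPoly-1}(\hat T)$ to bound $\|\hat w\|_{L^1(\hat e)}$ by $\|\hat w\|_{L^1(\hat T)}$, then push forward, with the Jacobian factors producing precisely the $h_T^{-1}$ (ratio of a $(d-1)$-dimensional measure to a $d$-dimensional one) together with shape-regularity controlling the comparison of different faces' scalings — this is where $n_0$ (the number of faces of the reference element) and the uniformity constant from \cite[Lemma~1.62]{DiPietro2012} and the condition $h_T\geq Ch$ enter.

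The main obstacle is essentially bookkeeping rather than conceptual: making sure the $L^1$ (rather than $L^2$) version of the discrete trace inequality is the one being used, so that no spurious extra factor of $h_T$ or of $\|\nabla v_h\|$ appears, and confirming that the constant genuinely does not depend on $\chi_\ell$ (it cannot, since after factoring $w$ is just an arbitrary nonnegative element of a fixed polynomial space). A secondary subtlety is the degree count — one should double-check that $2\orderPoly-1$ (with $\orderPoly=1$ this is $1$) is the right bound and that Lemma~\ref{lem:disctra} is stated for a polynomial degree at least this large; since the paper fixes $\orderPoly=1$, $w$ is in fact affine on $T$ and the inequality is the most elementary instance of the discrete trace estimate.
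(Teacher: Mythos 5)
Your proposal is correct and follows essentially the same route as the paper: rewrite $\|\chi_{\ell}^{\half}\nabla v_h\|_{L^2(e)^d}^2$ as the $L^1(e)$-norm of the nonnegative polynomial $\chi_{\ell}\,\nabla v_h:\nabla v_h$ (using that $\chi_{\ell}\vert_T$ is affine) and apply Lemma~\ref{lem:disctra} with $p=1$. Your hedging about the $L^1$ versus $L^2$ form of the trace inequality is unnecessary here, since Lemma~\ref{lem:disctra} is already stated for all $p\in[1,\infty]$.
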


\begin{proof}
	The main idea of this proof is to apply Lemma~\ref{lem:disctra}. We cannot directly apply this lemma, though, as a function $\chi_{\ell}^{\half}\nabla v_h\vert_T$ is not necessarily a polynomial. Note that in the $L^2(e)$-norm, a square appears which cancels out the square root. Thus, we rewrite the $L^2$-norm as an $L^1$-norm. This then enables us to apply Lemma~\ref{lem:disctra}, and we obtain
	\begin{align*}
		\big\| \chi_{\ell}^{\half}\nabla v_h\vert_T \big\|^2_{L^2(e)^d}
		&=\lno \chi_{\ell}\nabla v_h\vert_T: \nabla v_h\vert_T\rno_{L^1(e)^d}\\
		&\leq \constChi h_T^{-1}\lno \chi_{\ell}\nabla v_h\vert_T: \nabla v_h\vert_T\rno_{L^1(T)^d}
		=\constChi h_T^{-1} \big\| \chi_{\ell}^{\half}\nabla v_h\vert_T \big\|_{L^2(T)^d}^2,
	\end{align*}
	where the colon $:$ denotes the element-wise multiplication between two vectors.
\end{proof}
\begin{lemma}\label{lem:dissip}
	Let $A_{h,\ell} \colon V_h \to V_h$ be given as in \eqref{eq:Ahsplit}. Further, let $K$ be as stated in \eqref{eq:def_K_sym}--\eqref{eq:def_K_eigenvalues} and $n_0$ the maximal number of vertices of an element the elements $T \in \mathcal{T}_h$.
	For $\sigma \geq \constChi  K_1^2 K_0^{-1} n_0$, where $\constChi$ is from Lemma~\ref{lem:disctra}, and $\ell \in \{1,2\}$, it follows that $A_{h,\ell}$ is non-negative on $V_h$ with respect to the $L^2(\D)$-norm, i.e., for all $v_h \in V_h$ it holds that $\left(A_{h,\ell}v_h,v_h\right)_{L^2(\D)}\geq 0$. This ensures that Assumption~\ref{ass:Ah}~\ref{ass:Ah_pos} is fulfilled.
\end{lemma}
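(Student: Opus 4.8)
The plan is to test the bilinear form on the diagonal and absorb the indefinite consistency and symmetry contributions into the positive volume and penalty terms, in the spirit of the coercivity argument in \cite[Section~2.7.1]{Riviere}, but carrying along the weight $\chi_\ell$ and tracking the constants. First I would insert $w_h=v_h$ into \eqref{eq:Ahsplit}; since the consistency and symmetry terms coincide on the diagonal, this gives
\begin{equation*}
	\left(A_{h,\ell}v_h,v_h\right)_{L^2(\D)} = \sum_{T\in\mathcal{T}_h}\left(\chi_{\ell}K\nabla v_h,\nabla v_h\right)_{L^2(T)^d} - 2\sum_{e\in F_h}\int_e \{\chi_{\ell}K\nabla v_h\cdot\mathbf{n}_e\}[v_h] \diff{\xi} + \sum_{e\in F_h}\int_e \chi_{\ell}\frac{\sigma}{h_e}[v_h]^2 \diff{\xi}.
\end{equation*}
Since $0\le\chi_\ell\le 1$ and the eigenvalues of $K$ are bounded below by $K_0$, the first sum is at least $K_0\sum_{T\in\mathcal{T}_h}\|\chi_\ell^{1/2}\nabla v_h\|_{L^2(T)^d}^2$ and the last sum is non-negative, so the task reduces to bounding the middle term by a small multiple of $\sum_{T\in\mathcal{T}_h}\|\chi_\ell^{1/2}\nabla v_h\|_{L^2(T)^d}^2$ plus $\sigma\sum_{e\in F_h}\int_e\chi_\ell h_e^{-1}[v_h]^2 \diff{\xi}$.

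For the middle term, I would exploit that $\chi_\ell$ is piecewise linear and lies in $W^{1,\infty}(\D)$, hence is continuous across the mesh skeleton, and so is $\chi_\ell^{1/2}$; therefore these factors pull out of the averages and jumps, in particular $\{\chi_\ell K\nabla v_h\cdot\mathbf{n}_e\}=\chi_\ell^{1/2}\{\chi_\ell^{1/2}K\nabla v_h\cdot\mathbf{n}_e\}$ on each $e$. Applying the Cauchy--Schwarz inequality on each edge (splitting $\chi_\ell=\chi_\ell^{1/2}\cdot\chi_\ell^{1/2}$) and then Young's inequality with a free parameter $\varepsilon>0$ splits the edge contribution into $\varepsilon h_e\|\{\chi_\ell^{1/2}K\nabla v_h\cdot\mathbf{n}_e\}\|_{L^2(e)}^2$ and $(\varepsilon h_e)^{-1}\|\chi_\ell^{1/2}[v_h]\|_{L^2(e)}^2$. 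The latter equals $\varepsilon^{-1}\int_e\chi_\ell h_e^{-1}[v_h]^2 \diff{\xi}$, i.e.\ exactly the form of the penalty term. For the former I would use the uniform bound $|K(\mathbf{x})\mathbf{y}|\le K_1|\mathbf{y}|$ from \eqref{eq:def_K_eigenvalues}, the elementary estimate $h_e\le h_T$, and Lemma~\ref{lem:chidisc} (the weighted discrete trace inequality) to pass from the edge norm to element norms; after summing over $e$ and re-indexing as a sum over elements, with each element having at most $n_0$ faces, this part is at most $\tfrac12\varepsilon K_1^2\constChi n_0\sum_{T\in\mathcal{T}_h}\|\chi_\ell^{1/2}\nabla v_h\|_{L^2(T)^d}^2$.

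Collecting the three contributions then yields a bound of the form
\begin{equation*}
	\left(A_{h,\ell}v_h,v_h\right)_{L^2(\D)} \ge \left(K_0-\tfrac12\varepsilon K_1^2\constChi n_0\right)\sum_{T\in\mathcal{T}_h}\|\chi_\ell^{1/2}\nabla v_h\|_{L^2(T)^d}^2 + \left(\sigma-\tfrac1\varepsilon\right)\sum_{e\in F_h}\int_e\chi_\ell h_e^{-1}[v_h]^2 \diff{\xi}.
\end{equation*}
Choosing $\varepsilon$ so that the first coefficient is non-negative (the largest admissible value, or a convenient smaller one) turns the requirement $\sigma\ge\varepsilon^{-1}$ into the asserted condition $\sigma\ge\constChi K_1^2K_0^{-1}n_0$ (the precise multiplicative constant being immaterial, and the stated one being on the safe side); both coefficients are then non-negative, so $\left(A_{h,\ell}v_h,v_h\right)_{L^2(\D)}\ge 0$, which is Assumption~\ref{ass:Ah}~\ref{ass:Ah_pos}.

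The only delicate point is the weighted consistency and symmetry term: one has to use the continuity of $\chi_\ell$ so that the jump contribution left over from Young's inequality carries precisely the same weight $\chi_\ell$ as the penalty term (otherwise the absorption breaks down), and one has to keep the constants in the discrete trace inequality and in Young's inequality sharp enough to reach exactly the stated threshold on $\sigma$. Everything else is routine rearrangement, parallel to the unweighted case.
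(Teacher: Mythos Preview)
Your proposal is correct and follows essentially the same route as the paper's proof: pull the continuous weight $\chi_\ell^{1/2}$ through the averages and jumps, invoke the weighted discrete trace inequality (Lemma~\ref{lem:chidisc}) to move the flux term from the skeleton to the elements, and absorb the indefinite contribution into the volume and penalty terms via Young's inequality. The only cosmetic difference is that the paper applies Cauchy--Schwarz globally over $F_h$ first and Young's inequality once at the end, whereas you apply Cauchy--Schwarz and Young edge by edge; the paper also keeps $K^{1/2}$ in the volume term rather than extracting $K_0$ immediately. Your constant tracking is slightly loose (the $\tfrac12$ you pick up from the interior average does not survive for boundary faces, so the sharp edge-to-element bound is $K_1^2\constChi n_0$, not $\tfrac12 K_1^2\constChi n_0$), but since you already note that the stated threshold $\sigma\ge\constChi K_1^2K_0^{-1}n_0$ is on the safe side, the conclusion stands.
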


\begin{proof}
	For all $v_h\in V_h$ it follows that
	\begin{align*}
		&\left(A_{h,\ell}v_h,v_h\right)_{L^2(\D)}\\
		&=\sum_{T\in\mathcal{T}_h} \big(\chi_{\ell}^{\half} K^{\half} \nabla v_h , \chi_{\ell}^{\half} K^{\half} \nabla v_h \big)_{L^2(T)^d}
		-2\sum_{e\in F_h}\int_e\{\chi_{\ell}^{\half} K\nabla v_h\cdot \mathbf{n}_e\}[\chi_{\ell}^{\half}v_h]\diff{\xi}\\
		&\quad+\sum_{e\in F_h}\int_e\frac{\sigma}{h_e}[\chi_{\ell}^{\half}v_h][\chi_{\ell}^{\half}v_h]\diff{\xi}\\
		&\geq \|\chi_{\ell}^{\half} K^{\half}\nabla v_h \|_{L^2(\D)^d}^2
		-2\sum_{e\in F_h} h_e^{\half} \big\| \{\chi_{\ell}^{\half} K\nabla v_h\cdot \mathbf{n}_e\} \big\|_{L^2(e)}h_e^{-\half} \big\| [\chi_{\ell}^{\half}v_h] \big\|_{L^2(e)}
		+\sigma \big| \chi_{\ell}^{\half}v_h \big|_{J_h}^2\\
		&=:  \Gamma_1 - \Gamma_2 + \Gamma_3 .
	\end{align*}
	In the remainder of the proof, we want to show that $ \Gamma_1 - \Gamma_2 + \Gamma_3  \geq 0$. We begin to apply Cauchy--Schwartz inequality for sums and obtain
	\begin{align*}
		\Gamma_2 
		&\leq 2 \Big(\sum_{e\in F_h}h_e \big\| \{\chi_{\ell}^{\half} K\nabla v_h\cdot \mathbf{n}_e\} \big\|_{L^2(e)}^2 \Big)^{\half}
		\Big(\sum_{e\in F_h}h_e^{-1} \big\| [\chi_{\ell}^{\half}v_h] \big\|_{L^2(e)}^2 \Big)^{\half}\\
		&= 2 \Big(\sum_{e\in F_h}h_e \big\| \{\chi_{\ell}^{\half} K \nabla v_h\cdot \mathbf{n}_e\} \big\|_{L^2(e)}^2 \Big)^{\half}|\chi_{\ell}^{\half}v_h|_{J_h}.
	\end{align*}
	If $e$ does not lie on $\partial\D$, we insert the definition of the average from  \eqref{eq:jump_average}. For such an $e$ we can apply Lemma~\ref{lem:chidisc} and obtain
	\begin{align*}
		h_e^{\frac{1}{2}} \big\| \{\chi_{\ell}^{\half}  K \nabla v_h\cdot \mathbf{n}_e\} \big\|_{L^2(e)}
		&= \frac{h_e^{\frac{1}{2}}}{2} \big\| \big(\chi_{\ell}^{\half}  K \nabla v_h\vert_{T^+_e}\cdot \mathbf{n}_e + \chi_{\ell}^{\half} K \nabla v_h\vert_{T_e^-}\cdot \mathbf{n}_e \big) \big\|_{L^2(e)}\\
		&\leq \frac{h_e^{\frac{1}{2}} K_1 }{2} \big\| \chi_{\ell}^{\half}  \nabla v_h\vert_{T^+_e} \big\|_{L^2(e)^d} 
		+ \frac{h_e^{\frac{1}{2}} K_1}{2} \big\| \chi_{\ell}^{\half}  \nabla v_h\vert_{T_e^-} \big\|_{L^2(e)^d}\\
		&\leq \frac{\constChi^{\frac{1}{2}} K_1}{2} \Big( \Big(\frac{h_e}{h_{T^+_e}}\Big)^{\frac{1}{2}} \big\| \chi_{\ell}^{\half}  \nabla v_h \big\|_{L^2(T^+_e)^d}
		+ \Big(\frac{h_e}{h_{T^-_e}}\Big)^{\frac{1}{2}} \big\| \chi_{\ell}^{\half}  \nabla v_h \big\|_{L^2(T^-_e)^d} \Big)\\
		&\leq \frac{\constChi^{\frac{1}{2}} K_1}{2} \Big(\big\| \chi_{\ell}^{\half}  \nabla v_h \big\|_{L^2(T^+_e)^d}
		+ \big\| \chi_{\ell}^{\half}  \nabla v_h \big\|_{L^2(T^-_e)^d} \Big),
	\end{align*}
	where we use $h_e = \min(h_{T_e^+}, h_{T_e^-})$ in the last step.
	In a similar way, for an edge $e \subset \partial \D$, we apply the definition of the average on the boundary \eqref{eq:jump_average_boun} and Lemma~\ref{lem:chidisc} to find
	\begin{equation*}
		h_e^{\frac{1}{2}} \big\| \{\chi_{\ell}^{\half} K \nabla v_h\cdot \mathbf{n}_e\} \big\|_{L^2(e)}
		\leq \constChi^{\frac{1}{2}} K_1 \big\| \chi_{\ell}^{\half}  \nabla v_h \big\|_{L^2(T_e)^d}.
	\end{equation*}
	When we reorder the  sum, we apply the fact that every element $T$ in $\mathcal{T}_h$ is counted $n_0$ times in total after summing over $T_e^+$ and $T_e^-$ for all edges $e \in F_h$. We can therefore bound $ \Gamma_2 $ by
	\begin{align*}
		\Gamma_2
		&\leq 2\constChi^{\frac{1}{2}}K_1  \Big(\sum_{e\in F_h, e \not\subset \partial \D} \Big(\big\| \chi_{\ell}^{\half}  \nabla v_h \big\|_{L^2(T^+_e)^d}^2
		+\big\| \chi_{\ell}^{\half}  \nabla v_h \big\|_{L^2(T^-_e)^d}^2\Big)\\
		&\quad +\sum_{e\in F_h, e \subset \partial \D} \big\| \chi_{\ell}^{\half}  \nabla v_h \big\|_{L^2(T_e)^d}^2
		 \Big)^{\half}  |\chi_{\ell}^{\half}v_h|_{J_h}\\
		&\leq 2 \constChi^{\frac{1}{2}} K_1 K_0^{-\half} n_0^{\frac{1}{2}} \Big(\sum_{T \in \mathcal{T}_h} \big\| \chi_{\ell}^{\half} K^{\frac{1}{2}} \nabla v_h \big\|_{L^2(T)^d}^2 \Big)^{\half}|\chi_{\ell}^{\half}v_h|_{J_h}\\
		&= 2 \constChi^{\frac{1}{2}} K_1 K_0^{-\half} n_0^{\frac{1}{2}} \| \chi_{\ell}^{\half} K^{\frac{1}{2}} \nabla v_h \|_{L^2(\D)^d} |\chi_{\ell}^{\half}v_h|_{J_h}.
	\end{align*}
	Applying Young's inequality for products, we obtain
	\begin{equation*}
		\Gamma_2 \leq  \| \chi_{\ell}^{\half} K^{\frac{1}{2}} \nabla v_h \|_{L^2(\D)^d}^2 + \constChi K_1^2 K_0^{-1} n_0|\chi_{\ell}^{\half}v_h|_{J_h}^2.
	\end{equation*}
	Thus, for $\sigma \geq \constChi K_1^2 K_0^{-1} n_0$, we find
	\begin{align*}
		&\left(A_{h,\ell}v_h,v_h\right)_{L^2(\D)}\\
		&\geq \| \chi_{\ell}^{\half} K^{\half}\nabla v_h \|_{L^2(\D)^d}^2
		- \| \chi_{\ell}^{\half} K^{\frac{1}{2}} \nabla v_h \|_{L^2(\D)^d}^2 
		- \constChi  K_1 K_0^{-\half} n_0|\chi_{\ell}^{\half}v_h|_{J_h}^2
		+\sigma \big| \chi_{\ell}^{\half}v_h \big|_{J_h}^2
		\geq 0,
	\end{align*}
	which finishes the proof of the lemma.
\end{proof}

\begin{remark}
	To our knowledge, no sharp bounds for $\constChi$ are known. An exception is the case $p=2$; compare \cite{Warburton2003}. Therefore, we have no sharp bound for the restriction of $\sigma$.
\end{remark}

\begin{lemma}\label{lem:Ah2}
	The operator $A_{h,\ell}$ defined in \eqref{eq:Ahsplit} fulfills
	\begin{equation*}
		\| A_{h,\ell} P_h \|_{\L(L^2(\D))} \leq Ch^{-2}.
	\end{equation*}
	Thus, Assumption~\ref{ass:Ah}~\ref{ass:Ah2} is fulfilled.
\end{lemma}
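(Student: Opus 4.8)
The plan is to reduce the statement, via the contraction property $\|P_h v\|_{L^2(\D)}\le\|v\|_{L^2(\D)}$, to proving $\|A_{h,\ell} v_h\|_{L^2(\D)}\le Ch^{-2}\|v_h\|_{L^2(\D)}$ for an arbitrary $v_h\in V_h$. To produce the factor $h^{-2}$, I would split it as $h^{-1}\cdot h^{-1}$: one $h^{-1}$ will come from the boundedness of the dG bilinear form $a_{h,\ell}$ with respect to $\|\cdot\|_{V_h}$, and the other from an inverse estimate relating $\|\cdot\|_{V_h}$ to $\|\cdot\|_{L^2(\D)}$ on the finite-dimensional space $V_h$. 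The mechanism is to use $A_{h,\ell}v_h\in V_h$ itself as the test function in the defining relation $(A_{h,\ell}v_h,w_h)_{L^2(\D)}=a_{h,\ell}(v_h,w_h)$.

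First I would establish the inverse estimate $\|w_h\|_{V_h}\le Ch^{-1}\|w_h\|_{L^2(\D)}$ for all $w_h\in V_h$, with $C$ independent of $h$. By \eqref{eq:defNormVh} this splits into a broken-$H^1$ part and a jump part. For the broken part, the standard polynomial inverse inequality $\|\nabla w_h\|_{L^2(T)^d}\le Ch_T^{-1}\|w_h\|_{L^2(T)}$ on each element, together with the quasi-uniformity $h_T\ge Ch$ and summation over $T\in\mathcal{T}_h$, gives $\verttt w_h\verttt_{H^1(\mathcal{T}_h)}\le Ch^{-1}\|w_h\|_{L^2(\D)}$. For the jump part, the discrete trace inequality of Lemma~\ref{lem:disctra} (applied to the polynomial $w_h|_T$) yields $\|[w_h]\|_{L^2(e)}^2\le Ch_e^{-1}(\|w_h\|_{L^2(T_e^+)}^2+\|w_h\|_{L^2(T_e^-)}^2)$, so dividing by $h_e$, summing over the edges with each element counted at most $n_0$ times, and using $h_e\ge Ch$ gives $|w_h|_{J_h}^2\le Ch^{-2}\|w_h\|_{L^2(\D)}^2$.

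Next I would record the boundedness $|a_{h,\ell}(v_h,w_h)|\le C\|v_h\|_{V_h}\|w_h\|_{V_h}$ for $v_h,w_h\in V_h$, which is the exact analogue for $A_{h,\ell}$ of the boundedness half of Lemma~\ref{lem:Ah_pos_bounded}: inspecting the four terms of \eqref{eq:Ahsplit}, the volume term is handled by Cauchy--Schwarz together with $\|\chi_\ell K\|_{L^\infty}\le K_1$, the penalty term by $0\le\chi_\ell\le1$, and the two consistency/symmetry terms by the discrete trace estimate of Lemma~\ref{lem:disctra} (after dropping the factor $\chi_\ell$ using $\chi_\ell\le1$), in the same spirit as the bound of $\Gamma_2$ in the proof of Lemma~\ref{lem:dissip}; alternatively one simply invokes \cite[Lemma~4.12]{DiPietro2012}. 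Combining, with $w_h:=A_{h,\ell}v_h$,
\begin{equation*}
	\|A_{h,\ell}v_h\|_{L^2(\D)}^2 = a_{h,\ell}(v_h,A_{h,\ell}v_h) \le C\|v_h\|_{V_h}\|A_{h,\ell}v_h\|_{V_h} \le Ch^{-1}\|v_h\|_{V_h}\|A_{h,\ell}v_h\|_{L^2(\D)},
\end{equation*}
so $\|A_{h,\ell}v_h\|_{L^2(\D)}\le Ch^{-1}\|v_h\|_{V_h}\le Ch^{-2}\|v_h\|_{L^2(\D)}$, and evaluating at $v_h=P_h v$ completes the argument.

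The only genuinely delicate point is that every constant in the inverse and discrete trace inequalities must be independent of $h$ and of the particular element: this is exactly where the mesh regularity assumptions on $\{\mathcal{T}_h\}$ (the conditions of \cite[Lemma~1.62]{DiPietro2012} and $h_T\ge Ch$) enter, and for the dG polynomial spaces $\mathbb{P}^{\orderPoly}$ these bounds are classical. Everything else is routine bookkeeping with Cauchy--Schwarz.
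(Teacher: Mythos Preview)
Your argument is correct but proceeds along a different line from the paper. The paper exploits that $A_{h,\ell}$ is self-adjoint, so that $\|A_{h,\ell}P_h\|_{\L(L^2(\D))}=\sup_{\|v_h\|_{L^2(\D)}\le 1}|(A_{h,\ell}v_h,v_h)_{L^2(\D)}|$, and then bounds the quadratic form $a_{h,\ell}(v_h,v_h)$ term by term: the volume term via Cauchy--Schwarz and the inverse inequality of Lemma~\ref{lem:inv}, the two face terms via Lemma~\ref{lem:disctra} combined with Lemma~\ref{lem:inv}, and the penalty term via Lemma~\ref{lem:disctra}; each contributes an $h^{-2}$ directly. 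By contrast, you never invoke self-adjointness: you pass through the abstract $V_h$-framework, first establishing the inverse estimate $\|\cdot\|_{V_h}\le Ch^{-1}\|\cdot\|_{L^2(\D)}$ and the $V_h$-boundedness $|a_{h,\ell}(v_h,w_h)|\le C\|v_h\|_{V_h}\|w_h\|_{V_h}$, and then test with $w_h=A_{h,\ell}v_h$. Your route is slightly more general (it applies verbatim to any, possibly non-symmetric, bilinear form bounded in $\|\cdot\|_{V_h}$) and reuses the structural Assumption~\ref{ass:Ah}~\ref{ass:Ah_bound} rather than recomputing each dG term; the paper's route is a little more self-contained since the $V_h$-boundedness of $a_{h,\ell}$ (as opposed to $a_h$) is not stated separately there, whereas you need it as an intermediate step. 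Both approaches ultimately rest on the same inverse and discrete trace inequalities.
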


\begin{proof}
	Since $A_{h,\ell}$ is self-adjoint and therefore also $A_{h,\ell} P_h$, we can express the operator norm by
	\begin{equation*}
		\| A_{h,\ell} P_h \|_{\L(L^2(\D))} 
		= \sup_{\|v\|_{L^2(\mathcal{D})}\leq 1} | (A_{h,\ell} P_h v, v)_{L^2(\mathcal{D})}|
		= \sup_{v_h \in V_h, \|v_h\|_{L^2(\mathcal{D})}\leq 1} | (A_{h,\ell} v_h, v_h)_{L^2(\mathcal{D})}|,
	\end{equation*}
	compare \cite[Satz~V.5.7]{Werner.2000}. Inserting the definition from \eqref{eq:Ahsplit}, we find
	\begin{align*}
		| (A_{h,\ell}v_h,v_h )_{L^2(\D)}|
		&\leq \Big| \sum_{T\in\mathcal{T}_h} \big(\chi_{\ell} K \nabla v_h, \nabla v_h \big)_{L^2(T)^d} \Big|
		+ 2 \Big| \sum_{e\in F_h} \int_e\{\chi_{\ell} K\nabla v_h\cdot \mathbf{n}_e\}[ v_h]\diff{\xi}\Big|\\
		&\quad + \Big| \sum_{e\in F_h} \int_e \chi_{\ell} \frac{\sigma}{h_e}[v_h][v_h]\diff{\xi}\Big|\\
		&=:  \Gamma_1  +  \Gamma_2  +  \Gamma_3.
	\end{align*}
	We begin to bound $ \Gamma_1 $ using the Cauchy-Schwarz inequality, Lemma~\ref{lem:inv} and $\max_{T \in \mathcal{T}_h} h_T^{-1} \leq C h^{-1}$ to obtain
	\begin{equation*}
		\Gamma_1  = \Big|\sum_{T\in\mathcal{T}_h}\left(\chi_{\ell}K\nabla v_h,\nabla v_h\right)_{L^2(T)} \Big|
		\leq \sum_{T\in\mathcal{T}_h} \|\chi_{\ell} \|_{L^{\infty}(T)} \| K^{\frac{1}{2}} \nabla v_h\|_{L^2(T)^d}^2
		\leq C h^{-2} \| v_h \|_{L^2(\D)}^2.
	\end{equation*}
	We now turn our attention towards $\Gamma_{2}$. From Cauchy-Schwartz inequality, as well as, Lemmas~\ref{lem:disctra} and \ref{lem:inv}, it follows
	\begin{align*}
		\Gamma_2 
		&= 2 \Big| \sum_{e\in F_h}\int_e\{\chi_{\ell}K\nabla v_h\cdot \mathbf{n}_e\}[v_h] \diff{\xi }\Big|\\
		&\leq 2 \Big(\sum_{e\in F_h} \| \{ K \nabla v_h\cdot \mathbf{n}_e\} \|_{L^2(e)}^2\Big)^{\frac{1}{2}} 
		\Big(\sum_{e\in F_h} \| [v_h] \|_{L^2(e)}^2\Big)^{\frac{1}{2}} \\
		&\leq C h^{-1} \Big(\sum_{T\in \mathcal{T}_h} \| \nabla v_h \|_{L^2(T)^d}^2 \Big)^{\frac{1}{2}}
		\Big(\sum_{T\in \mathcal{T}_h} \| v_h \|_{L^2(T)}^2 \Big)^{\frac{1}{2}}
		\leq C h^{-2} \| v_h \|_{L^2(\D)}^2.
	\end{align*}
	For the last remaining summand $ \Gamma_3$, we apply Lemma~\ref{lem:disctra} and find
	\begin{equation*}
		\Gamma_3 
		= \Big| \sum_{e\in F_h} \int_e \chi_{\ell} \frac{\sigma}{h_e}[v_h]^2\diff{\xi}\Big|
		\leq C h^{-1}  \sum_{e\in F_h} \| [v_h] \|_{L^2(e)}^2 \leq C h^{-2}  \sum_{T\in \mathcal{T}_h} \| v_h \|_{L^2(T)}^2 = C h^{-2} \| v_h \|_{L^2(\D)}^2.
	\end{equation*}
	Combining the bounds for $ \Gamma_1 $, $ \Gamma_2$, and $ \Gamma_3 $, we obtain the desired result.
\end{proof}

The result of the following lemma is similar sort of the result for Friedrichs' operators in \cite[Proposition~4.14]{HochbruckKoehler.2020}.

\begin{lemma}\label{lem:swap}
	Let $A_{\ell}$ and $A_{h,\ell}$ be defined as in \eqref{eq:Asplit} and \eqref{eq:Ahsplit}, respectively. For $\ell \in \{1,2\}$, it follows that
	\begin{equation*}
		\big\| \big( A_{h,\ell}P_h-P_hA_{\ell} \big)v \big\|_{L^2(\D)} \leq C \| Av \|_{L^2(\D)} 
		\qquad \text{for all } v\in \dom(A).
	\end{equation*}
	This shows that Assumption~\ref{ass:Ah}~\ref{ass:dissl_exch} is satisfied.
\end{lemma}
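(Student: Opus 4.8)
The plan is to argue by duality. Since $(A_{h,\ell}P_h-P_hA_{\ell})v\in V_h$, we have
\begin{equation*}
	\big\|(A_{h,\ell}P_h-P_hA_{\ell})v\big\|_{L^2(\D)}
	=\sup_{w_h\in V_h,\ \|w_h\|_{L^2(\D)}\le 1}\big|\big((A_{h,\ell}P_h-P_hA_{\ell})v,w_h\big)_{L^2(\D)}\big|,
\end{equation*}
so it suffices to bound the bilinear pairing on the right by $C\|Av\|_{L^2(\D)}\|w_h\|_{L^2(\D)}$ for every $w_h\in V_h$.

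The first key step is the consistency of the weighted dG form. For $w_h\in V_h$ the $L^2(\D)$-projection property gives $(P_hA_{\ell}v,w_h)_{L^2(\D)}=(A_{\ell}v,w_h)_{L^2(\D)}$. Writing $(A_{\ell}v,w_h)_{L^2(\D)}=\sum_{T\in\mathcal{T}_h}\int_T\bigl(-\nabla\cdot(\chi_{\ell}K\nabla v)\bigr)w_h\diff{x}$ and integrating by parts on each $T$ produces $\sum_T(\chi_{\ell}K\nabla v,\nabla w_h)_{L^2(T)^d}$ minus the element boundary fluxes $\int_{\partial T}(\chi_{\ell}K\nabla v\cdot\mathbf n_{\partial T})w_h\diff{s}$. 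For $v\in\dom(A)=H^2(\D)\cap H^1_0(\D)$ the field $\nabla v$ has no jump across interior faces and $v$ vanishes on $\partial\D$, so $[v]|_e=0$ for every $e\in F_h$; reassembling the boundary fluxes over faces then yields exactly $\sum_T(\chi_{\ell}K\nabla v,\nabla w_h)_{L^2(T)^d}-\sum_{e\in F_h}\int_e\{\chi_{\ell}K\nabla v\cdot\mathbf n_e\}[w_h]\diff{s}$, while the symmetry and penalty terms in \eqref{eq:Ahsplit} drop out because they carry the factor $[v]$. Hence $a_{h,\ell}(v,w_h)=(A_{\ell}v,w_h)_{L^2(\D)}$ for all $w_h\in V_h$. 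Combining this with $(A_{h,\ell}P_hv,w_h)_{L^2(\D)}=a_{h,\ell}(P_hv,w_h)$ and setting $\eta:=v-P_hv$ gives the clean identity
\begin{equation*}
	\big((A_{h,\ell}P_h-P_hA_{\ell})v,w_h\big)_{L^2(\D)}=-a_{h,\ell}(\eta,w_h),\qquad w_h\in V_h.
\end{equation*}

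It then remains to estimate the four terms of $a_{h,\ell}(\eta,w_h)$ from \eqref{eq:Ahsplit}. The factors carrying $\eta$ are controlled by the projection estimate of Lemma~\ref{lem:Projerr} combined with scaled continuous trace inequalities on each $T\in\mathcal{T}_h$; using that second derivatives of $P_hv|_T$ vanish (since $\orderPoly=1$) and mesh quasi-uniformity ($h_T\simeq h$), this gives $\|\nabla\eta\|_{L^2(T)^d}\le Ch\|v\|_{H^2(T)}$, $\|\{\chi_{\ell}K\nabla\eta\cdot\mathbf n_e\}\|_{L^2(e)}\le Ch^{1/2}\|v\|_{H^2(\omega_e)}$ and $\|[\eta]\|_{L^2(e)}\le Ch^{3/2}\|v\|_{H^2(\omega_e)}$, where $\omega_e$ denotes the union of the elements abutting $e$. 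The factors carrying $w_h$ are handled by the inverse and discrete trace inequalities (Lemmas~\ref{lem:inv} and~\ref{lem:disctra}), which give $\|\nabla w_h\|_{L^2(T)^d}\le Ch^{-1}\|w_h\|_{L^2(T)}$, $\|[w_h]\|_{L^2(e)}\le Ch^{-1/2}\|w_h\|_{L^2(\omega_e)}$ and $\|\{K\nabla w_h\cdot\mathbf n_e\}\|_{L^2(e)}\le Ch^{-3/2}\|w_h\|_{L^2(\omega_e)}$. Pairing these via Cauchy--Schwarz on each element and face and summing (using $\|\chi_{\ell}\|_{L^\infty(\D)}\le 1$, the eigenvalue bounds \eqref{eq:def_K_eigenvalues} on $K$, and the bounded number of faces per element), all powers of $h$ cancel and each of the four terms is bounded by $C\|v\|_{H^2(\D)}\|w_h\|_{L^2(\D)}$. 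Finally, the elliptic regularity estimate $\|v\|_{H^2(\D)}\le C\|Av\|_{L^2(\D)}$ valid on the convex polygon $\D$ turns this into $C\|Av\|_{L^2(\D)}\|w_h\|_{L^2(\D)}$, and taking the supremum over $w_h$ completes the proof.

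The main obstacle is the bookkeeping in the middle two steps: getting the consistency identity $a_{h,\ell}(v,w_h)=(A_{\ell}v,w_h)_{L^2(\D)}$ exactly right, in particular the handling of the boundary faces, and then matching the negative powers of $h$ from the inverse estimates on $w_h$ against the positive powers from the trace and approximation estimates on $\eta$ so that the $h$-dependence disappears term by term. The weight $\chi_{\ell}$ (piecewise linear with $\|\chi_{\ell}\|_{L^\infty(\D)}\le 1$) and the Lipschitz continuity and uniform ellipticity of $K$ only serve to make the products on the faces well defined and uniformly bounded, so they cause no real difficulty.
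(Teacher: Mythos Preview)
Your proposal is correct and follows essentially the same route as the paper: duality in $V_h$, the consistency identity $a_{h,\ell}(v,w_h)=(A_{\ell}v,w_h)_{L^2(\D)}$ for $v\in\dom(A)$, reduction to $a_{h,\ell}(P_hv-v,w_h)$, and then a term-by-term estimate of the four pieces of \eqref{eq:Ahsplit} using the projection bounds (Lemma~\ref{lem:Projerr}), the inverse and trace inequalities (Lemmas~\ref{lem:inv}, \ref{lem:disctra}, \ref{lem:tra}), and the elliptic regularity $\|v\|_{H^2(\D)}\le C\|Av\|_{L^2(\D)}$. The only cosmetic difference is that the paper spells out each of the four terms $\Gamma_1,\dots,\Gamma_4$ separately, whereas you summarize the $h$-balancing in one paragraph.
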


\begin{proof}
	Since $\big( A_{h,\ell}P_h - P_hA_{\ell} \big)v \in V_h$ for $v\in \dom(A)$, we can write that
	\begin{equation*}
		\big\| \big( A_{h,\ell}P_h - P_hA_{\ell} \big)v \big\|_{L^2(\D)} 
		= \sup_{w_h\in V_h, \lno w_h\Ltwonex=1} \big| \big( (A_{h,\ell}P_h-A_{\ell})v,w_h \big)_{L^2(\D)}\big|.
	\end{equation*}	
	First, let us look at the non-discretized operator $A_{\ell}$ a bit closer. Using integration by parts, we find
	\begin{align*}
		\left( A_{\ell} v,w_h\right)_{L^2(\D)}
		&= \sum_{T\in\mathcal{T}_h} \left(\chi_{\ell}K\nabla v,\nabla w_h\right)_{L^2(T)}
		-\sum_{e\in F_h}\int_e ( \chi_{\ell}K\nabla v\cdot \mathbf{n}_e )[w_h]\diff{\xi}\\
		&= \sum_{T\in\mathcal{T}_h} \left(\chi_{\ell}K\nabla v,\nabla w_h\right)_{L^2(T)}
		-\sum_{e\in F_h}\int_e\{\chi_{\ell}K\nabla v\cdot \mathbf{n}_e\}[w_h]\diff{\xi}\\
		&\quad -\sum_{e\in F_h}\int_e\{\chi_{\ell}K\nabla w_h\cdot \mathbf{n}_e\}[v]\diff{\xi}+\sum_{e\in F_h}\int_e\chi_{\ell}\frac{\sigma}{h_e}[v][w_h]\diff{\xi},
	\end{align*}
	where in the first step the boundary terms do not disappear because of the possible discontinuities of $w_h$. In the second step, we use the fact that $\chi_{\ell}K\nabla v\cdot \mathbf{n}_e = \{\chi_{\ell}K\nabla v\cdot \mathbf{n}_e\}$ and $\lno[v]\rno_{L^2(e)}=0$ for $v \in \dom(A)$, compare \cite[Lemma~1.23]{DiPietro2012}. With an application of H\"older's inequality we then obtain that
	\begin{align*}
		& \big| \big( (A_{h,\ell}P_h-A_{\ell} ) v,w_h \big)_{L^2(\D)} \big|\\
		&= \Big| \sum_{T\in\mathcal{T}_h} \left(\chi_{\ell}K\nabla (P_h-I)v,\nabla w_h\right)_{L^2(T)}
		-\sum_{e\in F_h}\int_e\{\chi_{\ell}K\nabla (P_h-I)v\cdot \mathbf{n}_e\}[w_h]\diff{\xi}\\
		&\quad -\sum_{e\in F_h}\int_e\{\chi_{\ell}K\nabla w_h\cdot \mathbf{n}_e\}[(P_h-I)v]\diff{\xi}+\sum_{e\in F_h}\int_e\chi_{\ell}\frac{\sigma}{h_e}[(P_h-I)v][w_h]\diff{\xi} \Big| \\
		&\leq K_1\sum_{T\in\mathcal{T}_h}\lno\nabla (P_h-I)v\rno_{L^2(T)^d}\lno\nabla w_h\rno_{L^2(T)^d}\\
		&\quad + \sum_{e\in F_h}\lno\{K \nabla (P_h-I)v\cdot \mathbf{n}_e\}\rno_{L^2(e)}\lno[w_h]\rno_{L^2(e)}\\
		&\quad + \sum_{e\in F_h}\lno\{K \nabla w_h\cdot \mathbf{n}_e\}\rno_{L^2(e)}\lno[(P_h-I)v]\rno_{L^2(e)}\\
		&\quad +\sigma \sum_{e\in F_h} \frac{1}{h_e} \lno[(P_h-I)v]\rno_{L^2(e)}\lno[w_h]\rno_{L^2(e)}
		=: \Gamma_1 + \Gamma_2 + \Gamma_3 + \Gamma_4.
	\end{align*}
	To prove the desired bound, we will consider $\Gamma_{1}, \Gamma_2, \Gamma_3$, and $\Gamma_4,$ separately.
	First, for $\Gamma_1$, we can use Cauchy-Schwarz inequality for sums and then Lemmas~\ref{lem:Projerr} and \ref{lem:inv}
	\begin{align*}
		\Gamma_1 
		&=K_1\sum_{T\in\mathcal{T}_h}\lno\nabla (P_h-I)v\rno_{L^2(T)^d}\lno\nabla w_h\rno_{L^2(T)^d}\\
		&\leq C \Big(\sum_{T\in\mathcal{T}_h} h_T^{-2} \lno (P_h-I)v\rno_{H^1(T)}^2 \Big)^{\frac{1}{2}} 
		\Big(\sum_{T\in\mathcal{T}_h} h_T^2 \lno\nabla w_h\rno_{L^2(T)}^2 \Big)^{\frac{1}{2}} \\
		&\leq C \Big(\sum_{T\in\mathcal{T}_h} \lno v \rno_{H^2(T)}^2 \Big)^{\frac{1}{2}} 
		\Big(\sum_{T\in\mathcal{T}_h} \lno w_h\rno_{L^2(T)}^2 \Big)^{\frac{1}{2}} 
		= C \| v \|_{H^{2}(\D)} \lno w_h\Ltwonex,
	\end{align*}
	where we used that $\| v \|_{H^{2}(\D)} = \verttt v\verttt_{H^{2}(\mathcal{T}_h)}$ for a function $v \in H^2(\D)$.
	For the second term, $ \Gamma_2 $, we apply Lemma~\ref{lem:tra} and Lemma~\ref{lem:disctra}. Note that Lemma~\ref{lem:disctra} is formulated for scalar-valued functions instead of a vector-valued function $\nabla (P_h-I)v$. We can still apply the lemma for the components of the vector-valued function.
	With Lemma~\ref{lem:Projerr} we can then bound the projection error and then obtain in combination with the Cauchy--Schwarz inequality for sums
	\begin{align*}
		\Gamma_2 
		&=\sum_{e\in F_h}\lno\{K \nabla (P_h-I)v\cdot \mathbf{n}_e\}\rno_{L^2(e)}\lno[w_h]\rno_{L^2(e)}\\
		&\leq C \Big(\sum_{e\in F_h} \lno\{K \nabla (P_h-I)v\cdot \mathbf{n}_e\}\rno_{L^2(e)}^2\Big)^{\frac{1}{2}}
		\Big(\sum_{e\in F_h} \lno[w_h]\rno_{L^2(e)}^2 \Big)^{\frac{1}{2}}\\
		&\leq C \Big(\sum_{T\in\mathcal{T}_h} \| (P_h-I)v \|_{H^1(T)}
		\big(h_{T}^{-1} \| (P_h-I)v \|_{H^1(T)} + \| (P_h-I)v \|_{H^2(T)} \big) \Big)^{\frac{1}{2}}\\
		&\qquad \times \Big(\sum_{T\in\mathcal{T}_h} h_T^{-1}\lno w_h \rno_{L^2(T)}^2\Big)^{\frac{1}{2}}\\
		&\leq C \Big(\sum_{T\in\mathcal{T}_h}  h_T\| v \|_{H^2(T)}^2 \Big)^{\frac{1}{2}}
		\Big(\sum_{T\in\mathcal{T}_h} h_T^{-1}\lno w_h \rno_{L^2(T)}^2\Big)^{\frac{1}{2}}
		\leq C \| v \|_{H^2(\D)}\| w_h \|_{L^2(\D)},
	\end{align*}
	since $h_T \leq C h$ holds.
	Similarly, we bound $\Gamma_3$ by applying Lemmas~\ref{lem:disctra},~\ref{lem:inv} and \ref{lem:tra} for the first factor and Lemmas~\ref{lem:disctra} and \ref{lem:Projerr} for the second. An additional application of Cauchy--Schwarz inequality for sums then shows that
	\begin{align*}
		\Gamma_3
		&= \sum_{e\in F_h} \lno\{K \nabla w_h\cdot \mathbf{n}_e\}\rno_{L^2(e)}\lno[(P_h-I)v]\rno_{L^2(e)}\\
		&\leq C \Big(\sum_{e\in F_h} \lno\{K \nabla w_h\cdot \mathbf{n}_e\}\rno_{L^2(e)}^2\Big)^{\frac{1}{2}}
		\Big(\sum_{e\in F_h} \lno[(P_h-I)v]\rno_{L^2(e)}^2 \Big)^{\frac{1}{2}}\\
		&\!\!\leq C\! \Big( \hspace{-3pt}\sum_{T\in\mathcal{T}_h}\!\! h_T^{-1}\!\lno\nabla w_h\rno_{L^2(T)}^2 \hspace{-3pt} \Big)^{\frac{1}{2}}\!
			\Big(\hspace{-3pt}\sum_{T\in\mathcal{T}_h}\! \lno(P_h \!-\! I)v\rno_{L^2(T)}\big(h_T^{-1}\!\lno(P_h \!-\! I)v\rno_{L^2(T)}+\!\lno\nabla(P_h-I)v\rno_{L^2(T)^d}\big) \hspace{-3pt} \Big)^{\frac{1}{2}}\\
		&\!\leq C \Big(\sum_{T\in\mathcal{T}_h} h_T^{-3}\lno w_h \rno_{L^2(T)}^2\Big)^{\frac{1}{2}}
		\Big(\sum_{T\in\mathcal{T}_h} h_T^{3}\lno v\rno_{H^2(T)}^2 \Big)^{\frac{1}{2}}
		\leq C \| w_h \|_{L^2(\D)} \| v \|_{H^2(\D)}.
	\end{align*}
	For the last term $ \Gamma_4 $, we apply Lemmas~\ref{lem:tra}, \ref{lem:disctra} for the two obtained factors and the fact that $h_e = \min(h_{T_e^+}, h_{T_e^-}) \geq C h$ for every edge $e$ of an element $T$. Moreover, we use Lemma~\ref{lem:Projerr} to bound the projection error and Cauchy--Schwarz inequality for sums. More precisely, we obtain
	\begin{align*}
		\Gamma_4 
		&= \sigma \sum_{e\in F_h} h_e^{-1} \lno[(P_h-I)v]\rno_{L^2(e)}\lno[w_h]\rno_{L^2(e)}\\
		&\leq C  \Big(\!\sum_{e\in F_h} h_e^{-2}\lno[(P_h-I)v]\rno_{L^2(e)}^2\Big)^{\frac{1}{2}}
		\Big(\sum_{e\in F_h} \lno[w_h]\rno_{L^2(e)}^2 \Big)^{\frac{1}{2}}\\
		&\leq\!\! C \! \Big(\!\sum_{T\in\mathcal{T}_h} \! h_{T}^{-2}\!\lno (P_h \!-\! I)v\rno_{L^2(T)}\!\big(h_{T}^{-1}\!\lno(P_h \!-\! I)v\rno_{L^2(T)}\!\!+\!\!\lno\nabla(P_h \!-\! I)v\rno_{L^2(T)^d}\big) \!\!\Big)^{\frac{1}{2}}\!\!
			\Big(\!\sum_{T\in\mathcal{T}_h} h_T^{-1}\!\!\lno w_h\rno_{L^2(T)}^2 \!\Big)^{\frac{1}{2}}\\
		&\leq C \Big(\sum_{T\in\mathcal{T}_h} h_{T} \lno v\rno_{H^2(T)}^2\Big)^{\frac{1}{2}}
		\Big(\sum_{T\in\mathcal{T}_h} h_T^{-1}\lno w_h\rno_{L^2(T)}^2 \Big)^{\frac{1}{2}}\leq C \| v \|_{H^{2}(\D)}\lno w_h\Ltwonex.
	\end{align*}
	Combining the bound for $ \Gamma_1 $ to $ \Gamma_4 $, we can now prove the claimed result of the lemma
	\begin{equation*}
		\big\| \big(A_{h,\ell}P_h-P_hA_{\ell} \big)v \big\|_{L^2(\D)}
		=\sup_{w_h\in V_h, \lno w_h\Ltwonex=1} \big( \big(A_{h,\ell}P_h-A_{\ell}\big) v, w_h \big)_{L^2(\D)}
		\leq C \| v \|_{H^{2}(\D)}.
	\end{equation*}
	Since $\| v \|_{H^{2}(\D)}\leq C \| Av \|_{L^2(\D)}$ for $v\in\dom(A)$, we have completed the proof of the required statement.
\end{proof}

The last lemma of this section completes the verification of Assumption~\ref{ass:Ah}. 

\begin{lemma} \label{lem:bound_inv_A}
	Let $A$ and $A_{\ell}$ be defined as in \eqref{eq:def_A} and \eqref{eq:Asplit}, respectively. If $\sigma>C\frac{K_1^2}{K_0}$ where $C$ depends on the mesh, it follows that
	\begin{equation*}
		\| A^{-1} - A_h^{-1}P_h \|_{\L(L^2(\D))}\leq C h^{2},
	\end{equation*}
	i.e.~Assumption~\ref{ass:Ah}~\ref{ass:dissa_exch} is fulfilled.
\end{lemma}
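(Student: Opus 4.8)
The plan is a C\'ea-plus-Aubin--Nitsche duality argument for the symmetric interior penalty form $a_h$. Fix $f\in L^2(\D)$, set $u:=A^{-1}f\in\dom(A)=H^2(\D)\cap H^1_0(\D)$ and $u_h:=A_h^{-1}P_hf\in V_h$; by definition of $A_h$ the latter satisfies $a_h(u_h,v_h)=(P_hf,v_h)_{L^2(\D)}=(f,v_h)_{L^2(\D)}$ for all $v_h\in V_h$, and it suffices to prove $\|u-u_h\|_{L^2(\D)}\le Ch^2\|f\|_{L^2(\D)}$. First I would establish consistency: since $u\in H^2(\D)\cap H^1_0(\D)$ has no jumps across interior edges, the normal flux $K\nabla u\cdot\mathbf n_e$ is single-valued, and $u=0$ on $\partial\D$, an element-wise integration by parts in \eqref{eq:A_dG_formulation} --- as in the first display of the proof of Lemma~\ref{lem:swap} --- gives $a_h(u,v_h)=(Au,v_h)_{L^2(\D)}=(f,v_h)_{L^2(\D)}$ for all $v_h\in V_h$, hence the Galerkin orthogonality $a_h(u-u_h,v_h)=0$ for all $v_h\in V_h$.

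For the energy estimate I would work with the augmented norm $\verttt v\verttt_{*}^2:=\|v\|_{V_h}^2+\sum_{e\in F_h}h_e\|\{K\nabla v\cdot\mathbf n_e\}\|_{L^2(e)}^2$ on $V_h+H^2(\D)$. Writing $u-u_h=(u-P_hu)+\xi$ with $\xi:=P_hu-u_h\in V_h$, testing the strong positivity of $A_h$ (Lemma~\ref{lem:Ah_pos_bounded}) against $\xi$, and combining the Galerkin orthogonality with boundedness of $a_h$ on $(V_h+H^2(\D))\times V_h$ in the $\verttt\cdot\verttt_{*}$-norm (Cauchy--Schwarz together with the trace and inverse inequalities, Lemmas~\ref{lem:tra},~\ref{lem:disctra},~\ref{lem:inv}, exactly the bookkeeping in the proof of Lemma~\ref{lem:swap}) gives $\|\xi\|_{V_h}\le C\verttt u-P_hu\verttt_{*}$. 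The projection estimates of Lemma~\ref{lem:Projerr}, with the trace inequalities used to handle the edge terms, yield $\verttt u-P_hu\verttt_{*}\le Ch\,\|u\|_{H^2(\D)}$; elliptic regularity gives $\|u\|_{H^2(\D)}\le C\|Au\|_{L^2(\D)}=C\|f\|_{L^2(\D)}$; and since $\verttt v_h\verttt_{*}\le C\|v_h\|_{V_h}$ on $V_h$ by a discrete trace inequality while the flux part of $u$ is controlled by $\|u\|_{H^2(\D)}$, altogether $\verttt u-u_h\verttt_{*}\le Ch\|f\|_{L^2(\D)}$.

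Finally I would run the duality argument. Put $e:=u-u_h\in H^1_0(\D)+V_h$ and $z:=A^{-1}e\in\dom(A)$, so $\|z\|_{H^2(\D)}\le C\|e\|_{L^2(\D)}$. Arguing as in the consistency step with $z$ in place of $u$ and now testing against the broken $H^1$ function $e$ gives $a_h(z,e)=(Az,e)_{L^2(\D)}=\|e\|_{L^2(\D)}^2$. Using symmetry of $a_h$ and the Galerkin orthogonality $a_h(e,P_hz)=a_h(P_hz,e)=0$,
\[
\|e\|_{L^2(\D)}^2=a_h(e,z)=a_h(e,z-P_hz)\le C\verttt e\verttt_{*}\verttt z-P_hz\verttt_{*}\le C\bigl(h\|f\|_{L^2(\D)}\bigr)\bigl(h\|z\|_{H^2(\D)}\bigr)\le Ch^2\|f\|_{L^2(\D)}\|e\|_{L^2(\D)},
\]
and dividing by $\|e\|_{L^2(\D)}$ gives $\|A^{-1}f-A_h^{-1}P_hf\|_{L^2(\D)}\le Ch^2\|f\|_{L^2(\D)}$, i.e.\ the assertion.

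The hard part will be the energy estimate: one must work in a norm in which $a_h$ is simultaneously coercive on $V_h$ --- which is where the restriction $\sigma>CK_1^2/K_0$ enters, via Lemma~\ref{lem:Ah_pos_bounded} --- and bounded against $H^2$ functions with the correct $O(h)$ approximation rate, which is what forces the weighted flux term $h_e\|\{K\nabla v\cdot\mathbf n_e\}\|_{L^2(e)}^2$ into $\verttt\cdot\verttt_{*}$; the rest is the routine trace, inverse and projection estimates already assembled for Lemma~\ref{lem:swap}.
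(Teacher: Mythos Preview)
Your proposal is correct and is precisely the standard symmetric interior penalty error analysis (consistency $+$ C\'ea $+$ Aubin--Nitsche duality) that the paper invokes by citing \cite[Corollary~4.26]{DiPietro2012} together with the elliptic regularity $\|A^{-1}v\|_{H^2(\D)}\le C\|v\|_{L^2(\D)}$; the paper does not spell out the argument, so your write-up is in fact more detailed than what appears there.
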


A proof can be performed in a similar fashion as in \cite[Corollary~4.26]{DiPietro2012}. Additionally, we use the fact that $\| A^{-1}v \|_{H^2(\D)} \leq C \| v \|_{L^2(\D)}$, compare \cite[Theorem~9.24]{Hackbusch2017}) for our choice of $K$.

The lemmas of Section~\ref{sec:dG} show that the stochastic evolution equation from \eqref{eq:model} fits into the theoretical framework of Section~\ref{sec:Prob_description} and that the dG framework fulfills the assumptions stated in Section~\ref{sec:discretization}.
We have therefore verified that our main theoretical convergence result from Theorem~\ref{thm:result} is indeed applicable.

\section{Numerical experiments} \label{sec:Num_ex}
In the following section, we will validate our theoretical results through numerical tests. To implement the dG spatial discretization scheme, we used the software module DUNE-FEM \cite{dedner_2020_3706994,dune:Fem,dunereview:21,blatt2025distributedunifiednumericsenvironment}. For the dG approximation, we choose polynomials of at most order one and the parameter $\sigma=3$ in \eqref{eq:A_dG_formulation} and \eqref{eq:Ahsplit}, which is determined empirically.
We looked at two examples to test our method. First, we look at a semi-linear stochastic heat equation with a homogeneous Dirichlet boundary condition in Section~\ref{subsec:ex1}. This setting fits into the framework of Section~\ref{sec:Prob_description} and therefore is used to verify our error bound from Theorem~\ref{thm:result}. To show that the method also performs well in a more general framework, we also test it in a quasi-linear setting. In Section~\ref{subsec:ex2}, we therefore look at the stochastic porous media equation.

\subsection{Semi-linear test example} \label{subsec:ex1}
We look at a stochastic heat equation in a domain $\D=(0,1)^2\subset\R^2$ with a reaction term and multiplicative noise. More precisely, we look at the equation
\begin{equation}\label{eq:exmodel}	
	\begin{cases}
		\diff{X}\coord= \Big(\Delta X\coord +\sqrt{t}\pi^2(1+X\coord)\sin(\pi x)\sin(\pi y)\Big)\diff{t}&\\
		\quad\quad\quad\quad\quad\:\:+10X\coord \diff{W}(t,\mathbf{x}),& \coord\in (0,0.5]\times \D;\\
		X\coord=0, & \coord \in (0,0.5] \times \partial\D;\\
		X(0,\mathbf{x})=\sin(\pi x)\sin(\pi y),& \mathbf{x}\in \D,
	\end{cases}
\end{equation}
where we abbreviate $\mathbf{x}=(x,y)$. In the following, the spaces $H$ and $U$ from the theory in the previous sections are chosen to be $L^2(\D)$.
The $Q$-Wiener is defined by its Karhunen-Lo\`{e}ve expansion
\begin{equation}\label{eq:W_num_exp1}
	W\coord
	=\sum_{k = 1}^{\infty} (g_1(k)^2+g_2(k)^2)^{-2-\frac{\varepsilon}{2}} \sin(g_1(k)\pi x)\sin(g_2(k)\pi y)\beta_{g(k)}(t),
\end{equation}
where
\begin{equation*}
	g \colon \N \to \N^2, \quad k \mapsto (g_1(k), g_2(k))
	= \Big( k  - \frac{1}{2}(h(k) - 1) (h(k) - 2) , \ h(k) - k + \frac{1}{2}(h(k) - 1) (h(k) - 2) \Big)
\end{equation*}
for $h(k) = \lfloor \frac{3}{2} + (\frac{1}{4} + 2 (k-1) )^{\frac{1}{2}} \rfloor$, $\beta_{g(k)}$ are i.i.d.~$\mathcal{F}_t$-adapted Brownian motions and $\varepsilon=2\cdot 10^{-5}$. One can show that $g$ is a bijective mapping between $\N$ and $\N^2$. 

First, we note that the initial value $X_0$ is smooth and bounded in space and deterministic. Thus, Assumption~\ref{ass:X0} is fulfilled for every $\theta_{X_0} \in [0,1)$.
The function $f$ is chosen to be $\mathbf{x} \mapsto f(t,v)(\mathbf{x}) = \pi^2(1+v(\mathbf{x}))\sin(\pi x)\sin(\pi y)$. Since the $\sin$ functions are bounded on $\D$, it follows that $f(t,v)$ lies in $L^2(\D)$ for every $v \in L^2(\D)$. To verify $\| A^{\theta_{f}}f(t,w) \|_H \leq C\big(1+ \| A^{\theta_{f}}w \|_H \big)$ for every $w \in \dom(A^{\theta_f})$ and $\theta_f \in [0,\frac{1}{2}) \setminus \{\frac{1}{4}\}$, we refer the reader to the calculation in \cite[Equations~(20) and (23)]{JENTZEN2012114}. Thus, Assumption~\ref{ass:f_reg} holds.
Since the function $v$ appears in a linear fashion, Assumption~\ref{ass:f_hold} is also fulfilled, which proves that Assumption~\ref{ass:f} holds.

Moreover, we can verify that the diffusion coefficient given by $B(t,v) u  = v \cdot u$ fits into Assumption~\ref{ass:B}. The well-definedness, the bounds in \eqref{eq:HS_B}, and Assumption~\ref{ass:B_hold} are verified in \cite[page~121]{JENTZEN2012114}, where we also use the fact that $B(t,0) = 0$. It only remains to show that \eqref{eq:boundedOp_B} is fulfilled. For $v \in \dom(A^{\theta_{X_0}})$, we obtain that
\begin{equation*}
	\| B(t,v) \|_{\L(U,H)} 
	= \sup_{\|u\|_{L^2(\D)} = 1} \| v \cdot u \|_{L^2(\D)} 
	\leq \| v \|_{L^{\infty}(\D)}
	\leq \| v \|_{H^{2\theta_{X_0} }(\D)}
	\leq C \|A^{\theta_{X_0}} v \|_H,
\end{equation*}
where first use that $H^{2\theta_{X_0}}(\D)$ is continuously embedded into $L^{\infty}(\D)$ for $\theta_{X_0} > \frac{1}{2}$, see \cite[Theorem~4.12]{AdamsFournier.2003}. Additionally, we used that the fractional Sobolev space $H^{2\theta_{X_0} }(\D)$ is the $\theta_{X_0}$-interpolation between $L^2(\D)$ and $H^2(\D)$ (compare \cite[Theorem~12.4.]{LionsMagenes.1972}) while $\dom(A^{\theta_{X_0}})$ is the $\theta_{X_0}$-interpolation between $L^2(\D)$ and $\dom(A) = H^2(\D) \cap H^1_0(\D)$ (see \cite[Theorem~4.36]{Lunardi.2009}). Since we can equip both $H^2(\D)$ and $H^2(\D) \cap H^1_0(\D)$ with the same norm, the norms of $H^{2\theta_{X_0} }(\D)$ and $\dom(A^{\theta_{X_0}})$ are equivalent. Thus, Assumption~\ref{ass:B} is satisfied for all $\theta_{B} \in [0,\half) \setminus \{\frac{1}{4}\}$ and $\theta_{X_0} \in (\frac{1}{2},1)$.

As weight functions, we choose
\begin{equation*}
	\chi_0=\begin{cases}
		1& \text{if }x<\half-\delta;\\
		\frac{-x+\half+\delta}{2\delta}& \text{if }x<\half-\delta;\\
		0& \text{if }x>\half+\delta
	\end{cases}
	\quad 
	\text{and}
	\quad
	\chi_1=\begin{cases}
		0& \text{if }x<\half-\delta;\\
		\frac{x-\half+\delta}{2\delta}& \text{if }x<\half-\delta;\\
		1& \text{if }x>\half+\delta,
	\end{cases}
\end{equation*}
with $\delta=0.1$. We choose $V_{h}$ such that the corresponding $\mathcal{T}_h$ is a Cartesian $M$ by $M$ grid for varying $M\in\N$ and $h = M^{-1}$.

We note that one can show that $C (k+1) \leq (g_1(k)^2+g_2(k)^2) \leq C (k+1)$ for all $k \in \N$. Thus, in terms of Assumption~\ref{ass:Proj_error_U} the eigenvalues lie in $\mathcal{O}(k^{-(2r + 1 + \varepsilon) })$ for $r = \frac{3}{2}$. For $N_U = \lfloor h^{-\frac{4}{3}} \rfloor$, we then observe $(N_U + 1)^{-r} \leq (h^{-\frac{4}{3}})^{-\frac{3}{2}} = h^{2} \leq h^{2 \theta_U+ 1}$ for all $\theta_U \in [0,\half)$.

For this setting, we have two experiments, where we compute the $L^2(\Omega;L^2(\D))$-error at the final time. To approximate the $L^2(\Omega;L^2(\D))$-norm, we use a Monte Carlo simulation with one hundred samples in the form
\begin{equation}\label{Num_Error_approx}
	\|X_{h,\tau}^N(t_f)-X(t_f)\|_{L^2(\Omega; L^2(\D))} 
	\approx \Big(\frac{1}{100}\sum_{j=1}^{100} \|X_{h,\tau}^N(t_f,\omega_j)-X_{h_{\text{ref}},\tau_{\text{ref}}}(t_f,\omega_j) \|_{L^2(\D)}^2\Big)^{\half},
\end{equation}
where $X_{h_{\text{ref}},\tau_{\text{ref}}}$ is a reference solution.
For the first test, we fix the time step to $\tau = 2^{-12}$ and have different space discretization with $h=\frac{1}{5}\cdot 2^{-j}$ and $j=\{1,\dots,4\}$. In this case, our reference solution has been computed with $\tau_{\text{ref}} =2^{-12}$ and $h_{\text{ref}} = \frac{1}{5} \cdot 2^{-6}$.

For the second experiment, we check the convergence in time. Furthermore, we compare it to the method with the Lie splitting method and the semi-implicit Euler method (without splitting). The Lie splitting is obtained by replacing $S_{h,\tau}$ by $(I+\tau A_{h,2})^{-1} (I+\tau A_{h,1})^{-1}$ in scheme~\eqref{eq:scheme} while the semi-implicit Euler method is obtained by choosing $A_{h,1}=A_{h}$ and $A_{h,2}=0$ in scheme~\eqref{eq:scheme}. In this case, we consider a varying $\tau = 2^{-j}$ with $j=\{6,\dots,13\}$ and fix $h=1/150$. The reference solution has been computed with $\tau_{\text{ref}}= 2^{-15}$ and $h_{\text{ref}}=1/150$ using the semi-implicit Euler method. The convergence rates are as expected and no large difference is visible in these plots. The fact that our method performs similarly well as the semi-implicit Euler method is a positive result. This means that the splitting error is relatively small and a code uses our method for parallelization will not make a large additional error. While the error plot does not show a big difference between the Douglas--Racheford and the Lie splitting method, when comparing the solutions at the final time, it is visible that the error distribution of the Douglas--Racheford splitting is more even on the domain and not as concentrated on the overlap as for the Lie splitting.

Note that the decomposed operators $A_{h,1}$ and $A_{h,2}$ do not commute in this example. Thus, the assumptions of Theorem~\ref{thm:result_selfadjoint} are not satisfied. However, Theorem~\ref{thm:result} already implies a rate of convergence of $\frac{1}{2}$ in time and of $2$ in space due to the regularity properties of the data.

\begin{figure}[ht]
	\includegraphics[width=0.45\textwidth]{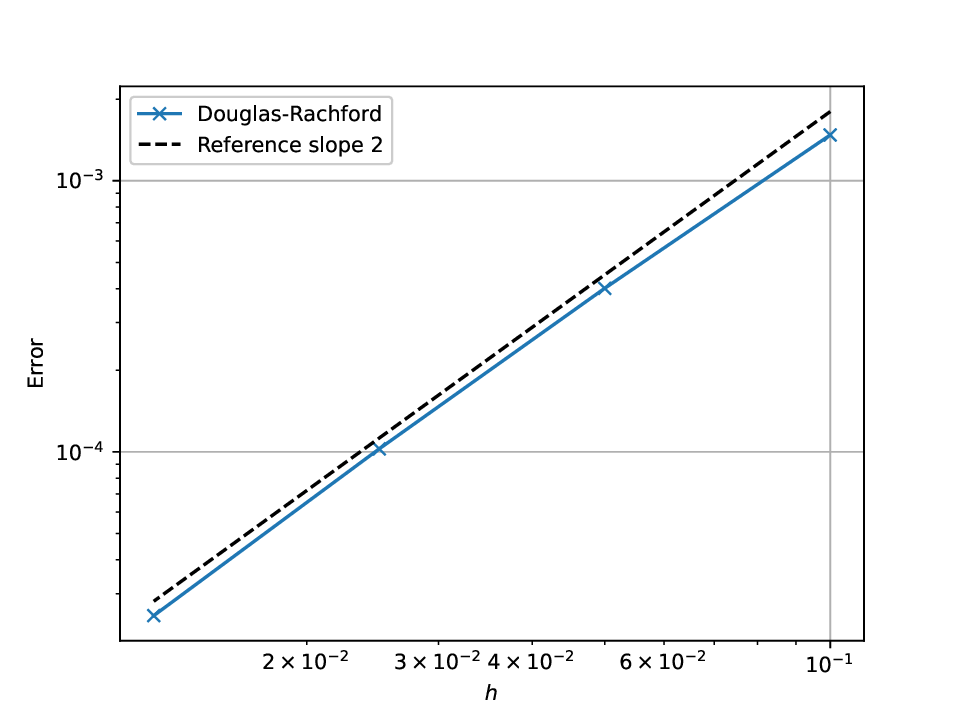}
	\includegraphics[width=0.45\textwidth]{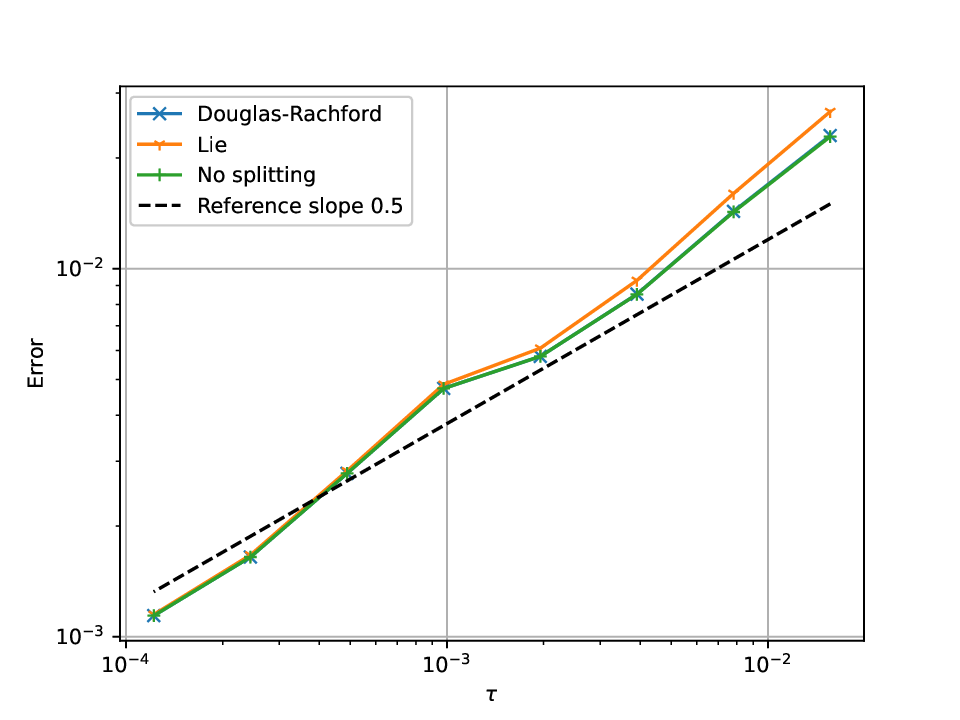}
	\caption{Left: Space convergence plot of the strong error at the final time for Experiment 1. Right: Time convergence plot of the strong error at the final time for Experiment 1.}
\end{figure}

\subsection{Quasi-linear test example} \label{subsec:ex2}
In this experiment, we generalize the problem class to a setting that is not included in the theoretical result. This is to suggest that the method \eqref{eq:scheme} can also be applied to more general cases. We consider the stochastic porous medium equation
\begin{equation}\label{eq:exmodelPME}
	\begin{cases}
		\diff{X}\coord= \Delta X^4\coord\diff{t}+X\coord \diff{W}(t,\mathbf{x}),& \coord\in [0,0.05]\times \D;\\
		X\coord=0, & \coord \in [0,0.05] \times \partial\D;\\
		X(0,\mathbf{x})=  S^{-\frac{1}{5}} \max\Big(0, \frac{1}{10}-\frac{3}{40}\frac{4(x-\half)^2}{S^{\frac{2}{5}}} \Big),&\mathbf{x}\in \D,
	\end{cases}
\end{equation}
where $\D=(0,1)\subset\R$, $\mathbf{x}=(x,y)$, $S=0.02$. The $Q$-Wiener process can be stated as
\begin{equation*}
	W\coord=\sum_{k=1}^{\infty} k^{-\frac{5}{2}-2\varepsilon}\sin(k\pi x)\beta_{k}(t),
\end{equation*}
where $\beta_{k}$ are i.i.d. $\mathcal{F}_t$-adapted Brownian motions and $\varepsilon=10^{-5}$. The initial condition is chosen such that we have the Barenblatt solution in the deterministic case. The short time interval is to ensure that the solution has not reached the boundary yet, at least in the deterministic case.
We choose $V_{h}$ such that the domain is divided into intervals of equal length $h = \frac{1}{M}$, $M \in \N$. For the discretized operator $B_h$, we choose $N_U=M$, which satisfies Assumption~\ref{ass:Proj_error_U} for $\theta_U\in[0,\half)$.

For this experiment, we also look at the spatial and temporal convergence. We drop the symmetry term in the discrete operator of \eqref{eq:A_dG_formulation} due to implementation issues. In this case, it is an incomplete interior penalty Galerkin (IIPG) method, see~\cite[Chapter 1]{Riviere}. For the spatial convergence, we fix $\tau=10^{-3}\cdot 2^{-14}$ and consider varying spatial step sizes $h=0.1\cdot 2^{-j}$ with $j=\{0,\dots,5\}$. The reference solution is computed with $h_{\text{ref}} =  0.1\cdot 2^{-7}$ and $\tau_{\text{ref}}=10^{-3}\cdot 2^{-14}$ using the Douglas--Rachford splitting. The observed spatial convergence order is around one. It is expected to observe a lower spatial convergence order for the porous medium equation compared to the linear case. See for example \cite{Rulla1996}, where they prove a convergence result for fully-discretized porous medium equations in two dimensions. For the temporal convergence, we fix $h=1/200$ and consider varying temporal step sizes $\tau=10^{-3}\cdot 2^{-j}$ with $j=\{5,\dots,13\}$. The reference solution is computed with $h_{\text{ref}} =  1/200$ and $\tau_{\text{ref}}=10^{-3}\cdot 2^{-15}$ using the semi-implicit Euler method. Again, we compare the Douglas--Rachford splitting with the Lie splitting and the semi-implicit Euler method. The observed temporal convergence order is around $0.5$ as suggested for the less general case considered in our theory. The errors are estimated as described in \eqref{Num_Error_approx}. The temporal errors of the different schemes do not differ much. As for such a nonlinear problem, the parallelization of code is even more relevant than in the linear setting; this is a promising result.
\begin{figure}[ht]
	\includegraphics[width=0.45\textwidth]{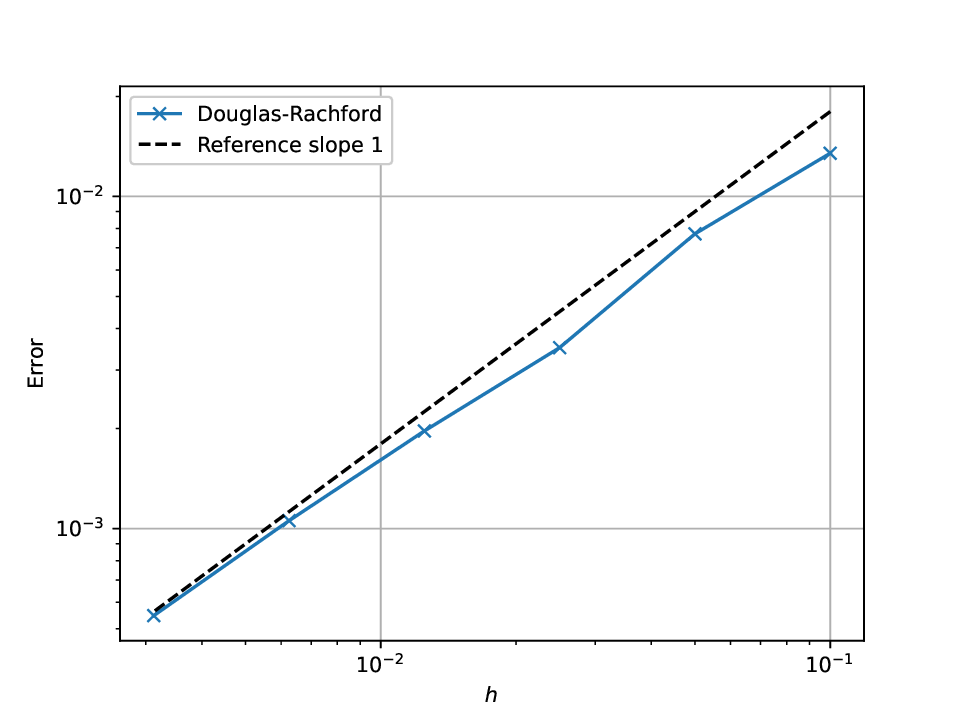}
	\includegraphics[width=0.45\textwidth]{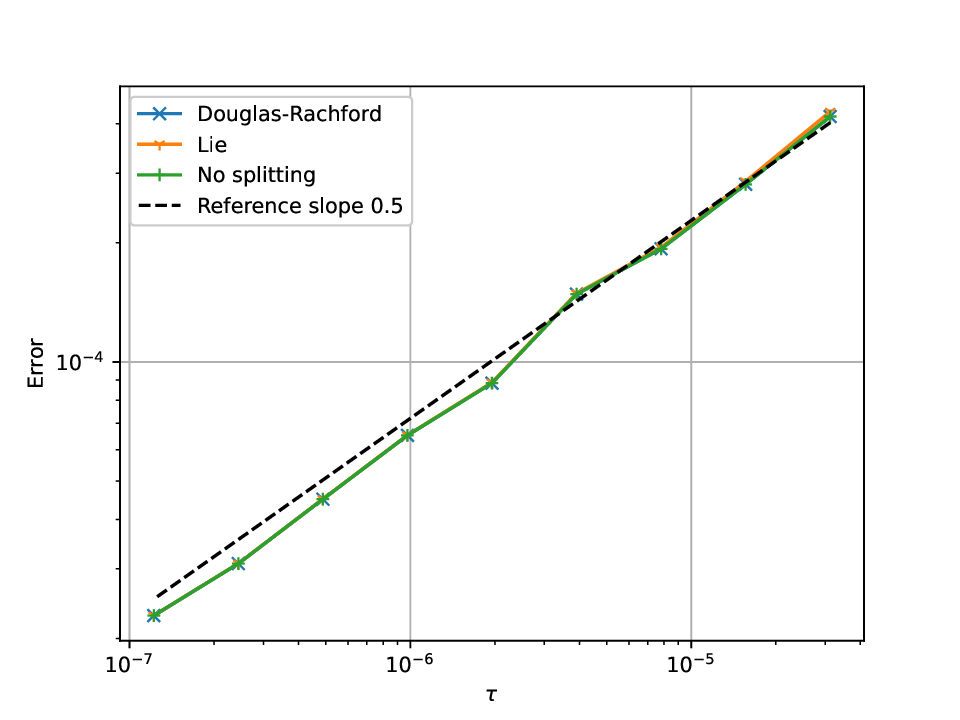}
	\caption{Left: Space convergence plot of the strong error at the final time for Experiment 2. Right: Time convergence plot of the strong error at the final time for Experiment 2.}
\end{figure}
\appendix
\section{Basic results} \label{appendix:basic_results}
In this first part of the appendix, we collect some basic inequalities that are of importance throughout the paper.

\begin{lemma}\label{lem:interA}
	For a real Hilbert space $H$, let $\mathcal{B} \in \L(H)$ be given and let $A$ fulfill Assumption~\ref{ass:A}. For every $\zeta \in [0,1]$, it follows that
	\begin{equation*}
		\| \mathcal{B}A^{-\zeta} \|_{\L(H)} 
		\leq C \| \mathcal{B}A^{-1} \|_{\L(H)}^{\zeta} \| \mathcal{B} \|_{\L(H)}^{1-\zeta}.
	\end{equation*}
\end{lemma}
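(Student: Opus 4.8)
The plan is to prove this interpolation estimate via complex interpolation (the three-lines lemma / Stein interpolation), exploiting that $A$ generates an analytic semigroup and is sectorial, so that its fractional powers $A^{-z}$ for $\operatorname{Re}(z) \in [0,1]$ are well-behaved. First I would introduce, for a fixed nonzero $v \in H$ and $w \in H$ with $\|v\|_H = \|w\|_H = 1$, the scalar-valued function
\[
	F(z) = \innerb[H]{\mathcal{B} A^{-z} v}{w} \, M^{-z},
\]
defined on the closed strip $\{z \in \mathbb{C}_{\C} : 0 \le \operatorname{Re}(z) \le 1\}$, where $M$ is a normalizing constant (to be chosen as a ratio of the two operator norms on the right-hand side) inserted to balance the two boundary bounds. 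Here one works in the complexification $H_{\C}$, using that $A$ is sectorial there by Assumption~\ref{ass:A_sect}, which is what makes $z \mapsto A^{-z}$ analytic on the open strip and bounded/continuous up to the boundary.

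The key steps, in order: (1) verify that $F$ is holomorphic on the open strip and bounded and continuous on the closed strip — this uses the standard theory of fractional powers of sectorial operators (e.g.\ as in Pazy, Chapter 2.6, already cited in the excerpt), in particular that $\|A^{-z}\|_{\L(H_{\C})}$ is uniformly bounded for $\operatorname{Re}(z)$ in a compact subinterval of $[0,1]$ and grows at most polynomially (in fact is bounded) in $|\operatorname{Im}(z)|$; (2) estimate $F$ on the two boundary lines: on $\operatorname{Re}(z) = 0$ one has $A^{-z} = A^{-i\operatorname{Im}(z)}$, a bounded operator, giving $|F(it)| \le C_0 \|\mathcal{B}\|_{\L(H)}$ after absorbing the purely imaginary-power bound; on $\operatorname{Re}(z) = 1$ one writes $A^{-z} = A^{-1} A^{-i\operatorname{Im}(z)}$ to get $|F(1+it)| \le C_1 \|\mathcal{B} A^{-1}\|_{\L(H)}$; (3) apply the Phragmén–Lindelöf / three-lines theorem to conclude $|F(\zeta)| \le C \|\mathcal{B}\|_{\L(H)}^{1-\zeta}\|\mathcal{B} A^{-1}\|_{\L(H)}^{\zeta}$ for $\zeta \in (0,1)$; (4) take the supremum over unit vectors $v, w$ to recover the operator norm $\|\mathcal{B} A^{-\zeta}\|_{\L(H)}$, and note the endpoint cases $\zeta = 0, 1$ are trivial. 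A cleaner alternative, if the paper prefers to avoid invoking complex interpolation machinery, is to cite a moment inequality for fractional powers directly: for a positive sectorial operator one has $\|A^{-\zeta} x\|$ controlled by $\|x\|^{1-\zeta}\|A^{-1}x\|^{\zeta}$ up to a constant (a consequence of the integral representation $A^{-\zeta} = \frac{\sin(\pi\zeta)}{\pi}\int_0^\infty s^{-\zeta}(A + s)^{-1}\diff{s}$ combined with splitting the integral at a well-chosen threshold and optimizing) — but applied to $\mathcal{B}A^{-\zeta}$ one cannot move $\mathcal{B}$ past $A^{-\zeta}$, so the duality-plus-three-lines route is the safe one.

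The main obstacle I anticipate is the regularity bookkeeping in step (1): one must be careful that $A^{-z}$ really is defined and analytic on the full closed strip including the imaginary axis, which requires knowing that $A$ has bounded imaginary powers (this follows from sectoriality with angle $< \pi/2$, but is a nontrivial input) and that the bound on $\|A^{-z}\|_{\L(H_{\C})}$ along vertical lines grows no faster than allowed by Phragmén–Lindelöf — exponential-type growth $e^{c|\operatorname{Im}(z)|}$ with $c < \pi$ would still be fine, but one should state which estimate is being used. Since the excerpt already grants Assumption~\ref{ass:A} and cites Pazy for the fractional-power calculus, I would lean on that: invoke the boundedness of imaginary powers of a sectorial operator on the complexification, reduce to the three-lines lemma, and keep the exposition short. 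The estimates on the two boundary lines and the passage to operator norms via duality are then entirely routine.
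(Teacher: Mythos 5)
Your route (duality plus the three-lines theorem on the strip, using bounded imaginary powers of $A$) is genuinely different from the paper's, and considerably heavier. The paper proves the lemma in four lines with exactly the ``moment inequality'' idea you mention and then dismiss: it writes
\begin{equation*}
	\mathcal{B}A^{-\zeta} = \mathcal{B}\,\big(A^{-\zeta}(I-\e^{-tA})\big) + \mathcal{B}A^{-1}\,\big(A^{1-\zeta}\e^{-tA}\big),
\end{equation*}
applies the two analytic-semigroup bounds $\| A^{-\zeta}(I-\e^{-tA})\|_{\L(H)}\leq Ct^{\zeta}$ and $\| A^{1-\zeta}\e^{-tA}\|_{\L(H)}\leq Ct^{\zeta-1}$ (equations \eqref{eq:ana_semi2} and \eqref{eq:ana_semi1}), and chooses $t=\|\mathcal{B}A^{-1}\|_{\L(H)}/\|\mathcal{B}\|_{\L(H)}$ to balance the two terms. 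Note that your stated objection to this alternative --- that ``one cannot move $\mathcal{B}$ past $A^{-\zeta}$'' --- does not actually arise: the splitting is done entirely to the right of $\mathcal{B}$, so nothing ever needs to commute with $\mathcal{B}$. This elementary argument is what the paper intends, uses only the two semigroup estimates already established from Assumption~\ref{ass:A}, and avoids complexification altogether.

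Your approach can be made to work, but as written it contains one real flaw: the claim that bounded imaginary powers ``follows from sectoriality with angle $<\pi/2$'' is false in general --- there are sectorial operators, even on Hilbert spaces, without bounded imaginary powers (McIntosh-type counterexamples). What saves you here is not sectoriality but positivity: since $A$ is a positive (hence m-accretive) operator on a Hilbert space, Kato's theorem gives $\|A^{is}\|_{\L(H_{\mathbb{C}})}\leq \e^{\pi|s|/2}$, which is of admissible exponential type for Phragm\'en--Lindel\"of. If you pursue the interpolation route you must cite that result explicitly rather than attribute the input to sectoriality; otherwise step (1) of your outline is unjustified. Given that the elementary splitting is available and shorter, I would recommend it over the three-lines machinery.
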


\begin{proof}
	For the proof, we define the constant $C_0=\frac{\| \mathcal{B}A^{-1} \|_{\L(H)}}{\lno\mathcal{B}\Loptwon}$. Together with an application of the semigroup bounds \eqref{eq:ana_semi2} and \eqref{eq:ana_semi1}, we find 
	\begin{align*}
		\| \mathcal{B}A^{-\zeta} \|_{\L(H)}
		&\leq \big\| \mathcal{B}A^{-\zeta} (I- \e^{-C_0A}) \big\|_{\L(H)}
		+ \big\| \mathcal{B}A^{-\zeta} \e^{-C_0 A} \big\|_{\L(H)} \\
		&\leq \| \mathcal{B} \|_{\L(H)} 
		\big\| A^{-\zeta} (I-\e^{-C_0 A}) \big\|_{\L(H)} + \big\| \mathcal{B}A^{-1} \big\|_{\L(H)}
		\big\| A^{1-\zeta}\e^{-C_0 A} \big\|_{\L(H)}\\
		&\leq C \| \mathcal{B} \|_{\L(H)} C_0^{\zeta}
		+C \| \mathcal{B} A^{-1}\|_{\L(H)} C_0^{\zeta-1}
		\leq C \| \mathcal{B}A^{-1}\|_{\L(H)}^{\zeta} \| \mathcal{B} \|_{\L(H)}^{1-\zeta},
	\end{align*}
	which proves the claimed result.
\end{proof}

\begin{lemma} \label{lem:riemann}
	For $N \in \N$ and $t_f \in \R^+$, consider $\tau = \frac{t_f}{N}$ and $t_k = k \tau$ for $k \in \{1,\dots,N\}$. 
	For every $n \in \{1,\dots, N\}$ and $\zeta \in [0,1)$, it follows that	
	\begin{equation*}
		\tau \sum_{k=1}^{n} t_k^{-\zeta} \leq C
		\quad \text{and} \quad
		\tau \sum_{k=1}^{n} t_k^{-1} \leq 1 + \ln(n).
	\end{equation*}
\end{lemma}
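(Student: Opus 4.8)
The plan is a standard comparison of the sum with an integral, using that $s \mapsto s^{-\zeta}$ is decreasing on $\R^+$. For the first bound, fix $\zeta \in [0,1)$. For each $k \in \{1,\dots,n\}$ and each $s \in [t_{k-1},t_k]$ we have $s^{-\zeta} \geq t_k^{-\zeta}$, since $s \leq t_k$. Integrating over $[t_{k-1},t_k]$ (whose length is $\tau$) gives $\tau\, t_k^{-\zeta} \leq \int_{t_{k-1}}^{t_k} s^{-\zeta}\diff{s}$. Summing over $k$ from $1$ to $n$ and using $t_0 = 0$, $t_n = n\tau \leq t_f$, together with the fact that $\int_0^{t_n} s^{-\zeta}\diff{s}$ is finite precisely because $\zeta < 1$, yields
\begin{equation*}
	\tau \sum_{k=1}^{n} t_k^{-\zeta} \leq \int_0^{t_n} s^{-\zeta}\diff{s} = \frac{t_n^{1-\zeta}}{1-\zeta} \leq \frac{t_f^{1-\zeta}}{1-\zeta} =: C,
\end{equation*}
which is independent of $n$, $\tau$, and $N$ (it may depend on the fixed parameter $\zeta$, which is permissible).

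For the second bound, the same integral estimate fails at the first term because $\int_0^{t_1} s^{-1}\diff{s}$ diverges, so I would peel off $k=1$ separately. One has $\tau\, t_1^{-1} = \tau/(1\cdot\tau) = 1$. For $k \geq 2$, arguing as above with $\zeta = 1$ gives $\tau\, t_k^{-1} \leq \int_{t_{k-1}}^{t_k} s^{-1}\diff{s}$, and summing from $k=2$ to $n$ produces $\int_{t_1}^{t_n} s^{-1}\diff{s} = \ln(t_n) - \ln(t_1) = \ln(t_n/t_1) = \ln(n)$. Hence
\begin{equation*}
	\tau \sum_{k=1}^{n} t_k^{-1} = 1 + \tau \sum_{k=2}^{n} t_k^{-1} \leq 1 + \ln(n).
\end{equation*}

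There is no real obstacle here: the only subtlety is recognizing that the $\zeta < 1$ case allows the integral to be taken all the way down to $0$ (so that the bound is uniform in $\tau$), whereas the $\zeta = 1$ case must start the integral at $t_1$ and absorb the $k=1$ contribution into the constant $1$, producing the logarithmic growth in $n$.
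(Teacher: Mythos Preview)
Your proof is correct and follows essentially the same integral-comparison approach as the paper. The only cosmetic difference is that for $\zeta \in [0,1)$ the paper also peels off the $k=1$ term (bounding it by $\tau^{1-\zeta}$) before comparing with the integral on $[t_1,t_n]$, whereas you more cleanly integrate all the way down to $0$; the $\zeta=1$ case is handled identically.
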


\begin{proof}
	We use that for every $s \in (t_{k-1},t_k)$ it follows that $t_k^{-\zeta} \leq s^{-\zeta}$. Thus, we obtain
	\begin{equation*}
		\tau \sum_{k=1}^{n} t_k^{-\zeta}
		\leq \tau^{1-\zeta} + \sum_{k=2}^{n} \int_{t_{k-1}}^{t_k} s^{-\zeta} \diff{s}
		= \tau^{1-\zeta} + \frac{t_n^{1-\zeta} - \tau^{1-\zeta}}{1-\zeta} \leq C
	\end{equation*}
	and analogously for $\zeta = 1$, we find
	\begin{equation*}
		\tau \sum_{k=1}^{n} t_k^{-\zeta}
		\leq 1 + \sum_{k=2}^{n} \int_{t_{k-1}}^{t_k} s^{-1} \diff{s}
		= 1 + \ln(t_n) - \ln(\tau) = 1 + \ln(n).
	\end{equation*}	
\end{proof}

\begin{lemma}\label{gron}
	Let $a,b \in \R_0^+$ and $N \in \N$ be given. Further, for all $n\in\{0,\dots,N\}$, let $u_{n}\leq a+b\sum_{k=0}^{n-1} u_{k}$ be fulfilled. Then it follows that $u_{n}\leq a\e^{n	b}$.
\end{lemma}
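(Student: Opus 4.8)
The statement is the standard discrete Grönwall lemma, and I would prove it by induction on $n$, using the elementary inequality $1+x\le\e^{x}$ together with a telescoping sum. The base case $n=0$ is immediate: the empty sum convention gives $u_0\le a+b\sum_{k=0}^{-1}u_k=a=a\e^{0\cdot b}$.

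For the induction step, I would assume $u_k\le a\e^{kb}$ for all $k\in\{0,\dots,n-1\}$ and insert this into the hypothesis for index $n$, obtaining
\begin{equation*}
	u_n\le a+b\sum_{k=0}^{n-1}u_k\le a+b\sum_{k=0}^{n-1}a\e^{kb}=a\Big(1+b\sum_{k=0}^{n-1}\e^{kb}\Big).
\end{equation*}
It then remains to show $1+b\sum_{k=0}^{n-1}\e^{kb}\le\e^{nb}$. For this I would use $1+b\le\e^{b}$, which yields $b\,\e^{kb}\le\e^{kb}(1+b)-\e^{kb}\le\e^{(k+1)b}-\e^{kb}$ for each $k$, so that the sum telescopes:
\begin{equation*}
	b\sum_{k=0}^{n-1}\e^{kb}\le\sum_{k=0}^{n-1}\big(\e^{(k+1)b}-\e^{kb}\big)=\e^{nb}-1.
\end{equation*}
Adding $1$ to both sides gives the needed bound and hence $u_n\le a\e^{nb}$, completing the induction.

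There is essentially no hard step here; the only point requiring a small observation is the telescoping trick that converts $b\sum\e^{kb}$ into $\e^{nb}-1$ via $1+b\le\e^{b}$. One minor care point is the edge case $b=0$ (then the hypothesis reads $u_n\le a$ and the conclusion $u_n\le a$ holds trivially, and the telescoping argument still goes through since both sides equal $a$), and the empty-sum convention at $n=0$; both are handled cleanly by the induction as set up above.
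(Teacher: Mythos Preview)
Your proof is correct and self-contained. The paper does not actually prove this lemma; it simply refers to \cite{Clark.1987}. Your induction together with the telescoping via $1+b\le\e^{b}$ is the standard elementary argument and works cleanly, including the edge cases you flagged.
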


For a proof, we refer to \cite{Clark.1987}.
\section{Auxiliary dG results} \label{appendix:dG}

The following lemmas are some basic bounds for the dG setting considered in Section~\ref{subsec:dG_Notation}. For the exact notation in this subsection, we refer the reader to Section~\ref{subsec:dG_Notation} for an explanation.

\begin{lemma}\label{lem:disctra}
	For every $p\in[1,\infty]$, there exists $\constChi(p,\kappa) = \constChi \in \R^+$ such that for all $T\in \mathcal{T}_h$, all $e \in F_h$ such that $e \subset \partial T$ and $v_h$, a $\kappa$-th degree polynomial restricted on $T$, it follows that
	\begin{equation*}
		\| v_h \|_{L^p\left(e\right)} \leq \constChi h_{T}^{-\frac{1}{p}} \| v_h \|_{L^p(T)}.
	\end{equation*}
\end{lemma}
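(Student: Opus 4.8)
The plan is to reduce everything to the fixed reference element $\hat T$ via the affine maps supplied by the mesh assumptions, invoke equivalence of norms on the finite-dimensional polynomial space, and then carefully track the scaling constants back to $T$.

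First I would recall from the mesh regularity assumptions (the conditions of \cite[Lemma~1.62]{DiPietro2012}) that every $T\in\mathcal{T}_h$ can be written as $T=F_T(\hat T)$ with an affine map $F_T(\hat{\mathbf{x}})=\mathbb{J}_T\hat{\mathbf{x}}+\mathbf{b}_T$, and that there are constants depending only on the regularity parameter such that $\|\mathbb{J}_T\|\leq C h_T$, $\|\mathbb{J}_T^{-1}\|\leq C h_T^{-1}$ and $c h_T^d\leq|\det\mathbb{J}_T|\leq C h_T^d$. Each face $e\subset\partial T$ is the image $e=F_T(\hat e)$ of one of the $n_0$ faces $\hat e$ of $\hat T$, and the induced surface measure transforms as $\int_e g\diff{\sigma}=J_e\int_{\hat e}(g\circ F_T)\diff{\hat\sigma}$ with $c h_T^{d-1}\leq J_e\leq C h_T^{d-1}$. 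The only genuinely delicate point in the whole argument is this uniform control of $\|\mathbb{J}_T^{-1}\|$, $|\det\mathbb{J}_T|$ and $J_e$ in terms of $h_T$ alone; this is exactly what the shape-regularity of the mesh sequence provides, and I expect bookkeeping these geometric constants — rather than any analytic difficulty — to be the main obstacle.

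Next I would establish the estimate on $\hat T$: for each of the $n_0$ faces $\hat e$, the map $\hat v\mapsto\|\hat v\|_{L^p(\hat e)}$ is a seminorm on the finite-dimensional space $\mathbb{P}^{\kappa}(\hat T)$ (it need not be a norm, since a polynomial can vanish on a hyperplane) which is continuous with respect to $\|\cdot\|_{L^p(\hat T)}$, because every linear functional on a finite-dimensional normed space is bounded. Taking the maximum over the finitely many faces yields a constant $\hat C=\hat C(p,\kappa)$ with $\|\hat v\|_{L^p(\hat e)}\leq\hat C\|\hat v\|_{L^p(\hat T)}$ for all $\hat v\in\mathbb{P}^{\kappa}(\hat T)$ and all faces $\hat e$.

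Finally I would pull this back to $T$. Given $v_h\in\mathbb{P}^{\kappa}(T)$, set $\hat v=v_h\circ F_T\in\mathbb{P}^{\kappa}(\hat T)$. The change of variables gives $\|v_h\|_{L^p(T)}^p=|\det\mathbb{J}_T|\,\|\hat v\|_{L^p(\hat T)}^p$ and $\|v_h\|_{L^p(e)}^p=J_e\,\|\hat v\|_{L^p(\hat e)}^p$, so that
\[
\|v_h\|_{L^p(e)}^p=J_e\|\hat v\|_{L^p(\hat e)}^p\leq \hat C^p\,J_e\,|\det\mathbb{J}_T|^{-1}\,\|v_h\|_{L^p(T)}^p\leq \hat C^p\,\frac{C h_T^{d-1}}{c h_T^{d}}\,\|v_h\|_{L^p(T)}^p.
\]
Taking $p$-th roots yields the claim with $\constChi=\hat C\,(C/c)^{1/p}$, which depends only on $p$, $\kappa$ and the mesh-regularity constants. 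For the limiting case $p=\infty$ the factor $h_T^{-1/p}$ degenerates to $1$ and the inequality $\|v_h\|_{L^\infty(e)}\leq\|v_h\|_{L^\infty(T)}$ is immediate since $e\subseteq\overline{T}$, so $\constChi=1$ works there.
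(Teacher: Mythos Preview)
Your argument is correct and is precisely the standard affine-scaling plus finite-dimensionality proof; the paper itself does not prove this lemma but merely cites \cite[Lemma~1.52]{DiPietro2012}, where exactly this reference-element argument is carried out. One tiny wording quibble: the map $\hat v\mapsto\|\hat v\|_{L^p(\hat e)}$ is a seminorm, not a linear functional, so the justification for its continuity should read ``every seminorm on a finite-dimensional normed space is bounded'' (which is of course still true and for the same reason).
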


For a proof, see \cite[Lemma~1.52]{DiPietro2012}. In this lemma, we explicitly state the constant $\constChi$, because the lower bound of $\sigma$ depends on $\constChi$. For more theoretical information on the magnitude of $\constChi$, we refer to \cite[Remark~1.51 and Remark~1.53]{DiPietro2012} and \cite[Section~12.2]{Ern2021}.

\begin{lemma}\label{lem:inv}
	For all $v_h\in V_h$ and $T\in \mathcal{T}_h$, it follows that
	\begin{equation*}
		\|\nabla v_h \|_{L^2(T)^d}\leq C h_T^{-1} \| v_h \|_{L^2(T)}.
	\end{equation*}
	The constant $C$ is dependent on the space $V_h$, but is independent of $h$ and $v_h$. 
\end{lemma}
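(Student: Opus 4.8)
\textbf{Proof proposal for Lemma~\ref{lem:inv}.}
The plan is to reduce the estimate to the reference element $\hat{T}$ by the affine change of variables and then exploit that all norms on the finite-dimensional polynomial space $\mathbb{P}^{\orderPoly}(\hat{T})$ are equivalent. Concretely, let $F_T\colon \hat{T}\to T$, $\hat{\mathbf{x}}\mapsto \mathbb{J}_T\hat{\mathbf{x}}+b_T$, be the affine map introduced in Section~\ref{subsec:dG_Notation}, and for $v_h\in V_h$ set $\hat{v}=v_h\circ F_T\in\mathbb{P}^{\orderPoly}(\hat{T})$. Since $\mathbb{P}^{\orderPoly}(\hat{T})$ is finite dimensional, the two norms $\hat{v}\mapsto \|\nabla_{\hat{\mathbf{x}}}\hat{v}\|_{L^2(\hat{T})^d}$ and $\hat{v}\mapsto\|\hat{v}\|_{L^2(\hat{T})}$ are equivalent, so there is a constant $\hat{C}$, depending only on $\orderPoly$ and $\hat{T}$, with
\begin{equation*}
	\|\nabla_{\hat{\mathbf{x}}}\hat{v}\|_{L^2(\hat{T})^d}\leq \hat{C}\,\|\hat{v}\|_{L^2(\hat{T})}\qquad\text{for all }\hat{v}\in\mathbb{P}^{\orderPoly}(\hat{T}).
\end{equation*}

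The second step is the scaling bookkeeping. By the chain rule $\nabla_{\hat{\mathbf{x}}}\hat{v}=\mathbb{J}_T^{\mathsf{T}}(\nabla v_h)\circ F_T$, and the substitution rule gives $\|v_h\|_{L^2(T)}^2=|\det\mathbb{J}_T|\,\|\hat{v}\|_{L^2(\hat{T})}^2$ together with $\|\nabla v_h\|_{L^2(T)^d}^2=|\det\mathbb{J}_T|\,\|\mathbb{J}_T^{-\mathsf{T}}\nabla_{\hat{\mathbf{x}}}\hat{v}\|_{L^2(\hat{T})^d}^2\leq |\det\mathbb{J}_T|\,\|\mathbb{J}_T^{-1}\|^2\,\|\nabla_{\hat{\mathbf{x}}}\hat{v}\|_{L^2(\hat{T})^d}^2$. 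Combining these with the reference-element inequality, the factors $|\det\mathbb{J}_T|$ cancel and one is left with
\begin{equation*}
	\|\nabla v_h\|_{L^2(T)^d}\leq \hat{C}\,\|\mathbb{J}_T^{-1}\|\,\|v_h\|_{L^2(T)}.
\end{equation*}
It remains to bound $\|\mathbb{J}_T^{-1}\|$ by $C h_T^{-1}$ with $C$ independent of $T$ and $h$; this is where the mesh-regularity hypotheses enter, since for a shape-regular mesh sequence satisfying the conditions of \cite[Lemma~1.62]{DiPietro2012} the inradius $\rho_T$ and diameter $h_T$ are comparable, and standard estimates (e.g.\ \cite[Lemma~1.50]{DiPietro2012}) yield $\|\mathbb{J}_T\|\leq C h_T$ and $\|\mathbb{J}_T^{-1}\|\leq C\rho_T^{-1}\leq C h_T^{-1}$.

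The only genuinely delicate point is the uniformity of the constant in $T$ and $h$: once the comparability of $h_T$ and $\rho_T$ (equivalently, a uniform lower bound on the aspect ratio over the whole mesh sequence) is invoked from the standing assumptions in Section~\ref{subsec:dG_Notation}, everything else is the routine affine scaling argument above. Alternatively, the statement is exactly \cite[Lemma~1.44]{DiPietro2012} (the discrete inverse inequality for broken polynomial spaces), and one may simply cite it; I would nonetheless include the short scaling argument for completeness, since the paper keeps explicit track of which mesh properties are used.
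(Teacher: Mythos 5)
Your proposal is correct: the paper itself disposes of this lemma by citing \cite[Lemma~1.44]{DiPietro2012}, which is exactly the reference you identify, and your affine-scaling argument (pull-back to $\hat{T}$, norm equivalence on the finite-dimensional space $\mathbb{P}^{\orderPoly}(\hat{T})$, and the shape-regularity bounds $\|\mathbb{J}_T\|\leq Ch_T$, $\|\mathbb{J}_T^{-1}\|\leq C h_T^{-1}$) is the standard proof of that cited result. Since the approaches coincide, no further comparison is needed.
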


For a general proof, we refer to \cite[Lemma~1.44]{DiPietro2012}.

\begin{lemma}\label{lem:tra}
	For all $T\in \mathcal{T}_h$, it follows that 
	\begin{equation*}
		\lno v\rno_{L^2\left(\partial T\right)}
		\leq C\lno v\rno_{L^2(T)}^{\frac{1}{2}}\left(h_{T}^{-1}\| v\|_{L^2(T)}+ \| \nabla v\|_{L^2(T)^d }\right)^{\frac{1}{2}}
	\end{equation*}
	for every $v\in H^1(T)$, where $C$ is independent of $h$ and $T$.
\end{lemma}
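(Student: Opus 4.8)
The plan is to establish the inequality first on a fixed reference element and then transfer it to an arbitrary $T \in \mathcal{T}_h$ by the affine scaling argument that is standard in the analysis of finite element and dG methods. Writing $F_T \colon \hat{T} \to T$, $F_T(\hat{\mathbf{x}}) = B_T \hat{\mathbf{x}} + \mathbf{b}_T$, for the affine diffeomorphism from the reference element $\hat{T}$ onto $T$ and setting $\hat{v} = v \circ F_T$, the target reduces to the scale-invariant bound
\[
	\| \hat{v} \|_{L^2(\partial \hat{T})}^2 \leq C \| \hat{v} \|_{L^2(\hat{T})} \big( \| \hat{v} \|_{L^2(\hat{T})} + \| \hat{\nabla} \hat{v} \|_{L^2(\hat{T})^d} \big)
\]
on $\hat{T}$, where $C$ depends only on the reference element.

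For the reference inequality I would argue by a divergence-theorem identity. Fix a vector field $\boldsymbol{\Phi} \in C^1(\overline{\hat{T}})^d$ whose normal component is bounded below, $\boldsymbol{\Phi} \cdot \hat{\mathbf{n}} \geq c_0 > 0$ almost everywhere on $\partial \hat{T}$; for the convex polyhedral reference element such a field exists, for instance $\boldsymbol{\Phi}(\hat{\mathbf{x}}) = \hat{\mathbf{x}} - \hat{\mathbf{x}}_0$ with $\hat{\mathbf{x}}_0$ an interior point, whose normal component on each face is a positive constant. For smooth $\hat{v}$ the Gauss divergence theorem gives
\[
	c_0 \int_{\partial \hat{T}} \hat{v}^2 \diff{s} \leq \int_{\partial \hat{T}} \hat{v}^2 (\boldsymbol{\Phi} \cdot \hat{\mathbf{n}}) \diff{s} = \int_{\hat{T}} \big( \hat{v}^2 \, \nabla \cdot \boldsymbol{\Phi} + 2 \hat{v} \, \boldsymbol{\Phi} \cdot \hat{\nabla} \hat{v} \big) \diff{\hat{\mathbf{x}}},
\]
and bounding the right-hand side by $\| \nabla \cdot \boldsymbol{\Phi} \|_{L^\infty} \| \hat{v} \|_{L^2(\hat{T})}^2 + 2 \| \boldsymbol{\Phi} \|_{L^\infty} \| \hat{v} \|_{L^2(\hat{T})} \| \hat{\nabla} \hat{v} \|_{L^2(\hat{T})^d}$ followed by the elementary factorization $a^2 + ab = a(a+b)$ yields the reference bound; density of smooth functions in $H^1(\hat{T})$ then extends it to all $\hat{v} \in H^1(\hat{T})$. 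Alternatively, this multiplicative trace estimate on a fixed domain could simply be quoted from a standard reference.

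Finally I would carry out the scaling. Under the mesh assumptions quoted in Section~\ref{subsec:dG_Notation} (those of \cite[Lemma~1.62]{DiPietro2012} together with $h_T \geq Ch$), the mesh is shape-regular, so $\| B_T \| \approx h_T$, $\| B_T^{-1} \| \approx h_T^{-1}$ and $|\det B_T| \approx h_T^d$. Transforming norms then gives $\| v \|_{L^2(T)}^2 \approx h_T^d \| \hat{v} \|_{L^2(\hat{T})}^2$, $\| v \|_{L^2(\partial T)}^2 \approx h_T^{d-1} \| \hat{v} \|_{L^2(\partial \hat{T})}^2$ (the surface measure carrying the factor $h_T^{d-1}$), and, via $\nabla v = B_T^{-\top} \hat{\nabla} \hat{v}$, $\| \nabla v \|_{L^2(T)^d}^2 \approx h_T^{d-2} \| \hat{\nabla} \hat{v} \|_{L^2(\hat{T})^d}^2$. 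Substituting these equivalences into the reference bound and multiplying through by $h_T^{d-1}$, the accumulated powers of $h_T$ combine so that the term involving $\| v \|_{L^2(T)}^2$ carries the factor $h_T^{-1}$ and the term involving $\| v \|_{L^2(T)} \| \nabla v \|_{L^2(T)^d}$ carries the factor $h_T^{0} = 1$, producing exactly
\[
	\| v \|_{L^2(\partial T)}^2 \leq C \| v \|_{L^2(T)} \big( h_T^{-1} \| v \|_{L^2(T)} + \| \nabla v \|_{L^2(T)^d} \big),
\]
and taking square roots gives the claim. The main technical point to get right is the scaling of the boundary measure and of the gradient under $F_T$, which is where the shape-regularity of the mesh enters and guarantees that $C$ is independent of both $h$ and $T$; everything else is routine bookkeeping of exponents.
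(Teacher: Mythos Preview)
Your argument is correct and is essentially the standard proof of the multiplicative trace inequality. The paper does not actually prove this lemma; it simply refers to \cite[Lemma~1.49]{DiPietro2012}, whose proof proceeds along the same lines you outline (a scale-invariant trace estimate on the reference element followed by the affine scaling argument under shape regularity). So you have supplied precisely the details the paper omits, and there is nothing to correct.
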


A proof can be found in \cite[Lemma~1.49]{DiPietro2012}.

\section{Abstract discretization} \label{appendix:spatialDisc}

The following lemmas show how to bound the error of the space-discretized semigroup in a dG setting. This kind of result for possibly non-selfadjoint operators $A$ can be found in \cite[Theorem~2 and 3]{MingyouThomee.1981} for a finite element setting. A suitable generalization can be found in lecture notes published by \cite{Crouzeix_lecture}. For the sake of completeness, we give the proofs.

\begin{lemma}\label{app1}
	Let Assumptions~\ref{ass:A} be fulfilled. Let $\varphi \in (0, \frac{\pi}{2})$ be given such that $S_{\varphi} = \{\lambda\in\mathbb{C} : \varphi < |\arg(\lambda)| \leq \pi \}$ lies in the resolvent set $\rho(A)$. For all $\lambda\in S_{\varphi}$, it follows that 
	\begin{equation*}
		\|A(\lambda I - A)^{-1}\|_{\mathcal{L}(H_{\mathbb{C}})} 
		\leq \frac{|\lambda|}{\mathrm{dist}(\lambda ,S_{\varphi}^c)},
	\end{equation*}
	where $S_{\varphi}^c = \mathbb{C} \setminus S_{\varphi}$ and ${\mathrm{dist}(\lambda ,S_{\varphi}^c)}:= \inf_{\tilde{\lambda}\in S_{\varphi}^c}|\lambda-\tilde\lambda|$.
\end{lemma}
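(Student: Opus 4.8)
\textbf{Proof plan for Lemma~\ref{app1}.}
The statement is a purely elementary resolvent identity combined with the sectoriality assumption. The plan is to write $A(\lambda I - A)^{-1}$ in the form $-I + \lambda (\lambda I - A)^{-1}$, using the algebraic identity $A(\lambda I - A)^{-1} = -(\lambda I - A)(\lambda I - A)^{-1} + \lambda(\lambda I - A)^{-1} = -I + \lambda(\lambda I - A)^{-1}$. This reduces the problem to estimating $\|\lambda(\lambda I - A)^{-1}\|_{\mathcal{L}(H_{\mathbb{C}})}$, but with a sharper geometric constant than the crude bound $C/|\lambda| \cdot |\lambda| = C$ from Assumption~\ref{ass:A_sect}.

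The key point is to replace the abstract constant $C$ in Assumption~\ref{ass:A_sect} by the exact reciprocal-distance bound. First I would recall (or cite) the standard fact that for any closed operator whose spectrum $\sigma(A)$ is contained in the closed set $S_{\varphi}^c$, one has the sharp resolvent bound $\|(\lambda I - A)^{-1}\|_{\mathcal{L}(H_{\mathbb{C}})} \leq 1/\operatorname{dist}(\lambda, \sigma(A))$ whenever $A$ is \emph{normal} — however $A$ here is only assumed sectorial, not normal, so this exact form is not automatic. Instead, the honest route is: the hypothesis already packages the bound we need. Re-examining the statement, $\|A(\lambda I-A)^{-1}\|_{\mathcal{L}(H_{\mathbb{C}})} = \|{-I} + \lambda(\lambda I-A)^{-1}\|_{\mathcal{L}(H_{\mathbb{C}})}$, and one shows $\|{-I}+\lambda(\lambda I-A)^{-1}\| \le |\lambda|/\operatorname{dist}(\lambda,S_\varphi^c)$ by the following device: for $\mu \in S_\varphi^c$ the scalar function $z \mapsto -1 + \lambda/(\lambda - z)$ maps the complement region appropriately, and $\sup_{\mu\in S_\varphi^c}|-1+\lambda/(\lambda-\mu)| = \sup_{\mu}|\mu|/|\lambda-\mu| \le |\lambda|/\operatorname{dist}(\lambda,S_\varphi^c)$ once one checks $|\mu|/|\lambda-\mu|$ is maximized, for $\lambda$ fixed in $S_\varphi$, essentially at the boundary point closest to $\lambda$ — this uses that $0 \notin S_\varphi^c$ is the relevant competing direction, but since $0 \in \rho(A)$ as well the sup is genuinely controlled by the distance to $S_\varphi^c$. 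Then one transfers this scalar bound to the operator via the functional calculus / Riesz–Dunford integral available since $A$ is sectorial with $0 \in \rho(A)$.

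The main obstacle is making the geometric estimate $\sup_{\mu \in S_\varphi^c} |\mu|/|\lambda - \mu| \le |\lambda|/\operatorname{dist}(\lambda, S_\varphi^c)$ rigorous: $S_\varphi^c$ is an unbounded cone around the positive real axis, so one must verify that $|\mu|/|\lambda-\mu|$ does not blow up as $|\mu|\to\infty$ (it tends to $1$, which is fine) and is bounded by $|\lambda|/\operatorname{dist}(\lambda,S_\varphi^c)$ — the cleanest argument is that for any $\mu$, writing $d = \operatorname{dist}(\lambda, S_\varphi^c) \le |\lambda - \mu|$, we get $|\mu| \le |\lambda| + |\lambda - \mu|$, hence $|\mu|/|\lambda-\mu| \le |\lambda|/|\lambda-\mu| + 1 \le |\lambda|/d + 1$; this gives the bound with an extra additive $1$, and to remove it one notes $\mathrm{dist}(\lambda, S_\varphi^c) \le |\lambda|$ always (since $0\in\overline{S_\varphi^c}$), so $1 \le |\lambda|/d$ is false in general — hence one actually keeps a bound of the shape $|\lambda|/d + 1 \le 2|\lambda|/d$ and must argue the sharper constant directly from the geometry of the cone, or simply accept that the lemma as stated requires the normal/self-adjoint structure (which does hold in the paper's dG application where $A$ is self-adjoint). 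I would therefore carry out the self-adjoint case cleanly via the spectral theorem — there $\|f(A)\| = \sup_{\mu \in \sigma(A)} |f(\mu)|$ exactly — and remark that the general sectorial case follows with an adjusted constant absorbed into the generic $C$, which is all the subsequent lemmas in Appendix~\ref{appendix:spatialDisc} actually use.
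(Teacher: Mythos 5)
There is a genuine gap: your argument does not establish the lemma as stated under the paper's hypotheses. The two routes you end up with are (i) the triangle-inequality estimate, which only yields $\|A(\lambda I-A)^{-1}\|_{\mathcal{L}(H_{\mathbb{C}})} \le 2|\lambda|/\operatorname{dist}(\lambda,S_\varphi^c)$, and (ii) the spectral theorem, which requires $A$ to be self-adjoint (or at least normal). Neither gives the stated bound with constant $1$ for a general positive sectorial $A$, and the paper deliberately does \emph{not} assume self-adjointness here -- that generality is one of its stated selling points. Moreover, the transfer step you invoke (``one transfers this scalar bound to the operator via the functional calculus / Riesz--Dunford integral'') is invalid for non-normal operators: the holomorphic functional calculus does not satisfy $\|f(A)\|_{\mathcal{L}(H_{\mathbb{C}})} \le \sup_{\mu\in\sigma(A)}|f(\mu)|$ in general, so even a correct scalar supremum over $S_\varphi^c$ would not convert into the operator bound you need.

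The idea you are missing is to argue with the numerical range rather than the spectrum; this is exactly what the paper does, and it produces the sharp constant in a few lines. Given $w\in H_{\mathbb{C}}$, set $v=(\lambda I-A)^{-1}w$. Since $A$ is positive, $(Av,v)_H\ge 0$, so the rescaled point $\mu_*:=\frac{|\lambda|^2}{\|Av\|_H^2}(Av,v)_H$ lies on $[0,\infty)\subset S_\varphi^c$. Hence
\begin{align*}
\operatorname{dist}(\lambda,S_\varphi^c)\,\|Av\|_H^2
\le |\lambda-\mu_*|\,\|Av\|_H^2
= |\lambda|\,\big|(Av,Av-\lambda v)_H\big|
= |\lambda|\,\big|(Av,w)_H\big|
\le |\lambda|\,\|Av\|_H\,\|w\|_H,
\end{align*}
and dividing by $\|Av\|_H$ gives $\|A(\lambda I-A)^{-1}w\|_H\le \frac{|\lambda|}{\operatorname{dist}(\lambda,S_\varphi^c)}\|w\|_H$. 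No functional calculus and no normality are needed: the single point $\mu_*\in S_\varphi^c$, chosen adaptively in terms of $v$, replaces the supremum over all of $S_\varphi^c$ that you were struggling to control. (Your weaker factor-$2$ bound would in fact suffice for the downstream use in Lemma~\ref{app2}, where all constants are absorbed into a generic $C$, but it does not prove the lemma as stated, and the retreat to self-adjointness contradicts the standing assumptions.)
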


\begin{proof}
	We consider $v\in \dom(A)$ and $w\in H$ such that $(\lambda I-A)v=w$ is fulfilled. Since $\inner[H]{A v}{v} \geq 0$ and $\R_0^+ \subset S_{\varphi}^c$, it follows that $\frac{|\lambda |^2}{\|Av\|_H^2} (Av,v)_H \in S_{\varphi}^c$. We can now use this fact to estimate the distance between $\lambda$ and $S_{\varphi}^c$ as follows
	\begin{align*}
		\mathrm{dist}(\lambda ,S_{\varphi}^c)\|Av\|_H^2
		&\leq \Big| \lambda - \frac{|\lambda |^2}{\|Av\|_H^2} (Av,v)_H \Big| \|Av\|_H^2
		= \big|\lambda \|Av\|_H^2 - \overline{\lambda} \lambda (Av,v)_H\big|\\
		&=|\lambda ||(Av,Av - \lambda v)_H|
		= |\lambda| |(Av,w)|
		\leq |\lambda |\|Av\|_H\|w\|_H.
	\end{align*}
	Dividing the inequality by $\|Av\|_H$ and inserting the definition for $w$ into the left-hand side of the previous inequality, we find $\|A(\lambda I-A)^{-1} w\|_{H_{\mathbb{C}}}\leq \frac{|\lambda |}{\mathrm{dist}(\lambda ,S_{\varphi}^c)}\|w\|_H$. This proves our result.
\end{proof}

\begin{lemma}\label{app2}
	Let Assumptions~\ref{ass:A}, \ref{ass:proj_err}, and \ref{ass:Ah} be fulfilled. For every $t\in(0,t_f]$, it follows that
	\begin{equation}\label{eq:proof_errS2A}
		\| \e^{-tA}-\e^{-tA_h}P_h \|_{\L(H)} \leq C\frac{h^{2}}{t}.
	\end{equation}
\end{lemma}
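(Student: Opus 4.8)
The plan is to represent the difference $\e^{-tA}-\e^{-tA_h}P_h$ via the Dunford--Cauchy contour integral and to exploit the uniform sectoriality of both $A$ and $A_h$ established in Section~\ref{subsec:space_discretization}. First I would fix a contour $\Gamma$ lying in the sector $S_{\varphi}$ (say, two rays from the origin together with a small circular arc, suitably scaled with $t$, e.g. the boundary of $\{\lambda : |\arg(\lambda)|\geq \varphi\}\cup\{|\lambda|\leq 1/t\}$) and write
\begin{equation*}
	\e^{-tA}-\e^{-tA_h}P_h
	= \frac{1}{2\pi i}\int_{\Gamma} \e^{t\lambda}\bigl((\lambda I - A)^{-1} - (\lambda I - A_h)^{-1}P_h\bigr)\diff{\lambda}.
\end{equation*}
The key algebraic step is the resolvent identity: since $A$ and $A_h$ are both sectorial with the same angle $\varphi$ and with resolvent bounds uniform in $h$, one can write the bracketed difference as $(\lambda I-A_h)^{-1}\bigl((\lambda I - A_h)P_h(\lambda I - A)^{-1} - P_h\bigr)$, and then manipulate $(\lambda I-A_h)P_h(\lambda I-A)^{-1}-P_h$ into a form that isolates the factor $A^{-1}-A_h^{-1}P_h$ controlled by Assumption~\ref{ass:Ah}~\ref{ass:dissa_exch}, together with the projection error bound of Remark~\ref{remark:proj_err}. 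Concretely I expect a decomposition of the type
\begin{equation*}
	(\lambda I-A)^{-1}-(\lambda I-A_h)^{-1}P_h
	= (\lambda I-A_h)^{-1}\bigl(A_h(A_h^{-1}P_h - P_hA^{-1})A(\lambda I - A)^{-1} + (P_h - I)\lambda(\lambda I-A)^{-1}\bigr)\cdot(\text{bookkeeping})
\end{equation*}
so that each resolvent factor contributes $C/|\lambda|$, the operator $A(\lambda I-A)^{-1}$ is bounded by a constant times $|\lambda|/\operatorname{dist}(\lambda,S_\varphi^c)$ via Lemma~\ref{app1}, and the difference term contributes a factor $h^2$ via Assumption~\ref{ass:Ah}~\ref{ass:dissa_exch} (after inserting $A^{-1}$ and $A_hP_hA^{-1}$, which is bounded by Lemma~\ref{lem:Ah}~\ref{lem:switch}).

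The estimate is then completed by the standard contour calculation: on $\Gamma$, $|\e^{t\lambda}|\leq \e^{-ct|\lambda|}$ away from the arc and is $\mathcal O(1)$ on the arc, while the integrand (after the manipulations above) is bounded by $C h^2/|\lambda|$ (or $Ch^2$ on the arc of radius $1/t$), giving
\begin{equation*}
	\|\e^{-tA}-\e^{-tA_h}P_h\|_{\L(H)}
	\leq C h^2\Bigl(\int_{1/t}^{\infty}\e^{-c t r}\,\frac{\diff r}{r}\cdot\frac{1}{?} + \int_{\text{arc}}|\e^{t\lambda}|\,|\diff\lambda|\Bigr)
	\leq C\frac{h^2}{t},
\end{equation*}
where the power of $|\lambda|$ in the integrand is tuned by how many resolvent factors survive versus how many are cancelled by $A(\lambda I-A)^{-1}$; one wants a net $\int |\e^{t\lambda}|\,|\diff\lambda|$ which scales like $1/t$ after substituting $\mu = t\lambda$. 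One must be slightly careful that all bounds are stated in $\L(H)$ for the real space: since everything here is linear and the resolvent estimates are on the complexification $H_{\mathbb C}$, restricting back to $H$ is harmless, and I would just note that.

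**Main obstacle.** The delicate point is the bookkeeping in the resolvent-difference identity: one must arrange the telescoping so that exactly one factor $A^{-1}-A_h^{-1}P_h$ (worth $h^2$) appears while keeping enough decay in $\lambda$ for the contour integral to converge and produce the sharp $1/t$ (rather than, say, $\log$ or $t^{-1-\epsilon}$). This requires using $A(\lambda I - A)^{-1}$ to "absorb" one resolvent's worth of growth via Lemma~\ref{app1}, and invoking Lemma~\ref{lem:Ah} (parts~\ref{lem:switch} and possibly~\ref{lem:disc2cont}) together with Assumption~\ref{ass:Ah}~\ref{ass:dissl_exch} and~\ref{ass:dissa_exch} to convert discrete operators into continuous ones without losing powers of $h$. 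The analytic-semigroup bounds \eqref{eq:ana_semi1}, \eqref{eq:ana_semi2} and their discrete counterparts \eqref{eq:ana_semi_h1}, \eqref{eq:ana_semi_h2} may be used as an alternative to an explicit contour integral in some of the subterms, which I would do wherever it shortens the argument. Everything else is a routine contour estimate of the kind found in \cite[Theorem~2 and 3]{MingyouThomee.1981} or \cite{Crouzeix_lecture}, to which the paper already refers.
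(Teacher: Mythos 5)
Your proposal follows essentially the same route as the paper's proof: a Dunford--Cauchy contour representation of $\e^{-tA}-\e^{-tA_h}P_h$, a resolvent-difference identity that sandwiches a single factor worth $Ch^{2}$ (via Assumption~\ref{ass:Ah}~\ref{ass:dissa_exch}) between $A(\lambda I-A)^{-1}$ and $A_h(\lambda I-A_h)^{-1}P_h$, each controlled by Lemma~\ref{app1}, with the contour integral then producing the factor $1/t$ exactly as you indicate. The only cosmetic differences are that the paper integrates over the two rays $|\arg\lambda|=\tilde\varphi$ through the origin (no arc) and disposes of the $(I-P_h)$ contribution by noting $(\lambda I-A_h)^{-1}P_h(I-P_h)=0$ and reusing $\|A^{-1}-A_h^{-1}P_h\|_{\L(H)}\leq Ch^{2}$ rather than Remark~\ref{remark:proj_err} (whose one-sided form $\|(I-P_h)A^{-1}\|_{\L(H)}\leq Ch^{2}$ would otherwise require an adjoint argument), and you should make sure the exact identity carries no stray factor of $\lambda$ so that the integrand stays at $Ch^{2}\bigl(|\lambda|/\operatorname{dist}(\lambda,S_\varphi^c)+|\lambda|^{2}/\operatorname{dist}(\lambda,S_\varphi^c)^{2}\bigr)$.
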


\begin{proof}
	In the following, let $\varphi \in (0,\frac{\pi}{2})$ be given such that $S_{\varphi} = \{\lambda\in\mathbb{C} : \varphi < |\arg(\lambda)| \leq \pi \}$ lies in both $\rho(A)$ and $\rho(A_h)$.
	Using the integral form of an analytical semigroup, see \cite[Chapter~1, Theorem~7.7]{Pazy1983}, and choosing a $\tilde{\varphi} \in(\varphi, \frac{\pi}{2})$, we find that
	\begin{equation} \label{eq:semigroup_integral_eq}
		\e^{-tA}-\e^{-tA_h}P_h
		= \frac{1}{2\pi i } \int_{\Gamma_{\tilde{\varphi}}} \e^{-t\lambda}\big( (\lambda I-A)^{-1} - (\lambda I-A_h)^{-1}P_h\big) \diff\lambda,
	\end{equation}
	where $\Gamma_{\tilde{\varphi}}:=\{\lambda\in\mathbb{C}: | \arg(\lambda) | =\tilde{\varphi}\}$. Note that this is well-defined since $\Gamma_{\tilde{\varphi}} \subset S_{\varphi}$. For the following analysis, we decompose the integrant into the two parts
	\begin{align*}
		&\e^{-t\lambda} \big((\lambda I-A)^{-1}-(\lambda I-A_h)^{-1}P_h\big)\\
		&= \e^{-t\lambda} \big((\lambda I-A)^{-1}-(\lambda I-A_h)^{-1}\big)P_h
		+ \e^{-t\lambda} \big((\lambda I-A)^{-1}-(\lambda I-A_h)^{-1}P_h\big) (I - P_h) 
		= \Gamma_{1} + \Gamma_{2}.
	\end{align*}	
	Some algebraic manipulations give us the following norm bound for $\Gamma_1$ 
	\begin{align*} 
		\| \Gamma_1\|_{\L(H_{\mathbb{C}})}
		&= \big\| \e^{-t\lambda} \big((\lambda I-A)^{-1} - (\lambda I-A_h)^{-1}  \big)P_h \big \|_{\L(H_{\mathbb{C}})}\\
		&= |\e^{-t\lambda}| \big\| A (\lambda I-A)^{-1} \big( A^{-1} (\lambda I-A_h) A_h^{-1} - (\lambda I-A)A^{-1} A_h^{-1} \big) A_h (\lambda I-A_h)^{-1} P_h \big\|_{\L(H_{\mathbb{C}})}\\
		&= |\e^{-t\lambda}| \big\| A (\lambda I-A)^{-1} \big( A_h^{-1} - A^{-1} \big) A_h (\lambda I-A_h)^{-1} P_h \big\|_{\L(H_{\mathbb{C}})}\\
		&\leq |\e^{-t\lambda}| \big\| A (\lambda I-A)^{-1}\big\|_{\L(H_{\mathbb{C}})}
		\big\| A_h^{-1} P_h - A^{-1} \big\|_{\L(H)}
		\big\| A_h (\lambda I-A_h)^{-1} P_h \big\|_{\L(H_{\mathbb{C}})}.
	\end{align*}	
	For the norm of $\Gamma_{2}$, we use that for a projection operator $P_h$ it holds that $P_h (I - P_h) = 0$. Thus, we obtain  $(\lambda I-A_h)^{-1} P_h (I - P_h) = A (\lambda I-A)^{-1} A_h^{-1} P_h (I - P_h) = 0$. This leads to
	\begin{align*} 
		\| \Gamma_2\|_{\L(H_{\mathbb{C}})}
		&= \big\| \e^{-t\lambda} \big((\lambda I-A)^{-1} - (\lambda I-A_h)^{-1} P_h \big) (I - P_h) \big\|_{\L(H_{\mathbb{C}})}\\
		&= |\e^{-t\lambda}| \big\| \big((\lambda I-A)^{-1} - A (\lambda I-A)^{-1} A_h^{-1} P_h \big) (I - P_h)\big\|_{\L(H_{\mathbb{C}})}\\
		&\leq |\e^{-t\lambda}| \big\| A (\lambda I-A)^{-1}\big\|_{\L(H_{\mathbb{C}})} 
		\big\|A^{-1} - A_h^{-1} P_h \big\|_{\L(H)}.
	\end{align*}	
	Since both $A$ and $A_h$ satisfy Assumption~\ref{ass:A_sect} (in the case of $A_h$ we change $H$ to $V_h$ equipped with the $\|\cdot\|_H$-norm), we can apply Lemma~\ref{app1} to obtain 
	\begin{equation*}
		\|A(\lambda I-A)^{-1} \|_{\mathcal{L}(H_{\mathbb{C}})}
		\leq \frac{|\lambda|}{\text{dist}(\lambda ,S_{\varphi}^c)}
		\quad \text{and} \quad 
		\| A_h (\lambda I - A_h )^{-1} P_h \|_{\mathcal{L}(H_{\mathbb{C}})}
		\leq \frac{|\lambda|}{\text{dist}(\lambda ,S_{\varphi}^c)}.
	\end{equation*}
	Combining the above bounds with Assumption~\ref{ass:Ah}~\ref{ass:dissa_exch}, we find that
	\begin{align*}
		&\big\|\e^{-t\lambda} (\lambda I-A)^{-1}-(\lambda I-A_h)^{-1}P_h \big\|_{\mathcal{L}(H_{\mathbb{C}})}\\
		&\leq \| \Gamma_{1}\|_{\mathcal{L}(H_{\mathbb{C}})}
		+ \| \Gamma_{2} \|_{\mathcal{L}(H_{\mathbb{C}})}\\
		&\leq |\e^{-t\lambda}| \big\| A (\lambda I-A)^{-1}\big\|_{\L(H_{\mathbb{C}})}
		\big\| A_h^{-1} P_h - A^{-1} \big\|_{\L(H)}
		\big\| A_h (\lambda I-A_h)^{-1} P_h \big\|_{\L(H_{\mathbb{C}})}\\
		&\quad +  |\e^{-t\lambda}| \big\| A (\lambda I-A)^{-1}\big\|_{\L(H_{\mathbb{C}})} 
		\big\|A^{-1} - A_h^{-1} P_h \big\|_{\L(H)}\\
		&\leq C  h^2 |\e^{-t\lambda}| \Big(\frac{|\lambda|^2}{(\text{dist}(\lambda ,S_{\varphi}^c))^2} + \frac{|\lambda| }{\text{dist}(\lambda ,S_{\varphi}^c)}\Big).
	\end{align*}
	It remains to insert this previous bound in \eqref{eq:semigroup_integral_eq}, which then shows that
	\begin{align*}
		\| \e^{-tA}-\e^{-tA_h}P_h \|_{\L(H)}
		&\leq C \int_{\Gamma_{\tilde{\varphi}}} \big\| \e^{-t\lambda}((\lambda I-A)^{-1} - (\lambda I-A_h)^{-1}P_h) \big\|_{\L(H_{\mathbb{C}} )} \diff\lambda\\
		&\leq C h^2 \int_{\Gamma_{\tilde{\varphi}}} | \e^{-t\lambda}| \Big(\frac{|\lambda|}{\text{dist}(\lambda ,S_{\varphi}^c)} + \frac{ |\lambda|^2}{(\text{dist}(\lambda ,S_{\varphi}^c))^2}\Big) \diff\lambda\\
		&= C h^2 \Big( \int_{0}^{\infty} \e^{-t r } \Big(\frac{ r}{\text{dist}(re^{- i \tilde{\varphi}} , S_{\varphi}^c)} + \frac{ r^2}{(\text{dist}(re^{- i \tilde{\varphi}} ,S_{\varphi}^c))^2}\Big) \diff{r}\\
		&\quad + \int_{0}^{\infty} \e^{-t r } \Big(\frac{ r}{\text{dist}(re^{i \tilde{\varphi}} ,S_{\varphi}^c)} + \frac{ r^2}{(\text{dist}(re^{i \tilde{\varphi}} ,S_{\varphi}^c))^2}\Big) \diff{r}\Big) \\
		&= C h^2 \Big( \int_{0}^{\infty} \e^{-t r } \Big( \frac{1}{\sin(\tilde{\varphi}-\varphi)} + \frac{ 1}{\sin^2(\tilde{\varphi}-\varphi)}\Big) \diff{r}
		= C \frac{h^2}{t},
	\end{align*}
	where we used that for $\tilde{\varphi}\in(\varphi,\frac{\pi}{2})$, it holds that $\text{dist}(re^{\pm i \tilde{\varphi}} ,S_{\varphi}^c) = r \sin(\tilde{\varphi}-\varphi)$.
\end{proof}

\begin{lemma}\label{app3}
	Let Assumptions~\ref{ass:A} and \ref{ass:Ah} hold, then it follows that
	\begin{equation*}
		\| ( \e^{-tA}-\e^{-tA_h}P_h )A^{-1} \|_{\L(H)} \leq Ch^{2}.
	\end{equation*}
\end{lemma}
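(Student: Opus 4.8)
The plan is to mimic the strategy of the proof of Lemma~\ref{app2}, but now with the extra factor $A^{-1}$ on the right, which should make the integral over the sectorial contour converge without the $t^{-1}$ singularity. First I would fix $\varphi\in(0,\tfrac{\pi}{2})$ so that the sector $S_\varphi$ lies in both $\rho(A)$ and $\rho(A_h)$ (using that, by the discussion preceding \eqref{eq:ana_semi_h1}, we may take the same angle for $A$ and $A_h$), pick $\tilde\varphi\in(\varphi,\tfrac{\pi}{2})$, and write, exactly as in \eqref{eq:semigroup_integral_eq},
\begin{equation*}
	\big(\e^{-tA}-\e^{-tA_h}P_h\big)A^{-1}
	= \frac{1}{2\pi i}\int_{\Gamma_{\tilde\varphi}} \e^{-t\lambda}\big((\lambda I-A)^{-1}-(\lambda I-A_h)^{-1}P_h\big)A^{-1}\diff\lambda .
\end{equation*}
Then I would reuse the decomposition of the integrand into $\Gamma_1+\Gamma_2$ from the proof of Lemma~\ref{app2}, i.e.\ split off the $(I-P_h)$ part, and note that the algebraic identities there give
\begin{equation*}
	\big((\lambda I-A)^{-1}-(\lambda I-A_h)^{-1}P_h\big)A^{-1}
	= A(\lambda I-A)^{-1}\big(A_h^{-1}P_h-A^{-1}\big)A_h(\lambda I-A_h)^{-1}P_hA^{-1}
	+ \big(A(\lambda I-A)^{-1}\big)\big(A^{-1}-A_h^{-1}P_h\big)(I-P_h)A^{-1},
\end{equation*}
up to rearranging the resolvent identity as in Lemma~\ref{app2}. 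The only genuinely new point is bounding $A_h(\lambda I-A_h)^{-1}P_hA^{-1}$: I would write $A_h(\lambda I-A_h)^{-1}P_hA^{-1} = A_hP_hA^{-1}\cdot$(something bounded) is \emph{not} quite right since the resolvent does not commute with $P_hA^{-1}$, so instead I would bound it crudely by $\|A_h(\lambda I-A_h)^{-1}P_h\|_{\L(H_{\mathbb C})}\,\|P_hA^{-1}\|_{\L(H)}$, but $P_hA^{-1}$ is not obviously bounded into $V_h$ with the right norm — actually it is, since $\|P_h\|_{\L(H)}\le C$ by Remark~\ref{remark:proj_err}, so $\|P_hA^{-1}\|_{\L(H)}\le C\|A^{-1}\|_{\L(H)}\le C$. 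Hence $\|A_h(\lambda I-A_h)^{-1}P_hA^{-1}\|_{\L(H_{\mathbb C})}\le C\,|\lambda|/\mathrm{dist}(\lambda,S_\varphi^c)$ by Lemma~\ref{app1}.

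Combining these, together with Assumption~\ref{ass:Ah}~\ref{ass:dissa_exch} ($\|A^{-1}-A_h^{-1}P_h\|_{\L(H)}\le Ch^2$) and Lemma~\ref{app1} applied to $A(\lambda I-A)^{-1}$, gives a pointwise bound of the form
\begin{equation*}
	\big\|\e^{-t\lambda}\big((\lambda I-A)^{-1}-(\lambda I-A_h)^{-1}P_h\big)A^{-1}\big\|_{\L(H_{\mathbb C})}
	\le C h^2 |\e^{-t\lambda}|\Big(\frac{|\lambda|^2}{\mathrm{dist}(\lambda,S_\varphi^c)^2}+\frac{|\lambda|}{\mathrm{dist}(\lambda,S_\varphi^c)}\Big)\cdot\frac{1}{|\lambda|},
\end{equation*}
i.e.\ the same integrand as in Lemma~\ref{app2} but with an extra factor $|\lambda|^{-1}$. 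Parametrising $\Gamma_{\tilde\varphi}$ as $\lambda = re^{\pm i\tilde\varphi}$ and using $\mathrm{dist}(re^{\pm i\tilde\varphi},S_\varphi^c)=r\sin(\tilde\varphi-\varphi)$ as in Lemma~\ref{app2}, the contour integral becomes $C h^2\int_0^\infty \e^{-tr}\big(\frac{1}{r\sin(\tilde\varphi-\varphi)}+\frac{1}{r\sin^2(\tilde\varphi-\varphi)}\big)\,r^{-1}\cdot r\,\diff r$ — I need to be careful about the Jacobian $|\diff\lambda| = \diff r$ — yielding $Ch^2\int_0^\infty \e^{-tr}\,\diff r\cdot\frac{1}{r}$-type terms; the decisive observation is that the extra $|\lambda|^{-1}=r^{-1}$ combines with the $|\diff\lambda|=\diff r$ and the dimensional factors so that the remaining integral is $\int_0^\infty \e^{-tr}\,\diff r = 1/t$ times a further $1/r$...

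Here is the actual obstacle and its resolution: the naive count gives $\int_0^\infty \e^{-tr} r^{-1}\,\diff r$, which \emph{diverges} at $r=0$. So the crude splitting is too lossy near $\lambda=0$. The fix is to deform the contour to the standard Hankel-type path $\Gamma$ that consists of the two rays $\{re^{\pm i\tilde\varphi}: r\ge\delta\}$ together with the arc $\{\delta e^{i\psi}: |\psi|\le\tilde\varphi\}$ for a fixed $\delta>0$ (legitimate since $0\in\rho(A)\cap\rho(A_h)$, so the integrand is analytic in a neighbourhood of $0$ and the value of the integral is unchanged); on the arc the integrand is bounded by $Ch^2$ uniformly, contributing $Ch^2$, and on the rays $r\ge\delta$ the integral $\int_\delta^\infty \e^{-tr}r^{-1}\,\diff r\le \delta^{-1}\int_0^\infty\e^{-tr}\,\diff r = (\delta t)^{-1}$ — still a $1/t$, not good enough. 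The genuinely correct route, and the one I would actually follow, is to not introduce the spurious $|\lambda|^{-1}/|\lambda|$ cancellation at all: instead keep $A_h(\lambda I-A_h)^{-1}P_hA^{-1}$ bounded by $C$ directly (not by $C|\lambda|/\mathrm{dist}$), using that $\|A_h^{-1}P_h\|_{\L(H)}\le C$ (from \ref{ass:dissa_exch} plus $\|A^{-1}\|\le C$) so that $\|A_h(\lambda I-A_h)^{-1}P_hA^{-1}\|\le \|(\lambda I-A_h)^{-1}A_h P_h A_h^{-1}\cdots\|$... The cleanest is: write $(\lambda I - A_h)^{-1}P_hA^{-1} = (\lambda I-A_h)^{-1}A_h^{-1}P_h + (\lambda I-A_h)^{-1}(P_hA^{-1}-A_h^{-1}P_h)$; the second term is $O(h^2)$ times $\|(\lambda I-A_h)^{-1}\|\le C/|\lambda|$ hence harmless, and for the first term one uses $A_h(\lambda I-A_h)^{-1}A_h^{-1}P_h = (\lambda I-A_h)^{-1}P_h$ which is $O(1/|\lambda|)$, giving overall decay $|\lambda|^{-2}$ along the contour and a bounded integral $Ch^2\int_{\Gamma_{\tilde\varphi}}|\e^{-t\lambda}||\lambda|^{-2}|\diff\lambda| \le Ch^2\int_0^\infty \e^{-tr}r^{-2}\,\diff r$ — which \emph{also} diverges...

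I will stop optimising prematurely: the robust and standard argument is that Lemma~\ref{app3} is an elementary consequence of Lemma~\ref{app2} for $t$ bounded away from $0$, and for small $t$ one instead writes $(\e^{-tA}-\e^{-tA_h}P_h)A^{-1} = (A^{-1}-A_h^{-1}P_h) - \int_0^t\big(\e^{-sA}-\e^{-sA_h}P_h\big)\,\diff s\,\cdot(\text{nothing})$ — no: the correct identity is $(\e^{-tA}-\e^{-tA_h}P_h)A^{-1} = \big(\e^{-tA}A^{-1}\big) - \big(\e^{-tA_h}A_h^{-1}P_h\big) - \e^{-tA_h}\big(A^{-1}P_h - A_h^{-1}P_h\big)$, and then $\e^{-tA}A^{-1} - \e^{-tA_h}A_h^{-1}P_h = A^{-1}-A_h^{-1}P_h - \int_0^t\big(\e^{-sA}-\e^{-sA_h}P_h\big)\,\diff s$, since $\frac{d}{ds}\e^{-sA}A^{-1} = -\e^{-sA}$. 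Therefore
\begin{equation*}
	\big\|(\e^{-tA}-\e^{-tA_h}P_h)A^{-1}\big\|_{\L(H)}
	\le \|A^{-1}-A_h^{-1}P_h\|_{\L(H)} + \int_0^t \|\e^{-sA}-\e^{-sA_h}P_h\|_{\L(H)}\,\diff s + C\|A^{-1}P_h-A_h^{-1}P_h\|_{\L(H)}.
\end{equation*}
The first and third terms are $\le Ch^2$ by Assumption~\ref{ass:Ah}~\ref{ass:dissa_exch}. For the middle term, Lemma~\ref{app2} gives $\|\e^{-sA}-\e^{-sA_h}P_h\|\le Ch^2/s$ which is \emph{not} integrable at $0$, so one also uses the trivial bound $\|\e^{-sA}-\e^{-sA_h}P_h\|\le 2$; splitting the integral at $s=h^2$ gives $\int_0^{h^2}2\,\diff s + \int_{h^2}^t Ch^2 s^{-1}\,\diff s \le 2h^2 + Ch^2\ln(t/h^2) \le Ch^2(1+\ln(t_f/h^2))$, which carries an unwanted logarithm. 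To remove it, one improves the bound on $\e^{-sA}-\e^{-sA_h}P_h$ near $s=0$ to $O(h^2 s^{-1})\wedge O(1)$ is already what we used; the logarithm is genuinely there unless one uses a sharper off-diagonal estimate. I therefore expect the main obstacle to be precisely this: getting rid of the logarithm, which requires either the contour-integral argument above (with the contour deformed through $0$, exploiting $0\in\rho(A)$ so that $\int_{\Gamma}|\e^{-s\lambda}||\lambda|^{-2}|\diff\lambda|$ is replaced by an integral over a bounded arc plus decaying rays and is genuinely $O(1)$ uniformly in $t$) or a direct resolvent bound showing $\|(\e^{-tA}-\e^{-tA_h}P_h)A^{-1}\|$ is bounded uniformly in $t$. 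Given the structure of the paper I would present the contour-integral proof: decompose the integrand as in Lemma~\ref{app2}, carry the extra $A^{-1}$ through, bound each piece by $Ch^2|\e^{-t\lambda}|$ times a function of $|\lambda|/\mathrm{dist}(\lambda,S_\varphi^c)$ that is $O(|\lambda|^{-1})$ at infinity and $O(1)$ near $0$, deform $\Gamma_{\tilde\varphi}$ to a Hankel contour with a small arc around $0$ (allowed since $0\in\rho(A)\cap\rho(A_h)$), and conclude $\le Ch^2\big(1 + \int_1^\infty \e^{-tr}r^{-1}\,\diff r\big)\le Ch^2$ uniformly in $t\in(0,t_f]$.
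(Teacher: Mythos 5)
Your proposal does not close the argument. You correctly diagnose the central obstacle---every straightforward estimate leaves behind a factor $t^{-1}$ or $\ln(1/t)$---but neither of your proposed resolutions removes it. In the Duhamel variant, the bound $\int_0^{h^2}2\diff{s}+\int_{h^2}^{t}Ch^2 s^{-1}\diff{s}\leq Ch^2(1+\ln(t_f/h^2))$ genuinely retains the logarithm, as you concede, and no sharper ingredient is supplied. In the contour variant, the decisive final claim that $\int_1^{\infty}\e^{-tr}r^{-1}\diff{r}$ is bounded uniformly in $t\in(0,t_f]$ is false: this is the exponential integral $E_1(t)$, which behaves like $\ln(1/t)$ as $t\to 0^{+}$, so decay of order $|\lambda|^{-1}$ along the rays is not sufficient. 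Moreover, you never actually establish that every piece of the integrand has even that decay; for instance the term $A(\lambda I-A)^{-1}(A_h^{-1}P_h-A^{-1})A_h(\lambda I-A_h)^{-1}(P_hA^{-1}-A_h^{-1}P_h)$ arising from your splitting is only $O(h^4)$ with no decay in $|\lambda|$, and integrating it against $\e^{-t\lambda}$ produces $O(h^4/t)$, which is not $O(h^2)$ when $t$ is of order $h^2$ or smaller.

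The paper's proof uses a mechanism absent from your proposal. It telescopes the error as $(A^{-1}-A_h^{-1}P_h)\e^{-tA}+A_h^{-1}(P_h\e^{-tA}-\e^{-tA_h}P_h)+\e^{-tA_h}P_h(A_h^{-1}P_h-A^{-1})$, bounds the outer terms by $Ch^2$ via Assumption~\ref{ass:Ah}~\ref{ass:dissa_exch}, and observes that the middle term $e_h(t)=A_h^{-1}(P_h\e^{-tA}-\e^{-tA_h}P_h)$ solves $e_h'+A_he_h=(P_hA^{-1}-A_h^{-1}P_h)A\e^{-tA}$ with $e_h(0)=0$. In the resulting Duhamel representation the $O(h^2)$ factor $P_hA^{-1}-A_h^{-1}P_h$ sits between the two semigroups, and the integral is split at $t/2$: on $[t/2,t]$ one uses $\|A\e^{-sA}\|_{\L(H)}\leq Cs^{-1}$ together with $\int_{t/2}^{t}s^{-1}\diff{s}=\ln 2$, while on $[0,t/2]$ one integrates by parts (writing $A\e^{-sA}=-\tfrac{\mathrm{d}}{\mathrm{d}s}\e^{-sA}$), transferring the singularity onto $A_h\e^{-(t-s)A_h}$ with $\int_0^{t/2}(t-s)^{-1}\diff{s}=\ln 2$, at the cost of $O(h^2)$ boundary terms. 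This ``split at $t/2$ and integrate by parts on the near half'' device is precisely what eliminates the logarithm that defeats your version of the Duhamel argument.
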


\begin{proof}
	We begin to split the error into three parts
	\begin{align*}
		\| ( \e^{-tA}-\e^{-tA_h}P_h )A^{-1} \|_{\L(H)}
		&\leq \| (A^{-1}-A_h^{-1}P_h)\e^{-tA} \|_{\L(H)}+ \| A_h^{-1}(P_h\e^{-tA}- \e^{-tA_h}P_h) \|_{\L(H)}\\
		&\quad + \| \e^{-tA_h}P_h(A_h^{-1}P_h-A^{-1}) \|_{\L(H)} 
		:= \Gamma_1+\Gamma_2+\Gamma_3.
	\end{align*}
	Using Assumption~\ref{ass:Ah}~\ref{ass:dissa_exch}, the error terms $\Gamma_1$ and $\Gamma_3$ can be  bounded by $Ch^2$. It remains to bound $\Gamma_{2}$. To do this, we define 
	\begin{equation*}
		e_h(t):=A_h^{-1}(P_h\e^{-tA}-\e^{-tA_h}P_h).
	\end{equation*}
	A simple insertion verifies that this function is a solution to the initial value problem given by
	\begin{equation}\label{eq:err_A}
		\begin{cases}
			e_h'(t)+A_he_h(t)=(P_h-A_h^{-1}P_hA)\e^{-tA}, \quad t \in (0,t_f], \\
			e_h(t) = 0.
		\end{cases}
	\end{equation}
	Applying the structure of \eqref{eq:err_A} to $\Gamma_{2}$, we subdivide it into the two following terms
	\begin{align*}
		\Gamma_2
		&= \|e_h(t)\|_{\L(H)}
		\leq \Big\|\int_0^t \e^{-(t-s)A_h}(P_h - A_h^{-1}P_hA) \e^{-sA} \diff{s}\Big\|_{\mathcal{L}(H)}\\
		&\leq \Big\|\int_0^{\frac{t}{2}}  \e^{-(t-s)A_h}(P_h - A_h^{-1}P_hA) \e^{-sA}  \diff{s} \Big\|_{\mathcal{L}(H)}
		+ \int_{\frac{t}{2}}^t \big\| \e^{-(t-s)A_h}(P_h - A_h^{-1}P_hA) \e^{-sA} \big\|_{\mathcal{L}(H)}\diff{s}\\
		&=: \Gamma_{2,1} + \Gamma_{2,2}.
	\end{align*}
	To bound $\Gamma_{2,1}$, we integrate by parts, apply Assumption~\ref{ass:Ah}~\ref{ass:dissa_exch} and \eqref{eq:ana_semi1} to then find
	\begin{align*}
		\Gamma_{2,1}
		&= \Big\| \e^{-\frac{t}{2}A_h} (P_h A^{-1} - A_h^{-1}P_h) \e^{-\frac{t}{2}A} 
		- \e^{-tA_h} (P_h A^{-1} - A_h^{-1}P_h) \\
		&\qquad - \int_0^{\frac{t}{2}} \e^{-(t-s)A_h} (A_h P_h A^{-1} - P_h) \e^{-sA}  \diff{s}\Big\|_{\L(H)}\\
		&\leq \big\| \e^{-\frac{t}{2}A_h} P_h (A^{-1} - A_h^{-1}P_h) \e^{-\frac{t}{2}A} \big\|_{\L(H)}
		+ \big\|  \e^{-tA_h} P_h (A^{-1} - A_h^{-1}P_h)\big\|_{\L(H)} \\
		&\qquad + \Big\| \int_0^{\frac{t}{2}} A_h \e^{-(t-s)A_h} P_h (A^{-1} - A_h^{-1} P_h) \e^{-sA}  \diff{s} \Big\|_{\L(H)}\\
		&\leq C h^2 + C h^2 \int_0^{\frac{t}{2}} (t-s)^{-1} \diff{s}
		\leq C h^2.
	\end{align*}
	Next, we bound $\Gamma_{2,2}$ by using Assumption~\ref{ass:Ah}~\ref{ass:dissa_exch} and \eqref{eq:ana_semi1}. We then see that
	\begin{equation*}
		\Gamma_{2,2}
		\leq \int_{\frac{t}{2}}^t \big\| \e^{-(t-s)A_h}P_h (A^{-1} - A_h^{-1}P_h) A \e^{-sA} \big\|_{\mathcal{L}(H)}\diff{s}
		\leq Ch^2 \int_{\frac{t}{2}}^t s^{-1}\diff{s}
		\leq Ch^2.
	\end{equation*}
	Combining the bounds for $\Gamma_{1}$, $\Gamma_{2,1}$, $\Gamma_{2,2}$, and $\Gamma_3$, we obtain the claimed result.
\end{proof}

\section{Higher convergence} \label{appendix:higher_conv}
Under additional assumptions on the operator $A_h$ and its decomposition, we can improve the convergence result and thereby characterize better where the loss of convergence order compared to a non-split method comes from. This approach is inspired by \cite{Ichinose2001}.

\begin{lemma} \label{lem:high_conv}
	Let Assumptions~\ref{ass:A}, \ref{ass:proj_err}, \ref{ass:Ah} be fulfilled and assume additionally that $A_{h,1}$ and $A_{h,2}$ commute and are self-adjoint. For $S_{h,\tau}$ given in \eqref{eq:def_S}, it follows that
	\begin{equation}\label{eq:len_high_conv}
		\big\| \e^{-t_n  A_h} P_h - S_{h,\tau}^{n-1}(I+\tau A_{h,2})^{-1}(I+\tau A_{h,1})^{-1} P_h \big\|_{\L(H)} \leq C \frac{\tau }{t_n}
	\end{equation}
	for all $n \in \{1,\dots,N\}$.
\end{lemma}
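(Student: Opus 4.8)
The idea is to exploit the commutativity and self-adjointness to reduce everything to a scalar (spectral) estimate, then sum up a telescoping identity. Since $A_{h,1}$ and $A_{h,2}$ commute and are self-adjoint (and non-negative), they are simultaneously diagonalizable, so it suffices to prove the corresponding scalar bound: for $a,b \ge 0$ and the rational function
\begin{equation*}
	r(a,b) := \frac{1 + \tau^2 ab}{(1+\tau a)(1+\tau b)},
\end{equation*}
one has $0 \le r(a,b) \le 1$ (this is where $\tfrac12((1-\tau a)(1-\tau b) + (1+\tau a)(1+\tau b))$ from the proof of Lemma~\ref{lem:Sne} reappears: $r = \tfrac12 + \tfrac12 \rho_1\rho_2$ with $|\rho_\ell| \le 1$), together with the local consistency estimate
\begin{equation*}
	\bigl| \e^{-\tau(a+b)} - r(a,b) \bigr| \le C\,\tau^2 (a+b)^2 \e^{-c\tau(a+b)}
\end{equation*}
for some $c>0$, or rather the weaker form $\bigl|\e^{-\tau(a+b)} - r(a,b)\bigr| \le C \tau^2 \lambda^2/(1+\tau\lambda)^2$ where $\lambda = a+b$; the key point being a gain of \emph{two} powers of $\tau$ rather than one, which is the payoff of the Douglas--Rachford structure over Lie splitting. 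I would establish this by Taylor expanding both $\e^{-\tau\lambda}$ and $r$ around $\tau = 0$: both equal $1 - \tau\lambda + \tfrac12\tau^2\lambda^2 + O(\tau^3(\cdots))$, so the difference starts at order $\tau^3$ times lower-order terms, and one controls the tail uniformly using the denominator.

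Next I would set up the telescoping sum in the operator setting. Writing $T_{h,\tau} := S_{h,\tau}(I+\tau A_{h,2})^{-1}(I+\tau A_{h,1})^{-1}$ is not quite right; rather, following the decomposition used in Lemma~\ref{lem:Split_error}, I would compare $\e^{-t_n A_h}P_h$ with $S_{h,\tau}^{n-1}(I+\tau A_{h,2})^{-1}(I+\tau A_{h,1})^{-1}P_h$ by inserting intermediate terms $S_{h,\tau}^{n-1-k}(\cdots)\e^{-t_k A_h}$ and telescoping, so that the error becomes a sum of local errors $\bigl(\e^{-\tau A_h} - S_{h,\tau}\bigr)\e^{-t_k A_h}$ multiplied by powers of $S_{h,\tau}$ (which are uniformly bounded by Lemma~\ref{lem:Sne}, using that the extra $(I+\tau A_{h,2})^{-1}$ factor is absorbed, or here more simply because everything is self-adjoint and spectrally bounded by $1$). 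For each summand the scalar consistency bound gives, with $\lambda$ a spectral value of $A_h$ and $m = n-k$,
\begin{equation*}
	\bigl| (\e^{-\tau\lambda} - r)\,\e^{-t_k\lambda} \bigr|
	\le C\tau^2 \frac{\lambda^2}{(1+\tau\lambda)^2}\,\e^{-t_k\lambda}
	\le C\tau^2 \lambda^2 \e^{-t_k\lambda},
\end{equation*}
and then $\sup_{\lambda \ge 0} \lambda^2 \e^{-t_k \lambda} \le C t_k^{-2}$, so the $k$-th term is bounded by $C\tau^2/t_k^2$. Summing over $k=1,\dots,n-1$ and comparing with the integral $\int \tau/s^2\,\mathrm{d}s$ via a Riemann-sum argument (Lemma~\ref{lem:riemann}-type, or direct) gives $\sum_k \tau^2/t_k^2 \le C\tau \sum_k \tau/t_k^2 \le C\tau/\tau = $ ... one must be careful here: $\tau\sum_{k\ge1} t_k^{-2} = \tau^{-1}\sum_{k\ge1} k^{-2} \cdot \text{(const)}$, which is $O(\tau^{-1})$, so $\tau^2 \sum_k t_k^{-2} = O(\tau)$, but we want $O(\tau/t_n)$. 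I would instead split the sum at $k \approx n/2$: for $k \ge n/2$ use $t_k \ge t_n/2$ so those terms give $\sum \tau^2 t_k^{-2} \le n \cdot \tau^2 (t_n/2)^{-2} \le C\tau/t_n$; for $k < n/2$ the remaining semigroup factor $\e^{-t_{n-k}A_h}$ (from the telescoping, with $t_{n-k} \ge t_n/2$) supplies the decay $\sup_\lambda \e^{-t_{n-k}\lambda} \le 1$ but better, one keeps one more power: reorganize so that the local error is $\e^{-t_{n-k}A_h}(\e^{-\tau A_h}-S_{h,\tau})$ and bound $\lambda^2 \e^{-t_{n-k}\lambda} \le C t_{n-k}^{-2} \le Ct_n^{-2}$, giving $\sum_{k<n/2}\tau^2 t_n^{-2} \le C\tau/t_n$ again.

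The main obstacle I anticipate is precisely this \emph{bookkeeping of where the decay sits} in the telescoping sum: one has a product of $S_{h,\tau}$-powers on one side and an $\e^{-t_k A_h}$ on the other, and to extract the sharp $\tau/t_n$ rate (rather than a logarithmic or $O(\tau)$ bound) one must carefully decide which factor carries the parabolic smoothing for each $k$, splitting the range of $k$ accordingly and using $\|S_{h,\tau}^m P_h\|_{\L(H)} \le 1$ freely (valid here by self-adjointness and the spectral bound $0 \le r \le 1$) on the other factor. The scalar estimates themselves are elementary calculus; the self-adjoint/commuting reduction is standard functional calculus; so the real work is the summation argument. A secondary technical point is justifying the spectral/functional-calculus reduction rigorously on the finite-dimensional space $V_h$ with the $\|\cdot\|_H$ inner product — but since $A_{h,1}, A_{h,2}$ are self-adjoint w.r.t.\ $(\cdot,\cdot)_H$ and commute, the finite-dimensional spectral theorem applies directly and this is routine.
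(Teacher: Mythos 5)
Your reduction to scalar estimates via the finite-dimensional spectral theorem is legitimate, and the key facts $0\le r(a,b)\le 1$ and $|\e^{-\tau\lambda}-r(a,b)|\le C\tau^2\lambda^2$ are correct (though note the expansions do \emph{not} agree to second order: $r=1-s+s^2+O(s^3)$ while $\e^{-s}=1-s+\tfrac12 s^2+O(s^3)$ for $s=\tau\lambda$, so the local error is genuinely of size $\tau^2\lambda^2$ --- which happens to be the bound you actually use). The genuine gap is in the summation. Each telescoping term is $S_{h,\tau}^{\,n-1-k}\big(\e^{-\tau A_h}-S_{h,\tau}\big)\e^{-t_kA_h}$, i.e.\ $r^{n-1-k}\big(\e^{-\tau\lambda}-r\big)\e^{-t_k\lambda}$ in the commuting scalar picture; reindexing or reordering the factors never produces a factor $\e^{-t_{n-k}\lambda}$, so the proposed ``reorganization'' for $k<n/2$ does not correspond to any valid identity. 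For those indices the exponential carries a \emph{small} time and the decay would have to come from $S_{h,\tau}^{\,n-1-k}$, i.e.\ from a discrete smoothing estimate $\|A_h^2S_{h,\tau}^{\,m}P_h\|_{\L(H)}\le Ct_m^{-2}$ --- and this fails, because $r(a,b)\to1$ as $a,b\to\infty$ (take $a=b$ with $\tau a\to\infty$), so powers of $S_{h,\tau}$ do not damp high frequencies. A correct scalar argument can be made, but it requires splitting frequencies, using $1-r\ge c\min\big(\tau\lambda,(\tau\lambda)^{-1}\big)$ together with the exponential factor $\e^{-t_k\lambda}$ in the high-frequency regime, and treating the first-step term $S_{h,\tau}^{\,n-1}\big(\e^{-\tau A_h}-(I+\tau A_{h,2})^{-1}(I+\tau A_{h,1})^{-1}\big)$ separately; none of this is in your sketch, and it is exactly where the $\tau/t_n$ rate is won or lost.

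The paper sidesteps this difficulty entirely (following Ichinose--Takanobu): setting $\tilde A_h=(I+\tau A_{h,1})^{-1}A_h(I+\tau A_{h,2})^{-1}$, one checks that $(I+\tau A_{h,1})^{-1}(I+\tau^2A_{h,1}A_{h,2})(I+\tau A_{h,2})^{-1}=I-\tau\tilde A_h$ is self-adjoint with spectrum in $[0,1]$, so the splitting iteration is an explicit Euler step for $\tilde A_h$ and the comparison with $\e^{-t_{n-1}\tilde A_h}$ reduces to the single uniform scalar bound $|\e^{-(n-1)(1-\mu)}-\mu^{n-1}|\le\tfrac{1}{n-1}$ on $\mu\in[0,1]$, valid at \emph{all} frequencies at once. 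The remaining pieces are a Duhamel comparison of $\e^{-tA_h}$ with $\e^{-t\tilde A_h}$ exploiting $\|A_h^{-1}-\tilde A_h^{-1}\|_{\L(H)}=O(\tau)$, and a one-step shift. You may want to restructure your argument along these lines rather than repairing the telescoping sum.
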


\begin{proof}
	The proof follows similar arguments as in  \cite[Lemma~2.1]{Ichinose2001}. For a more compact notation, we will abbreviate
	\begin{equation*}
		\tilde{S}_{h,\tau} := (I+\tau A_{h,1})^{-1} (I+\tau^2A_{h,1}A_{h,2} ) (I+\tau A_{h,2})^{-1}
	\end{equation*}
	in the following proof. Inserting the definition of $S_{h,\tau}^{n-1}$, it follows that
	\begin{equation*}
		S_{h,\tau}^{n-1} (I+\tau A_{h,2})^{-1}(I+\tau A_{h,1})^{-1}
		= (I+\tau A_{h,2})^{-1} \tilde{S}_{h,\tau}^{n-1} (I+\tau A_{h,1})^{-1}.
	\end{equation*}
	With this in mind, we begin to decompose the right-hand side of \eqref{eq:len_high_conv} into three parts that we consider separately. Together with the abbreviation $\tilde{A}_h := (I+\tau A_{h,1})^{-1}A_h(I+\tau A_{h,2})^{-1}$, we find that 
	\begin{align*}
		&\big\| \e^{- t_n  A_h} P_h - S_{h,\tau}^{n-1} (I+\tau A_{h,2})^{-1}(I+\tau A_{h,1})^{-1} P_h \big\|_{\L(H)}\\
		&= \big\| \e^{- t_n  A_h} P_h - (I+\tau A_{h,2})^{-1} \tilde{S}_{h,\tau}^{n-1} (I+\tau A_{h,1})^{-1} P_h \big\|_{\L(H)}\\
		&\leq \big\| \e^{- t_n  A_h} P_h - (I+\tau A_{h,2})^{-1}\e^{- t_{n-1}  A_h}(I+\tau A_{h,1})^{-1} P_h \big\|_{\L(H)}\\
		&\quad+ \big\| (I+\tau A_{h,2})^{-1} \big( \e^{-t_{n-1}  A_h} - \e^{-t_{n-1}\tilde{A}_h} \big) (I+\tau A_{h,1})^{-1}P_h \big\|_{\L(H)}\\
		&\quad+\big\| (I+\tau A_{h,2})^{-1} \big( \e^{-t_{n-1} \tilde{A}_h} - \tilde{S}_{h,\tau}^{n-1} \big)(I+\tau A_{h,1})^{-1}P_h \big\|_{\L(H)}\\
		&=: \|\Gamma_1 P_h\|_{\L(H)} + \|\Gamma_2 P_h\|_{\L(H)} + \|\Gamma_3 P_h\|_{\L(H)}.
	\end{align*}
	First, we look at $\Gamma_{1}$ in more detail. In the following, we apply the equality $(I+\tau A_{h,\ell})^{-1}= I-\tau A_{h,\ell}(I+\tau A_{h,\ell})^{-1}$, $\ell \in \{1,2\}$, Lemma~\ref{lem:Ah}~\ref{lem:disc2cont}, the fact that $A_{h,\ell}A^{-1}_{h} = A^{-1}_{h} A_{h,\ell}$ due to the commutivity and the semigroup properties \eqref{eq:ana_semi_h2} and \eqref{eq:def_fh} of $A_h$, $(I+\tau A_{h,2})^{-1}\tau A_{h,2} = I - (I+\tau A_{h,2})^{-1}$ and find	
	\begin{align*}
		&\|\Gamma_1 P_h \|_{\L(H)}\\
		&= \big\| \e^{-t_n  A_h} P_h - \big(I-\tau A_{h,2}(I+\tau A_{h,2})^{-1}\big) \e^{-t_{n-1}  A_h} \big( I-\tau A_{h,1}(I+\tau A_{h,1})^{-1} \big) P_h \big\|_{\L(H)}\\
		&\leq \big\| (\e^{-\tau  A_h}-I) \e^{-t_{n-1}  A_h} P_h \big \|_{\L(H)} + \tau \big\| (I+\tau A_{h,2})^{-1} A_{h,2} \e^{-t_{n-1}  A_h} P_h \big\|_{\L(H)}\\
		&\quad+ \tau \big\| \e^{-t_{n-1}  A_h} A_{h,1}(I+\tau A_{h,1})^{-1}P_h \big\|_{\L(H)}\\
		&\quad + \tau^2 \big\| (I+\tau A_{h,2})^{-1} A_{h,2} \e^{-t_{n-1}  A_h} A_{h,1} (I+\tau A_{h,1})^{-1}P_h \big\|_{\L(H)}\\
		&\leq \big\| (\e^{-\tau  A_h}-I)A_h^{-1}A_h\e^{-t_{n-1}  A_h}P_h \big\|_{\L(H)}
		+ \tau \big\| (I+\tau A_{h,2})^{-1}A_{h,2}A_{h}^{-1}A_h\e^{-t_{n-1}  A_h}P_h \big\|_{\L(H)}\\
		&\quad+ \tau \big\| \e^{-t_{n-1}  A_h}A_hA_h^{-1}A_{h,1}(I+\tau A_{h,1})^{-1}P_h \big\|_{\L(H)}\\
		&\quad+ \tau \big\| (I+\tau A_{h,2})^{-1} \tau A_{h,2}\e^{-t_{n-1}  A_h}A_hA_h^{-1}A_{h,1}(I+\tau A_{h,1})^{-1}P_h \big\|_{\L(H)}
		\leq C \frac{\tau}{t_{n-1}}.
	\end{align*}
	Thus, we have a suitable bound for $\Gamma_1$. 
	We now turn to $\Gamma_2$ and look at this summand in more detail 
	\begin{align*}
		\Gamma_{2}
		&=(I+\tau A_{h,2})^{-1} \big( \e^{-t_{n-1}  A_h} - \e^{-t_{n-1}\tilde{A}_h} \big) (I+\tau A_{h,1})^{-1}\\
		&=(I+\tau A_{h,2})^{-1} A_h A_{h}^{-1} \big( \e^{-t_{n-1}  A_h} - \e^{-t_{n-1}\tilde{A}_h} \big) \tilde{A}_{h}^{-1} \tilde{A}_h (I+\tau A_{h,1})^{-1}\\
		&=(I+\tau A_{h,2})^{-1}A_h  \Big[-\e^{-(t_{n-1} -s)A_h}A_h^{-1} \tilde{A}_h^{-1}\e^{-s\tilde{A}_h}\Big]_{s=0}^{s=t_{n-1}} \tilde{A}_h (I+\tau A_{h,1})^{-1}\\
		&= (I+\tau A_{h,2})^{-1} A_h \int_0^{t_{n-1}} \e^{-(t_{n-1}-s)A_h} \big( A_h^{-1}-\tilde{A}_h^{-1} \big) \e^{-s\tilde{A}_h} \diff{s} \ \tilde{A}_h (I+\tau A_{h,1})^{-1} \\
		&= (I+\tau A_{h,2})^{-1} A_h \int_0^{\frac{t_{n-1}}{2}} \e^{-(t_{n-1}-s)A_h} \big( A_h^{-1}-\tilde{A}_h^{-1} \big) \e^{-s\tilde{A}_h} \diff{s} \ \tilde{A}_h (I+\tau A_{h,1})^{-1} \\
		&\quad + (I+\tau A_{h,2})^{-1} A_h \int_{\frac{t_{n-1}}{2}}^{t_{n-1}} \e^{-(t_{n-1}-s)A_h} \big( A_h^{-1}-\tilde{A}_h^{-1} \big) \e^{-s\tilde{A}_h} \diff{s} \ \tilde{A}_h (I+\tau A_{h,1})^{-1}\\
		&=: \Gamma_{2,1} + \Gamma_{2,2}.
	\end{align*}
	For $\Gamma_{2,1}$, we use integration by parts to find
	\begin{align}\label{eq:proof_higher_conv_int_parts}
		\begin{split}
			\Gamma_{2,1}
			&= (I+\tau A_{h,2})^{-1} \int_0^{\frac{t_{n-1}}{2}} A_h^2 \e^{-(t_{n-1}-s)A_h}\big(A_h^{-1}-\tilde{A}_h^{-1} \big)\e^{-s\tilde{A}_h} \diff{s} \ (I+\tau A_{h,1})^{-1} \\
			&\quad- (I+\tau A_{h,2})^{-1} A_h \Big[ \e^{-(t_{n-1}-s)A_h} \big(A_h^{-1}-\tilde{A}_h^{-1}\big) \e^{-s\tilde{A}_h} \Big]_{s=0}^{s=\frac{t_{n-1}}{2}} (I+\tau A_{h,1})^{-1} \\
			&=\int_0^{\frac{t_{n-1}}{2}} A_h^2 \e^{-(t_{n-1}-s)A_h} (I+\tau A_{h,2})^{-1} \big(A_h^{-1}-\tilde{A}_h^{-1} \big)(I+\tau A_{h,1})^{-1} \e^{-s\tilde{A}_h} \diff{s}\\
			&\quad- A_h \e^{-\frac{t_{n-1}}{2}A_h} (I+\tau A_{h,2})^{-1}\big(A_h^{-1}-\tilde{A}_h^{-1} \big)  (I+\tau A_{h,1})^{-1} \e^{-\frac{t_{n-1}}{2}\tilde{A}_h}\\
			&\quad + A_h \e^{-t_{n-1}A_h} (I+\tau A_{h,2})^{-1} \big(A_h^{-1}-\tilde{A}_h^{-1} \big) (I+\tau A_{h,1})^{-1}.
		\end{split}
	\end{align}
	where we inserted that the operators $(I + \tau A_{h,2})^{-1}$ and $A_h$ as well as $\tilde{A}_h$ and $(I + \tau A_{h,1})^{-1}$ commute because of the commutativity of $A_{h,1}$ and $A_{h,2}$.
	Next, we state a bound for $(I+\tau A_{h,2})^{-1} (A_h^{-1}-\tilde{A}_h^{-1}) (I + \tau A_{h,1})^{-1}$. First, we recall the definition $\tilde{A}_h = (I+\tau A_{h,1})^{-1}A_h(I+\tau A_{h,2})^{-1}$ and find
	\begin{align*}
		&\| (I + \tau A_{h,2})^{-1} (A_h^{-1} - \tilde{A}_h^{-1}) (I + \tau A_{h,1})^{-1} P_h \|_{\L(H)}\\
		&= \| (I + \tau A_{h,1})^{-1} A_h^{-1} (I + \tau A_{h,1})^{-1} P_h - A_h^{-1} P_h \|_{\L(H)}\\
		&\leq \big\| (I + \tau A_{h,1})^{-1} A_h^{-1} \big((I + \tau A_{h,1})^{-1} - I \big) P_h\big\|_{\L(H)}
		+ \big\| \big((I + \tau A_{h,1})^{-1} - I \big) A_h^{-1} P_h \big\|_{\L(H)}\\
		&= \tau \big\| (I + \tau A_{h,1})^{-1} A_h^{-1} A_{h,1} (I + \tau A_{h,1})^{-1} P_h\big\|_{\L(H)}
		+ \tau \big\| (I + \tau A_{h,1})^{-1} A_{h,1} A_h^{-1} P_h \big\|_{\L(H)}\\
		&\leq C \tau \big\| A_{h,1} A_h^{-1}  P_h\big\|_{\L(H)} \leq C \tau,
	\end{align*}
	where we inserted that the operators $(I + \tau A_{h,2})^{-1}$ and $A_h$ as well as $\tilde{A}_h$ and $(I + \tau A_{h,1})^{-1}$ commute because of the commutativity of $A_{h,1}$ and $A_{h,2}$.
	Next, we state a bound for $(I+\tau A_{h,2})^{-1} (A_h^{-1}-\tilde{A}_h^{-1}) (I + \tau A_{h,1})^{-1}$. First, we recall the definition $\tilde{A}_h = (I+\tau A_{h,1})^{-1}A_h(I+\tau A_{h,2})^{-1}$ and find
	\begin{align*}
		&\| (I + \tau A_{h,2})^{-1} (A_h^{-1} - \tilde{A}_h^{-1}) (I + \tau A_{h,1})^{-1} P_h \|_{\L(H)}\\
		&= \| (I + \tau A_{h,1})^{-1} A_h^{-1} (I + \tau A_{h,1})^{-1} P_h - A_h^{-1} P_h \|_{\L(H)}\\
		&\leq \big\| (I + \tau A_{h,1})^{-1} A_h^{-1} \big((I + \tau A_{h,1})^{-1} - I \big) P_h\big\|_{\L(H)}
		+ \big\| \big((I + \tau A_{h,1})^{-1} - I \big) A_h^{-1} P_h \big\|_{\L(H)}\\
		&= \tau \big\| (I + \tau A_{h,1})^{-1} A_h^{-1} A_{h,1} (I + \tau A_{h,1})^{-1} P_h\big\|_{\L(H)}
		+ \tau \big\| (I + \tau A_{h,1})^{-1} A_{h,1} A_h^{-1} P_h \big\|_{\L(H)}\\
		&\leq C \tau \big\| A_{h,1} A_h^{-1}  P_h\big\|_{\L(H)} \leq C \tau,
	\end{align*}
	where we inserted $(I + \tau A_{h,1})^{-1} - I = \tau A_{h,1} (I + \tau A_{h,1})^{-1}$, the commutativity of $A_{h,1}$ and $A_{h}$ as well as Lemma~\ref{lem:Ah}~\ref{lem:disc2cont}.
	Then we can apply the semigroup bound \eqref{eq:ana_semi_h1} to find
	\begin{align*}
		&\| \Gamma_{2,1} P_h \|_{\L(H)}\\
		&\leq \int_0^{\frac{t_{n-1}}{2}} \big\| A_h^2 \e^{-(t_{n-1}-s)A_h} P_h\big\|_{\L(H)} \big\| (I+\tau A_{h,2})^{-1} \big(A_h^{-1}-\tilde{A}_h^{-1} \big)(I+\tau A_{h,1})^{-1} P_h\big\|_{\L(H)} \\
		&\hspace{11cm} \times \big\| \e^{-s\tilde{A}_h} P_h\big\|_{\L(H)} \diff{s}\\
		&\quad+ \big\| A_h \e^{-\frac{t_{n-1}}{2}A_h} P_h\big\|_{\L(H)} 
		\big\|(I+\tau A_{h,2})^{-1}\big(A_h^{-1}-\tilde{A}_h^{-1} \big)  (I+\tau A_{h,1})^{-1} P_h\big\|_{\L(H)} 
		\big\| \e^{-\frac{t_{n-1}}{2}\tilde{A}_h} P_h\big\|_{\L(H)}\\
		&\quad + \big\| A_h \e^{-t_{n-1}A_h} P_h \big\|_{\L(H)} \big\| (I+\tau A_{h,2})^{-1} \big(A_h^{-1}-\tilde{A}_h^{-1} \big) (I+\tau A_{h,1})^{-1} P_h\big\|_{\L(H)} \\
		&\leq C \tau \int_0^{\frac{t_{n-1}}{2}} (t_{n-1}-s)^{-2} \diff{s} 
		+ C\frac{\tau }{t_{n-1}} + C \frac{\tau}{t_{n-1}} 
		\leq C\frac{\tau}{t_{n-1}}.
	\end{align*}
	We can use a similar argument to show that $\| \Gamma_{2,2} P_h \|_{\L(H)} \leq C\frac{\tau}{t_{n-1}}$. The difference is that in \eqref{eq:proof_higher_conv_int_parts} we use integration by parts but change the role of the two functions. Additionally, we need to apply \eqref{eq:ana_semi_h1} for $\tilde{A}_h$. This is possible since $A_{h,1}$ and $A_{h,2}$ are commutable and self-adjoint, which implies that $\tilde{A}_h$ is self-adjoint and therefore in particular sectorial. This shows that $\| \Gamma_{2} P_h \|_{\L(H)} \leq C\frac{\tau}{t_{n-1}}$.
	
	It remains to bound $\| \Gamma_{3}P_h\|_{\L(H)}$. We begin to rewrite $\tilde{S}_{h,\tau}$ as follows
	\begin{align*}
		\tilde{S}_{h,\tau}
		&= (I+\tau A_{h,1})^{-1} (I+\tau^2A_{h,1}A_{h,2} ) (I+\tau A_{h,2})^{-1}\\
		&= (I+\tau A_{h,1})^{-1} \big( (I + \tau A_{h,1})(I + \tau A_{h,2}) - \tau A_h \big) (I+\tau A_{h,2})^{-1}=I-\tau\tilde{A}_h,
	\end{align*}
	where we inserted the abbreviation $\tilde{A}_h = (I+\tau A_{h,1})^{-1}A_h(I+\tau A_{h,2})^{-1}$ in the last step.
	Then we can bound $\| \Gamma_{3}P_h\|_{\L(H)}$ as follows
	\begin{equation*}
		\| \Gamma_{3}P_h\|_{\L(H)}
		\leq \big\| \e^{-t_{n-1}\tilde{A}_h} P_h - \tilde{S}_{h,\tau}^{n-1}  P_h \big\|_{\L(H)}
		= \big\| \e^{-(n-1)(I - (I-\tau \tilde{A}_h ) ) }P_h - \big(I-\tau\tilde{A}_h \big)^{n-1} P_h \big\|_{\L(H)}.
	\end{equation*}
	In the case that $A_{h,1}$ and $A_{h,2}$ are commutable and self-adjoint, we know that $\tilde{S}_{h,\tau} = I-\tau\tilde{A}_h$ is self-adjoint. Additionally, the eigenvalues of the operator are between $0$ and $1$, which can be deduced from the proof of Lemma~\ref{lem:Sne}. Thus, applying the functional calculus theorem \cite[Theorem~2.7.11(b)]{Conway2007}, it will be sufficient in the following to find a bound for  $|\e^{-(n-1)(1-\lambda)}-\lambda^{n-1} |$ for $\lambda \in [0,1]$. To this end, we denote $g(\lambda):= \e^{-(n-1)(1-\lambda)}-\lambda^{n-1}$ and will find $g$'s extrema for $\lambda \in [0,1]$. We note that $g(0)=e^{-(n-1)}$ and $g(1)=0$ are finite and will look for further local extrema of $g$ in the open interval $(0,1)$. The derivative of $g$ is given by $g'(\lambda)= (n-1) \e^{-(n-1)(1-\lambda)} - (n-1)\lambda^{n-2}$. Setting this to zero implies that every candidate $\lambda_{*}$ for a local extremum fulfills $\e^{-(n-1)(1-\lambda_{*})} = \lambda_{*}^{n-2}$. First, we assume that such a value $\lambda_{*}$ is a maximum. Then we find
	\newpage
	\begin{align*}
		g(\lambda_{*}) 
		&= \e^{-(n-1)(1-\lambda_{*})}-\lambda_{*}^{n-1}
		= \e^{-(n-1)(1-\lambda_{*})} - \lambda_{*} \e^{-(n-1)(1-\lambda_{*})}\\
		&\leq \max_{\lambda \in [0,1]} \e^{-(n-1)(1-\lambda)} - \lambda \e^{-(n-1)(1-\lambda)}
		= \max_{\lambda \in [0,1]} \frac{1}{n-1} (n-1) (1-\lambda) \e^{-(n-1)(1-\lambda)}
		\leq \frac{\tau}{t_{n-1}}
	\end{align*}
	as $x \e^{-x} \leq 1$ for all $x \in \R_0^+$.
	In case $\lambda_{*}$ is a minimum, we argue in a similar way using that $\min_{\lambda \in [0,1]} \frac{1}{n-1} (n-1) (1-\lambda) \e^{-(n-1)(1-\lambda)} \geq 0$. This shows that $|\e^{-(n-1)(1-\lambda)}-\lambda^{n-1} | \leq \frac{1}{n-1}$ is fulfilled for all $\lambda \in [0,1]$. Altogether, we have shown that $\Gamma_{3} \leq \frac{\tau}{t_{n-1}}$.
	\end{proof}

\section*{Acknowledgments}
The computations were enabled by resources provided by LUNARC, The Centre for Scientific and Technical Computing at Lund University. 
The authors would like to thank Robert Kl\"ofkorn for valuable help and advice when producing our code.
Additionally, the authors would also like to thank the referees for constructive and insightful feedback.

\section*{Funding}
The first and the third author were supported in part by the Swedish Research Council under the grant 2023-03930, eSSENCE: The e-Science Collaboration, and the Crafoord foundation and the second author by the Swedish Research Council under the grant 2023–04862.

\bibliographystyle{amsplain}
\bibliography{lit.bib}

@book{HundsdorferVerwer.2003,
	AUTHOR = {Hundsdorfer, W. and Verwer, J.},
	TITLE = {Numerical {S}olution of {T}ime-{D}ependent
	{A}dvection-{D}iffusion-{R}eaction {E}quations},
	SERIES = {Springer Series in Computational Mathematics},
	VOLUME = {33},
	PUBLISHER = {Springer-Verlag, Berlin},
	YEAR = {2003},
	PAGES = {x+471},
	ISBN = {3-540-03440-4},
	MRCLASS = {65-02 (65L05 65Mxx)},
	MRNUMBER = {2002152},
	MRREVIEWER = {Ian Gladwell},
	DOI = {10.1007/978-3-662-09017-6},
	URL = {https://doi.org/10.1007/978-3-662-09017-6},
}

@article {DouglasRachford.1956,
	AUTHOR = {Douglas, Jr., J. and Rachford, Jr., H. H.},
	TITLE = {On the numerical solution of heat conduction problems in two
	and three space variables},
	JOURNAL = {Trans. Amer. Math. Soc.},
	FJOURNAL = {Transactions of the American Mathematical Society},
	VOLUME = {82},
	YEAR = {1956},
	PAGES = {421--439},
	ISSN = {0002-9947,1088-6850},
	MRCLASS = {65.3X},
	MRNUMBER = {84194},
	MRREVIEWER = {Charles\ Saltzer},
	DOI = {10.2307/1993056},
	URL = {https://doi.org/10.2307/1993056},
}

@article {MuZhang.2019,
	AUTHOR = {Mu, L. and Zhang, G.},
	TITLE = {A domain decomposition model reduction method for linear
	convection-diffusion equations with random coefficients},
	JOURNAL = {SIAM J. Sci. Comput.},
	FJOURNAL = {SIAM Journal on Scientific Computing},
	VOLUME = {41},
	YEAR = {2019},
	NUMBER = {3},
	PAGES = {A1984--A2011},
	ISSN = {1064-8275,1095-7197},
	MRCLASS = {65N35 (65N12 65N15 65N75)},
	MRNUMBER = {3968253},
	MRREVIEWER = {Brian\ Bradie},
	DOI = {10.1137/18M1170601},
	URL = {https://doi.org/10.1137/18M1170601},
}

@article {ChenEtAl.2024,
	AUTHOR = {Chen, L. and Chen, Y. and Li, Q. and Zhang, Z.},
	TITLE = {Stochastic domain decomposition based on variable-separation
	method},
	JOURNAL = {Comput. Methods Appl. Mech. Engrg.},
	FJOURNAL = {Computer Methods in Applied Mechanics and Engineering},
	VOLUME = {418},
	YEAR = {2024},
	PAGES = {Paper No. 116538, 19},
	ISSN = {0045-7825,1879-2138},
	MRCLASS = {65M55 (76M35)},
	MRNUMBER = {4656572},
	DOI = {10.1016/j.cma.2023.116538},
	URL = {https://doi.org/10.1016/j.cma.2023.116538},
}

@article{BuckwarEtAl.2024,
	author = {Buckwar, E. and Djurdjevac, A. and Eisenmann, M.},
	title = {A Domain Decomposition Method for Stochastic Evolution Equations},
	journal = {SIAM Journal on Numerical Analysis},
	volume = {62},
	number = {6},
	pages = {2611-2639},
	year = {2024},
	doi = {10.1137/24M1629845},
	URL = {https://doi.org/10.1137/24M1629845},
	eprint = {https://doi.org/10.1137/24M1629845}
}

@book{Mathew.2008,
	AUTHOR = {Mathew, T.},
	TITLE = {Domain {D}ecomposition {M}ethods for the {N}umerical {S}olution 
	of {P}artial {D}ifferential {E}quations},
	SERIES = {Lecture Notes in Computational Science and Engineering},
	VOLUME = {61},
	PUBLISHER = {Springer-Verlag, Berlin},
	YEAR = {2008},
	PAGES = {xiv+764},
	ISBN = {978-3-540-77205-7},
	MRCLASS = {65-02 (65M55 65N55 74S30 76M25)},
	MRNUMBER = {2445659},
	MRREVIEWER = {Karsten Urban},
	DOI = {10.1007/978-3-540-77209-5},
	URL = {https://doi.org/10.1007/978-3-540-77209-5},
}

@book {DoleanJolivetNataf.2015,
	AUTHOR = {Dolean, V. and Jolivet, P. and Nataf, F.},
	TITLE = {An introduction to domain decomposition methods},
	NOTE = {Algorithms, theory, and parallel implementation},
	PUBLISHER = {Society for Industrial and Applied Mathematics (SIAM),
	Philadelphia, PA},
	YEAR = {2015},
	PAGES = {x+238},
	ISBN = {978-1-611974-05-8},
	MRCLASS = {65-02 (65M55 65N55)},
	MRNUMBER = {3450068},
	MRREVIEWER = {Benjamin\ Wi-Lian\ Ong},
	DOI = {10.1137/1.9781611974065.ch1},
	URL = {https://doi.org/10.1137/1.9781611974065.ch1},
}

@article {Prohl.2012,
	AUTHOR = {Carelli, E. and M\"{u}ller, A. and Prohl, A.},
	TITLE = {Domain decomposition strategies for the stochastic heat
	equation},
	JOURNAL = {Int. J. Comput. Math.},
	FJOURNAL = {International Journal of Computer Mathematics},
	VOLUME = {89},
	YEAR = {2012},
	NUMBER = {18},
	PAGES = {2517--2542},
	ISSN = {0020-7160,1029-0265},
	MRCLASS = {60H15 (65M55 65M75)},
	MRNUMBER = {3004664},
	DOI = {10.1080/00207160.2012.720977},
	URL = {https://doi.org/10.1080/00207160.2012.720977},
}

@article{Ji.2024,
	title = {An Overlapping Domain Decomposition Splitting Algorithm for Stochastic Nonlinear {S}chr\"{o}dinger Equation},
	ISSN = {1991-7139},
	url = {http://dx.doi.org/10.4208/jcm.2402-m2023-0104},
	DOI = {10.4208/jcm.2402-m2023-0104},
	journal = {Journal of Computational Mathematics},
	publisher = {Global Science Press},
	author = {Ji,  L.},
	year = {2024},
	PAGES = {online first},
}

@article {EisenmannHansen.2018,
	AUTHOR = {Eisenmann, M. and Hansen, E.},
	TITLE = {Convergence analysis of domain decomposition based time
	integrators for degenerate parabolic equations},
	JOURNAL = {Numer. Math.},
	FJOURNAL = {Numerische Mathematik},
	VOLUME = {140},
	YEAR = {2018},
	NUMBER = {4},
	PAGES = {913--938},
	ISSN = {0029-599X,0945-3245},
	MRCLASS = {65M55 (35K65 65J08 65M12)},
	MRNUMBER = {3864705},
	MRREVIEWER = {Barry\ Lee},
	DOI = {10.1007/s00211-018-0985-z},
	URL = {https://doi.org/10.1007/s00211-018-0985-z},
}

@article {EisenmannHansen.2022,
	AUTHOR = {Eisenmann, M. and Hansen, E.},
	TITLE = {A variational approach to the sum splitting scheme},
	JOURNAL = {IMA J. Numer. Anal.},
	FJOURNAL = {IMA Journal of Numerical Analysis},
	VOLUME = {42},
	YEAR = {2022},
	NUMBER = {1},
	PAGES = {923--950},
	ISSN = {0272-4979,1464-3642},
	MRCLASS = {65J15 (47J35 65M12 65M60)},
	MRNUMBER = {4367676},
	MRREVIEWER = {Marius\ Ghergu},
	DOI = {10.1093/imanum/draa100},
	URL = {https://doi.org/10.1093/imanum/draa100},
}

@article {HansenHenningsson.2016,
	AUTHOR = {Hansen, E. and Henningsson, E.},
	TITLE = {A full space-time convergence order analysis of operator
	splittings for linear dissipative evolution equations},
	JOURNAL = {Commun. Comput. Phys.},
	FJOURNAL = {Communications in Computational Physics},
	VOLUME = {19},
	YEAR = {2016},
	NUMBER = {5},
	PAGES = {1302--1316},
	ISSN = {1815-2406,1991-7120},
	MRCLASS = {65M60 (65M12)},
	MRNUMBER = {3501210},
	MRREVIEWER = {Etsushi\ Nakaguchi},
	DOI = {10.4208/cicp.scpde14.22s},
	URL = {https://doi.org/10.4208/cicp.scpde14.22s},
}

@article {HansenHenningsson.2017,
	AUTHOR = {Hansen, E. and Henningsson, E.},
	TITLE = {Additive domain decomposition operator
	splittings---convergence analyses in a dissipative framework},
	JOURNAL = {IMA J. Numer. Anal.},
	FJOURNAL = {IMA Journal of Numerical Analysis},
	VOLUME = {37},
	YEAR = {2017},
	NUMBER = {3},
	PAGES = {1496--1519},
	ISSN = {0272-4979,1464-3642},
	MRCLASS = {65M55},
	MRNUMBER = {3671503},
	DOI = {10.1093/imanum/drw043},
	URL = {https://doi.org/10.1093/imanum/drw043},
}

@article {HochbruckKoehler.2022,
	AUTHOR = {Hochbruck, M. and K\"{o}hler, J.},
	TITLE = {Error analysis of a fully discrete discontinuous {G}alerkin
	alternating direction implicit discretization of a class of
	linear wave-type problems},
	JOURNAL = {Numer. Math.},
	FJOURNAL = {Numerische Mathematik},
	VOLUME = {150},
	YEAR = {2022},
	NUMBER = {3},
	PAGES = {893--927},
	ISSN = {0029-599X,0945-3245},
	MRCLASS = {65M60 (65M12 65M15 65M22)},
	MRNUMBER = {4394004},
	MRREVIEWER = {Murat\ Uzunca},
	DOI = {10.1007/s00211-021-01262-z},
	URL = {https://doi.org/10.1007/s00211-021-01262-z},
}

@book{Riviere,
	author = {Rivière, B.},
	title = {{Discontinuous Galerkin Methods for Solving Elliptic and Parabolic Equations}},
	publisher = {Society for Industrial and Applied Mathematics},
	year = {2008},
	doi = {10.1137/1.9780898717440},
	address = {},
	edition   = {},
	URL = {https://epubs.siam.org/doi/abs/10.1137/1.9780898717440},
	eprint = {https://epubs.siam.org/doi/pdf/10.1137/1.9780898717440}
}

@book{DiPietro2012,
	doi = {10.1007/978-3-642-22980-0},
	url = {https://doi.org/10.1007/978-3-642-22980-0},
	year = {2012},
	publisher = {Springer Berlin Heidelberg},
	author = {Di Pietro, D.~A. and Ern, A.},
	title = {{Mathematical Aspects of Discontinuous Galerkin Methods}}
}

@book{Ern2021,
	doi = {10.1007/978-3-030-56341-7},
	url = {https://doi.org/10.1007/978-3-030-56341-7},
	year = {2021},
	publisher = {Springer International Publishing},
	author = {A.~Ern and J.-L.~Guermond},
	title = {{Finite Elements I}}
}

@article{Warburton2003,
	doi = {10.1016/s0045-7825(03)00294-9},
	url = {https://doi.org/10.1016/s0045-7825(03)00294-9},
	year = {2003},
	month = jun,
	publisher = {Elsevier {BV}},
	volume = {192},
	number = {25},
	pages = {2765--2773},
	author = {T.~Warburton and J.~S.~Hesthaven},
	title = {On the constants in hp-finite element trace inverse inequalities},
	journal = {Computer Methods in Applied Mechanics and Engineering}
}

@book{Hackbusch2017,
	title = {Elliptic Differential Equations},
	ISBN = {9783662549612},
	ISSN = {2198-3712},
	url = {http://dx.doi.org/10.1007/978-3-662-54961-2},
	DOI = {10.1007/978-3-662-54961-2},
	journal = {Springer Series in Computational Mathematics},
	publisher = {Springer Berlin Heidelberg},
	author = {Hackbusch,  W.},
	year = {2017}
}

@book{lord_powell_shardlow_2014, 
	place={Cambridge}, 
	series={Cambridge Texts in Applied Mathematics}, 
	title={{An Introduction to Computational Stochastic PDEs}}, 
	DOI={10.1017/CBO9781139017329}, 
	publisher={Cambridge University Press}, 
	author={Lord, G.~J. and Powell, C.~E. and Shardlow, T.}, 
	year={2014}, 
	collection={Cambridge Texts in Applied Mathematics}
}

@book{Liu2015,
	title = {Stochastic Partial Differential Equations: An Introduction},
	ISBN = {9783319223544},
	ISSN = {2191-6675},
	url = {http://dx.doi.org/10.1007/978-3-319-22354-4},
	DOI = {10.1007/978-3-319-22354-4},
	journal = {Universitext},
	publisher = {Springer International Publishing},
	author = {Liu, W. and R\"{o}ckner, M.},
	year = {2015}
}

@book{Yagi2010,
	title = {{Abstract Parabolic Evolution Equations and their Applications}},
	ISBN = {9783642046315},
	ISSN = {1439-7382},
	url = {http://dx.doi.org/10.1007/978-3-642-04631-5},
	DOI = {10.1007/978-3-642-04631-5},
	journal = {Springer Monographs in Mathematics},
	publisher = {Springer Berlin Heidelberg},
	author = {Yagi,  A.},
	year = {2010}
}

@book {Pazy1983,
	AUTHOR = {Pazy, A.},
	TITLE = {{Semigroups of linear operators and applications to partial
	differential equations}},
	SERIES = {Applied Mathematical Sciences},
	VOLUME = {44},
	PUBLISHER = {Springer-Verlag, New York},
	YEAR = {1983},
	PAGES = {viii+279},
	ISBN = {0-387-90845-5},
	MRCLASS = {47D05 (34Gxx 35Fxx 35Gxx 47H20)},
	MRNUMBER = {710486},
	MRREVIEWER = {H.\ O.\ Fattorini},
	DOI = {10.1007/978-1-4612-5561-1},
	URL = {https://doi.org/10.1007/978-1-4612-5561-1},
}

@book{Kruse.2014,
	AUTHOR = {Kruse, R.},
	TITLE = {Strong and weak approximation of semilinear stochastic
	evolution equations},
	SERIES = {Lecture Notes in Mathematics},
	VOLUME = {2093},
	PUBLISHER = {Springer, Cham},
	YEAR = {2014},
	PAGES = {xiv+177},
	ISBN = {978-3-319-02230-7; 978-3-319-02231-4},
	MRCLASS = {60H15 (35R60 60H07 65-02 65Cxx)},
	MRNUMBER = {3154916},
	MRREVIEWER = {Roger\ Pettersson},
	DOI = {10.1007/978-3-319-02231-4},
	URL = {https://doi.org/10.1007/978-3-319-02231-4},
}

@book {Werner.2000,
	AUTHOR = {Werner, D.},
	TITLE = {Funktionalanalysis},
	EDITION = {extended},
	PUBLISHER = {Springer-Verlag, Berlin},
	YEAR = {2000},
	PAGES = {xii+501},
	ISBN = {3-540-67645-7},
	MRCLASS = {46-01 (47-01)},
	MRNUMBER = {1787146},
	MRREVIEWER = {Manfred\ Wolff},
}

@article{Ichinose2001,
	title = {The Norm Convergence of the {T}rotter--{K}ato Product Formula with Error Bound},
	volume = {217},
	ISSN = {1432-0916},
	url = {http://dx.doi.org/10.1007/s002200100376},
	DOI = {10.1007/s002200100376},
	number = {3},
	journal = {Communications in Mathematical Physics},
	publisher = {Springer Science and Business Media LLC},
	author = {Ichinose,  T. and Tamura,  H.},
	year = {2001},
	month = mar,
	pages = {489–502}
}

@software{dedner_2020_3706994,
	author       = {Dedner, A. and Kloefkorn, R. and Nolte, M.},
	title        = {Python Bindings for the {DUNE-FEM} module},
	month        = {mar},
	year         = {2020},
	publisher    = {Zenodo},
	version      = {v2.7.0},
	doi          = {10.5281/zenodo.3706994},
	url          = {https://doi.org/10.5281/zenodo.3706994}
}

@article {MingyouThomee.1981,
	AUTHOR = {Mingyou, H. and Thom\'{e}e, V.},
	TITLE = {Some convergence estimates for semidiscrete type schemes for
	time-dependent nonselfadjoint parabolic equations},
	JOURNAL = {Math. Comp.},
	FJOURNAL = {Mathematics of Computation},
	VOLUME = {37},
	YEAR = {1981},
	NUMBER = {156},
	PAGES = {327--346},
	ISSN = {0025-5718,1088-6842},
	MRCLASS = {65M60 (65M15)},
	MRNUMBER = {628699},
	DOI = {10.2307/2007430},
	URL = {https://doi.org/10.2307/2007430},
}

@article {ArrarasEtAl.2017,
	AUTHOR = {Arrar\'{a}s, A. and in 't Hout, K. J. and Hundsdorfer, W. and
	Portero, L.},
	TITLE = {Modified {D}ouglas splitting methods for reaction-diffusion
	equations},
	JOURNAL = {BIT},
	FJOURNAL = {BIT. Numerical Mathematics},
	VOLUME = {57},
	YEAR = {2017},
	NUMBER = {2},
	PAGES = {261--285},
	ISSN = {0006-3835},
	MRCLASS = {65L06 (65M20)},
	MRNUMBER = {3651079},
	MRREVIEWER = {Shirley Abelman},
	DOI = {10.1007/s10543-016-0634-9},
	URL = {https://doi.org/10.1007/s10543-016-0634-9},
}

@article{Mathew.1998,
	AUTHOR = {Mathew, T. P. and Polyakov, P. L. and Russo, G. and Wang, J.},
	TITLE = {Domain decomposition operator splittings for the solution of
	parabolic equations},
	JOURNAL = {SIAM J. Sci. Comput.},
	FJOURNAL = {SIAM Journal on Scientific Computing},
	VOLUME = {19},
	YEAR = {1998},
	NUMBER = {3},
	PAGES = {912--932},
	ISSN = {1064-8275},
	MRCLASS = {65M55 (65F30)},
	MRNUMBER = {1616698},
	MRREVIEWER = {Lutz Tobiska},
	DOI = {10.1137/S1064827595288206},
	URL = {https://doi.org/10.1137/S1064827595288206},
}

@article{Vabishchevich.2008,
	AUTHOR = {Vabishchevich, P. N.},
	TITLE = {Domain decomposition methods with overlapping subdomains for
	the time-dependent problems of mathematical physics},
	JOURNAL = {Comput. Methods Appl. Math.},
	FJOURNAL = {Computational Methods in Applied Mathematics},
	VOLUME = {8},
	YEAR = {2008},
	NUMBER = {4},
	PAGES = {393--405},
	ISSN = {1609-4840},
	MRCLASS = {65M55},
	MRNUMBER = {2604752},
	MRREVIEWER = {Karsten Urban},
	DOI = {10.2478/cmam-2008-0029},
	URL = {https://doi.org/10.2478/cmam-2008-0029},
}

@article {ES.2024,
	AUTHOR = {Eisenmann, M. and Stillfjord, T.},
	TITLE = {A randomized operator splitting scheme inspired by stochastic
	optimization methods},
	JOURNAL = {Numer. Math.},
	FJOURNAL = {Numerische Mathematik},
	VOLUME = {156},
	YEAR = {2024},
	NUMBER = {2},
	PAGES = {435--461},
	ISSN = {0029-599X,0945-3245},
	MRCLASS = {65C99 (65M12 65M55 90C15)},
	MRNUMBER = {4735388},
	DOI = {10.1007/s00211-024-01396-w},
	URL = {https://doi.org/10.1007/s00211-024-01396-w},
}

@InProceedings{HochbruckKoehler.2020,
	author="Hochbruck, M.
	and K{\"o}hler, J.",
	editor="D{\"o}rfler, W.
	and Hochbruck, M.
	and Hundertmark, D.
	and Reichel, W.
	and Rieder, A.
	and Schnaubelt, R.
	and Sch{\"o}rkhuber, B.",
	title="Error Analysis of Discontinuous {G}alerkin Discretizations of a Class of Linear Wave-type Problems",
	booktitle="Mathematics of Wave Phenomena",
	year="2020",
	publisher="Springer International Publishing",
	address="Cham",
	pages="197--218",
	abstract="In this paper we consider central fluxes discontinuous Galerkin space discretizations of a general class of wave-type equations of Friedrichs' type. This class includes important examples such as Maxwell's equations and wave equations. We prove an optimal error bound which holds under suitable regularity assumptions on the solution. Our analysis is performed in a framework of evolution equations on a Hilbert space and thus allows for the combination with various time integration schemes.",
	isbn="978-3-030-47174-3"
}

@Article{Clark.1987,
	author   = {Clark, D.~S.},
	title    = {Short proof of a discrete {G}ronwall inequality},
	journal  = {Discrete Appl. Math.},
	year     = {1987},
	volume   = {16},
	number   = {3},
	pages    = {279--281},
	issn     = {0166-218X},
	doi      = {10.1016/0166-218X(87)90064-3},
	fjournal = {Discrete Applied Mathematics. The Journal of Combinatorial Algorithms, Informatics and Computational Sciences},
	mrclass  = {26D15},
	mrnumber = {878027},
	url      = {http://dx.doi.org/10.1016/0166-218X(87)90064-3},
}

@article{JENTZEN2012114,
	title = {Regularity analysis for stochastic partial differential equations with nonlinear multiplicative trace class noise},
	journal = {Journal of Differential Equations},
	volume = {252},
	number = {1},
	pages = {114-136},
	year = {2012},
	issn = {0022-0396},
	doi = {https://doi.org/10.1016/j.jde.2011.08.050},
	url = {https://www.sciencedirect.com/science/article/pii/S0022039611003706},
	author = {A.~Jentzen and M.~R\"{o}ckner},
	keywords = {Stochastic partial differential equations, Regularity analysis, Nonlinear multiplicative noise},
	abstract = {In this article spatial and temporal regularity of the solution process of a stochastic partial differential equation (SPDE) of evolutionary type with nonlinear multiplicative trace class noise is analyzed.}
}

@book{Conway2007,
	title = {A Course in Functional Analysis},
	ISBN = {9781475743838},
	ISSN = {0072-5285},
	url = {http://dx.doi.org/10.1007/978-1-4757-4383-8},
	DOI = {10.1007/978-1-4757-4383-8},
	journal = {Graduate Texts in Mathematics},
	publisher = {Springer New York},
	author = {Conway,  J. B.},
	year = {2007}
}

@book {QuarteroniValli.1999, 
	AUTHOR = {Quarteroni, A. and Valli, A.},
	TITLE = {Domain decomposition methods for partial differential
	equations},
	SERIES = {Numerical Mathematics and Scientific Computation},
	NOTE = {Oxford Science Publications},
	PUBLISHER = {The Clarendon Press, Oxford University Press, New York},
	YEAR = {1999},
	PAGES = {xvi+360},
	ISBN = {0-19-850178-1},
	MRCLASS = {65-02 (35J25 35Q30 35Q35 65M55 65N55)},
	MRNUMBER = {1857663},
	MRREVIEWER = {Weimin\ Han},
}

@book {ToselliWidlund.2005,
	AUTHOR = {Toselli, A. and Widlund, O.},
	TITLE = {Domain decomposition methods---algorithms and theory},
	SERIES = {Springer Series in Computational Mathematics},
	VOLUME = {34},
	PUBLISHER = {Springer-Verlag, Berlin},
	YEAR = {2005},
	PAGES = {xvi+450},
	ISBN = {3-540-20696-5},
	MRCLASS = {65-02 (65N55 74S05 76M10)},
	MRNUMBER = {2104179},
	MRREVIEWER = {R\'emi\ Vaillancourt},
	DOI = {10.1007/b137868},
	URL = {https://doi.org/10.1007/b137868},
}

@book {AdamsFournier.2003,
	AUTHOR = {Adams, R.~A. and Fournier, J.~J.~F.},
	TITLE = {Sobolev {S}paces},
	SERIES = {Pure and Applied Mathematics (Amsterdam)},
	VOLUME = {140},
	EDITION = {Second},
	PUBLISHER = {Elsevier/Academic Press, Amsterdam},
	YEAR = {2003},
	PAGES = {xiv+305},
	ISBN = {0-12-044143-8},
	MRCLASS = {46E35 (46-01 46-02 46B70 46Exx)},
	MRNUMBER = {2424078},
}

@misc{Crouzeix_lecture,
	author = {Crouzeix, M.},
	title = {Parabolic Evolution Problems},
	URL = {https://perso.univ-rennes1.fr/michel.crouzeix/publis/pabolic.pdf},
	note = {\href{https://perso.univ-rennes1.fr/michel.crouzeix/publis/pabolic.pdf}{https://perso.univ-rennes1.fr/michel.crouzeix/publis/pabolic.pdf}. Last visited on 2024/11/22},
	}

@book {Grisvard.2011,
		AUTHOR = {Grisvard, P.},
		TITLE = {Elliptic problems in nonsmooth domains},
		SERIES = {Classics in Applied Mathematics},
		VOLUME = {69},
		NOTE = {Reprint of the 1985 original [MR0775683],
		With a foreword by Susanne C. Brenner},
		PUBLISHER = {Society for Industrial and Applied Mathematics (SIAM),
		Philadelphia, PA},
		YEAR = {2011},
		PAGES = {xx+410},
		ISBN = {978-1-611972-02-3},
		MRCLASS = {35J25 (01A75 35-02)},
		MRNUMBER = {3396210},
		DOI = {10.1137/1.9781611972030.ch1},
		URL = {https://doi.org/10.1137/1.9781611972030.ch1},
	}

@book {Lunardi.2009,
	AUTHOR = {Lunardi, A.},
	TITLE = {Interpolation theory},
	SERIES = {Appunti. Scuola Normale Superiore di Pisa (Nuova Serie).
	[Lecture Notes. Scuola Normale Superiore di Pisa (New
	Series)]},
	EDITION = {Second},
	PUBLISHER = {Edizioni della Normale, Pisa},
	YEAR = {2009},
	PAGES = {xiv+191},
	ISBN = {978-88-7642-342-0; 88-7642-342-0},
	MRCLASS = {46M35 (46-02 46B70 47D06 47F05)},
	MRNUMBER = {2523200},
	MRREVIEWER = {Paolo\ Acquistapace},
}

@book {LionsMagenes.1972,
	AUTHOR = {Lions, J.-L. and Magenes, E.},
	TITLE = {Non-homogeneous boundary value problems and applications.
	{V}ol. {I}},
	SERIES = {Die Grundlehren der mathematischen Wissenschaften},
	VOLUME = {Band 181},
	NOTE = {Translated from the French by P. Kenneth},
	PUBLISHER = {Springer-Verlag, New York-Heidelberg},
	YEAR = {1972},
	PAGES = {xvi+357},
	MRCLASS = {35JXX (35KXX 35LXX 46E35)},
	MRNUMBER = {350177},
}

@article {Li2021,
	AUTHOR = {Li, Yunzhang and Shu, Chi-Wang and Tang, Shanjian},
	TITLE = {A local discontinuous {G}alerkin method for nonlinear
	parabolic {SPDE}s},
	JOURNAL = {ESAIM Math. Model. Numer. Anal.},
	FJOURNAL = {ESAIM. Mathematical Modelling and Numerical Analysis},
	VOLUME = {55},
	YEAR = {2021},
	PAGES = {S187--S223},
	ISSN = {2822-7840,2804-7214},
	MRCLASS = {65M75 (35R60 60H35 65C30)},
	MRNUMBER = {4221304},
	MRREVIEWER = {Mikhail\ V.\ Tretyakov},
	DOI = {10.1051/m2an/2020026},
	URL = {https://doi.org/10.1051/m2an/2020026},
}

@article {Pazner2019,
	AUTHOR = {Pazner, W. and Trask, N. and Atzberger, P. J.},
	TITLE = {Stochastic discontinuous {G}alerkin methods ({SDGM}) based on
	fluctuation-dissipation balance},
	JOURNAL = {Results Appl. Math.},
	FJOURNAL = {Results in Applied Mathematics},
	VOLUME = {4},
	YEAR = {2019},
	PAGES = {Paper No. 100068, 18},
	ISSN = {2590-0374},
	MRCLASS = {35R60},
	MRNUMBER = {4351536},
	DOI = {10.1016/j.rinam.2019.100068},
	URL = {https://doi.org/10.1016/j.rinam.2019.100068},
}

@article {Yang2023,
	AUTHOR = {Yang, Xu and Zhao, Weidong and Zhao, Wenju},
	TITLE = {Strong optimal error estimates of discontinuous {G}alerkin
	method for multiplicative noise driving nonlinear {SPDE}s},
	JOURNAL = {Numer. Methods Partial Differential Equations},
	FJOURNAL = {Numerical Methods for Partial Differential Equations. An
	International Journal},
	VOLUME = {39},
	YEAR = {2023},
	NUMBER = {3},
	PAGES = {2073--2095},
	ISSN = {0749-159X,1098-2426},
	MRCLASS = {65C30 (65M60)},
	MRNUMBER = {4570537},
	MRREVIEWER = {Yunzhang\ Li},
	DOI = {10.1002/num.22958},
	URL = {https://doi.org/10.1002/num.22958},
}

@article {Rulla1996,
	AUTHOR = {Rulla, Jim and Walkington, Noel J.},
	TITLE = {Optimal rates of convergence for degenerate parabolic problems
	in two dimensions},
	JOURNAL = {SIAM J. Numer. Anal.},
	FJOURNAL = {SIAM Journal on Numerical Analysis},
	VOLUME = {33},
	YEAR = {1996},
	NUMBER = {1},
	PAGES = {56--67},
	ISSN = {0036-1429},
	MRCLASS = {65M12},
	MRNUMBER = {1377243},
	MRREVIEWER = {Rudolf\ Gorenflo},
	DOI = {10.1137/0733004},
	URL = {https://doi.org/10.1137/0733004},
}

@article{dunereview:21,
	author = {Bastian, P. and Blatt, M. and Dedner, M. and Dreier, N.-A. and
	Engwer,   Ch. Fritze, R. and Gr{\"a}ser, C. and Gr{\"u}ninger, Ch. and
	Kempf, D. and Kl{\"o}fkorn, R. and Ohlberger, M. and Sander, O.},
	title = {{The Dune framework: Basic concepts and recent developments}},
	journal = {{CAMWA}},
	volume = {81},
	pages = {75-112},
	year = {2021},
	doi = {10.1016/j.camwa.2020.06.007},
	URL = {https://doi.org/10.1016/j.camwa.2020.06.007},
}

@misc{blatt2025distributedunifiednumericsenvironment,
	title={The Distributed and Unified Numerics Environment (DUNE),
	version 2.10},
	author={Markus Blatt and Samuel Burbulla and Ansgar Burchardt and
	Andreas Dedner and Christian Engwer and Carsten Gräser and Christoph
	Grüninger and Robert Klöfkorn and Timo Koch and Santiago Ospina De Los
	Ríos and Simon Praetorius and Oliver Sander},
	year={2025},
	eprint={2506.23558},
	archivePrefix={arXiv},
	primaryClass={cs.MS},
	URL={https://arxiv.org/abs/2506.23558},
}

@ARTICLE{dune:Fem,
	author = {Dedner, A. and Kl{\"o}fkorn, R. and Nolte, M. and
	Ohlberger, M.},
	title = {{A Generic Interface for Parallel and Adaptive Scientific
	Computing:
	Abstraction Principles and the DUNE-FEM Module}},
	journal = {Computing},
	year = {2010},
	volume = {90},
	pages = {165--196},
	number = {3--4},
	doi={10.1007/s00607-010-0110-3},
	URL = {https://doi.org/10.1007/s00607-010-0110-3},
}

\end{document}